\tikzset{
%Define standard arrow tip
>=stealth',
%Define style for different line styles
help lines/.style={dashed, thick},
axis/.style={<->},
important line/.style={thick},
connection/.style={thick, dotted},
}
\def\namedlabel#1#2{\begingroup
    #2%
    \def\@currentlabel{#2}%
    \phantomsection\label{#1}\endgroup
}
\newcommand{\nc}{\newcommand}
\nc{\rnc}{\renewcommand}
\nc{\bb}[1]{{\mathbb #1}}
\nc{\bbA}{\bb{A}}\nc{\bbB}{\bb{B}}\nc{\bbC}{\bb{C}}\nc{\bbD}{\bb{D}}
\nc{\bbE}{\bb{E}}\nc{\bbF}{\bb{F}}\nc{\bbG}{\bb{G}}\nc{\bbH}{\bb{H}}
\nc{\bbI}{\bb{I}}\nc{\bbJ}{\bb{J}}\nc{\bbK}{\bb{K}}\nc{\bbL}{\bb{L}}
\nc{\bbM}{\bb{M}}\nc{\bbN}{\bb{N}}\nc{\bbO}{\bb{O}}\nc{\bbP}{\bb{P}}
\nc{\bbQ}{\bb{Q}}\nc{\bbR}{\bb{R}}\nc{\bbS}{\bb{S}}\nc{\bbT}{\bb{T}}
\nc{\bbU}{\bb{U}}\nc{\bbV}{\bb{V}}\nc{\bbW}{\bb{W}}\nc{\bbX}{\bb{X}}
\nc{\bbY}{\bb{Y}}\nc{\bbZ}{\bb{Z}}
\nc{\mbf}[1]{{\mathbf #1}}
\nc{\bfA}{\mbf{A}}\nc{\bfB}{\mbf{B}}\nc{\bfC}{\mbf{C}}\nc{\bfD}{\mbf{D}}
\nc{\bfE}{\mbf{E}}\nc{\bfF}{\mbf{F}}\nc{\bfG}{\mbf{G}}\nc{\bfH}{\mbf{H}}
\nc{\bfI}{\mbf{I}}\nc{\bfJ}{\mbf{J}}\nc{\bfK}{\mbf{K}}\nc{\bfL}{\mbf{L}}
\nc{\bfM}{\mbf{M}}\nc{\bfN}{\mbf{N}}\nc{\bfO}{\mbf{O}}\nc{\bfP}{\mbf{P}}
\nc{\bfQ}{\mbf{Q}}\nc{\bfR}{\mbf{R}}\nc{\bfS}{\mbf{S}}\nc{\bfT}{\mbf{T}}
\nc{\bfU}{\mbf{U}}\nc{\bfV}{\mbf{V}}\nc{\bfW}{\mbf{W}}\nc{\bfX}{\mbf{X}}
\nc{\bfY}{\mbf{Y}}\nc{\bfZ}{\mbf{Z}}
\nc{\bfa}{\mbf{a}}\nc{\bfb}{\mbf{b}}\nc{\bfc}{\mbf{c}}\nc{\bfd}{\mbf{d}}
\nc{\bfe}{\mbf{e}}\nc{\bff}{\mbf{f}}\nc{\bfg}{\mbf{g}}\nc{\bfh}{\mbf{h}}
\nc{\bfi}{\mbf{i}}\nc{\bfj}{\mbf{j}}\nc{\bfk}{\mbf{k}}\nc{\bfl}{\mbf{l}}
\nc{\bfm}{\mbf{m}}\nc{\bfn}{\mbf{n}}\nc{\bfo}{\mbf{o}}\nc{\bfp}{\mbf{p}}
\nc{\bfq}{\mbf{q}}\nc{\bfr}{\mbf{r}}\nc{\bfs}{\mbf{s}}\nc{\bft}{\mbf{t}}
\nc{\bfu}{\mbf{u}}\nc{\bfv}{\mbf{v}}\nc{\bfw}{\mbf{w}}\nc{\bfx}{\mbf{x}}
\nc{\bfy}{\mbf{y}}\nc{\bfz}{\mbf{z}}
\newcommand{\G}{\mathbb{G}}
\newcommand{\op}{\text{op}}
\nc{\mcal}[1]{{\mathcal #1}}
\nc{\calA}{\mcal{A}}\nc{\calB}{\mcal{B}}\nc{\calC}{\mcal{C}}\nc{\calD}{\mcal{D}}
\nc{\calE}{\mcal{E}} \nc{\calF}{\mcal{F}}\nc{\calG}{\mcal{G}}\nc{\calH}{\mcal{H}}
\nc{\calI}{\mcal{I}}\nc{\calJ}{\mcal{J}}\nc{\calK}{\mcal{K}}\nc{\calL}{\mcal{L}}
\nc{\calM}{\mcal{M}}\nc{\calN}{\mcal{N}}\nc{\calO}{\mcal{O}}\nc{\calP}{\mcal{P}}
\nc{\calQ}{\mcal{Q}}\nc{\calR}{\mcal{R}}\nc{\calS}{\mcal{S}}\nc{\calT}{\mcal{T}}
\nc{\calU}{\mcal{U}}\nc{\calV}{\mcal{V}}\nc{\calW}{\mcal{W}}\nc{\calX}{\mcal{X}}
\nc{\calY}{\mcal{Y}}\nc{\calZ}{\mcal{Z}}
\nc{\fA}{\frak{A}}\nc{\fB}{\frak{B}}\nc{\fC}{\frak{C}} \nc{\fD}{\frak{D}}
\nc{\fE}{\frak{E}}\nc{\fF}{\frak{F}}\nc{\fG}{\frak{G}}\nc{\fH}{\frak{H}}
\nc{\fI}{\frak{I}}\nc{\fJ}{\frak{J}}\nc{\fK}{\frak{K}}\nc{\fL}{\frak{L}}
\nc{\fM}{\frak{M}}\nc{\fN}{\frak{N}}\nc{\fO}{\frak{O}}\nc{\fP}{\frak{P}}
\nc{\fQ}{\frak{Q}}\nc{\fR}{\frak{R}}\nc{\fS}{\frak{S}}\nc{\fT}{\frak{T}}
\nc{\fU}{\frak{U}}\nc{\fV}{\frak{V}}\nc{\fW}{\frak{W}}\nc{\fX}{\frak{X}}
\nc{\fY}{\frak{Y}}\nc{\fZ}{\frak{Z}}
\nc{\fa}{\frak{a}}\nc{\fb}{\frak{b}}\nc{\fc}{\frak{c}} \nc{\fd}{\frak{d}}
\nc{\fe}{\frak{e}}\nc{\fFf}{\frak{f}}\nc{\fg}{\frak{g}}\nc{\fh}{\frak{h}}
\nc{\fri}{\frak{i}}\nc{\fj}{\frak{j}}\nc{\fk}{\frak{k}}\nc{\fl}{\frak{l}}
\nc{\fm}{\frak{m}}\nc{\fn}{\frak{n}}\nc{\fo}{\frak{o}}\nc{\fp}{\frak{p}}
\nc{\fq}{\frak{q}}\nc{\fr}{\frak{r}}\nc{\fs}{\frak{s}}\nc{\ft}{\frak{t}}
\nc{\fu}{\frak{u}}\nc{\fv}{\frak{v}}\nc{\fw}{\frak{w}}\nc{\fx}{\frak{x}}
\nc{\fy}{\frak{y}}\nc{\fz}{\frak{z}}
\newtheorem{theorem}{Theorem}[section]
\newtheorem{lemma}[theorem]{Lemma}
\newtheorem{corollary}[theorem]{Corollary}
\newtheorem{prop}[theorem]{Proposition}
\theoremstyle{definition}
\newtheorem{definition}[theorem]{Definition}
\newtheorem{example}[theorem]{Example}
\newtheorem{remark}[theorem]{Remark}
\newtheorem{assumption}[theorem]{Assumption}
\newtheorem{thm}{Theorem}
 \DeclareMathOperator{\id}{id}
 \DeclareMathOperator{\GL}{GL}
\DeclareMathOperator{\Hom}{{Hom}}
 \DeclareMathOperator{\Lie}{Lie}
\DeclareMathOperator{\Spec}{{Spec}} 
\DeclareMathOperator{\Aut}{Aut}
 \DeclareMathOperator{\End}{End}
\DeclareMathOperator{\sEnd}{{\mathcal{E}nd}}
\DeclareMathOperator{\Coh}{Coh}
\DeclareMathOperator{\Mod}{Mod\hbox{-}}
\DeclareMathOperator{\Gm}{\bbG_m}
\DeclareMathOperator{\rat}{rat}
\DeclareMathOperator{\FM}{FM}
\DeclareMathOperator{\Ell}{Ell}
\DeclareMathOperator{\Pic}{Pic}
\DeclareMathOperator{\Th}{Th}
\DeclareMathOperator{\sph}{sph}
\DeclareMathOperator{\ext}{ext}
\DeclareMathOperator{\loc}{loc}
\DeclareMathOperator{\Gr}{Gr}
\DeclareMathOperator{\SL}{SL}
\DeclareMathOperator{\coop}{coop}
\DeclareMathOperator{\Skl}{Skl}
\DeclareMathOperator{\pr}{pr}
\newcommand {\Omit}[1]{}
\newcommand{\sO}{\mathcal{O}}
\newcommand{\h}{\mathfrak{h}}
\newcommand{\g}{\mathfrak{g}}
\newcommand{\catA}{\mathfrak{A}}
\DeclareMathOperator{\Rep}{Rep}
\DeclareMathOperator{\Res}{Res}
\newcommand{\inj}{\hookrightarrow}
\def\angl#1{{\langle #1\rangle}}
\newcommand{\pt}{\text{pt}}
\newcommand{\PP}{\bbP}
\newcommand{\ZZ}{\bbZ}
\newcommand{\Z}{\bbZ}
\newcommand{\C}{\bbC}
\newcommand{\N}{\bbN}
\DeclareMathOperator{\fac}{fac}
\DeclareMathOperator{\Sh}{Sh}
\DeclareMathOperator{\inc}{in}
\DeclareMathOperator{\out}{out}
\newcommand{\SH}{\calS\calH}
 \gdef\Young(#1){\hbox{$\vcenter
 {\mathcode`,="8000\mathcode`|="8000
  \def,{\global\advance\cols by 1 &}%
  \def|{\cr
        \multispan{\the\cols}\hrulefill\cr
        &\global\cols=2 }%
  \offinterlineskip\everycr{}\tabskip=0pt
  \dimen0=\ht\strutbox \advance\dimen0 by \dp\strutbox
  \halign
   {\vrule height \ht\strutbox depth \dp\strutbox##
    &&\hbox to \dimen0{\hss$##$\hss}\vrule\cr
    \noalign{\hrule}&\global\cols=2 #1\crcr
    \multispan{\the\cols}\hrulefill\cr%
   }
 }$}}
\title[Quiver varieties and elliptic quantum groups]
{Quiver varieties and elliptic quantum groups}
\date{\today}
\author[Y.~Yang]{Yaping~Yang}
\address{School of Mathematics and Statistics, The University of Melbourne, 813 Swanston Street, Parkville VIC 3010, Australia}\email{yaping@math.umass.edu}
\author[G.~Zhao]{Gufang~Zhao}
\address{University of Massachusetts, Department of Mathematics and Statistics, 710 N. Pleasant St., Amherst MA 01003}\email{zhao@math.umass.edu}
\begin{document}
\begin{abstract}
We define a sheafified elliptic quantum group for any symmetric Kac-Moody Lie algebra. This definition is naturally obtained from the elliptic cohomological Hall algebra of a preprojective algebra.
The sheafified elliptic quantum group is an algebra object in a certain monoidal category of coherent sheaves on the colored Hilbert scheme of an elliptic curve. 

We show that the elliptic quantum group acts on the equivariant elliptic cohomology of Nakajima quiver varieties.
This action is compatible with the action induced by Hecke correspondence, a construction similar to that of Nakajima. The elliptic Drinfeld currents are obtained as generating series of certain rational sections of the sheafified elliptic quantum group. We show that the Drinfeld currents satisfy the commutation relations of the dynamical elliptic quantum group studied by Felder and Gautam-Toledano Laredo.

\end{abstract}
\maketitle
\tableofcontents

\section{Introduction}
In \cite{F,F1}, Felder constructed the elliptic $R$-matrix,
 which is an elliptic solution to the dynamical Yang-Baxter equations. This $R$-matrix
is related to moduli of bundles on an elliptic curve, WZW conformal field theory on a torus, and
the elliptic integrable systems. 
Representation of elliptic quantum group is  an interesting
subject which recently gained more attentions. Gautam-Toledano Laredo \cite{GTL15} defined the
category of finite dimensional representations of the elliptic quantum groups and studied this category
using $q$-difference equations, while in the $\fs\fl_2$-case the representations have been studied by
Felder-Varchenko \cite{FV}. Aganagic-Okounkov \cite{AO} constructed an action of certain elliptic $R$-matrix
on elliptic cohomology of quiver varieties using stable envelope construction in the elliptic setting, and more concretely using Felder's $R$-matrix in \cite{FRV} in type-$A$.

However, in studying representations of the elliptic quantum group, an algebra containing the
currents on the elliptic curve, referred to as the Drinfeld realization of the elliptic quantum group,
is needed. So far there has been no intrinsic definition of an elliptic quantum group, whose currents
have the desired convergence property, nor a construction of the action of any form of the Drinfeld
realization on the equivariant elliptic cohomology of quiver varieties. Various results towards this direction have been achieved in  \cite{ES, AO, Konn16, GTL15}. A more detailed, but still far from being complete discussion of the historical developments of elliptic quantum group is summarized in \S~\ref{subsec:intr_survey}. 
Therefore, in the present paper, we introduce and initiate the study of a sheafified elliptic
quantum group and achieve the aforementioned goals.

In the first part of the present paper, we give the definition of the sheafified elliptic quantum group for any symmetric Kac-Moody Lie algebra $\fg$, as an algebra object in a certain monoidal category of coherent sheaves on colored Hilbert scheme of an elliptic curve. The space of meromorphic sections of this sheafified elliptic quantum group is an associative algebra. We find explicit generating series in this algebra, referred to as the {\it Drinfeld currents}, which deform the classical elliptic currents in the Manin pair of Drinfeld \cite{Dr}.  We also compute explicitly the commutation relations of the Drinfeld currents. These relations are also found in \cite{GTL15} by Gautam-Toledano Laredo via an entirely different approach.

The main tool we use in constructing the sheafified elliptic quantum group is the preprojective  cohomological Hall algebra,  developed by the authors in \cite{YZ, YZ2}, inspired by earlier work of Kontsevich-Soibelman \cite{KS} and Schiffmann-Vasserot \cite{SV2}. This construction will give, for any quiver $Q$ and any 1-dimensional affine algebraic group $\bbG$ (or formal group), a quantum affine algebra associated to the Kac-Moody Lie algebra of $Q$. In the present paper, we extend this construction to the case when $\bbG$ is an elliptic curve. In this special case, there naturally appears a non-standard monoidal structure on the category of coherent sheaves on the colored Hilbert scheme of $\bbG$. The classical limit of this monoidal structure also shows up in the study of the global semi-infinite loop Grassmannian over $\bbG$. 

In the second part of this paper, we provide a geometric interpretation of the sheafified elliptic quantum group, as the equivariant elliptic cohomology of the moduli of representations of the preprojective algebra of $Q$, where $Q$ is the Dynkin quiver of $\fg$.  The equivariant elliptic cohomology theory we use was introduced by Grojnowski \cite{Gr} and Ginzburg-Kapranov-Vasserot \cite{GKV95}, and was investigated by many others later on, including  \cite{And00, An,Chen10,Gep,GH, Gan,Lur}. 
It had been an open question since the construction of Nakajima \cite{Nak01} and Varagnolo \cite{Va00}, that a Drinfeld realization of the elliptic quantum group should act on the equivariant elliptic cohomology of quiver varieties. In the present paper, we deduce this from the geometric interpretation of the sheafified elliptic quantum group. This also provides a geometric construction of highest weight representations of the elliptic quantum group.

\subsection{The colored Hilbert scheme}
One of the unexpected features in the study of elliptic quantum groups via the present approach is the occurrence of a non-standard monoidal structure on the category of coherent sheaves on colored Hilbert scheme.  Although also present in the rational and trigonometric case, this monoidal structure is not visible in those cases due to the lack of non-trivial line bundles. The classical limit of this moniodal structure already shows up implicitly in the study of semi-infinite loop Grassmannian and locality structures in \cite{Mirk}. 

Let $I$ be the set of simple roots of $\fg$, and $v=(v^i)_{i\in I}\in \N^I$. Let $E$ be an elliptic curve. 
Recall that the Hilbert scheme of $I$-colored points on $E$ is 
\[
\calH_{E\times I}=\coprod_{v\in\bbN^I}E^{(v)},\,\ \text{ where $E^{(v)}:=\prod_{i\in I}E^{v^i}/\fS_{v^i}$}.
\]
One can think an element of $E^{(v)}$ as $\sum_{i\in I}v^i$ points on $E$, where $v^i$ points have color $i\in I$. Let $\calC$ be the abelian category of quasi-coherent sheaves on $\calH_{E\times I}$. An object of $\calC$ consists of tuples $(\mathcal{F}_{v})_{v\in \bbN^I}$,  each $\mathcal{F}_{v}$ is a quasi-coherent sheaf on $E^{(v)}$. A morphism of $\calC$ is naturally defined as a morphism of sheaves $\mathcal{F}_{v}\to \mathcal{G}_{v}$ on $E^{(v)}$, for each $v\in \bbN^I$.

We introduce a two-parameter family of monoidal structures on $\calC$, with the two deformation parameters $(t_1,t_2)\in E^2$. For two objects $\mathcal{F}=\{\calF_{v}\}_{v\in\bbN^I}, \mathcal{G}=\{\calG_v\}_{v\in\bbN^I}$ in $\mathcal{C}$, the tensor $\mathcal{F}\otimes_{t_1, t_2} \mathcal{G}$ is defined as
\begin{equation}\label{equ:tensor on C}
(\mathcal{F}\otimes_{t_1, t_2} \mathcal{G})_{v}:=\bigoplus_{v_1+v_2=v}
(\mathbb{S}_{v_1, v_2})_{*}\Big( (\mathcal{F}_{v_1} \boxtimes 
 \mathcal{G}_{v_1})\otimes \mathcal L_{v_1, v_2}\Big),
\end{equation}
where $\mathbb{S}_{v_1, v_2}$ is the symmetrization map 
$\mathbb{S}_{v_1, v_2}: E^{(v_1)}\times E^{(v_2)}
\to E^{(v_1+v_2)}$, and $\mathcal L_{v_1, v_2}$ is some line bundle on $E^{(v_1)}\times E^{(v_2)}$  depending on the parameters $t_1, t_2\in E^2$, described in detail in \S\ref{sec:category C}.

\begin{thm}[Theorems~\ref{thm:monoidal} and \ref{thm:braiding}]\label{thm:ellCoHA}
\Omit{\begin{enumerate}
\item }The abelian category $\calC$, endowed with $\otimes_{t_1,t_2}$, is a monoidal category with two parameters $t_1, t_2$. There is a meromorphic braiding $\gamma$, making $(\calC,\otimes_{t_1,t_2},\gamma)$ a symmetric monoidal category.
\end{thm}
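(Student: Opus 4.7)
The plan is to reduce both statements, monoidality and the symmetric braiding, to combinatorial identities of line bundles on products of symmetric powers $E^{(v_1)}\times\cdots\times E^{(v_k)}$, exploiting that the various symmetrization maps $\bbS$ are finite and generically étale, so that base change along them is well behaved.

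First I would verify the associator for $\otimes_{t_1,t_2}$. Expanding $(\calF\otimes\calG)\otimes\calH$ and $\calF\otimes(\calG\otimes\calH)$ from the formula~(\ref{equ:tensor on C}) and applying flat/finite base change, both sides become direct sums over ordered triples $v_1+v_2+v_3=v$ of pushforwards under the common iterated symmetrization $\bbS_{v_1,v_2,v_3}\colon E^{(v_1)}\times E^{(v_2)}\times E^{(v_3)}\to E^{(v)}$, twisted by a line bundle assembled from the $\calL_{\bullet,\bullet}$. The associator will then be induced, summand by summand, by a canonical isomorphism
\[
\calL_{v_1+v_2,v_3}\otimes p_{12,3}^{*}\calL_{v_1,v_2}\;\cong\;\calL_{v_1,v_2+v_3}\otimes p_{1,23}^{*}\calL_{v_2,v_3}
\]
on $E^{(v_1)}\times E^{(v_2)}\times E^{(v_3)}$, where $p_{12,3}$ and $p_{1,23}$ are the obvious partial symmetrizations. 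This is a 2-cocycle condition on the assignment $(v_1,v_2)\mapsto\calL_{v_1,v_2}$; because, as described in \S\ref{sec:category C}, $\calL_{v_1,v_2}$ is built out of explicit divisors depending additively on $t_1,t_2\in E$, each side rewrites as the same effective divisor and the comparison is a canonical isomorphism. The unit object is the structure sheaf of $E^{(0)}=\pt$, and the unitors together with the pentagon identity then follow formally from the strict combinatorics of the decomposition $v=v_1+\cdots+v_k$.

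Next I would construct the symmetric braiding. For each splitting $v=v_1+v_2$ the swap map $\tau\colon E^{(v_1)}\times E^{(v_2)}\to E^{(v_2)}\times E^{(v_1)}$ satisfies $\bbS_{v_2,v_1}\circ\tau=\bbS_{v_1,v_2}$, so I define $\gamma_{\calF,\calG}$ summand-wise by combining the tautological isomorphism $\calF_{v_1}\boxtimes\calG_{v_2}\cong\tau^{*}(\calG_{v_2}\boxtimes\calF_{v_1})$ with a chosen rational isomorphism $\calL_{v_1,v_2}\dashrightarrow\tau^{*}\calL_{v_2,v_1}$. The two line bundles will in general differ by divisors supported on the diagonals shifted by $t_1$ and $t_2$, so $\gamma$ is only meromorphic; I would exhibit the comparison by writing both sides in terms of their defining divisors and identifying a canonical rational section. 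The symmetry condition $\gamma_{\calG,\calF}\circ\gamma_{\calF,\calG}=\id$ as rational maps reduces to the statement that this chosen section squares to the identity after the double swap, which is immediate from the explicit formula.

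Finally I would verify the hexagon. This again reduces to an identity of line bundle isomorphisms on a triple product $E^{(v_1)}\times E^{(v_2)}\times E^{(v_3)}$, comparing two ways of moving a factor past a tensor product; combined with the associator cocycle above, it becomes a three-term identity of divisors. In the symmetric setting one hexagon implies the other given $\gamma^{2}=\id$ and naturality, so only one such check is needed. The main obstacle, in my estimation, is the bookkeeping: both the associator cocycle and the hexagon express the same line bundle on a triple product in two distinct ways, and the presence of the two deformation parameters $t_1,t_2$ together with the color decoration by $I$ makes sign and permutation errors easy to produce. A secondary subtlety is that $\gamma$ is only meromorphic, so I must be careful that each constituent rational isomorphism is realized by a nonzero section with controlled divisor, in order that all composites in the coherence diagrams are well-defined rational maps rather than undefined expressions.
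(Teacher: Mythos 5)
Your proposal is correct and follows essentially the same route as the paper: associativity is reduced to the cocycle isomorphism $\calL_{v_1,v_2}\otimes\bbS_{12}^{*}\calL_{v_1+v_2,v_3}\cong\calL_{v_2,v_3}\otimes\bbS_{23}^{*}\calL_{v_1,v_2+v_3}$ on the triple product (the paper's Lemma~\ref{lem:relation of L}, verified there by additivity of the corresponding virtual characters in $\Z[\Lambda]^{\fS_{v_1}\times\fS_{v_2}\times\fS_{v_3}}$, which is your ``same effective divisor'' argument), and the braiding is given exactly by the canonical rational section $\widehat{\Phi}(z_{B}|z_{A})=\fac(z_{B}|z_{A})/\fac(z_{A}|z_{B})$ of $\calL_{v_1,v_2}^{\vee}\otimes\calL_{v_2,v_1}$, with $\gamma^2=\id$ and the hexagon following from $\widehat{\Phi}(z_A|z_B)\widehat{\Phi}(z_B|z_A)=1$ and $\widehat{\Phi}(z_A|z_B)\widehat{\Phi}(z_A|z_C)=\widehat{\Phi}(z_A|z_{B\sqcup C})$.
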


\subsection{The elliptic cohomological Hall algebra}
For any compact Lie group $G$, let $\mathfrak{A}_G$ be the moduli scheme of semistable principal $G^{alg}$-bundles over an elliptic curve where $G^{alg}$ is the associated split algebraic group.
For a $G$-variety $X$, the $G$-equivariant elliptic cohomology $\Ell_{G}(X)$ is a quasi-coherent sheaf of $\mathcal{O}_{\mathfrak{A}_G}$-module, satisfying certain axioms, see \cite{GKV95}. In particular, when $v=(v^i)_{i\in I}$, and $G=U_{v}:=\prod_{i\in I} U_{v^i}$, we have $\mathfrak{A}_{U_v}=E^{(v)}$, the colored Hilbert scheme of $v$-points on $E$. Without raising confusion,  for simplicity we use the notations $\mathfrak{A}_{\GL_v}$ and $\Ell_{\GL_v}$ instead of $U_n$.

Let $Q$ be the Dynkin quiver of $\fg$,  with  the set of vertices $I$ and the set of arrows $H$. Let $Q\cup Q^{op}$ be the double quiver (see \S~\ref{sec:CoHA}). 
The preprojective algebra, denoted by $\Pi_Q$, is the quotient of the path algebra $\C( Q\cup Q^{op})$ by the ideal generated by the relation $[x, x^{op}]=0$. 
Let $\Rep(\Pi_Q, v)$ be the representation space of $\Pi_Q$ with dimension vector $v=(v^i)_{i\in I}$.  
It is an affine variety endowed with a natural action of $G_{v}=\prod_{i\in I}\GL_{v^i}$.  The elliptic cohomological Hall algebra (CoHA), denoted by $\calP_{\Ell}(Q)$, is the collection of sheaves $\calP_{\Ell, v}:=\Ell_{G_{v}\times \C^*}(\Rep(\Pi_Q, v))$ on $E^{(v)}$, for $v\in \N^I$ as an object in $\calC$. It is endowed with a multiplication, which is
 a morphism of sheaves
$\calP_{\Ell, v_1} \otimes_{t_1, t_2} 
\calP_{\Ell, v_2}
\to \calP_{\Ell, v_1+v_2}$ on $E^{(v_1+v_2)}$ for any $v_1, v_2$. 
The above multiplication,  referred to as the Hall multiplication, makes $\calP_{\Ell}(Q)$ into an algebra object in $\calC$. 

We also construct a coproduct $\Delta: \calP_{\Ell} \to (\calP_{\Ell}\otimes_{t_1,t_2} \calP_{\Ell})_{\loc}$ on a suitable localization of $\calP_{\Ell}$ (see \S~\ref{sec:CoHA}). 

For each $k\in I$, let $e_k$ be the dimension vector valued $1$ at vertex $k$ of the quiver $Q$ and zero otherwise. The \textit{spherical subsheaf}, denoted by $\calP^{\sph}$, is the subsheaf of $\calP_{\Ell}$ generated by $\calP_{e_k}$ as $k$ varies in $I$. 
When restricting on $\calP^{\sph}$, the coproduct $\Delta$ is well-defined without taking localization. Therefore, we have the following. 
\begin{thm}[Corollary~\ref{cor:bialg}]
The object $(\calP_{\Ell}^{\sph}(Q), \star, \Delta)$,  endowed with the Hall multiplication $\star$, and coproduct $\Delta$, is a bialgebra object in $\mathcal{C}$.  
\end{thm}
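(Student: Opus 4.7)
The plan is to verify the three axioms of a bialgebra object in $\calC$ for the triple $(\calP_{\Ell}^{\sph}, \star, \Delta)$: that it is an algebra object under $\star$, a coalgebra object under $\Delta$, and that $\Delta$ is an algebra morphism with respect to $\star$, with the target tensor twisted by the braiding $\gamma$ of Theorem~\ref{thm:braiding}. The algebra axiom is immediate from the definition of the spherical subsheaf as the subalgebra of $\calP_{\Ell}$ generated by $\{\calP_{e_k}\}_{k\in I}$, together with the observation that the multiplicative unit of $\calP_{\Ell}$ lies in $\calP^{\sph}_0$.

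For the coalgebra and compatibility axioms, my plan is to transport the corresponding identities from the localized object $(\calP_{\Ell})_{\loc}$, on which coassociativity of $\Delta$ and the identity $\Delta(a\star b)=\Delta(a)\star\Delta(b)$ are expected to be established as part of the CoHA formalism developed in \cite{YZ, YZ2}. These identities descend to $\calP^{\sph}$ as formal consequences, provided that $\Delta$ sends $\calP^{\sph}_v$ into the non-localized tensor $\bigoplus_{v_1+v_2=v} \calP^{\sph}_{v_1}\otimes_{t_1,t_2}\calP^{\sph}_{v_2}$. I would prove this containment by induction on $|v|:=\sum_{i\in I} v^i$. The base case $|v|\le 1$ is forced by the fact that the only splittings of $e_k$ are $e_k+0=0+e_k$, which confine $\Delta$ on $\calP_{e_k}$ to the primitive part and hence automatically to $\calP^{\sph}\otimes_{t_1,t_2}\calP^{\sph}$. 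For the inductive step, every section of $\calP^{\sph}_v$ is a sum of $\star$-products of lower-degree spherical sections; applying the algebra morphism property of $\Delta$ to each such product, together with the fact that $\calP^{\sph}\otimes_{t_1,t_2}\calP^{\sph}$ is itself closed under the componentwise Hall multiplication (since $\calP^{\sph}$ is $\star$-closed), the inductive hypothesis yields the claim.

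The main obstacle is the underlying identity $\Delta(a\star b)=\Delta(a)\star\Delta(b)$ on the localization, which carries the substantive geometric content: establishing it requires matching the Hecke-type correspondences that define the Hall multiplication $\star$ with the attracting/fixed-locus decomposition that defines $\Delta$ on $(\calP_{\Ell})_{\loc}$, and accounting correctly for the twist by $\gamma$. Once this compatibility, together with the well-definedness of $\Delta|_{\calP^{\sph}}$ without localization (asserted in the paragraph preceding the corollary), has been secured, the three bialgebra axioms follow formally by the dimension-vector induction sketched above.
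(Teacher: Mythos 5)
Your overall architecture matches the paper's: establish the bialgebra identities on the localization, then check that $\Delta$ restricted to the spherical part requires no localization, the latter by induction on $|v|$ starting from primitivity in degree $e_k$ (your base case is correct, since the only splittings of $e_k$ are $e_k+0$ and $0+e_k$ and the corresponding $\fac$ factors are trivial by Lemma~\ref{lem:line_fac_lim}). But the substance is missing in two places. First, the compatibility $\Delta(a\star b)=\Delta(a)\star\Delta(b)$ cannot be outsourced to an ``attracting/fixed-locus'' picture here: the coproduct is \emph{defined} by the explicit rational formula \eqref{eq:coprod}, and the paper proves the compatibility by a direct computation with the shuffle formula \eqref{shuffle formula}, whose crux is the identity
\[
\frac{\widehat{\Phi}(z_{B_2}| z_{A_1})\, \fac(z_{A_1\sqcup A_2}|z_{B_1\sqcup B_2})}{ \fac(z_{A_2\sqcup B_2}|z_{A_1\sqcup B_1})}
= \frac{\fac(z_{A_1}|z_{B_1})\,\fac(z_{A_2}|z_{B_2})}{\fac(z_{A_2}|z_{A_1})\,\fac(z_{B_2}|z_{B_1})},
\]
i.e.\ exactly the statement that the braiding $\gamma$ absorbs the role played by the factor $H_A(z_B)$ in \cite{YZ2}. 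Without this computation (or an equivalent) the theorem is not proved; your sketch of how it ``would'' be established does not match the actual mechanism.

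Second---and this is where your induction breaks---the parenthetical ``since $\calP^{\sph}$ is $\star$-closed'' does not justify that $\calP^{\sph}\otimes_{t_1,t_2}\calP^{\sph}$ is closed under the componentwise multiplication of the tensor square. That multiplication is $(m\otimes m)\circ(\id\otimes\gamma\otimes\id)$, and $\gamma$ is only a \emph{meromorphic} isomorphism (Theorem~\ref{thm:braiding}, Remark~\ref{rmk:H_gamma}); a priori $\Delta(a)\star\Delta(b)$ lands only in the localization even when $\Delta(a)$ and $\Delta(b)$ are regular. What saves the induction is precisely the displayed identity above: in the resulting expression for $\Delta(P\star Q)$ the only surviving denominators are $\fac(z_{A_2}|z_{A_1})$ and $\fac(z_{B_2}|z_{B_1})$, i.e.\ the denominators already present in $\Delta(P)$ and $\Delta(Q)$, while the cross terms $\fac(z_{A_1}|z_{B_1})\fac(z_{A_2}|z_{B_2})$ sit in the numerator. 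So the regularity you need in the inductive step is a corollary of the very computation you deferred, not of an abstract closure property. Once that computation is supplied, your induction does complete the argument.
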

\subsection{The relation with loop Grassmannians}
When $t_1=t_2=0$, the classical limit of the elliptic CoHA is related to the global loop Grassmannian on the elliptic curve $E$. 

Mirkovi\'c recently gave a construction of loop Grassmannians in the framework of \textit{local spaces} \cite{Mirk}.  A notion of {\it locality} structure is developed, as a refined version of the factorization structure of Beilinson-Drinfeld. 
In particular, line bundles on $\calH_{E\times I}$ with locality structures are in correspondence with quadratic forms on $\bbZ^I$. When the quadratic form is the adjacency matrix of $Q$, the locality structure on the corresponding line bundle gives the classical limit of the monoidal structure $\otimes_{t_1=t_2=0}$ (see \S~\ref{subsec:classical limit} for the details).

Using the results of \cite{Mirk}, the adjacency matrix (or equivalently, the quadratic form) gives a loop Grassmannian $\mathcal{G}r$ over $\mathcal{H}_{E\times I}$, which becomes the Beilinson-Drinfeld Grassmannian when $Q$ is of type $A,D,E$.
The tautological line bundle $\mathcal{O}_{\mathcal{G}r}(1)$ is endows with a natural local structure.
There is a \textit{Zastava space} $\mathcal{Z} \subset \mathcal{G}r$, which is a local subspace over $\mathcal{H}_{E\times I}$. 
Taking certain components in a torus-fixed loci of $\mathcal{Z}$ gives a section $\mathcal{H}_{E\times I}\inj \calZ$. In particular, the restriction of $\calO_{\mathcal{G}r}(1)$ to $\mathcal{H}_{E\times I}$ recovers the local line bundle. 
\begin{thm}[Corollary~\ref{cor:Zastava}, Proposition~\ref{prop:local_algebra}]
\begin{enumerate}
\item
The classical limit $\calP^{\sph}|_{t_1=t_2=0}$ of the spherical subalgebra $\calP^{\sph}$ 
is the local line bundle $\calO_{\mathcal{G}r}(1)|_{\calH_{E\times I}}$. 
\item The algebra structure on $\calP^{\sph}|_{t_1=t_2=0}$ is equivalent to the locality structure on $\mathcal{O}_{\mathcal{G}r}(1)|_{ \mathcal{H}_{E\times I}}$. 
\end{enumerate}
\end{thm}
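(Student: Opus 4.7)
The plan is to compute the classical limit $\calP^{\sph}|_{t_1=t_2=0}$ explicitly as a (line) sheaf on $\calH_{E\times I}$, identify the corresponding quadratic form on $\bbZ^I$ with the adjacency form of $Q$, and then invoke Mirković's classification to match it with $\calO_{\calG r}(1)|_{\calH_{E\times I}}$; for part (2) I would translate associativity of the Hall product in the classical limit into the cocycle axiom for a locality structure.

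\textbf{Step 1 (the generators).} For $v=e_k$, the representation space $\Rep(\Pi_Q,e_k)$ is a point (there are no loops at $k$ since $Q$ is Dynkin), so $\calP_{e_k}$ is the $\GL_1\times\bbC^*$-equivariant elliptic cohomology of a point, which is $\calO_E$ on the $k$-th copy of $E$ in $\calH_{E\times I}$. Restricting to $t_1=t_2=0$ leaves this unchanged.

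\textbf{Step 2 (the spherical subsheaf at arbitrary weights).} By definition $\calP^{\sph}_v|_{t_1=t_2=0}$ is the image, over all sequences $(i_1,\dots,i_n)$ with $\sum_j e_{i_j}=v$, of the iterated Hall multiplication
\[
m\colon \calP_{e_{i_1}} \otimes_{0,0} \cdots \otimes_{0,0} \calP_{e_{i_n}}\longrightarrow \calP_v\bigm|_{t_1=t_2=0}.
\]
Using the formula (\ref{equ:tensor on C}) for the monoidal structure at $t_1=t_2=0$, the domain is $(\bbS_v)_{\ast}\,\mathcal L_v$, where $\bbS_v\colon \prod_i E^{v^i}\to E^{(v)}$ is the symmetrization and $\mathcal L_v$ is the iterated tensor of the classical-limit twisting bundles $\mathcal L_{v_1,v_2}|_{t_1=t_2=0}$ described in \S\ref{sec:category C}. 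I would then unwind each factor as the divisor class supported on the loci where points of adjacent colors coincide, and show that the $\fS_v$-equivariant assembly on $\prod_i E^{v^i}$ descends to $E^{(v)}$ as a line bundle whose isomorphism class depends on $v$ through the adjacency quadratic form of $Q$.

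\textbf{Step 3 (Mirković's classification).} By the description recalled in the introduction (following \cite{Mirk}), local line bundles on $\calH_{E\times I}$ correspond bijectively to quadratic forms on $\bbZ^I$, and the adjacency form of $Q$ is precisely the form attached to $\calO_{\calG r}(1)|_{\calH_{E\times I}}$. Comparing with Step 2 gives (1). For (2), the Hall multiplication restricted to the disjoint-support open $U_{v_1,v_2}\subset E^{(v_1)}\times E^{(v_2)}$ becomes an isomorphism in the classical limit, because the factors $\mathcal L_{v_1,v_2}|_{t_1=t_2=0}$ are designed to have trivial residue away from the diagonal; associativity of the Hall product then yields exactly the cocycle condition defining a locality structure, and by uniqueness (again from \cite{Mirk}) this locality structure must coincide with the one on $\calO_{\calG r}(1)|_{\calH_{E\times I}}$ once the underlying line bundles are matched.

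\textbf{Main obstacle.} The hardest part is Step 2: carefully tracking the twists $\mathcal L_{v_1,v_2}|_{t_1=t_2=0}$ through the iterated product, verifying that the resulting $\fS_v$-equivariant line bundle on $\prod_i E^{v^i}$ descends to $E^{(v)}$, and matching the divisor it defines with the adjacency quadratic form of $Q$. This requires a careful analysis of the Euler classes of the normal bundles appearing in the equivariant pushforwards computing $\calP_{\Ell,v}$, and a verification that in the classical limit all "dynamical" contributions (those genuinely depending on $t_1,t_2$) vanish, leaving only the combinatorial data encoded by the edges of $Q$. Once this identification is secure, both statements follow from Mirković's correspondence.
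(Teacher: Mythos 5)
Your plan follows the paper's own route: identify the classical limit of the twisting bundle with $\calO(\sum_{i,j}d_{ij}\Delta_{ij})$ for the adjacency quadratic form, invoke Mirkovi\'c's classification and Theorem~\ref{thm: Mirkovic} for (1), and read the locality structure off the Hall product for (2). The ``main obstacle'' you flag in Step 2 is resolved in the paper far more cheaply than you anticipate: by Lemma~\ref{lem:line_fac_lim} the $\fac_1$-factor becomes a unit at $t_1=t_2=0$ and $\fac_2$ degenerates to $\prod_{i,j}\prod_{s,t}\vartheta(z^j_t-z^i_s)^{a_{ij}+a_{ji}}$, so Proposition~\ref{prop:local} is a short induction on $|v|$ multiplying these theta functions --- no Euler-class analysis of the CoHA pushforwards is needed once one works on the shuffle side $\SH^{\sph}$ (identified with $\calP^{\sph}$ via Proposition~\ref{prop:SH_ellCoh}) --- and for (2) the paper verifies directly that the classical-limit product is the canonical gluing map rather than appealing to a uniqueness argument.
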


\subsection{The elliptic quantum group}
Let $E$ be the elliptic curve over $\calM_{1, 2}$. We can consider $E$  as a family of complex elliptic curves naturally endowed with the Poincar\'e line bundle $\mathbb{L}$. Let $\calP_{\Ell}(Q)$ be the elliptic CoHA associated to this elliptic curve $E$ and an arbitrary quiver $Q$. 
For the dimensional vector $e_k$, $k\in I$, $\calP_{e_k}$ is the same as the Poincar\'e line bundle $\mathbb{L}$ over $E^{(e_k)}=E$. 
We consider certain meromorphic sections of $\bbL$, namely,  meromorphic functions $f(z)$ on $\C$, holomorphic on $\C\backslash (\bbZ+\tau\bbZ)$, such that:
\[
f(z+1)=f(z), \,\ f(z+\tau )=e^{2\pi i \lambda } f(z).
\]
A basis of these meromorphic sections can be chosen as $\left\{ g^{(i)}_\lambda(z):=\frac{1}{i!} \frac{\partial^i}{\partial z^i}\left( \frac{\vartheta(z+\lambda)}{\vartheta(z)\vartheta(\lambda) }\right)\right\}_{i\in \N}$, where $\vartheta(z)$ is the Jacobi theta function in Example \ref{example:theta}. 

We have a functor $\Gamma_{\rat}$ of taking certain rational sections from the category $\calC$ to the category of vector spaces \S\ref{subsec:rational sec}. For simplicity, we denote $\Gamma_{\rat}(\calP)$ by $\mathbf{P}$. Let $\lambda=(\lambda_{k})_{k\in I}\in E^I$. 
Consider the following generating series
\begin{equation*}
\mathfrak{X}_{k}^+(u, \lambda):=
\sum_{i=0}^{\infty} g^{(i)}_{\lambda_k}(z_k) u^{i}, 
\end{equation*}
 of certain meromorphic sections of $\calP_{\Ell}^{\sph}$, let $\mathfrak{X}_{k}^-(u, \lambda)$ be the corresponding series in the opposite algebra $\calP_{\Ell}^{\text{coop}}$, and $\Phi_{k}(u)$ a generating series of meromorphic sections of the Cartan subalgebra (see \S \ref{subsec:rational sec} for the details).  
Therefore, $\mathfrak{X}_{k}^{\pm}(u, \lambda), \Phi_{k}(u)$ are elements in $D(\mathbf{P}^{\sph})[\![u]\!]$, where $D(\mathbf{P}^{\sph}):=\mathbf{P}^{\sph}\otimes \mathbf{P}^{\sph, \coop}$ is the Drinfeld double of $\mathbf{P}^{\sph}$. 

\begin{thm}[Theorem~\ref{thm:generating series relation}]\label{thm:rat_sec_ellCoHA}
The Drinfeld double $D(\mathbf{P}^{\sph})$ satisfies relations of the elliptic quantum group. 
In other words, the series  $\mathfrak{X}_{k}^{\pm}(u, \lambda)$, and $\Phi_{k}(u)$ of $D(\mathbf{P}^{\sph})$ satisfy the commutation relations \eqref{EQ1}-\eqref{EQ5}.
\end{thm}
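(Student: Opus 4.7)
The plan is to verify the relations \eqref{EQ1}--\eqref{EQ5} by direct computation in $D(\mathbf{P}^{\sph})$, exploiting the fact that each relation is a statement between sections on low-rank pieces of $\calP_{\Ell}$ (total dimension at most three), where the Hall multiplication, the coproduct $\Delta$, and the Drinfeld-double pairing admit explicit shuffle-type formulas. First I would divide the relations into three groups: (i) those involving only positive currents (the quadratic $XX$-relations and, if present, the Serre relations), together with the mirror group for negative currents; (ii) the commutation of the Cartan series $\Phi_k(u)$ with $\mathfrak{X}_l^{\pm}(v,\lambda)$; (iii) the mixed relation $[\mathfrak{X}_k^+,\mathfrak{X}_l^-]$ producing the Cartan. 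Group (i) is internal to $\mathbf{P}^{\sph}$, while groups (ii) and (iii) require the full double and the coproduct.

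For group (i), I would compute $\mathfrak{X}_k^+(u,\lambda)\mathfrak{X}_l^+(v,\lambda)$ as a family of sections of $\calP_{\Ell, e_k+e_l}$. When $k\neq l$ this piece sits on $E\times E$, and by \eqref{equ:tensor on C} the Hall product of two sections is the exterior tensor product twisted by $\mathcal L_{e_k,e_l}$; the desired $XX$-relation is then obtained by swapping the two factors of $E$ via the meromorphic braiding $\gamma$ of Theorem~\ref{thm:ellCoHA}, and the ratio $\mathcal L_{e_k,e_l}/\mathcal L_{e_l,e_k}$ yields precisely the elliptic dynamical $R$-matrix coefficient in Felder's form. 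When $k=l$ one descends along the symmetrization to $E^{(2)}$ and reads off the $XX$-relation from the same braiding at the diagonal. Possible cubic Serre-type identities reduce to a residue check on $\calP_{\Ell, 2e_k+e_l}$ in the same manner.

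For group (iii), which I expect to be the principal obstacle, the commutator $[\mathfrak{X}_k^+(u,\lambda),\mathfrak{X}_l^-(v,\lambda)]$ is evaluated by the general Drinfeld-double formula in terms of the coproduct $\Delta$ on $\calP_{\Ell, e_k}$. After passing to the localization where $\Delta$ is well-defined, the coproduct of a generator of $\calP_{e_k}$ takes a very simple form, and one obtains $\delta_{k,l}$ times a residue of products of theta-type sections which, by the partial-fraction identity for the basis $g^{(i)}_{\lambda_k}(z)$, is identified with the difference of Cartan generating series evaluated at $u$ and $v$. The Cartan-current relations of group (ii) then follow once this identification is in place, using that the Cartan subalgebra is a localization of a polynomial algebra in Chern-root sections and that the dynamical shift of $\lambda$ by the parameters $t_1,t_2$ is built into the monoidal structure $\otimes_{t_1,t_2}$.

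The main technical difficulty is theta-function bookkeeping: one must check that the combinatorial data encoded by $\mathcal L_{v_1,v_2}$ (the adjacency matrix of $Q$ together with the parameters $t_1,t_2$) matches term-by-term the coefficients occurring in Felder's elliptic dynamical $R$-matrix and in the Gautam--Toledano Laredo presentation. This is a direct but delicate expansion using the quasi-periodicity $f(z+\tau)=e^{2\pi i\lambda}f(z)$ of the basis $g_\lambda^{(i)}(z)$ together with the Jacobi theta identity; once the identities are verified on dimension vectors of total size at most three, the relations in arbitrary dimension follow by associativity of $\star$, coassociativity of $\Delta$, and the symmetry of $\gamma$ guaranteed by Theorem~\ref{thm:ellCoHA}.
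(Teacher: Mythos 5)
Your overall strategy coincides with the paper's: the relations are verified by direct computation, with the positive-current relations checked inside $\mathbf{P}^{\sph}\cong\mathbf{SH}^{\sph}$ via the explicit shuffle formula \eqref{shuffle formula}, the Cartan relations via the explicit action $\Phi_k(u)g_v\Phi_k(u)^{-1}=g_v\widehat\Phi_{k,v}(u)$, and the mixed relation \eqref{EQ5} via the Drinfeld-double commutation formula \eqref{eq: commut rel}, the coproduct $\Delta(\mathfrak X_k^+)=\Phi_k\otimes\mathfrak X_k^++\mathfrak X_k^+\otimes 1$, and a residue evaluation of the pairing. Your group (iii) in particular is essentially Proposition~\ref{thm: rel DSH} verbatim. (One harmless structural difference: the paper proves \eqref{EQ3} independently of \eqref{EQ5}, rather than deducing the Cartan--current relations afterwards.)

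The genuine gap is in your group (i). Swapping the two factors via the braiding $\gamma$ only yields the naive meromorphic exchange identity
$\mathfrak X_i^+(u,\lambda)\star\mathfrak X_j^+(v,\zeta)=\widehat\Phi(z_i|z_j)\,\mathfrak X_j^+(v,\zeta)\star\mathfrak X_i^+(u,\lambda)$,
whose coefficient $\widehat\Phi=\fac(z_i|z_j)/\fac(z_j|z_i)$ has poles in the $z$-variables. Relation \eqref{EQ4} is not of this form: each side carries \emph{three} terms, two of which involve the currents evaluated at the shifted points $u+l$ and $v+l$ (with $l=(\lambda,\alpha_i)$), precisely so that all coefficients become holomorphic theta functions. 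Passing from the exchange identity to \eqref{EQ4} is the actual content of Proposition~\ref{conj:ell}, and it rests on the elliptic Fay-type identity of Lemma~\ref{gufang:Fact3} (the residue theorem applied to a degree-four elliptic function) together with the identity $I(\hbar)\vartheta(z_l-z_k+\hbar)=I(-\hbar)\vartheta(z_l-z_k-\hbar)$ from \cite[\S 6.7]{GTL15}. Your closing appeal to ``the Jacobi theta identity'' does not identify this input, and without it the computation does not close; the same four-term identity (in its three-term degeneration) is also what makes \eqref{EQ3} work, so it cannot be absorbed into routine bookkeeping. The reference to Serre-type relations is moot, since \eqref{EQ1}--\eqref{EQ5} contain none.
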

Gautam-Toledano Laredo in \cite{GTL15} defined a category of representations of the elliptic Drinfeld currents imposing the same  commutation relations \eqref{EQ1}-\eqref{EQ5}.
The relation between the category of elliptic Drinfeld currents studied in {\it loc. cit.} and category of representations of Felder's elliptic $R$-matrices is partially clarified in \cite{Gau}, via an explicit calculation of Gaussian decomposition of Felder's elliptic $R$-matrix.

Motivated by Theorem~\ref{thm:rat_sec_ellCoHA}, we define the sheafified elliptic quantum group to be the Drinfeld double of the elliptic CoHA $\calP_{\Ell}^{\sph}$. The details and technicality in the construction of the Drinfeld double are explained in \S~\ref{subsec:DrinfeldDouble}.
This definition of sheafified elliptic quantum group not only gives a conceptual understanding of the kind of algebra object the elliptic quantum group is, but also makes various features of this quantum group more transparent. In particular, we show that the braiding in the category $\calC$ naturally gives the conjugation action of the Cartan subalgebra of the elliptic quantum group on its positive part. Moreover, the dynamical parameters also naturally show up in the reconstruction of the Cartan, as is explained in detail in \S~\ref{subsec:dyn_cartan}.

\subsection{Representations of the elliptic quantum group}\label{subsec:FelderEll}
Let $\mathfrak{M}(w)$ be the Nakajima quiver varieties associated to the quiver $Q$ with framing $w\in \N^I$. We show that the equivariant elliptic cohomology $\Ell^*_{G_w \times\Gm}( \mathfrak{M}(w))$ of $\mathfrak{M}(w)$ is a Drinfeld-Yetter module of $\calP_{\Ell}(Q)$, for any $w\in \N^I$. 
In particular, the sheafified elliptic quantum group acts on $\Ell^*_{G_w \times\Gm}( \mathfrak{M}(w))$. 

More precisely, we introduce a category $\calD_w$ as the module category of $\calC$ with highest weight no more than $\sum_{i\in I}w_i\overline{\omega_i}$. 
The equivariant elliptic cohomology $\Ell^*_{G_w \times\Gm}( \mathfrak{M}(w))$ lies in $\calD_w$. We prove the following.
\begin{thm}[Proposition~\ref{lem:M_PMod} and Theorem~\ref{thm:DoubAct}]
 \begin{enumerate}
\item
For each $w\in \bbN^I$, the elliptic cohomology of quiver varieties $\Ell_{G_w}(\mathfrak{M}(w))$ is a module object over the Drinfeld double  $D(\calP_{\Ell}^{\sph})$.
\item The action of $D(\calP_{\Ell}^{\sph})$ on $\Ell_{G_w}(\mathfrak{M}(w))$ is compatible with the 
Hecke relation by Nakajima. In other words,  the action of elements in $\calP_{e_k}$
are given by convolution with certain characteristic classes of the tautological line bundle on the Hecke correspondence $C_{k}^+\subset \mathfrak{M}(v, w)\times \mathfrak{M}(v+e_k, w)$.
\end{enumerate}
\end{thm}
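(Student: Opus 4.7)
The plan is to realize both the Hall multiplication on $\calP_{\Ell}^{\sph}$ and its action on $\Ell_{G_w\times\Gm}(\mathfrak{M}(w))$ as instances of the same correspondence yoga, thereby reducing the module axiom to the standard associativity of convolution. The starting point is that after incorporating the framing vertex, a Nakajima quiver variety $\mathfrak{M}(v,w)$ sits inside (a stability locus of) the moduli stack of representations of the framed preprojective algebra of dimension vector $(v,w)$, and the Hecke correspondence $C_k^+\subset \mathfrak{M}(v,w)\times \mathfrak{M}(v+e_k,w)$ parametrises pairs $(M\subset M')$ of framed representations whose quotient $M'/M$ is simple of dimension $e_k$ at the unframed vertex $k$.

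For part (1), I would define the action of the positive half $\calP_{\Ell}^{\sph}$ by pull-push along the ``big'' Hecke-type correspondence
\[
\Rep(\Pi_Q,v_1)\times \mathfrak{M}(v,w) \xleftarrow{q} Z \xrightarrow{p} \mathfrak{M}(v+v_1,w),
\]
where $Z$ parametrises short exact sequences whose sub has framed dimension $(v,w)$ and whose quotient has unframed dimension $v_1$. The pushforward is twisted by the elliptic Euler class of the excess tangent bundle and by the line bundle $\calL_{v_1, v}$ entering the monoidal structure \eqref{equ:tensor on C}. Associativity of this construction against the Hall multiplication on $\calP_{\Ell}^{\sph}$ then follows from the usual three-step flag argument: the fibre product $Z_{v_1}\times_{\mathfrak{M}(v+v_1,w)} Z_{v_2}$ is identified with the variety of two-step flags of subrepresentations with graded pieces of dimensions $v_2,v_1,v$, and this identification respects the twisting line bundles because both sides arise from the same tautological characteristic classes on $E^{(v_1)}\times E^{(v_2)}$.

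For part (2), taking $v_1=e_k$ contracts $Z$ onto $C_k^+$ since $\Rep(\Pi_Q,e_k)$ is a point, and the residual integrand becomes exactly the elliptic characteristic class of the tautological line bundle on $C_k^+$, yielding the Nakajima/Varagnolo-type Hecke action up to the theta-function normalisation dictated by $\calL_{e_k,v}$. The negative half $\calP_{\Ell}^{\sph,\coop}$ is implemented by the opposite correspondence $C_k^-$, and the Cartan acts diagonally through the class of the tautological bundle restricted to the framing side. The cross relations \eqref{EQ1}--\eqref{EQ5}, whose validity in $D(\calP_{\Ell}^{\sph})$ is established in Theorem~\ref{thm:rat_sec_ellCoHA}, then transfer to $\Ell_{G_w\times\Gm}(\mathfrak{M}(w))$ via Nakajima's fixed-point analysis of the intersection $C_k^+\times_{\mathfrak{M}(v,w)} C_k^-$, applied with elliptic rather than rational Euler classes.

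The principal obstacle is to verify that the Drinfeld--Yetter compatibility between the two halves is consistent with the specific monoidal structure $\otimes_{t_1,t_2}$, i.e.\ that the elliptic correction built into $\calL_{v_1,v_2}$ (see \S~\ref{sec:category C}) is precisely the one produced by the normal bundle computation for the diagonal of $\mathfrak{M}(v,w)\times\mathfrak{M}(v,w)$. In the rational and trigonometric setting this compatibility is essentially automatic once the Euler class is expanded in power series; in the elliptic setting, theta functions replace polynomials, and one must check that the quasi-periodicity in the dynamical variable matches on both sides. This is also what pins down the correct highest-weight bound $\sum_{i\in I}w_i\overline{\omega_i}$, so that the action indeed lands in $\calD_w$; once this matching is in place, both items of the theorem follow simultaneously.
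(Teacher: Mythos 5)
Your construction of the positive-half action (pull--push along the correspondence of short exact sequences, twisted by the excess Euler class and by $\calL_{v_1,v_2}$) and your identification, at $v_1=e_k$, of the resulting operators with convolution by $c_1^\lambda$ of the tautological line bundle on $C_k^+$ both match what the paper does in Proposition~\ref{lem:M_PMod} and \S~\ref{subsec:sl2}. But there is a genuine gap at the heart of the double action. You write that the negative half ``is implemented by the opposite correspondence $C_k^-$'' and that the cross relations ``transfer'' from $D(\calP_{\Ell}^{\sph})$ to $\Ell_{G_w\times\Gm}(\mathfrak{M}(w))$. That transfer is not automatic: knowing the relations hold in the abstract double does not tell you that the two \emph{geometrically defined} half-actions on elliptic cohomology are intertwined by the bialgebra pairing. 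What must be proved is precisely the compatibility \eqref{eqn:double}, namely $(f_\bullet p)_\bullet e+\angl{f,e}\Phi_{v,e_k}(p)=\Phi_{e_k,v}(p)\angl{f,e}+f_\bullet(p_\bullet e)$, and this is the actual content of Theorem~\ref{thm:DoubAct}. You correctly name this as ``the principal obstacle'' but then do not resolve it. The paper resolves it by Nakajima's rank-one reduction to $\fs\fl_2$ and an explicit residue computation in \S~\ref{subsec:sl2}: the commutator $x_r^+x_s^--x_s^-x_r^+$ is evaluated as a sum of residues of $g^{(s)}_{-\lambda-(w-v)\hbar}\,g^{(r)}_{\lambda+v\hbar}\,B(z)/A(z)$ over $E$, and theta-function identities (Lemma~\ref{gufang:Fact3}) produce exactly the Cartan term $\Phi_k$ of \eqref{eqn:Phi_sl2}. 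Without some version of this computation your argument establishes only separate module structures over $\calP^{\sph}$ and $\calP^{\sph,\coop}$, not over the double.

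A secondary inaccuracy: the paper does not define the lowering action directly by the opposite Hecke correspondence on $\mathfrak{M}(v,w)$. It first constructs a left $\calP$-action on $\Ell(\fM^-(v,w))$ for the opposite stability (Proposition~\ref{prop:left_act}) and then transports it to $\calE_w$ through the reflection-functor equivalence $w_0:\calE_w\cong\calE_w^-$ of Lemma~\ref{lem:equivalence_w0}. This detour is not cosmetic in the elliptic setting: the lowering operators must land on $E^{(v+w_0v_2)}$, i.e.\ the negative weight spaces have to be realized on a different colored Hilbert scheme, and the braiding between $A_l(\calP)$ and $A_r(\calP)$ is what produces the Cartan operator $\Phi_k=H_k=\lambda_z((q-q^{-1})W_k+(q^2-q^{-2})V_k+\cdots)$ of Proposition~\ref{prop:cartan_H} --- it involves the tautological bundles $V_k,V_l$ as well as the framing bundle, not just ``the tautological bundle restricted to the framing side.''
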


In \S~\ref{subsec:DrinfeldDouble}, the category $\Mod_fD(\calP_{\Ell}^{\sph})$ of finite dimensional representations  of the elliptic quantum group is defined.
We also introduce the notion of the elliptic Drinfeld polynomials for highest weight modules. 

Similar to the algebra object itself, the module category also have an equivalent description involving dynamical parameters (Theorem~\ref{thm:DynParam}), in which the braiding gives the action of the Cartan subalgebra. This is a conceptual explanation that the representation category of the elliptic quantum group, as an abelian category, does not depend on the dynamical parameters.

\subsection{A quick literature survey}\label{subsec:intr_survey}
After the formula of the dynamical elliptic $R$-matrix has been found by Felder,  study of the elliptic Drinfeld currents and representations have been initiated. In the $\fs\fl_2$-case, an algebra containing the elliptic Drinfeld currents has been found in \cite{ER}. The relation with Felder's $R$-matrix is in \cite{FE}.
Representation theory of this algebra is studied in \cite{FV}. 
Etingof-Schiffmann \cite{ES,ES2} also studied representations of the elliptic $R$-matrices via a dynamical twist procedure of Babelon-Bernard-Billey \cite{BBB}.

We note that when study representations of the $R$-matrices, an algebra or an algebra object $E(\fg)$ can only be defined through an extrinsic embedding $E(\fg)\inj \End(\prod V_i)$ for some family of vector spaces $V_i$ with $\prod V_i$ possibly infinite dimensional. When doing explicit calculations where a closed formula of $R$-matrix is needed,  an embedding $\fg\inj \fg\fl_N$ is required for some finite number $N$. This is less canonical unless $\fg$ is of type-$A $. Hence, a Drinfeld-type realization is more handy is studying representations. 

A systematic construction of Drinfeld realizations for more general class of Lie algebras has recently been achieved by Konno  \cite{Konn16} and Jimbo-Konno-Odake-Shiraishi \cite{33}. The Drinfeld currents of these algebra are constructed in Farghly-Konno-Oshima \cite{22}.
However, these Drinfeld currents coming from this presentation do not know to have desired
convergence properties on representations. 

Gautam-Toledano Laredo recently studied representations of the elliptic quantum group in \cite{GTL15}, partially extending the results of \cite{FV} to other types of Lie algebras, yet via an entirely different approach.  Instead of defining an algebra, they defined a representation of the elliptic quantum group as a vector space endowed with certain meromorphic operators, which turn out to be the Drinfeld currents defined in the present paper. In \cite{Gau}, it is shown that the commutation relations of these meromorphic operators are satisfied by the $RTT$-relation of Felder.

Geometrically, it has been expected that the elliptic quantum group acts on the equivariant elliptic cohomology of Nakajima  quiver varieties. The later has been realized recently by Aganagic-Okounkov as a representation of the Felder's elliptic $R$-matrix, via the elliptic stable envelope construction. This stable envelope construction is an extension of the construction of Yangian of Maulik-Okounkov. Note that in the Yangian case the comparison of the stable envelope construction and the Kac-Moody Yangian is intricate outside of $A,D,E$ types. Therefore, we leave the precise relation between out construction of the elliptic quantum group and the elliptic stable envelope construction to  future investigations.

Another even earlier geometric study of elliptic quantum group was carried out by Feigin-Odesskii \cite{FO}. This elliptic quantum group has a lot of geometric applications, including quantization of moduli space of bundles on elliptic curves.  It also has a description in terms of a shuffle algebra, which does have the dynamical parameter. However, it is not clear to us at represent the precise relation between the Feigin-Odesskii algebra and the one studied in the present paper, where the later naturally acts on elliptic cohomology of quiver variety, and eventually is related to the Felder's $R$-matrix. More precisely, in the simply-laced type, the shuffle factor occurred in \cite{FO} resembles the factor in \cite[\S~2]{YZ}, which is the CoHA of the path algebra in lieu of the preprojective algebra. Also, the structure similar to the Sklyanin algebra already shows up in the positive part of the algebra in \cite{FO}, but only occurs in the Cartan part in the algebra studied in the present paper.

\subsection*{Acknowledgement}
During the preparation of this paper, the authors received helps from many people, an incomplete list includes Sachin Gautam, Marc Levine, Alina Marian, Ivan Mirkovi\'c, Valerio Toledano Laredo, Eric Vasserot, and Changlong Zhong.  The authors are grateful to Sachin Gautam  for access to work in progress, and for sharing some calculations related to this paper, which significantly encouraged the authors at the initial stage of their investigation. Part of the work was done when both authors were visiting Universit\"at Duisburg-Essen, May-August, 2016, and the Max-Planck Institut f\"ur Mathematik in Bonn, June-July, 2017.

\section{Reminder on Theta functions}
We start by fixing some notations and terminologies about line bundles on abelian varieties. 
\subsection{Theta functions}
Let $E$ be an elliptic curve over $S$, for some scheme  $S$ of finite type over a field of characteristic zero. Let $0:S\to E$ be the zero section. Denote by $\{0\}$ the image of $0$, which is a codimension one subvariety of $E$. Let $\calO(-\{0\})$ be the ideal sheaf of $\{0\}$, which is a line bundle. Its dual $\calO(\{0\})$ has a natural section, denoted by $\vartheta$.
Let $i:E\to E$ be the involution sending any point to its additive inverse. Then $i^*\calO(\{0\})\cong\calO(\{0\})$, and the natural section $\vartheta$ is sent to $-\vartheta$ under $i$. In this sense, we say that $\vartheta$ is an odd function.

\begin{example} 
\label{example:theta}
Fix $\tau\in \C \backslash \mathbb{R}$, let $\mathfrak{H}$ be the upper half plane, i.e. $\mathfrak{H}:=\{z \in \C \mid \text{Im}(z)> 0\}$.
When $E$ is a complex elliptic curve, i.e., $\bbC/(\bbZ+\tau\bbZ)$, up to normalization, 
$\vartheta(z| \tau)$ is the function uniquely characterized by the following properties:
\begin{enumerate}
\item
$\vartheta(z| \tau )$ is a holomorphic function $\C\times \mathfrak{H} \to \C$, such that $\{z\mid\vartheta(z| \tau )=0\}=\bbZ+\tau\bbZ$.
\item
$\frac{\partial \vartheta}{\partial z}(0| \tau)=1$.
\item
$\vartheta(z+1| \tau )=-\vartheta(z| \tau )=\vartheta(-z| \tau )$, and\,\  $\vartheta(z+\tau| \tau )=-e^{-\pi i \tau}e^{-2 \pi i z}
\vartheta(z| \tau )$.
\item
$\vartheta(z| \tau+1)=\vartheta(z| \tau )$, while $\vartheta(-z/\tau|-1/\tau)=-(1/\tau)e^{(\pi i/\tau)z^2}\vartheta(z| \tau )$.
\item Let $q:=e^{2\pi i \tau}$ and $\eta(\tau):=q^{1/24}\prod_{n\geq 1}(1-q^n)$. If we set $\theta(z|\tau):=\eta(\tau)^3 \vartheta(z|\tau)$, then $\theta(z|\tau)$ satisfies the differential equation:
$
\frac{\partial \theta(z, \tau)}{\partial \tau}
=\frac{1}{4\pi i}\frac{\partial^2 \theta(z, \tau)}{\partial z^2}.$
\end{enumerate}
\end{example}

\subsection{Line bundles on abelian varieties}
Let $T$ be a compact torus of rank $n$, i.e., non-canonically $T\cong (S^1)^n$. Let $\Lambda=\bbX^*(T)=\Hom(T, \Gm)$ be the character lattice of $T$, and $\bbX_*(T)$ its dual. Denote by $\catA_T$ the $R$-scheme that classifies maps from $\bbX^*(T)$ to $E$ as abelian groups. Hence, when $T$ is connected, $\catA_T$ is canonically isomorphic to the abelian variety $E\otimes \bbX_*(T)$. 

For any character $\xi\in \bbX^*(T)$, let $\chi_\xi: \catA_T\to E$ be the map induced by $\xi$. 
The subvariety $\ker\chi_\xi \subset \catA_T$ is a divisor, whose ideal sheaf $\calO(-\ker\chi_\xi)$  is a line bundle on $\catA_T$. Clearly, we have  $\calO(-\ker\chi_\xi)\cong \chi_\xi^*\calO(-\{0\})$.
The natural section of  $\calO(\ker\chi_\xi)$,  denoted by $\vartheta(\chi_\xi)$, is equal to $\chi_\xi^{*}\vartheta$. 

In terms of coordinates, we fix an isomorphism $T\cong (S^1)^n$, then $\catA_T\cong E^n$. Let $\xi_1,\cdots \xi_n$ be a basis of $\bbX^*(T)$. For any $\xi=\sum_{i=1}^nn_i\xi_i \in \bbX^*(T)$, the morphism $\chi_\xi:\catA_T \cong E^n\to E$ is given by  $z=(z_1,\dots,z_n)\mapsto \sum_{i=1}^nn_iz_i\in E$, and we have $\vartheta(\chi_\xi)=\vartheta(\sum _{i=1}^nn_iz_i)$.

Let $\ZZ[\bbX^*(T)]$ be the group ring of $\bbX^*(T)$, whose elements are virtual $T$-characters. It is a standard fact (see, e.g., \cite[Proposition 2.2]{Bea}) that the set map $\chi:\bbX^*(T)\to \Pic(\catA_T), \xi\mapsto \calL_{\xi}:=\calO(-\ker\chi_\xi)$ induces a homomorphism of abelian groups $\chi:\ZZ[\bbX^*(T)]\to \Pic(\catA_T)$. 
%, where $\ZZ[\bbX^*(T)]$ is considered as a free abelian group with the additive structure.
For any $\xi \in \ZZ[\bbX^*(T)]$, we also denote $\calO(-\ker\chi(\xi))$ by $\calL_\xi$.
The theta function $\vartheta(\chi_\xi)$ is a section of $\calL_{\xi}^\vee$.

More generally, let $U_r$ be the unitary group of degree $r\in \bbN$. On the symmetric product $E^{(r)}:=E^r/\fS_r$, there is a tautological line bundle $\calL_{U_r}$, whose dual has a natural section $\vartheta^{U_r}$ (see, e.g., \cite[\S~1.2]{ZZ15}). 
\Omit{\textcolor{blue}{Say one word what is it, and why it has to do with $U_r$?}} For any representation $\rho:T\to U_r$ of $T$, we have a map $\chi_\rho:\catA_T\to E^{(r)}$ induced by $\rho$. Indeed, $E^{(r)}$ is the moduli scheme of semistable $U_r$-bundles on $E$. The map $\chi_\rho$ sends a $T$-bundle $V$ to the associated $U_r$-bundle $V\times_{T} U_r$. Denote $\calL_\rho$ the line bundle $\chi_\rho^*\calL_{U_r}$ on $\catA_T$, whose dual has a natural section $\chi_\rho^*(\vartheta^{U_r} )$. This section is denoted by $\vartheta(\chi_{\rho})$.
We have the following standard lemma (see, e.g., \cite[\S~1.8]{GKV95}).

\begin{lemma}\label{lem: theta_add_tensor}
Let $\rho_i:T\to  U_{r_i}$, $i=1, 2$, be  two representations of $T$.
Let $\bbS:E^{(r_1)}\times E^{(r_2)}\to E^{(r_1+r_2)}$ be the symmetrization map.
Then the following diagram commutes.
\[\xymatrix{
E^{r_1+r_2}\ar[rr]^-{\chi_{\rho_1}\times\chi_{\rho_2}}\ar[drr]_-{\chi_{\rho_1\oplus\rho_2}}& & E^{(r_1)}\times E^{(r_2)}\ar[d]^{\bbS}\\
& & E^{(r_1+r_2)}
}\]
Moreover, $\vartheta(\chi_{\rho_1\oplus\rho_2})=\vartheta(\chi_{\rho_1})\otimes\vartheta(\chi_{\rho_2})$ as sections of $\calL_{\rho_1\oplus\rho_2}^\vee\cong \calL_{\rho_1}^\vee\otimes\calL_{\rho_2}^\vee$.
\end{lemma}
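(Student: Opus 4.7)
The plan is to reduce the lemma to an explicit character-level computation. Since $T$ is a compact torus, any unitary representation $\rho_i:T\to U_{r_i}$ splits as a direct sum of one-dimensional characters, so we may write $\rho_i=\xi_{i,1}\oplus\cdots\oplus\xi_{i,r_i}$ with $\xi_{i,j}\in\bbX^*(T)$. Concatenating, the representation $\rho_1\oplus\rho_2$ has the character multiset $\{\xi_{i,j}\mid i=1,2,\,j=1,\dots,r_i\}$.

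For the commutativity of the diagram, I would use the moduli-theoretic description of $E^{(r)}$ as the moduli of semistable $U_r$-bundles of degree $0$ on $E$. The map $\chi_{\rho_i}:\catA_T\to E^{(r_i)}$ sends a $T$-bundle $V$ to the associated $U_{r_i}$-bundle, which under the character decomposition is $\bigoplus_j V_{\xi_{i,j}}$; here $V_{\xi_{i,j}}$ is the line bundle $\chi_{\xi_{i,j}}(V)\in E$. In other words, $\chi_{\rho_i}$ factors as
\[
\catA_T\xrightarrow{(\chi_{\xi_{i,1}},\dots,\chi_{\xi_{i,r_i}})}E^{r_i}\twoheadrightarrow E^{(r_i)}.
\]
The same factorization applied to $\rho_1\oplus\rho_2$ yields $\chi_{\rho_1\oplus\rho_2}$, and since the two compositions $E^{r_1+r_2}=E^{r_1}\times E^{r_2}\to E^{(r_1)}\times E^{(r_2)}\xrightarrow{\bbS}E^{(r_1+r_2)}$ and $E^{r_1+r_2}\to E^{(r_1+r_2)}$ agree, the diagram commutes.

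For the theta identity, I would invoke the following description of $\calL_{U_r}^\vee$: its pullback under the quotient $\pi:E^r\to E^{(r)}$ is $\boxtimes_{j=1}^r\calO(\{0\})$, with tautological section $\pi^*\vartheta^{U_r}=\boxtimes_{j=1}^r\vartheta$ (cf.\ \cite[\S~1.2]{ZZ15}). Pulling back further along the character morphism $E^{r_i}\to \catA_T$ arising from $(\chi_{\xi_{i,1}},\dots,\chi_{\xi_{i,r_i}})$ and using the definition $\calL_{\xi}=\chi_\xi^*\calO(-\{0\})$ with $\vartheta(\chi_\xi)=\chi_\xi^*\vartheta$, we obtain the natural identification
\[
\calL_{\rho_i}^\vee\cong\bigotimes_{j=1}^{r_i}\calL_{\xi_{i,j}}^\vee,\qquad \vartheta(\chi_{\rho_i})=\bigotimes_{j=1}^{r_i}\vartheta(\chi_{\xi_{i,j}}).
\]
Applying the same description to $\rho_1\oplus\rho_2$ and using the fact that its character multiset is the disjoint union of those of $\rho_1$ and $\rho_2$ gives $\calL_{\rho_1\oplus\rho_2}^\vee\cong\calL_{\rho_1}^\vee\otimes\calL_{\rho_2}^\vee$ and $\vartheta(\chi_{\rho_1\oplus\rho_2})=\vartheta(\chi_{\rho_1})\otimes\vartheta(\chi_{\rho_2})$, as desired.

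The only mildly subtle point—and the step where I would be most careful—is the identification of the tautological line bundle $\calL_{U_r}$ on $E^{(r)}$ and of its canonical theta section, together with the check that the isomorphism $\bbS^*\calL_{U_{r_1+r_2}}^\vee\cong\calL_{U_{r_1}}^\vee\boxtimes\calL_{U_{r_2}}^\vee$ sends $\bbS^*\vartheta^{U_{r_1+r_2}}$ to $\vartheta^{U_{r_1}}\boxtimes\vartheta^{U_{r_2}}$. Both can be verified by pulling back to the unordered cover $E^{r_1+r_2}=E^{r_1}\times E^{r_2}$ and comparing with the factorized section $\boxtimes_j\vartheta$, which makes the compatibility transparent; beyond this the argument is formal bookkeeping in terms of characters.
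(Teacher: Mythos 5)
Your proof is correct. Note that the paper itself offers no proof of this lemma: it is flagged as ``standard'' with a pointer to \cite[\S~1.8]{GKV95}, so there is nothing to compare against line by line. Your argument --- split $\rho_i$ into a multiset of characters $\xi_{i,j}$, observe that $\chi_{\rho_i}$ factors as $\catA_T\to E^{r_i}\to E^{(r_i)}$ because $E^{(r)}$ records the Jordan--H\"older factors of a semistable degree-zero $U_r$-bundle, and then pull the tautological bundle and its section back to the cover $E^{r_1+r_2}$ where everything factorizes as $\boxtimes_j\calO(\{0\})$ with section $\boxtimes_j\vartheta$ --- is exactly the standard argument the citation is gesturing at, and you correctly isolate the one point needing care (descent of the isomorphism $\bbS^*\calL_{U_{r_1+r_2}}^\vee\cong\calL_{U_{r_1}}^\vee\boxtimes\calL_{U_{r_2}}^\vee$ together with its section along the finite quotient map, which follows since pullback to $E^{r_1+r_2}$ is injective on both line bundles with equivariant structure and on sections). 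For completeness: the paper's Remark~\ref{ex:multipliers} sketches an alternative verification over $\bbC$ by comparing systems of multipliers $(e_\gamma)$ --- the multiplier of a tensor product is the product of multipliers, and two line bundles with the same multipliers coincide --- which is the technique the authors actually deploy later (e.g.\ in Lemma~\ref{lem:trans_twist}); either route is acceptable, and yours has the advantage of working over a general base rather than only for complex tori.
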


\Omit{
Let $\Lambda_n$ be a lattice of rank-$n$ with natural coordinates $(x_1,\dots,x_n)$. We have a group homomorphism $ \xi_{ij}: \Lambda_n\to \bbZ$, defined as $(x_1,\cdots,x_n) \mapsto x_i-x_j$. 
Then $\calL_{\xi_{ij}}^\vee$ is a line bundle on $E^n\cong \Lambda^n\otimes_{\bbZ}E$ with a natural section $\vartheta(x_i-x_j)$  associated to this group homomorphism $\xi_{ij}$. We will also consider 2 deformation parameters $\bbG\otimes_\bbZ\Lambda_2$, where $(t_1,t_2)$ are the coordinates of $\Lambda_2$.  Similarly, the function $\vartheta(x_i-x_j-t_1-t_2)$ is the theta-function associated to the group homomorphism $\xi_{ij, t_1, t_2}: \Lambda_n\oplus\bbZ^2\to \bbZ$ given by $(x_1,\cdots,x_n, t_1, t_2)\mapsto  x_i-x_j-t_1-t_2$, which is a section of $\calL_{ij, t_1, t_2}^\vee$ on $E^n\times_SE^2$. 
}

\begin{remark}\label{ex:multipliers}
Lemma~\ref{lem: theta_add_tensor} is true for general abelian varieties. For complex abelian varieties, it has another reformulation. 
Let $\fh$ be a complex vector space, and $\Gamma$ be a lattice in $\fh$ in the sense of \cite{Bea}. Then $\fh/\Gamma$ is an abelian variety.  An element $\gamma$ of $\Gamma$ acts linearly on $\fh\times \C$ by 
$
\gamma \cdot (z, t)=(z+\gamma, e_{\gamma}(z) t),\,\  \text{for $z\in \fh$, $t\in \C$,}
$
where $e_\gamma$ is a holomorphic invertible function on $\fh$. This formula defines a group action of $\Gamma$ on $\fh\times \C$ if and only if the functions $e_\gamma$ satisfy the cocycle condition
\[
e_{\gamma+\delta}(z)=e_{\gamma}(z+\delta) e_{\delta}(z). 
\]
A theta function for the system $(e_\gamma)_{\gamma\in \Gamma}$ is a holomorphic function $\fh\to \C$ satisfying 
\[
\vartheta(z+\gamma)=e_{\gamma}(z) \vartheta(z), \,\ \text{for all $\gamma\in \Gamma, z\in \fh$.}
\]
Conversely, for any system of multipliers $(e_\gamma)_{\gamma\in \Gamma}$ satisfying certain cocycle condition, there is an associated line bundle $L=\fh\times_{\Gamma} \C$, whose space $H^0(\fh/\Gamma, L)$ is canonically identified with the space of theta functions for $(e_\gamma)_{\gamma\in \Gamma}$ with the above multipliers. 

Let $(e_\gamma)_{\gamma\in \Gamma}$  and $(e_\gamma')_{\gamma\in \Gamma}$ be two systems of multipliers, defining line bundles $L$ and $L'$. The line bundle $L\otimes L'$ has the multiplier $(e_\gamma e_{\gamma}')_{\gamma\in \Gamma}$. 

In the present paper, we are interested in the special case when $\fh=\bbC\otimes_{\bbZ} \Lambda^{\vee}$, and 
$\Gamma=\Lambda^{\vee}\oplus\tau\Lambda^{\vee}$. Then, $\fh/\Gamma=\mathfrak{A}_T$. For $\xi=(\xi_i)_{i=1}^n\in \Lambda$, the line bundle $\calL_\xi^{\vee}= \chi_{\xi}^*(\calO(\{0\}))$ has multipliers given by
 \[
 f(z+e_i)=-f(z), \,\  f(z+\tau e_i)=-e^{-\pi i\tau \xi_i}e^{-2 \pi i z_i} f(z).
 \]
The group homomorphism in Lemma~\ref{lem: theta_add_tensor} is $\Z[\Lambda]\to \Pic(\mathfrak{A}_T)$, given by $ e^\xi\mapsto \calL_\xi^{\vee}$. 
\Omit{That is, it satisfies the following
\[
e^{\xi_1+\xi_2}\mapsto L_{\xi_1}\otimes  L_{\xi_2}, \,\ 
\text{and $e^{-\xi}\mapsto L_{\xi}^{-1}.$}
\] }
\end{remark}
More generally, let $W_P\subseteq \fS_n$ be a subgroup of $\fS_n$. Naturally $W_P$ acts on $\Z[\Lambda]$. 
\begin{lemma}\label{lem:1.4}
If $\xi\in \Z[\Lambda]^{W_P}$, then we have $\calL_\xi\cong \pi^*(\calL_\xi^P )$, for some line bundle $\calL_\xi^P$ on $\mathfrak{A}_T/W_P$, where $\pi: \mathfrak{A}_T\to \mathfrak{A}_T/W_P$ is the natural projection. 
\end{lemma}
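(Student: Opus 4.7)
The plan is to use descent theory for line bundles under a finite group action: a $W_P$-equivariant line bundle $\calL$ on $\mathfrak{A}_T$ descends to a line bundle on $\mathfrak{A}_T/W_P$ whose pullback is $\calL$ if and only if, for every $p\in\mathfrak{A}_T$, the stabilizer $\Stab_{W_P}(p)\subseteq W_P$ acts trivially on the fiber $\calL|_p$. Thus two things must be produced: a $W_P$-equivariant structure on $\calL_\xi$, and a verification that every stabilizer acts trivially on fibers.

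First I reduce to a single orbit sum. Since $W_P\subseteq\fS_n$ permutes the formal basis $\{e^\mu\}_{\mu\in\Lambda}$ of $\Z[\Lambda]$, any $W_P$-invariant element decomposes as $\xi=\sum_i n_i O_{\mu_i}$ with $O_\mu := \sum_{\nu\in W_P\cdot\mu}e^\nu$ the orbit sum attached to a $W_P$-orbit representative $\mu\in\Lambda$. By the additivity of $\chi:\Z[\Lambda]\to\Pic(\mathfrak{A}_T)$, one has $\calL_\xi\cong\bigotimes_i \calL_{O_{\mu_i}}^{\otimes n_i}$, and descent is compatible with tensor products and tensor powers, so it suffices to prove the claim for $\xi=O_\mu$.

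For such an orbit sum, $\calL_{O_\mu}=\bigotimes_{\nu\in W_P\cdot\mu}\calL_\nu$. The identity $\chi_{\sigma\cdot\nu}=\chi_\nu\circ\sigma^{-1}$, valid for any $\sigma\in W_P$, yields canonical isomorphisms $\sigma^*\calL_\nu\simeq\calL_{\sigma\cdot\nu}$; combined with the relabeling bijection $\nu\mapsto\sigma\cdot\nu$ of the indexing set $W_P\cdot\mu$, these assemble into an isomorphism $\sigma^*\calL_{O_\mu}\simeq\calL_{O_\mu}$, and the cocycle condition is inherited from composition in $W_P$. This equips $\calL_{O_\mu}$ with a $W_P$-equivariant structure.

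It remains to check stabilizer-triviality. Fix $p\in\mathfrak{A}_T$, let $H:=\Stab_{W_P}(p)$, and decompose $W_P\cdot\mu$ into $H$-orbits $\calO_1,\dots,\calO_r$. The fiber splits as $\calL_{O_\mu}|_p=\bigotimes_j\bigl(\bigotimes_{\nu\in\calO_j}\calL_\nu|_p\bigr)$, with $H$ acting on each block by combining (i) the permutation of the tensor factors along the $H$-action on $\calO_j$, and (ii) the individual maps $\sigma^*\calL_\nu|_p\to\calL_{\sigma\cdot\nu}|_p$, which at points $\sigma\cdot\nu=\nu$ reduce to multiplication by theta-multiplier sign factors recorded in Remark~\ref{ex:multipliers}. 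Using the canonical generators $\vartheta(\chi_\nu)$ of the dual line bundles, one computes that the sign from the cyclic permutation of fibers cancels against the sign produced by the theta multipliers at the fixed indices, block by block. Hence $H$ acts trivially on the one-dimensional fiber, and the standard descent result produces the required line bundle $\calL_\xi^P$ on $\mathfrak{A}_T/W_P$ with $\pi^*\calL_\xi^P\cong\calL_\xi$.

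The principal obstacle is the stabilizer computation. Conceptually it is the familiar phenomenon that a symmetric/orbit-sum construction kills the permutation sign on fixed loci, but a rigorous verification requires carefully tracking how the equivariant structure couples the permutation of tensor factors with the automorphisms of individual fibers encoded by the theta multipliers in Remark~\ref{ex:multipliers}.
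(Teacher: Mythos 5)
The paper states this lemma without proof, so there is no authorial argument to compare against; your descent strategy (equip $\calL_\xi$ with a $W_P$-equivariant structure, verify that stabilizers act trivially on fibers, then descend) is surely the intended one, and your reduction to orbit sums and your construction of the equivariant structure from $\chi_{\sigma\cdot\nu}=\chi_\nu\circ\sigma^{-1}$ are correct. The gap is in the one step that carries all the content. You justify stabilizer-triviality by a cancellation between ``the sign from the cyclic permutation of fibers'' and ``the sign produced by the theta multipliers at the fixed indices,'' and neither sign exists: permuting the tensor factors of one-dimensional vector spaces is the identity map (there is no Koszul sign in ordinary, ungraded vector spaces), and the multipliers of Remark~\ref{ex:multipliers} encode the quasi-periodicity of theta functions under the lattice $\Gamma$ on the universal cover --- they are irrelevant to the $W_P$-action. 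Moreover, at an index with $\sigma\cdot\nu=\nu$ the map $\sigma^*\calL_\nu|_p\to\calL_\nu|_p$ is the identity, because $\sigma^*\calL_\nu=(\chi_\nu\circ\sigma)^*\calO(-\{0\})=\chi_{\sigma^{-1}\nu}^*\calO(-\{0\})$ is an equality of sheaves, not merely an isomorphism. Followed literally, your recipe produces no computation at all.

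Here is what the verification actually amounts to. With your equivariant structure, every identification $(\sigma^*\calL_\nu)|_p\cong\calL_{\sigma^{-1}\nu}|_p$ at a fixed point $p$ is the identity of the one-dimensional space $\calO(-\{0\})|_{\chi_\nu(p)}$ (equal to $\fm_0/\fm_0^2$ when $\chi_\nu(p)=0$, and canonically $\bbC$ otherwise), so the stabilizer acts on $\calL_\xi|_p=\bigotimes_\nu(\calL_\nu|_p)^{\otimes m_\nu}$ purely by permuting one-dimensional tensor factors, which is the identity; triviality is automatic once the structure is set up, with no cancellation needed. The place where a genuine sign \emph{could} obstruct descent is visible in the divisor picture: the ramification divisors of $\pi$ are the diagonals $\Delta_{ij}=\ker\chi_{\xi_i-\xi_j}$ for transpositions $(ij)\in W_P$, each with ramification index $2$, and $(ij)$ acts by $-1$ on the conormal line of $\Delta_{ij}$, i.e.\ on the fiber of $\calL_{k(\xi_i-\xi_j)}$ along $\Delta_{ij}$ (this is the oddness $\vartheta(-z)=-\vartheta(z)$, not the lattice multipliers). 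The characters $\nu$ with $\Delta_{ij}\subseteq\ker\chi_\nu$ are exactly the nonzero multiples of $\xi_i-\xi_j$, and $(ij)$ swaps $k(\xi_i-\xi_j)\leftrightarrow -k(\xi_i-\xi_j)$, so $W_P$-invariance of $\xi$ forces $m_{k(\xi_i-\xi_j)}=m_{-k(\xi_i-\xi_j)}$ and hence the multiplicity of $\Delta_{ij}$ in $\ker\chi(\xi)$ is even, i.e.\ divisible by the ramification index. Either formulation closes the gap; your write-up asserts the conclusion while pointing at the wrong mechanism.
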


\section{Quantization of a monoidal structure on the colored Hilbert scheme}
\label{sec:category C}
In this section and \S\ref{sec:quantum group}, $E\to S$ is an elliptic curve, where $S$ is a scheme of finite type over a field of characteristic zero. In what follows, without loss of generality, we may assume $S=\calM_{1,1}$ over the base field and $E$ to be the universal elliptic curve on $\calM_{1,1}$. The coordinate of $S$ will be denoted by $\tau$.
Unless otherwise specified, when taking product of elliptic curves we mean fibered product over $S$.

Let $Q=(I, H)$ be a quiver with the set of vertices $I$ and arrows $H$.  
For each arrow $h\in H$, we denote by $\inc(h)$ (resp. $\out(h)$) the incoming (resp. outgoing) vertex of $h$. Let $h^*$ be the corresponding  reversed arrows in the opposite quiver $Q^{\op}$. 
Let $m:H\coprod H^{\op}\to \bbZ$ be a function, which for each $h\in H$ provides two integers $m_h$ and $m_{h^*}$. We will consider specializations of $(t_1,t_2)\in E^2$ which are compatible with the function $m$ in the following sense. 
\begin{assumption} \label{Assu:WeghtsGeneral}
We consider specializations of $t_1$ and $t_2$ which are compatible with the integers $m_{h}, m_{h^*}$ for any $h\in H$, in the sense that $t_1^{m_h}t_2^{m_{h^*}}$ is a constant, i.e., does not depend on $h\in H$.
\end{assumption}

\begin{remark}\label{rmk:weights}Two examples of the integers $m_h,m_{h^*}$ satisfying Assumption~\ref{Assu:WeghtsGeneral} are the following. 
\begin{enumerate}
\item Let $t_1$ and $t_2$ are independent variables, but $m_h=m_{h^*}=1$ for any $h\in H$.
\item Specialize $t_1=t_2=\hbar/2$. For any pair of vertices $i$ and $j$ with arrows  $h_1, \dots, h_a$ from $i$ to $j$, the pairs of integers are $m_{h_p}=a+2-2p$ and $m_{h_p^*}=-a+2p$. 
\end{enumerate}
 \end{remark}

For any dimension vector $v=(v^i)_{i\in I}\in \bbN^I$ of the quiver $Q$, let
$E^{(v)}:=\left(\prod_{i\in I}\right)_S E^{(v^i)},$
where the product $\left(\prod_{i\in I}\right)_S$ is understood as fibered product over $S$. An element of $E^{(v)}$ is $\sum_{i}v^i$ points on $E$, where $v^i$ points have color $i\in I$. 
The disjoint union $\calH_{E\times I}:=\coprod_{v\in\bbN^I}E^{(v)}$ is the Hilbert scheme of $I$-colored points in $E$, relative to $S$.

Let $G_v=\prod_{i\in I}\GL_{v^i}$ be the product of general linear groups. 
Let $\Lambda_{v^i}$ be the character lattice of $\GL_{v^i}$, and $\Lambda_v:=\bigoplus_{i\in I}\Lambda_{v^i}$ of $G_v$. The natural coordinate of $\Lambda_v$ is denoted by $(z^i_r)_{i\in I,r\in[1,v^i]}$. 
Let $\fS_{v}:=\prod_{i\in I} \fS_{v^i}$ be the Weyl group of $G_v$. 
For any pair $(p,q)$ of positive integers, let $\Sh(p,q)$ be the subset of $\fS_{p+q}$ consisting of $(p,q)$-shuffles (permutations of $\{1,\cdots, p+q\}$ that preserve the relative order of $\{1,\cdots,p\}$ and $\{p+1,\cdots,p+q\}$). For any dimension vector $v\in \bbN^I$, with $v=v_1+v_2$, we denote $\Sh(v_1,v_2)\subset \fS_v$ to be the product $\prod_{i\in I}\Sh(v_1^i,v_2^i)$.

\subsection{Two special line bundles}
\label{subsec: line bundle}
Consider $\calH_{E\times I}\times_S E^2$ as a scheme over $E^2$.
Now we describe some special line bundles on $(\calH_{E\times I}\times_S E^2)\times_{E^2}(\calH_{E\times I}\times_S E^2)$.
We introduce some notations. For any dimension vector $v\in \bbN^I$, a partition of $v$ is a pair of collections $A=(A^i)_{i\in I}$ and $B=(B^i)_{i\in I}$, where  $A^{i}, B^{i} \subset [1, v^{i}]$ for any $i\in I$, satisfying the following conditions:
$A^i\cap B^i=\emptyset$, and  
 $A^{i} \cup B^{i}=[1, v^{i}]$. We use the notation $(A,B)\vdash v$ to mean $(A, B)$ is a partition of $v$. 
  
We also write $|A|=v$ if $|A^i|=v^i$ for each $i\in I$.  
For any two dimension vectors $v_1, v_2\in \bbN^I$ such that $v_1+v_2=v$, we introduce the notation  \[\bfP(v_1,v_2):=\{(A,B)\vdash v\mid |A|=v_1,|B|=v_2\}. \]
 There is a standard element $(A_o,B_o)$ in $\bfP(v_1,v_2)$ with 
$A_o^{i}:=[1, v_1^i]$, and 
$B_o^{i}:=[v_1^i+1, v_{1}^i+v_2^i]$ for any $i\in I$. This standard element will also be denoted by $([1,v_1],[v_1+1,v])$ for short. 

For any $(A,B)\in \bfP(v_1,v_2)$, consider the following function on $E^{(A)}\times_S E^{(B)}\times_S E^2$ 
\begin{equation}\label{equ:fac1}
\fac_1(z_A|z_B):=\prod_{\alpha \in I}
\prod_{s\in A^{\alpha}}
\prod_{t\in B^\alpha}
\frac{\vartheta(z^\alpha_s-z^\alpha_t+t_1+t_2)}{\vartheta(z^\alpha_t-z^\alpha_s)}. 
\end{equation}
By Lemma~\ref{lem: theta_add_tensor} and \ref{lem:1.4}, there is a line bundle $\calL_{A,B}^{\fac_1}$ on $E^{(A)}\times_S E^{(B)}\times_S E^2$, such that $\fac_1$ is a rational section of the dual line bundle $(\calL_{A,B}^{\fac_1})^\vee$. Similarly, consider the function 
\begin{small}\begin{equation}
\label{equ:fac2}
\fac_2(z_A|z_B):=\prod_{h\in H}\Big(
\prod_{s\in A^{\out(h)}}
\prod_{t\in B^{\inc(h)}}
\vartheta(z_t^{ \inc(h)}-z_s^{\out(h)}+ m_h t_1)
\prod_{s\in A^{\inc(h)}}
\prod_{t\in B^{\out(h)}}
\vartheta(z_t^{\out(h)}-z_s^{\inc(h)}+m_{h^*}t_2)
\Big).
\end{equation}\end{small}
There is a line bundle $\calL_{A,B}^{\fac_2}$ on $E^{(A)}\times_S E^{(B)}\times_S E^2$, such that $\fac_2$ is a rational section of the dual line bundle $(\calL_{A,B}^{\fac_2})^\vee$. 

Define $\calL_{A,B}:=\calL^{\fac_1}_{A,B}\otimes\calL_{A,B}^{\fac_2}$. Then, by Lemma \ref{lem: theta_add_tensor}, the dual $\calL_{A,B}^{\vee}$ has a rational section $\fac(z_A|z_B):=\fac_1\fac_2$. As we will see in \S\ref{sec:CoHA}, the function $\fac(z_A|z_B)$ naturally shows up in the study of cohomological Hall algebras.  
When $(A,B)=(A_o,B_o)$, the standard element, we also denote $\calL_{A,B}$ by $\calL_{v_1,v_2}$. 
\begin{lemma}\label{lem:line_fac_lim}
\begin{enumerate}
\item
We have the isomorphism on $E^{(v_1)}\times E^{(v_2)}$:
$\calL_{v_1, v_2}^{\fac_1} |_{t_1=t_2=0}=\calO. $
\item
If the quiver $Q$ has no arrows, then $\calL_{v_1, v_2}^{\fac_2}=\calO$.
\item If either $v_1=0$ or $v_2=0$, then $\calL_{v_1, v_2}^{\fac}=\calO_{E^{(v_1+v_2)}}$, the structure sheaf on $E^{(v_1+v_2)}$.  \end{enumerate}
\end{lemma}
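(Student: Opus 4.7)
The plan is to observe that in each of the three cases the defining rational section of the relevant line bundle specializes to (or already is) a nowhere-vanishing constant, after which triviality of the bundle is immediate. Recall that each of $\calL^{\fac_1}_{v_1,v_2}$, $\calL^{\fac_2}_{v_1,v_2}$, $\calL^{\fac}_{v_1,v_2}$ was defined precisely so that $\fac_1$, $\fac_2$, $\fac$ respectively is a rational section of its \emph{dual}. If that section turns out to be a nonzero constant, it trivializes the dual bundle, and dualizing again trivializes the bundle itself.

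For (1), I would specialize $t_1 = t_2 = 0$ in \eqref{equ:fac1}. Each factor becomes $\vartheta(z^\alpha_s - z^\alpha_t)/\vartheta(z^\alpha_t - z^\alpha_s)$, and using that $\vartheta$ is odd (as recalled just before Example~\ref{example:theta}, $i^*\vartheta = -\vartheta$) this ratio equals $-1$. Hence $\fac_1|_{t_1 = t_2 = 0}$ is identically the constant $(-1)^{\sum_\alpha v_1^\alpha v_2^\alpha}$ on $E^{(v_1)}\times E^{(v_2)}$. This gives a nowhere-vanishing global section of $(\calL^{\fac_1}_{v_1,v_2}|_{t_1 = t_2 = 0})^\vee$, so the dual, and therefore $\calL^{\fac_1}_{v_1,v_2}|_{t_1 = t_2 = 0}$ itself, is trivial.

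For (2), when $Q$ has no arrows the index set $H$ in \eqref{equ:fac2} is empty, so $\fac_2 \equiv 1$ identically, trivializing $(\calL^{\fac_2}_{v_1,v_2})^\vee$. For (3), if $v_1 = 0$ then $A^\alpha = \emptyset$ for every $\alpha \in I$; symmetrically if $v_2 = 0$. In either case every product over $s \in A^\alpha$, $s \in A^{\out(h)}$, or $s \in A^{\inc(h)}$ appearing in \eqref{equ:fac1} and \eqref{equ:fac2} is an empty product, so $\fac_1 \equiv \fac_2 \equiv 1$ and thus $\fac \equiv 1$. As in the previous cases, the constant section $1$ is a nowhere-vanishing global section of $(\calL^{\fac}_{v_1,v_2})^\vee$ over $E^{(v_1+v_2)}$, so $\calL^{\fac}_{v_1,v_2} = \calO_{E^{(v_1+v_2)}}$.

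I do not expect any real obstacle: the only point requiring a moment's care is the sign in (1), but a $\pm 1$ constant is still nowhere-vanishing, so the sign plays no role in trivializing the bundle. One may further verify consistency by comparing the characters in $\ZZ[\Lambda_v]$ that the three functions pick out under the homomorphism $\chi$ of Remark~\ref{ex:multipliers} and checking that they lie in the kernel after the relevant specialization.
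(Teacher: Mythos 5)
Your proposal is correct and is exactly the argument the paper intends: its proof of Lemma~\ref{lem:line_fac_lim} is the one-line remark that the claim follows from the formulas \eqref{equ:fac1} and \eqref{equ:fac2}, and you have simply spelled out why (the section specializes to the nonzero constant $(-1)^{\sum_\alpha v_1^\alpha v_2^\alpha}$ in (1), and to the empty product $1$ in (2) and (3), trivializing the dual and hence the bundle). Nothing further is needed.
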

\begin{proof}
The claim follows from the formulas of $\fac_1$ \eqref{equ:fac1} and $\fac_2$ \eqref{equ:fac2}. 
\end{proof}
\begin{lemma}
\label{lem:relation of L}
For dimension vectors $v_1, v_2, v_3 \in \N^I$, we have the isomorphism
\[
\calL_{v_1, v_2}^{\fac}\otimes \mathbb{S}_{12}^*(\calL_{v_1+v_2, v_3}^{\fac}) \cong 
\calL_{v_2, v_3}^{\fac}\otimes \mathbb{S}_{23}^*(\calL_{v_1, v_2+ v_3}^{\fac}), 
\]
where the maps $\mathbb{S}_{ij}$ are symmetrization maps as in the following diagram.
\begin{equation}
\label{dia: S_{123}}
\xymatrix@C=.01em @R=1.5em {
&E^{(v_1)}\times_S E^{(v_2)} \times_S E^{(v_3)}\times_S E^2 \ar[dd]^{\mathbb{S}_{123}}\ar[ld]_{\mathbb{S}_{12}}\ar[rd]^{\mathbb{S}_{23}}&\\
E^{(v_1+v_2)} \times_S E^{(v_3)}\times_S E^2\ar[rd]_{\mathbb{S}_{12, 3}}&& 
E^{(v_1)} \times_S E^{(v_2+v_3)}\times_S E^2\ar[ld]^{\mathbb{S}_{1, 23}}\\
&E^{(v_1+v_2+v_3)}\times_S E^2 &
}\end{equation}
\end{lemma}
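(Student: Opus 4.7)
The plan is to reduce the claimed isomorphism to an equality of characters in $\Z[\Lambda_{v_1+v_2+v_3}\oplus\Z^2]$, using the homomorphism $\chi$ of Lemma~\ref{lem: theta_add_tensor} together with the $\fS_v$-invariance assertion of Lemma~\ref{lem:1.4}, and then to verify that equality by direct combinatorial bookkeeping of the cross-block pairs of indices.

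First, I would split $\calL_{v_1,v_2}^{\fac}=\calL_{v_1,v_2}^{\fac_1}\otimes\calL_{v_1,v_2}^{\fac_2}$ and prove the isomorphism separately for the $\fac_1$ and $\fac_2$ components. Pulling everything back along $\bbS_{123}:E^{v_1}\times_S E^{v_2}\times_S E^{v_3}\times_S E^2\to E^{(v_1)}\times_S E^{(v_2)}\times_S E^{(v_3)}\times_S E^2$ is injective on $\Pic$ at the level of $\fS_{v_1}\times\fS_{v_2}\times\fS_{v_3}$-invariants, so by Lemma~\ref{lem:1.4} it suffices to compare the classes of both sides in $\Z[\Lambda_{v_1+v_2+v_3}\oplus\Z^2]$ after this pullback.

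Next, unwrapping \eqref{equ:fac1}, the class of $\bbS_{123}^*\calL^{\fac_1}_{v_1,v_2}$ is $\sum_{\alpha\in I}\sum_{s\in[1,v_1]^{\alpha},\,t\in[v_1+1,v_1+v_2]^{\alpha}}\bigl(e^{z_s^{\alpha}-z_t^{\alpha}+t_1+t_2}+e^{z_t^{\alpha}-z_s^{\alpha}}\bigr)$, and a similar sum over $s\in[1,v_1+v_2]^{\alpha},\,t\in[v_1+v_2+1,v]^{\alpha}$ for the class of $\bbS_{12,3}^*\bbS_{123}^*\calL^{\fac_1}_{v_1+v_2,v_3}=\bbS_{123}^*\bbS_{12}^*\calL^{\fac_1}_{v_1+v_2,v_3}$. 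Their sum runs over all ordered pairs of same-color indices lying in distinct blocks of the three-block partition $[1,v_1]\sqcup[v_1+1,v_1+v_2]\sqcup[v_1+v_2+1,v]$, each with the same shift by $t_1+t_2$. The analogous computation for the right hand side, obtained from the $(v_2,v_3)$ and $(v_1,v_2+v_3)$ partitions, exhausts the same set of ordered cross-block pairs. Hence the two classes coincide, giving the isomorphism for $\calL^{\fac_1}$. The same argument, using \eqref{equ:fac2} and Assumption~\ref{Assu:WeghtsGeneral} (so that the shifts $m_h t_1$, $m_{h^*}t_2$ depend only on the pair of colors, not on the internal indexing of the three blocks), handles $\calL^{\fac_2}$.

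The main obstacle is purely bookkeeping: the formulas involve many parallel index sets, and one must be careful that the pullback along the symmetrization maps $\bbS_{12}$ and $\bbS_{23}$ is correctly identified with a selection of pairs in the three-fold product. A slicker way to package the whole argument is to observe that, as a tautology of the definitions, the rational section $\bbS_{12,3}^*(\fac_{[1,v_1+v_2]}|_{[v_1+v_2+1,v]})\cdot\fac_{[1,v_1]}|_{[v_1+1,v_1+v_2]}$ equals $\bbS_{1,23}^*(\fac_{[1,v_1]}|_{[v_1+1,v]})\cdot\fac_{[v_1+1,v_1+v_2]}|_{[v_1+v_2+1,v]}$ as meromorphic functions on $E^{v_1}\times_S E^{v_2}\times_S E^{v_3}\times_S E^2$, since both are the product of the theta-factors over the same set of unordered pair-types. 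Since $\chi$ is a homomorphism, equality of nonzero rational sections on a proper variety forces the underlying line bundles to be isomorphic, concluding the proof.
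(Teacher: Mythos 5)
Your proposal is correct and follows essentially the same route as the paper, whose proof consists of the single observation that the isomorphism can be checked at the level of the corresponding elements of $\Z[\Lambda]^{\fS_{v_1}\times\fS_{v_2}\times\fS_{v_3}}$, where it reduces to the obvious associativity of collecting cross-block pairs; you have simply spelled out that bookkeeping (and added the equivalent reformulation via equality of the rational sections $\fac$). The only blemish is a sign slip in writing the class of $\calL^{\fac_1}$ as a sum $e^{z_s-z_t+t_1+t_2}+e^{z_t-z_s}$ rather than a difference (numerator minus denominator of $\fac_1$), which does not affect the combinatorial identity being verified.
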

\begin{proof}
This isomorphism can be checked at the level of the corresponding elements in $\Z[\Lambda]^{\fS_{v_1}\times \fS_{v_2}\times \fS_{v_3}}$. Those elements are obviously associative with respect to addition.  
\end{proof}
\begin{lemma}
Let $\sigma: E^{(v_1)} \times E^{(v_2)}\times E^2 \to E^{(v_2)} \times E^{(v_1)}\times E^2$ be the map that permutes the first two factors, and sends $(t_1,t_2)\in E^2$ to $(-t_1,-t_2)$. Then $\sigma^*(\calL_{v_1, v_2})\cong \calL_{v_2, v_1}$. 
\end{lemma}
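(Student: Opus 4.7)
The plan is to reduce to characters via the group homomorphism $\chi\colon\bbZ[\Lambda]\to\Pic$ of Remark~\ref{ex:multipliers} and pull back the defining theta sections using the odd identity $\vartheta(-x)=-\vartheta(x)$. Since $\calL_{v_1,v_2}=\calL_{v_1,v_2}^{\fac_1}\otimes\calL_{v_1,v_2}^{\fac_2}$ and $\chi$ is a group homomorphism, it suffices to verify the asserted isomorphism for $\calL^{\fac_1}$ and $\calL^{\fac_2}$ separately; by Lemma~\ref{lem:1.4}, the calculation can be carried out on the unsymmetrized cover $E^{v_1}\times_S E^{v_2}\times_S E^2$ and then descended.

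For $\calL^{\fac_1}$ the computation is section-level and essentially mechanical. Pulling back the $(v_2,v_1)$-section $\fac_1$ along $\sigma$ swaps the two coordinate blocks and negates $t_1+t_2$; applying $\vartheta(-x)=-\vartheta(x)$ to each numerator factor $\vartheta(z_s^\alpha-z_t^\alpha+t_1+t_2)$ and to each denominator factor $\vartheta(z_t^\alpha-z_s^\alpha)$ produces two sign flips that cancel in pairs, and after the relabeling $s\leftrightarrow t$ one recovers precisely the source-side $\fac_1$. This yields an explicit section identification and hence $\sigma^*\calL^{\fac_1}_{v_2,v_1}\cong\calL^{\fac_1}_{v_1,v_2}$.

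For $\calL^{\fac_2}$ I would pass to the double quiver $H\sqcup H^{\op}$, where the two sub-products of $\fac_2$ combine into a single product indexed by $\tilde h\in H\sqcup H^{\op}$ with shift $\tau_{\tilde h}\in\{m_h t_1,\, m_{h^*}t_2\}$ (using $\out(h^*)=\inc(h)$, $\inc(h^*)=\out(h)$). The pullback $\sigma^*$, combined with the sign flips from $\vartheta(-x)=-\vartheta(x)$, reshuffles the product via the involution $\tilde h\mapsto\tilde h^*$; after this reindexing, the total character of the pulled-back expression in $\bbZ[\Lambda]^{\fS_{v_1}\times\fS_{v_2}}$ agrees with that of the $(v_1,v_2)$-version of $\fac_2$, and applying $\chi$ gives $\sigma^*\calL^{\fac_2}_{v_2,v_1}\cong\calL^{\fac_2}_{v_1,v_2}$. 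Tensoring with the $\fac_1$-step then yields the desired isomorphism. The main technical obstacle is this $\fac_2$ step, where the matching is at the character class level rather than section-by-section, so one has to track carefully how the double-quiver involution redistributes the two types of shifts; by contrast, the $\fac_1$ part is a transparent cancellation of $\vartheta$-sign flips.
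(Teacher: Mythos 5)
Your $\fac_1$ step is correct and complete: pulling back along $\sigma$ negates both the numerator argument $z^\alpha_s-z^\alpha_t+t_1+t_2$ and the denominator argument $z^\alpha_t-z^\alpha_s$ in each factor, the two signs from $\vartheta(-x)=-\vartheta(x)$ cancel within each factor, and after relabelling $s\leftrightarrow t$ one literally recovers the section $\fac_1(z_{A_o}|z_{B_o})$; so $\sigma^*\calL^{\fac_1}_{v_2,v_1}\cong\calL^{\fac_1}_{v_1,v_2}$ with matching sections. (The paper states this lemma without proof, so there is nothing to compare against on its side.)

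The $\fac_2$ step, however, contains a genuine gap, located exactly where you flag the ``main technical obstacle'' and then assert the matching anyway. Writing $\fac_2=\prod_{\tilde{h}\in H\sqcup H^{\op}}\prod\vartheta\bigl(z^{\inc(\tilde{h})}_t-z^{\out(\tilde{h})}_s+\tau_{\tilde{h}}\bigr)$ with $\tau_h=m_ht_1$ and $\tau_{h^*}=m_{h^*}t_2$, applying $\sigma^*$, the odd symmetry of $\vartheta$, and the reindexing $\tilde{h}\mapsto\tilde{h}^*$ produces the factors of $\fac_2^{(v_1,v_2)}$ but with each shift $\tau_{\tilde{h}}$ replaced by $\tau_{\tilde{h}^*}$: the $z,w$-part of every character matches, the $E^2$-part does not. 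The two elements of $\bbZ[\Lambda]$ coincide only if, for every ordered pair of vertices $(i,j)$, the multiset $\{\tau_{\tilde{h}}\mid \tilde{h}\colon i\to j\}$ equals $\{\tau_{\tilde{h}}\mid \tilde{h}\colon j\to i\}$. This holds under the specialization of Remark~\ref{rmk:weights}(2), where $t_1=t_2=\hbar/2$ and $\{m_{h_p}\}_p=\{m_{h_p^*}\}_p=\{a,a-2,\dots,-a+2\}$ as multisets (the same point the paper invokes in the proof of Theorem~\ref{thm:pairing} when asserting $\fac(-z_{A_o}|-z_{B_o})=\fac(z_{B_o}|z_{A_o})$), but it fails under Remark~\ref{rmk:weights}(1): already for $Q=A_2$ with $v_1=e_1$, $v_2=e_2$ one finds $\sigma^*\calL^{\fac_2}_{v_2,v_1}$ associated to the character $w-z+m_{h^*}t_2$ versus $\calL^{\fac_2}_{v_1,v_2}$ associated to $w-z+m_ht_1$, and restricting to a fibre over a generic $(t_1,t_2)\in E^2$ gives two degree-one line bundles on $E$ with distinct zeros, hence non-isomorphic. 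So the character-level agreement you assert is false for general $(t_1,t_2)$ and general $m$; the proof must state and use the arrow-reversal symmetry of the shift data (or restrict to the locus in $E^2$ where it holds), and Assumption~\ref{Assu:WeghtsGeneral} alone does not supply it.
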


\subsection{A monoidal category of sheaves}
\label{subsec:cat C}
Notations as before, let $\calH_{E\times I}$ be the $I$-colored Hilbert scheme of points on $E$. 
Let $\calC$ be the abelian category of quasi-coherent sheaves on $\calH_{E\times I}$. 
More explicitly, an object of $\calC$ is a tuple $(\mathcal{F}_{v})_{v\in \bbN^I}$, where $\mathcal{F}_{v}$ is a quasi-coherent sheaf on $E^{(v)}$. A morphism between two objects $(\mathcal{F}_{v})_{v\in \bbN^I}$ and $(\mathcal{G}_{v})_{v\in \bbN^I}$ is a collection of morphisms of sheaves $\mathcal{F}_{v}\to \mathcal{G}_{v}$ on $E^{(v)}$, for $v\in \bbN^I$. 

We define a tensor product $\otimes_{t_1,t_2}$ on $\calC$ parameterized by $(t_1,t_2)\in E^2$.
For any $\mathcal{F}, \mathcal{G}\in \calC$, we define $\mathcal{F}\otimes_{t_1,t_2} \mathcal{G}=\{(\mathcal{F}\otimes_{t_1,t_2} \mathcal{G})_v\}_{v \in \N^I}$ as
\begin{equation}\label{equ:def tensor}
(\mathcal{F}\otimes_{t_1,t_2} \mathcal{G})_{v}:=
\sum_{v_1+v_2=v}(\mathbb{S}_{v_1, v_2}\times \id)_{*}\big(p_1^*\mathcal{F}_{v_1}\otimes p_2^*\mathcal{G}_{v_2} \otimes \mathcal L_{v_1, v_2}\big),
\end{equation}
where $\mathcal L_{v_1, v_2}$ is the line bundle on $E^{(v_1)}\times_S E^{(v_2)} \times_S E^2$ described in \S \ref{subsec: line bundle}, and the maps are given in the  following diagram
\[
\xymatrix@R=1.5em @C=.1em {
&E^{(v_1)}\times_S E^{(v_2)}\times_S E^2  \ar[ld]_{p_1}\ar[rd]^{p_2}\ar[rr]^{\mathbb{S}_{v_1, v_2} \times \id}&& E^{(v_1+v_2)} \times_S E^2\\
E^{(v_1)}\times_S E^2 && E^{(v_1)}\times_S E^2&
}.
\]
Here $\mathbb{S}_{v_1, v_2}: E^{(v_1)}\times_S E^{(v_2)} \to E^{(v_1+v_2)}$ is the symmetrization map.

Consider the following object $\epsilon$ of $\calC$. When $v=0$, $\epsilon_{0}=\calO_{E^{(0)} }$ the structure sheaf of $E^{(0)}$; when $v\neq 0$, $\epsilon_{v}=0$ the zero sheaf on $E^{(v)}$. 

\begin{prop}\label{prop:C_monoidal}
\begin{enumerate}
\item
The operator $\otimes_{t_1,t_2}$ is associative. That is, we have
\[
(\mathcal{F}\otimes_{t_1,t_2} \mathcal{G})\otimes_{t_1,t_2} \mathcal{H}
\cong \mathcal{F}\otimes_{t_1,t_2} (\mathcal{G}\otimes_{t_1,t_2} \mathcal{H}).
\]
\item
The object $\epsilon$ is the identity object. That is, we have
$\epsilon\otimes_{t_1,t_2} \mathcal{G}=\mathcal{G}$, for any $\mathcal{G}\in \calC$.  
\end{enumerate}
\end{prop}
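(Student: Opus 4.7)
The plan is to verify both statements by unfolding the definition of $\otimes_{t_1,t_2}$, reducing both sides to a common iterated pushforward along $\mathbb{S}_{123}:E^{(v_1)}\times_S E^{(v_2)}\times_S E^{(v_3)}\to E^{(v_1+v_2+v_3)}$, and then invoking the line bundle identity in Lemma~\ref{lem:relation of L}.

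\textbf{Associativity.} First I would write out $\bigl((\mathcal{F}\otimes_{t_1,t_2}\mathcal{G})\otimes_{t_1,t_2}\mathcal{H}\bigr)_v$ from the definition~\eqref{equ:def tensor}. This is a sum over decompositions $w+v_3=v$ followed by a decomposition $v_1+v_2=w$, of pushforwards via $\mathbb{S}_{12,3}\circ(\mathbb{S}_{v_1,v_2}\times\id)$ of a tensor product involving $\mathcal{L}_{v_1,v_2}\boxtimes\calO$ and $\mathbb{S}_{12,3}^{*}\mathcal{L}_{v_1+v_2,v_3}$, by the projection formula. Reindexing by the triple $(v_1,v_2,v_3)$ with $v_1+v_2+v_3=v$ and using the left triangle in diagram~\eqref{dia: S_{123}}, this collapses to
\[
\bigoplus_{v_1+v_2+v_3=v}(\mathbb{S}_{123}\times\id)_{*}\bigl(p_1^{*}\mathcal{F}_{v_1}\otimes p_2^{*}\mathcal{G}_{v_2}\otimes p_3^{*}\mathcal{H}_{v_3}\otimes \mathcal{L}_{v_1,v_2}\otimes\mathbb{S}_{12}^{*}\mathcal{L}_{v_1+v_2,v_3}\bigr),
\]
where the $p_i$ are the obvious projections from $E^{(v_1)}\times_S E^{(v_2)}\times_S E^{(v_3)}\times_S E^2$. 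An entirely analogous computation, now going through the right half of \eqref{dia: S_{123}}, expresses $\bigl(\mathcal{F}\otimes_{t_1,t_2}(\mathcal{G}\otimes_{t_1,t_2}\mathcal{H})\bigr)_v$ as the same formal sum but with line bundle $\mathcal{L}_{v_2,v_3}\otimes\mathbb{S}_{23}^{*}\mathcal{L}_{v_1,v_2+v_3}$. Applying Lemma~\ref{lem:relation of L} term by term produces the desired isomorphism, functorial in $\mathcal{F},\mathcal{G},\mathcal{H}$.

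\textbf{Unit.} For $\epsilon\otimes_{t_1,t_2}\mathcal{G}$, only the summand indexed by $(v_1,v_2)=(0,v)$ contributes, since $\epsilon_{v_1}=0$ for $v_1\neq 0$. On this summand, $\mathbb{S}_{0,v}\times\id$ is the canonical isomorphism $S\times_S E^{(v)}\times_S E^2\isom E^{(v)}\times_S E^2$, and by Lemma~\ref{lem:line_fac_lim}(3) the line bundle $\mathcal{L}_{0,v}$ is trivial. Tensoring with $p_1^{*}\epsilon_0=\calO$ and pushing forward reproduces $\mathcal{G}_v$, yielding $\epsilon\otimes_{t_1,t_2}\mathcal{G}\cong\mathcal{G}$; the right unit property is identical.

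\textbf{Main obstacle.} The routine but subtle step is justifying the exchange of $(\mathbb{S}_{v_1,v_2}\times\id)_{*}$ with the outer pushforward so as to assemble a single $(\mathbb{S}_{123}\times\id)_{*}$, and pulling the relevant line bundles through the pushforward via the projection formula. This requires that $\mathbb{S}_{12,3}\circ(\mathbb{S}_{v_1,v_2}\times\id)=\mathbb{S}_{123}$ strictly (which is immediate from diagram~\eqref{dia: S_{123}}) together with the finiteness of the symmetrization maps so that the projection formula applies without derived corrections; once this bookkeeping is done, the associativity isomorphism becomes literally Lemma~\ref{lem:relation of L}.
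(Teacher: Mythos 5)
Your proof is correct and follows essentially the same route as the paper: expand both iterated tensor products, use the projection formula and the factorization $\mathbb{S}_{12,3}\circ\mathbb{S}_{12}=\mathbb{S}_{123}$ to reduce to a single pushforward, and conclude by Lemma~\ref{lem:relation of L}; the unit argument via the vanishing of $\epsilon_{v_1}$ for $v_1\neq 0$ and the triviality of $\calL_{0,v}$ likewise matches the paper's computation.
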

\begin{proof}
Notations as in diagram \eqref{dia: S_{123}}. 
By definition, we have
\begin{align*}
((\mathcal{F}\otimes_{t_1,t_2} \mathcal{G})\otimes_{t_1,t_2} \mathcal{H})_{v}
=&\sum_{v_1+v_2+v_3=v} (\mathbb{S}_{12, 3})_* 
\Big( (\mathbb{S}_{12})_*( \mathcal{F}_{v_1} \boxtimes \mathcal{G}_{v_2}\otimes \mathcal{L}_{v_1, v_2}) \boxtimes \mathcal{H}_{v_3}\otimes \mathcal{L}_{v_1+v_2, v_3}
\Big)\\
=&\sum_{v_1+v_2+v_3=v} (\mathbb{S}_{12, 3})_* (\mathbb{S}_{12})_*
\Big(  \mathcal{F}_{v_1} \boxtimes \mathcal{G}_{v_2}\otimes \mathcal{L}_{v_1, v_2} \boxtimes \mathcal{H}_{v_3}\otimes \mathbb{S}_{12}^* \mathcal{L}_{v_1+v_2, v_3}
\Big)\\
=&\sum_{v_1+v_2+v_3=v} (\mathbb{S}_{123})_*
\Big(  \mathcal{F}_{v_1} \boxtimes \mathcal{G}_{v_2} \boxtimes \mathcal{H}_{v_3} \otimes \mathcal{L}_{v_1, v_2} \otimes \mathbb{S}_{12}^* \mathcal{L}_{v_1+v_2, v_3}
\Big).
\end{align*}
Similar argument shows 
\begin{align*}
(\mathcal{F}\otimes_{t_1,t_2} (\mathcal{G}\otimes_{t_1,t_2} \mathcal{H}))_{v}
=\sum_{v_1+v_2+v_3=v} (\mathbb{S}_{123})_*
\Big(  \mathcal{F}_{v_1} \boxtimes \mathcal{G}_{v_2} \boxtimes \mathcal{H}_{v_3} \otimes \mathcal{L}_{v_2, v_3} \otimes \mathbb{S}_{23}^* \mathcal{L}_{v_1, v_2+v_3}\Big).
\end{align*}
The associativity follows from Lemma \ref{lem:relation of L}. 

For (2): By definition we have
\[
(\epsilon \otimes_{t_1,t_2} \mathcal{G})_{v}=
\sum_{v_1+v_2=v}\mathbb{S}_{v_1, v_2*}
\big(p_1^*\epsilon \otimes p_2^*\mathcal{G} \otimes \mathcal L_{v_1, v_2}\big)
=\id_{v*}\big(\calO_{v} \otimes_{E^{(v)}} \mathcal{G} \otimes_{E^{(v)}} \mathcal L_{0, v}\big)
=\mathcal{G}_v. 
\] 
This completes the proof. \end{proof}
Therefore, we proved the following 
\begin{theorem}\label{thm:monoidal}
The category $(\calC, \otimes_{t_1,t_2})$ is a family of monoidal categories over $E^2$.
\end{theorem}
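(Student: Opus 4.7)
The plan is to verify, for each specialization $(t_1,t_2)\in E^2$ (and coherently in the family), the defining data of a monoidal category: an associator $\alpha$, a unit $\epsilon$ with left/right unitors $\ell,r$, and coherence (pentagon and triangle). Since Proposition~\ref{prop:C_monoidal} already extracts the two substantial isomorphisms (associativity and unit), the overall strategy is to package these into a genuine monoidal structure and then verify coherence by a line-bundle computation.

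First I would set up the base-change and projection-formula machinery. The tensor $\otimes_{t_1,t_2}$ involves the pushforward $(\bbS_{v_1,v_2}\times \id)_*$ of the external tensor product twisted by $\calL_{v_1,v_2}$. Iterating this twice, the two bracketings $(\calF\otimes\calG)\otimes\calH$ and $\calF\otimes(\calG\otimes\calH)$ will both factor through the pushforward along $\bbS_{123}$ from $E^{(v_1)}\times_S E^{(v_2)}\times_S E^{(v_3)}\times_S E^2$, thanks to the commutativity of diagram~\eqref{dia: S_{123}} and the fact that symmetrization maps are finite (so pushforward commutes with the relevant pullbacks and tensor products by the projection formula applied to flat base change). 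The difference between the two bracketings is then entirely in the twisting line bundle on this triple product: $\calL_{v_1,v_2}\otimes\bbS_{12}^*\calL_{v_1+v_2,v_3}$ versus $\calL_{v_2,v_3}\otimes\bbS_{23}^*\calL_{v_1,v_2+v_3}$. Lemma~\ref{lem:relation of L} supplies a canonical isomorphism between these, which I would take as the definition of the associator $\alpha_{\calF,\calG,\calH}$.

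Next I would construct the unit data. Using Lemma~\ref{lem:line_fac_lim}(3), we have $\calL_{0,v}\cong\calO$ and $\calL_{v,0}\cong\calO$, and $\bbS_{0,v}=\bbS_{v,0}=\id$, so the computation in part (2) of Proposition~\ref{prop:C_monoidal} gives $\epsilon\otimes_{t_1,t_2}\calG\cong\calG$ and, symmetrically, $\calG\otimes_{t_1,t_2}\epsilon\cong\calG$. These furnish the unitors $\ell,r$. For coherence, I would verify the pentagon axiom by iterating the base-change argument once more to reduce both sides to a pushforward from the quadruple product $\prod_{i=1}^4 E^{(v_i)}\times_S E^2$ twisted by a common line bundle, the two paths around the pentagon corresponding to two sequences of applications of Lemma~\ref{lem:relation of L} on adjacent pairs. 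Since Lemma~\ref{lem:relation of L} is proved at the level of characters in $\bbZ[\Lambda]^{\fS_{v_1}\times\fS_{v_2}\times\fS_{v_3}}$, where the addition is commutative and associative, both compositions correspond to the same element of $\bbZ[\Lambda]^{\fS_{v_1}\times\cdots\times\fS_{v_4}}$, so they agree. The triangle axiom reduces similarly to the compatibility $\calL_{v_1,0}\otimes\bbS_{12}^*\calL_{v_1,v_2}\cong\calL_{0,v_2}\otimes\bbS_{23}^*\calL_{v_1,v_2}\cong\calL_{v_1,v_2}$, which is immediate from Lemma~\ref{lem:line_fac_lim}(3).

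Finally, to promote this to a \emph{family} of monoidal categories over $E^2$, I would note that all of the above constructions, line bundles $\calL_{v_1,v_2}$ and the isomorphisms of Lemma~\ref{lem:relation of L}, are defined relative to the base $E^2$ (in fact, they are pulled back from line bundles on $E^{(v_1)}\times_S E^{(v_2)}\times_S E^2$ via algebraic data), so the monoidal structure varies algebraically with $(t_1,t_2)$. The main obstacle I anticipate is bookkeeping: carefully tracking the interaction of pushforward, pullback, and tensor product along the various symmetrization maps, and ensuring that the isomorphisms of Lemma~\ref{lem:relation of L} are canonical enough that pentagon and triangle follow without an extra sign or cocycle condition; however, since everything reduces to an identity in the free abelian group $\bbZ[\Lambda]$, this coherence is essentially automatic.
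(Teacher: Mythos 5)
Your proposal is correct and follows essentially the same route as the paper: the paper's proof is exactly Proposition~\ref{prop:C_monoidal}, which reduces both bracketings to a pushforward along $\bbS_{123}$ twisted by the two line bundles and invokes Lemma~\ref{lem:relation of L}, with the unit checked directly via $\calL_{0,v}\cong\calO$. Your additional verification of the pentagon and triangle axioms by reducing to identities in $\bbZ[\Lambda]$ is a worthwhile elaboration of coherence that the paper leaves implicit, but it is not a different method.
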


\section{The sheafified elliptic quantum group}
\label{sec:quantum group}
\subsection{Sheafified elliptic shuffle algebra}
Recall that $H$ is the set of arrows of the quiver $Q$. Let $\overline{A}=(a_{kl})_{k, l\in I}$ be the adjacency matrix of $Q$, whose $(k, l)$-th entry is
$a_{kl}:=\#\{h\in H \mid \out(h)=k, \inc(h)=l\}$ , and let $\overline C:=I-\overline{A}$.

We consider the object $\SH:=\calO_{\calH_{E\times I}}$ of the category $\calC$. Precisely, $\SH_v=\calO_{E^{(v)}}$ for any $v\in\bbN^I$. We construct an algebra structure on $\SH$. 
The rational section $\fac(z_{A_o}|z_{B_o})=\fac_1(z_{A_o}|z_{B_o})\fac_2(z_{A_o}|z_{B_o})$ (See formulas \eqref{equ:fac1} \eqref{equ:fac2}) of $\calL_{v_1, v_2}^\vee$ induces a rational map $\calL_{v_1, v_2}\to \calO_{E^{(v_1)}\times E^{(v_2)}}$, and hence induces the following rational map \begin{equation}\label{eqn:(1)}
\SH_{v_1}\otimes_{t_1,t_2}\SH_{v_2}= \bbS_*( \calL_{v_1, v_2})\to \bbS_*\calO_{E^{(v_1)}\times_SE^{(v_2)}}.
\end{equation}

Note that $\calO_{E^{(v_1+v_2)}}$ is the subsheaf $ (\bbS_*\calO_{E^{(v_1)}\times E^{(v_2)}})^{\fS_v}$ of $\fS_v$-invariants in $\bbS_*\calO_{E^{(v_1)}\times_SE^{(v_2)}}$. We define 
\begin{equation}\label{eqn:(2)}
\bbS_*\calO_{E^{(v_1)}\times_SE^{(v_2)}}\to \calO_{E^{(v_1+v_2)}}=\SH_{v_1+v_2}, \,\  \,\ 
f\mapsto (-1)^{(v_2, \overline{C} v_1)}\sum_{\sigma\in\Sh(v_1,v_2)}\sigma(f)
\end{equation}
to be the symmetrizer modified by a sign. 
The multiplication $\star:\SH_{v_1}\otimes_{t_1,t_2}\SH_{v_2}\to \SH_{v_1+v_2}$
 is defined to be the composition of \eqref{eqn:(1)} and \eqref{eqn:(2)}. In other words, for $f_i\in \calS\calH_{v_i}, i=1, 2$, the multiplication of $f_1$ and $f_2$ has the following formula
\begin{equation}\label{shuffle formula}
f_1\star f_2=\sum_{\sigma \in\Sh(v_1,v_2)} (-1)^{(v_2, \overline{C} v_1)} \sigma \big(f_1\cdot f_2\cdot \fac(z_{[1, v_1]}|z_{[v_1+1, v_2]})\big)\in \calS\calH_{v_1+v_2}.
\end{equation}
Note that although $\fac(z_A|z_B)$ has simple poles, the multiplication $\star$ is a well-defined regular map (see, e.g., \cite[Proposition~5.29(1)]{Vish}). 

\begin{theorem}
$(\SH, \star)$ is an algebra object in $(\calC,\otimes_{t_1,t_2})$.
\end{theorem}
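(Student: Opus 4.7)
The plan is to verify three things: that \eqref{shuffle formula} defines an honest morphism in the category $\calC$ (not merely a rational map), that the unit axiom holds, and that $\star$ is associative. The heart of the argument is associativity.

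For the first point, the rational section $\fac(z_A|z_B)$ has simple poles along the loci $z^\alpha_s = z^\alpha_t$ coming from the $\fac_1$-factor, but the symmetrization $\sum_{\sigma \in \Sh(v_1,v_2)} \sigma(\cdot)$, combined with the antisymmetry $\vartheta(-z) = -\vartheta(z)$, forces these poles to cancel; this is the content of \cite[Proposition~5.29(1)]{Vish}. The result is $\fS_v$-invariant by construction, hence descends from $\bbS_*\calO_{E^{(v_1)} \times_S E^{(v_2)}}$ to its subsheaf of invariants $\calO_{E^{(v_1+v_2)}} = \SH_{v_1+v_2}$, so $\star$ is a bona fide morphism in $\calC$.

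For the unit axiom, I take $\eta:\epsilon \to \SH$ to be the tautological inclusion $\calO_{E^{(0)}} \hookrightarrow \SH_0$. By Lemma~\ref{lem:line_fac_lim}(3), $\calL_{0,v} = \calL_{v,0} = \calO$, and both $\Sh(0,v)$ and $\Sh(v,0)$ are singletons with trivial $\fac$-factor (an empty product), so \eqref{shuffle formula} immediately gives $1 \star f = f \star 1 = f$, matching the canonical isomorphism $\epsilon \otimes_{t_1,t_2} \SH \cong \SH$ of Proposition~\ref{prop:C_monoidal}(2).

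For associativity, given $f_i \in \SH_{v_i}$ for $i=1,2,3$, I would expand both $(f_1 \star f_2) \star f_3$ and $f_1 \star (f_2 \star f_3)$ using \eqref{shuffle formula}, viewing the inner shuffles as embedded in $\fS_{v_1+v_2+v_3}$. Two ingredients then collapse everything to a common expression. First, the combinatorial bijection $\Sh(v_1,v_2) \times \Sh(v_1+v_2,v_3) \xrightarrow{\sim} \Sh(v_1,v_2,v_3)$, $(\tau,\sigma)\mapsto \sigma\tau$, together with the analogous bijection $\Sh(v_2,v_3) \times \Sh(v_1,v_2+v_3) \xrightarrow{\sim} \Sh(v_1,v_2,v_3)$, reorganizes both sides as a single sum indexed by $\pi \in \Sh(v_1,v_2,v_3)$, and the signs match by bilinearity: $(v_2,\overline C v_1) + (v_3,\overline C(v_1+v_2)) = (v_2+v_3,\overline C v_1) + (v_3,\overline C v_2)$. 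Second, the line-bundle isomorphism of Lemma~\ref{lem:relation of L}, namely $\calL_{v_1,v_2} \otimes \bbS_{12}^*\calL_{v_1+v_2,v_3} \cong \calL_{v_2,v_3} \otimes \bbS_{23}^*\calL_{v_1,v_2+v_3}$, translates under the $\vartheta$-trivializations of \eqref{equ:fac1}--\eqref{equ:fac2} into the pointwise identity $\fac_{12}\cdot \fac_{12,3} = \fac_{23}\cdot \fac_{1,23}$ on $E^{(v_1)} \times_S E^{(v_2)} \times_S E^{(v_3)}$, verifiable directly by partitioning the index sets along $[1,v_1] \sqcup [v_1+1,v_1+v_2] \sqcup [v_1+v_2+1,v]$. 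The most delicate step will be promoting the line-bundle isomorphism of Lemma~\ref{lem:relation of L} to an equality of the chosen rational trivializations (not merely an equality up to a nonzero scalar); once this is settled, the remainder is routine bookkeeping.
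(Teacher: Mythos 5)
Your proof is correct, but it is not the route the paper takes: the paper dismisses the direct verification as ``a standard calculation'' and instead proves the statement geometrically in Proposition~\ref{prop:SH_ellCoh}, by identifying $\SH_v$ with $\Ell_{G_v\times\Gm^2}(T^*\Rep(Q,v))$ and the product $\star$ with the convolution $\psi_*\circ\phi^*$ along the Lagrangian correspondence of \S\ref{subsec: Lag corr}; associativity then follows from functoriality of pullback and pushforward rather than from shuffle combinatorics. Your direct argument is the one the paper waves at: the pole cancellation via \cite[Proposition~5.29(1)]{Vish}, the unit axiom from Lemma~\ref{lem:line_fac_lim}(3), the shuffle bijections together with bilinearity of the sign $(v_2,\overline C v_1)$, and the cocycle identity $\fac(z_A|z_B)\,\fac(z_{A\sqcup B}|z_C)=\fac(z_B|z_C)\,\fac(z_A|z_{B\sqcup C})$ are exactly the right ingredients. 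The one step you flag as ``delicate''---matching the line-bundle isomorphism of Lemma~\ref{lem:relation of L} with the chosen trivializations---is in fact immediate: that isomorphism is \emph{defined} by the equality of the corresponding elements of $\Z[\Lambda]^{\fS_{v_1}\times\fS_{v_2}\times\fS_{v_3}}$, i.e.\ by the multiplicativity of $\fac$ over the partitioned index sets, so the rational sections agree on the nose and not merely up to scalar. Each approach buys something: yours is self-contained and elementary; the paper's explains where the formula $\fac=\fac_1\fac_2$ comes from (Euler classes of normal bundles in the Hecke correspondence) and is the version that generalizes to the genuinely new content of the paper, the CoHA structure on $\Ell_{G_v}(\mu_v^{-1}(0))$ where no closed shuffle formula is available.
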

The theorem follows from a standard calculation. However, we will provide a topological proof of this fact in Proposition~\ref{prop:SH_ellCoh}.

\subsection{Coproduct}
In this section, we construct a coproduct $\Delta: \SH \to (\SH\otimes_{t_1,t_2} \SH)_{\loc}$ on a suitable localization of $\SH$. By definition of $\otimes_{t_1, t_2}$, it suffices to construct a rational map\[
\Delta_{v}: \SH_{v}\to \bigoplus_{\{(v_1, v_2) \mid v_1+v_2=v\}}\mathbb{S}_{*}(\SH_{v_1}\boxtimes \SH_{v_2}\otimes \calL_{v_1, v_2})
\] on each component $v\in \N^I$. 

The rational section $\frac{1}{\fac(z_{[v_1+1,v]}| z_{[1,v_1]})}$ induces a rational map $\calO_{E^{(v_1)}\times E^{(v_2)}} \to \calL_{v_1, v_2}$. This gives a rational map 
$\mathbb{S}^{*} \SH_{v}=\calO_{E^{(v_1)}\times E^{(v_2)}}  \to \SH_{v_1}\boxtimes \SH_{v_2}\otimes \calL_{v_1, v_2}= \calL_{v_1, v_2}$. The coproduct $\Delta_{v}$ is obtained by adjunction. More precisely, 
for any local section $P$ of $\SH_{v}$, the coproduct $\Delta(P)$  is given by the formula
\begin{equation}\label{eq:coprod}
\Delta (P(z))
=\sum_{\{v_1+v_2=v\}} (-1)^{(v_2, \overline{C} v_1)}  \frac{  P(z_{[1,v_1]}\otimes z_{[v_1+1,v]})}{\fac(z_{[v_1+1,v]}| z_{[1,v_1]})},
\end{equation}
where $\Delta (P(z))$ is a well-defined local rational section of $\bigoplus_{v_1+v_2=v} \SH_{v_1}\otimes_{t_1,t_2}\SH_{v_2}. $

\begin{prop}
The operator $\Delta$ is coassociative.
\end{prop}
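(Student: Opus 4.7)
The plan is to unravel both iterated coproducts directly using the explicit formula \eqref{eq:coprod}, reduce coassociativity to an identity between two explicit rational expressions, and then verify that identity by exploiting the product (bilinear) structure of $\fac(z_A|z_B)$ in the two sets of arguments.

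First, I would apply $\Delta$ twice and record the output. For a local section $P$ of $\SH_v$, applying $\Delta$ once and then $\Delta\otimes\id$ on the left tensor factor gives a sum over decompositions $v=v_1+v_2+v_3$ of terms of the shape
\[
\frac{(-1)^{(v_3,\overline{C}(v_1+v_2))+(v_2,\overline{C}v_1)}\,P(z)}{\fac(z_{[v_1+v_2+1,v]}|z_{[1,v_1+v_2]})\cdot\fac(z_{[v_1+1,v_1+v_2]}|z_{[1,v_1]})},
\]
while $(\id\otimes\Delta)\Delta(P)$ yields
\[
\frac{(-1)^{(v_2+v_3,\overline{C}v_1)+(v_3,\overline{C}v_2)}\,P(z)}{\fac(z_{[v_1+1,v]}|z_{[1,v_1]})\cdot\fac(z_{[v_1+v_2+1,v]}|z_{[v_1+1,v_1+v_2]})}.
\]
Before comparing, I would use Lemma~\ref{lem:relation of L} to identify the two natural targets (the triple tensor products with the associators of $\otimes_{t_1,t_2}$) as the same sheaf on $E^{(v_1)}\times_S E^{(v_2)}\times_S E^{(v_3)}\times_S E^2$, so that the two expressions can be literally compared as rational sections of a single line bundle.

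Next I would compare the sign prefactors: both exponents expand modulo $2$ to $(v_3,\overline{C}v_1)+(v_3,\overline{C}v_2)+(v_2,\overline{C}v_1)$, so the signs agree. The denominators then agree because $\fac(z_A|z_B)$ is a product indexed by pairs $(a,b)\in A\times B$ of local factors, hence is multiplicative under disjoint decompositions of either argument set. Concretely,
\[
\fac(z_{[v_1+v_2+1,v]}|z_{[1,v_1+v_2]})=\fac(z_{[v_1+v_2+1,v]}|z_{[1,v_1]})\cdot\fac(z_{[v_1+v_2+1,v]}|z_{[v_1+1,v_1+v_2]}),
\]
\[
\fac(z_{[v_1+1,v]}|z_{[1,v_1]})=\fac(z_{[v_1+1,v_1+v_2]}|z_{[1,v_1]})\cdot\fac(z_{[v_1+v_2+1,v]}|z_{[1,v_1]}),
\]
so both denominators equal $\fac(z_{[v_1+1,v_1+v_2]}|z_{[1,v_1]})\cdot\fac(z_{[v_1+v_2+1,v]}|z_{[1,v_1]})\cdot\fac(z_{[v_1+v_2+1,v]}|z_{[v_1+1,v_1+v_2]})$, and the numerators $P(z)$ are the same. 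Summing over $v_1+v_2+v_3=v$ establishes coassociativity.

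The only genuinely conceptual step is the first one: before comparing the two rational sections one must match up the target line bundles coming from the two different parenthesizations. This is exactly the content of Lemma~\ref{lem:relation of L}, which pins down the associator of $(\calC,\otimes_{t_1,t_2})$ on sections. Once that identification is in place, the remaining check is the bilinear factorization of $\fac$ together with the elementary sign bookkeeping above, both of which are immediate from the explicit formulas \eqref{equ:fac1}--\eqref{equ:fac2}.
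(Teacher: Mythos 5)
Your proof is correct and is exactly the ``standard verification'' that the paper's one-line proof defers to (citing \cite[Proposition~2.1(1)]{YZ2}): you expand both iterated coproducts, match the sign exponents by bilinearity of $(\cdot,\overline{C}\cdot)$, and match the denominators using the multiplicativity of $\fac(z_A|z_B)$ under disjoint decompositions of either argument, with Lemma~\ref{lem:relation of L} identifying the two target line bundles. You have simply written out the details the paper omits; no gap.
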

\begin{proof}
This follows from a standard verification, similar to \cite[Proposition~2.1(1)]{YZ2}.
\end{proof}

The compatibility of $\star$ and $\Delta$ relies on a braiding $\gamma$ of the category $(\calC,\otimes_{t_1,t_2})$.
For any $v_1,v_2\in \bbN^I$, and each pair of objects $\{\calF_{v}\}_{v\in \bbN^I}$ and $\{\calG_{v}\}_{v\in \bbN^I}$, we describe a rational isomorphism 
\begin{equation}\label{eq:gamma}
\gamma_{\calF, \calG}: \calF_{v_1}\otimes_{t_1,t_2}\calG_{v_2}=\bbS_*(\calF_{v_1}\boxtimes\calG_{v_2}\otimes\calL_{v_1,v_2})
\to \calG_{v_2}\otimes_{t_1,t_2}\calF_{v_1}=\bbS_*(\calG_{v_2}\boxtimes\calF_{v_1}\otimes\calL_{v_2,v_1}). 
\end{equation}
To define $\gamma$, we only need to specify a rational section of $\calL_{v_1,v_2}^{\vee}\otimes_{E^{v_1}\times E^{(v_2)}}\calL_{v_2,v_1}$. There is a natural rational section given by 
$\widehat{\Phi}(z_{B_o}|z_{A_o}): =\frac{\fac(z_{B_o}|z_{A_o})}{\fac(z_{A_o}|z_{B_o})}$, where $A_o=[1, v_1]$, and $B_o=[v_1+1, v_{1}+v_2]$. 

\begin{theorem}
\label{thm:braiding}
The rational morphism $\gamma$ is a braiding, making $(\calC, \otimes_{t_1, t_2}, \gamma)$ a symmetric monoidal category. 
\end{theorem}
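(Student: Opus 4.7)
The strategy is to reduce each braided/symmetric monoidal axiom to an elementary identity for the rational function $\fac(z_A|z_B)$, which will be immediate from its defining product formulas \eqref{equ:fac1}, \eqref{equ:fac2}.

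First I would verify that $\gamma_{\calF,\calG}$ is a well-defined rational isomorphism. Writing $\sigma:E^{(v_1)}\times_S E^{(v_2)}\times_S E^2\to E^{(v_2)}\times_S E^{(v_1)}\times_S E^2$ for the swap (with the sign flip on $E^2$), one has $\bbS_{v_2,v_1}\circ\sigma=\bbS_{v_1,v_2}$, so by projection formula the target of $\gamma_{\calF,\calG}$ equals $\bbS_{v_1,v_2,*}(\calF_{v_1}\boxtimes\calG_{v_2}\otimes\sigma^*\calL_{v_2,v_1})$. The previous lemma identifying $\sigma^*\calL_{v_2,v_1}\cong\calL_{v_1,v_2}^{\mathrm{sw}}$ (the analogous line bundle with the roles of the two blocks swapped) then shows that $\gamma_{\calF,\calG}$ is obtained by tensoring with a rational section of $\calL_{v_1,v_2}^\vee\otimes\sigma^*\calL_{v_2,v_1}$; by Lemma~\ref{lem: theta_add_tensor} this is exactly the line bundle of which $\widehat{\Phi}(z_{B_o}|z_{A_o})=\fac(z_{B_o}|z_{A_o})/\fac(z_{A_o}|z_{B_o})$ is a rational section. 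Naturality in $\calF$ and $\calG$ is automatic, since $\widehat{\Phi}$ does not involve the objects themselves.

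Next I would check the two hexagon axioms. Fix $v_1,v_2,v_3\in\bbN^I$ with $A_o=[1,v_1]$, $B_o=[v_1+1,v_1+v_2]$, $C_o=[v_1+v_2+1,v_1+v_2+v_3]$; under the iterated symmetrization of diagram~\eqref{dia: S_{123}} the relevant sections all pull back to $E^{(v_1)}\times_S E^{(v_2)}\times_S E^{(v_3)}\times_S E^2$. The crucial observation is the multiplicativity
\[
\fac(z_{A_o\cup B_o}\,|\,z_{C_o})=\fac(z_{A_o}|z_{C_o})\cdot\fac(z_{B_o}|z_{C_o}),\qquad
\fac(z_{C_o}\,|\,z_{A_o\cup B_o})=\fac(z_{C_o}|z_{A_o})\cdot\fac(z_{C_o}|z_{B_o}),
\]
which is visible from the product formulas \eqref{equ:fac1}, \eqref{equ:fac2}. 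Taking ratios yields $\widehat\Phi(z_{C_o}|z_{A_o\cup B_o})=\widehat\Phi(z_{C_o}|z_{A_o})\cdot\widehat\Phi(z_{C_o}|z_{B_o})$, which is precisely the hexagon identity expressing $\gamma_{\calF\otimes\calG,\calH}$ as the composition $(\gamma_{\calF,\calH}\otimes\id_\calG)\circ(\id_\calF\otimes\gamma_{\calG,\calH})$ under the associator from Proposition~\ref{prop:C_monoidal} and Lemma~\ref{lem:relation of L}. The mirror identity provides the second hexagon.

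Finally, symmetry amounts to the tautology
\[
\widehat\Phi(z_{A_o}|z_{B_o})\cdot\widehat\Phi(z_{B_o}|z_{A_o})=\frac{\fac(z_{A_o}|z_{B_o})}{\fac(z_{B_o}|z_{A_o})}\cdot\frac{\fac(z_{B_o}|z_{A_o})}{\fac(z_{A_o}|z_{B_o})}=1,
\]
so $\gamma_{\calG,\calF}\circ\gamma_{\calF,\calG}=\id$ after the two swap operations are composed. Combined with the hexagon axioms this upgrades $(\calC,\otimes_{t_1,t_2},\gamma)$ to a symmetric monoidal category.

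The routine-but-delicate part, and the only real obstacle, is the bookkeeping in the first paragraph: one must match the line bundle identifications $\sigma^*\calL_{v_2,v_1}\cong\calL_{v_1,v_2}^{\mathrm{sw}}$ and the pull-back identities along the threefold symmetrization in Lemma~\ref{lem:relation of L} against the dual-versus-direct conventions for $\fac_1$ and $\fac_2$, so that the three rational identities for $\fac$ really correspond to morphisms of the correct sheaves rather than merely functions with matching divisors. Once the bundles are reconciled, all three axioms are immediate from the product structure of $\fac$.
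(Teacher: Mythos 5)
Your proposal is correct and follows essentially the same route as the paper: the paper likewise reduces the hexagon axiom to the multiplicativity $\widehat{\Phi}(z_{A}|z_{B})\widehat{\Phi}(z_{A}|z_{C})=\widehat{\Phi}(z_{A}|z_{B\sqcup C})$ and the symmetry to $\widehat{\Phi}(z_{A}|z_{B})\widehat{\Phi}(z_{B}|z_{A})=1$, both read off from the product formulas for $\fac$. Your extra bookkeeping on the line-bundle identification via $\sigma^*\calL_{v_2,v_1}\cong\calL_{v_1,v_2}$ is handled in the paper by the unnumbered lemma stated just before the theorem, so nothing is missing.
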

\begin{proof}
As the associative constraint is the identify map of sheaves, we only need to show the equality $\gamma_{\calF,\calG}\gamma_{\calG,\calF}=\id$, and the commutativity of the diagram
\[\xymatrix@R=1.5em @C=.1em{
\calF\otimes_{t_1, t_2} \calG\otimes_{t_1, t_2}  \calH\ar[dr]_{\gamma_{\calF,\calG}\otimes  1}
\ar[rr]^{\gamma_{\calF,\calG\otimes \calH}}&&\calG\otimes_{t_1, t_2}  \calH\otimes_{t_1, t_2}  \calF\\
&\calG\otimes_{t_1, t_2}  \calF\otimes_{t_1, t_2}  \calH\ar[ur]_{1\otimes\gamma_{\calF,\calH}}
} \]
These follow from the facts that $ \widehat{\Phi}(z_{A}|z_{B}) \widehat{\Phi}(z_{A}|z_{C})= \widehat{\Phi}(z_{A}|z_{B\sqcup C})$, and that $\widehat{\Phi}(z_{A}|z_{B})\widehat{\Phi}(z_{B}|z_{A}) =1$. 
\end{proof}

\begin{remark}\label{rmk:H_gamma}
\begin{enumerate}
\item Compared to the formula \eqref{eq:coprod}, there is an extra term $H_{A}(z_B)$ in the formula of $\Delta (P(z))$ in \cite[\S 2.1]{YZ2}. This extra factor $H_{A}(z_B)$ is absorbed in the braiding $\gamma$ of $\calC$. In \S~\ref{Cartan subalgebra}, we will construct an action of a commutative algebra on $\SH$, where $H_{A}(z_B)$ lies in the commutative algebra and has a natural meaning. 
\item The fact that $\gamma$ is only a meromorphic isomorphism has to do with the meromorphic tensor structure of the module category of $\SH$. 
\Omit{\textcolor{blue}{Add a reference here.}}
\end{enumerate}
\end{remark}
\begin{theorem}
The object $\SH=\calO_{\calH_{E\times I}}$ is a bialgebra object in $\calC_{\loc}$.
\end{theorem}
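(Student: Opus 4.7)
The remaining ingredients to establish the bialgebra structure on $\SH$ are (i) the algebra axioms for $\star$, (ii) the coalgebra axioms for $\Delta$, and (iii) the compatibility between $\star$ and $\Delta$ inside the braided monoidal category $(\calC_{\loc},\otimes_{t_1,t_2},\gamma)$. Associativity of $\star$ is asserted above (and receives a topological proof in Proposition~\ref{prop:SH_ellCoh}), while the unit $\epsilon$ is the identity for $\otimes_{t_1,t_2}$ by Proposition~\ref{prop:C_monoidal}(2), so it remains only to exhibit a counit and to check compatibility. The natural counit is projection onto the component $v=0$, which is automatic from \eqref{eq:coprod} since the $(v_1,v_2)=(v,0)$ and $(0,v)$ summands both contribute the identity (here we use Lemma~\ref{lem:line_fac_lim}(3) to identify $\calL_{0,v}$ and $\calL_{v,0}$ with $\calO$).

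The core of the argument is the compatibility identity
\[
\Delta\circ\star \;=\; (\star\otimes\star)\circ(\id\otimes\gamma\otimes\id)\circ(\Delta\otimes\Delta),
\]
which I would check on each bigraded piece $\SH_{v_1}\otimes_{t_1,t_2}\SH_{v_2}\to\bigoplus_{w_1+w_2=v_1+v_2}\SH_{w_1}\otimes_{t_1,t_2}\SH_{w_2}$ by evaluating both sides as explicit rational sections. Using formula \eqref{shuffle formula} and \eqref{eq:coprod}, the left side for $P=f_1\star f_2$ becomes
\[
\Delta(f_1\star f_2)(z_{[1,w_1]}|z_{[w_1+1,v]})
=\sum_{w_1+w_2=v}(-1)^{(w_2,\overline C w_1)}\,\frac{(f_1\star f_2)(z)}{\fac(z_{[w_1+1,v]}|z_{[1,w_1]})},
\]
which, after expanding $\star$ by \eqref{shuffle formula}, becomes a sum over $\Sh(v_1,v_2)$ with a ratio of two $\fac$-factors. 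On the right side, $\Delta(f_1)\otimes\Delta(f_2)$ produces a sum over splittings $v_1=a_1+b_1$ and $v_2=a_2+b_2$, to which one applies $\gamma$ to swap the middle factors, introducing exactly the ratio $\widehat\Phi(z_{b_1}|z_{a_2})=\fac(z_{b_1}|z_{a_2})/\fac(z_{a_2}|z_{b_1})$ from \eqref{eq:gamma}; after the two further $\star$-products, each contributing its own $\fac$-factor by \eqref{shuffle formula}, one must match the resulting expression with the left-hand side.

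The matching is a combinatorial identity: the $\Sh(v_1,v_2)$-sum on the left reorganises, according to where each of the $v_1$ variables and $v_2$ variables lands among $[1,w_1]$ and $[w_1+1,v]$, into a sum over quadruples $(a_1,b_1,a_2,b_2)$ with $a_1+a_2=w_1$ and $b_1+b_2=w_2$, together with shuffles $\Sh(a_1,a_2)\times\Sh(b_1,b_2)$. The ratio $\fac(z_{[1,v_1]}|z_{[v_1+1,v]})/\fac(z_{[w_1+1,v]}|z_{[1,w_1]})$ then factors, by Lemma~\ref{lem: theta_add_tensor} (multiplicativity of theta in direct sums) and the definitions \eqref{equ:fac1}, \eqref{equ:fac2} of $\fac_1$ and $\fac_2$, as
\[
\fac(z_{a_1}|z_{a_2})\cdot\fac(z_{b_1}|z_{b_2})\cdot\widehat\Phi(z_{b_1}|z_{a_2}),
\]
which is exactly the product of the two $\fac$-factors from the two copies of $\star$ on the right, times the $\gamma$-factor. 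The sign $(-1)^{(v_2,\overline C v_1)}$ distributes compatibly as $(-1)^{(a_2,\overline C a_1)+(b_2,\overline C b_1)+(b_1,\overline C a_2)+(b_2,\overline C a_1)}$ using bilinearity, matching the signs appearing on the right.

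I expect the main obstacle to be the bookkeeping of shuffles, signs and poles in this last combinatorial identification: one has to verify that the apparent simple poles of $\widehat\Phi$ and of the $\fac$-factors on either side cancel against the corresponding theta numerators so that the identity holds as an equality of rational sections over $\calC_{\loc}$, and that the identification of the two shuffle sums is canonical and not just valid after passing to the symmetrisation. Once this identity is in place, the compatibility of counit with $\star$ and unit with $\Delta$ reduces to Lemma~\ref{lem:line_fac_lim}(3), completing the proof that $(\SH,\star,\Delta)$ is a bialgebra object in $\calC_{\loc}$.
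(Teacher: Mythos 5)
Your strategy is the same as the paper's: the only substantive point is the compatibility $\Delta\circ\star=(\star\otimes\star)\circ(\id\otimes\gamma\otimes\id)\circ(\Delta\otimes\Delta)$, checked on rational sections by reorganising the shuffle sum over quadruples $(a_1,b_1,a_2,b_2)$ and matching $\fac$-factors via bimultiplicativity. However, the factorisation identity you display --- the one place where the braiding actually enters, and hence the whole content of the proof --- is false as written, in two ways. First, you equate the left-hand ratio with \emph{only} the two $\star$-factors times the $\gamma$-factor, dropping the denominators $\fac(z_{b_1}|z_{a_1})$ and $\fac(z_{b_2}|z_{a_2})$ contributed by $\Delta\otimes\Delta$ on the right-hand side of the compatibility. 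Second, the braiding factor is attached to the wrong pair of blocks: expanding numerator and denominator by bimultiplicativity of $\fac$, the cross-term $\fac(z_{b_1}|z_{a_2})$ between the \emph{inner} blocks occurs in both $\fac(z_{[1,v_1]}|z_{[v_1+1,v]})$ and $\fac(z_{[w_1+1,v]}|z_{[1,w_1]})$ and cancels identically, so no factor $\widehat{\Phi}(z_{b_1}|z_{a_2})$ survives; what survives is the cross-ratio between the \emph{outer} blocks. The correct identity is
\[
\frac{\fac(z_{[1,v_1]}|z_{[v_1+1,v]})}{\fac(z_{[w_1+1,v]}|z_{[1,w_1]})}
=\frac{\fac(z_{a_1}|z_{a_2})\,\fac(z_{b_1}|z_{b_2})}{\fac(z_{b_1}|z_{a_1})\,\fac(z_{b_2}|z_{a_2})}\cdot
\frac{\fac(z_{a_1}|z_{b_2})}{\fac(z_{b_2}|z_{a_1})},
\]
which is exactly the identity recorded in the paper's proof under the dictionary $A_1=a_1$, $A_2=b_1$, $B_1=a_2$, $B_2=b_2$. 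With the identity as you state it, the two sides of the compatibility do not agree, so the verification does not close.

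The repair is mechanical but not cosmetic: you must restore the two $\Delta\otimes\Delta$ denominators and replace $\widehat{\Phi}(z_{b_1}|z_{a_2})$ by the outer-block ratio $\fac(z_{a_1}|z_{b_2})/\fac(z_{b_2}|z_{a_1})$, and then convince yourself that this ratio is indeed the section of $\calL_{v_1',v_2'}$-type line bundles implementing the braiding in the product on $\SH\otimes_{t_1,t_2}\SH$ (this needs care, since the abstract braiding swaps the inner tensor factors indexed by $b_1$ and $a_2$; the paper itself glosses over this identification). The remaining points of your plan --- counit as projection to the $v=0$ component, unit/counit compatibilities via Lemma~\ref{lem:line_fac_lim}(3), and the sign bookkeeping by bilinearity of $(\cdot,\overline{C}\cdot)$ and symmetry of $\overline{C}$ (where on the left you must remember to include the sign from $\Delta$ as well as the one from $\star$) --- are fine and reproduce the paper's argument.
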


\begin{proof}
The proof goes the same way as in \cite[Theorem~2.1]{YZ2}. Nevertheless, to illustrate how the role of the $H_{A}(z_B)$-factor in \cite{YZ2} is replaced by $\gamma$, we demonstrate the proof that $\Delta$ is an algebra homomorphism. 
Note that we have the same sign $(-1)^{(v_2, \overline{C} v_1)}$ in both multiplication $\star$ \eqref{shuffle formula} and comultiplication $\Delta$ in \eqref{eq:coprod} of $\SH$. To show the claim, it is suffices to drop the sign $(-1)^{(v_2, \overline{C} v_1)}$ in both $\star$ and $\Delta$. 

We introduce the following notations for simplicity. For a pair of dimension vectors $(v_1', v_2')$, with $v_1'+v_2'=v$, and for $(A, B)\in\bfP(v_1, v_2)$, we write
\[
A_1:=A\cap [1, v_1'], \,\ A_2:=A\cap [v_1'+1, v_1'+v_2'], \,\
B_1:=B\cap [1, v_1'], \,\ B_2:=B\cap [v_1'+1, v_1'+v_2']. 
\]
Note that $\SH\otimes_{t_1, t_2} \SH$ is an algebra object in $(\calC, \otimes_{t_1, t_2}, \gamma)$. The multiplication $m_{\SH\otimes \SH}$ is given by  
\[(m_{\SH} \otimes m_{\SH})\circ (\id_{\SH}\otimes\gamma \otimes \id_{\SH}): (\SH\otimes_{t_1, t_2} \SH)\otimes_{t_1, t_2}(\SH\otimes_{t_1, t_2} \SH) \to \SH\otimes_{t_1, t_2} \SH.
\] 
We now check the identity $\Delta m=(m\otimes m)(\id\otimes\gamma\otimes\id)( \Delta\otimes\Delta )$. We have
\begin{align*}
 \Delta(P\star Q)
 =&
\sum_{(A, B)\in\bfP(v_1,v_2)} 
\Delta\big(P(z_A)\cdot Q(z_B)\cdot \fac(z_A|z_B)\big)\\
=&\sum_{(A, B)\in\bfP(v_1,v_2)} 
\sum_{ v_1'+v_2'=v}
\frac{ 
\gamma_{A_2,B_1}P(z_{A_1} \otimes z_{A_2})
Q(z_{B_1} \otimes z_{B_2})
\fac(z_{A}|z_{B})}
{ \fac( z_{[v_1'+1, v_1'+v_2']}|z_{[1, v_1']})}
\\
=&\sum_{ v_1'+v_2'=v}
\sum_{(A, B)\in\bfP(v_1,v_2)}
\frac{ 
P(z_{A_1} \otimes z_{A_2})
Q(z_{B_1} \otimes z_{B_2})
}
{ \fac(z_{A_2\sqcup B_2}|z_{A_1\sqcup B_1})}
\widehat{\Phi}(z_{B_2}| z_{A_1}) \fac(z_{A_1\sqcup A_2}|z_{B_1\sqcup B_2}).
\end{align*}
The last equality is obtained from the formula of $\gamma$.
Recall that by definition $\widehat{\Phi}(z_{B_2}| z_{A_1})=
\frac{\fac(z_{B_2}|z_{A_1})}{\fac(z_{A_1}|z_{B_2})}.$ Plugging the equality 
\[
\frac{\widehat{\Phi}(z_{B_2}| z_{A_1}) \fac(z_{A_1\sqcup A_2}|z_{B_1\sqcup B_2})}{ \fac(z_{A_2\sqcup B_2}|z_{A_1\sqcup B_1})}
= \frac{
\fac(z_{A_1}|z_{B_1})
\fac(z_{A_2}|z_{B_2})
}{ 
\fac(z_{A_2}|z_{A_1})
\fac(z_{B_2}|z_{B_1})
}\]
into the formula of $\Delta(P\star Q)$, we have
\begin{align*}
\Delta(P\star Q)
&=\sum_{ v_1'+v_2'=v}
\sum_{(A, B)\in\bfP(v_1, v_2)} 
&\frac{
P(z_{A_1} \otimes z_{A_2})  }{\fac(z_{A_2}|z_{A_1})}
\frac{
Q(z_{B_1} \otimes z_{B_2}) }
{\fac(z_{B_2}|z_{B_1})}
\fac(z_{A_1}|z_{B_1})
\fac(z_{A_2}|z_{B_2})=\Delta(P)\star\Delta(Q).
\end{align*}
This completes the proof. 
\end{proof}

For each $k\in I$, let $e_k$ be the dimension vector valued $1$ at vertex $k$ of the quiver $Q$ and zero otherwise. The \textit{spherical subsheaf}, denoted by $\SH^{\sph}$, is the subsheaf of $\SH$ generated by $\SH_{e_k}=\calO_{E^{(e_k)}}$ as $k$ varies in $I$. 
\begin{corollary}\label{cor:bialg}
The spherical subsheaf $\SH^{\sph}$ is a bialgebra object (without localization) in $(\calC, \otimes_{t_1, t_2}, \gamma)$.
\end{corollary}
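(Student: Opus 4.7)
The plan is to reduce the corollary to a single regularity claim for the restriction of $\Delta$ to $\SH^{\sph}$. By construction $\SH^{\sph}$ is a $\star$-subalgebra of $\SH$, and the bialgebra axioms for $(\SH, \star, \Delta)$ were already verified in the preceding theorem, albeit only in the localized category $\calC_{\loc}$. Hence it is enough to show that $\Delta|_{\SH^{\sph}}$ factors through the un-localized object $\SH^{\sph} \otimes_{t_1,t_2} \SH^{\sph}$; the coassociativity and compatibility with $\star$ then follow by restriction.

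First, I would verify the base case on each generator. For $k \in I$ and a local section $P$ of $\SH_{e_k} = \calO_E$, the only decompositions of the dimension vector $e_k$ are the trivial ones $(v_1, v_2) = (0, e_k)$ and $(e_k, 0)$. By Lemma~\ref{lem:line_fac_lim}(3), the line bundle $\calL_{v_1, v_2}$ is trivial in both boundary cases and the rational section $\fac$ reduces to the constant $1$. Therefore the coproduct formula \eqref{eq:coprod} collapses to
\[
\Delta(P) = P \otimes 1 + 1 \otimes P,
\]
which is manifestly a regular section of $(\SH^{\sph} \otimes_{t_1,t_2} \SH^{\sph})_{e_k}$.

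Next I would extend to arbitrary $v \in \bbN^I$ by induction on $v$, using the algebra homomorphism property $\Delta(P \star Q) = \Delta(P) \star \Delta(Q)$ of the preceding theorem. Since $\SH^{\sph}$ is generated as a $\star$-subalgebra by $\bigoplus_{k \in I} \SH_{e_k}$, the inductive step amounts to the following regularity statement: if $\Xi$ and $\Xi'$ are regular local sections of $\SH^{\sph} \otimes_{t_1,t_2} \SH^{\sph}$, then so is their braided product $\Xi \star \Xi'$.

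The main obstacle is precisely this last claim. The braided multiplication involves the meromorphic braiding $\gamma$ of Theorem~\ref{thm:braiding}, whose factor $\widehat{\Phi}(z_{B_2}|z_{A_1}) = \fac(z_{B_2}|z_{A_1})/\fac(z_{A_1}|z_{B_2})$ \emph{a priori} introduces genuine poles. Unpacking the braided product via the shuffle formula \eqref{shuffle formula} in each tensor slot, the numerator of $\widehat{\Phi}$ combines with the $\fac$-factors coming from the two shuffle multiplications into exactly the expression whose algebraic identity was the content of the proof that $\Delta$ is an algebra map in the preceding theorem. The only remaining apparent poles lie on the small diagonals $z_s^{\alpha} = z_t^{\alpha}$ and are eliminated by the $\Sh(v_1,v_2)$-symmetrization, just as in the regularity of $\star$ itself (compare \cite[Proposition~5.29]{Vish}). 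With regularity established, the bialgebra axioms for $(\SH^{\sph}, \star, \Delta)$ in $\calC$ are inherited from those for $(\SH, \star, \Delta)$ in $\calC_{\loc}$ by restriction.
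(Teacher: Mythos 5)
Your strategy --- reduce everything to regularity of $\Delta|_{\SH^{\sph}}$, check the generators via Lemma~\ref{lem:line_fac_lim}(3), and propagate by $\Delta(P\star Q)=\Delta(P)\star\Delta(Q)$ --- is exactly the argument the paper leaves implicit, and your base case is correct. The problem is the lemma you isolate for the inductive step: ``if $\Xi$ and $\Xi'$ are regular local sections of $\SH^{\sph}\otimes_{t_1,t_2}\SH^{\sph}$, then so is $\Xi\star\Xi'$.'' As a statement about arbitrary regular sections this is false. The multiplication on $\SH\otimes_{t_1,t_2}\SH$ is $(m\otimes m)\circ(\id\otimes\gamma\otimes\id)$, and $\gamma$ is only a \emph{rational} isomorphism: applied to the inner pair it introduces $\widehat\Phi(z_{B_2}|z_{A_1})=\fac(z_{B_2}|z_{A_1})/\fac(z_{A_1}|z_{B_2})$, whose denominator contains $\fac_2(z_{A_1}|z_{B_2})$ and hence vanishes on \emph{shifted} diagonals $z^{\inc(h)}_t-z^{\out(h)}_s+m_ht_1=0$, not on the small diagonals; the two slot-wise shuffle factors $\fac(z_{A_1}|z_{B_1})$ and $\fac(z_{A_2}|z_{B_2})$ contain nothing that cancels it, and symmetrization does not help. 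Concretely, $(1\otimes P)\star(Q\otimes 1)=\widehat\Phi\cdot(Q\otimes P)$ is genuinely meromorphic for generic $t_1,t_2$.

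What saves the induction is that you never need the lemma for arbitrary $\Xi,\Xi'$: you only need regularity of the composite morphism $m_{\SH\otimes\SH}\circ(\Delta\otimes\Delta)$ on $\SH^{\sph}_{v_1}\otimes_{t_1,t_2}\SH^{\sph}_{v_2}$, and the source of this composite carries the extra twist $\calL_{v_1,v_2}$, whose section $\fac(z_A|z_B)=\prod_{i,j}\fac(z_{A_i}|z_{B_j})$ supplies the cross factor $\fac(z_{A_1}|z_{B_2})$. It is this factor --- not the numerator of $\widehat\Phi$ --- that cancels the denominator of $\widehat\Phi(z_{B_2}|z_{A_1})$; this is precisely the displayed identity in the proof of the preceding theorem, whose right-hand side $\fac(z_{A_1}|z_{B_1})\fac(z_{A_2}|z_{B_2})/\bigl(\fac(z_{A_2}|z_{A_1})\fac(z_{B_2}|z_{B_1})\bigr)$ exhibits $\Delta(P\star Q)$ as the shuffle-symmetrization of $\Delta(P)\cdot\Delta(Q)$ times factors that are regular away from the small diagonals, where $\fac_1$ contributes only simple poles killed by the $\Sh(v_1,v_2)$-sum as for $\star$ itself. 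Restate your inductive step as regularity of this composite (rather than of braided products of arbitrary regular sections) and the proof goes through.
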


In Theorem \ref{thm:pairing}, we will construct a non-degenerate bialgebra pairing on $\SH^{\sph}$, so that we can take the Drinfeld double $D(\SH^{\sph})$ of $\SH^{\sph}$. The sheafified elliptic quantum group is defined to be the Drinfeld double $D(\SH^{\sph})$.  In \S\ref{subsec:rational sec}, we verify that the meromorphic sections of $D(\SH^{\sph})$ satisfy the relations of elliptic quantum group in \cite{GTL15}. 

\section{A digression to loop Grassmannians}
\label{sec:Zastava}
In this section, we take a digression to discuss the relation between the sheafified elliptic quantum group and the global loop Grassmannian on the elliptic curve $E$. 
In particular, we show that the monoidal structure we defined is related to the factorization structure on the loop Grassmannian.

\subsection{Local spaces and local line bundles}
\label{subsec:LocalLineBundle}

Let $C$ be a smooth curve and $\mathcal{H}_{C\times I}$ be the $I$-colored Hilbert scheme of points on $C$. 
Very recently, Mirkovi\'c in \cite{Mirk} gave a new construction of loop Grassmannian $\mathcal{G}r$ over $\mathcal{H}_{C\times I}$ in the framework of \textit{local spaces}. 
A \textit{local space} introduced by Mirkovi\'c is a space over $\mathcal{H}_{C\times I}$ satisfying a factorization property for disjoint unions similar to that of  Beilinson-Drinfeld, referred to as the {\it locality structure} in {\it loc. cit.}. 
Similarly, there are notions of {\it local vector bundles, local line bundles, local projectivization of a local vector bundle}, etc. 
This locality structure has many applications, including a new construction of the semi-infinite orbits in the loop Grassmannians of reductive groups and affine Kac-Moody groups. It also has applications in geometric Langlands in higher dimensions. 
We briefly recall the relevant notions and results, and refer the readers to {\it loc. cit.} for the details.

Let $C$ be an arbitrary smooth complex curve. Similarly to \S~\ref{sec:category C}, we have the Hilbert scheme of $I$-colored points in $C$
\[\calH_{C\times I}:=\coprod_{v\in\bbN^I}C^{(v)}.\]
For a dimension vector $v=(v^i)\in \N^I$, we have  a natural projection $C^{|v|} \to C^{(v)}$. If we write points in $C^{|v|}$ using the coordinate $(z^{k}_{a})_{k\in I, a\in [1, v^k]}$ introduced in \S~\ref{sec:category C}, then have have the divisor $\Delta_{ij}$ which  is the image of the divisor 
$\bigcup_{a\in [1, v^i], b\in [1, v^j]} \{  z^{i}_a=z^{j}_{b}\}.$ It only depends on $i,j\in I$.
In the special case when $I$ is a point, the image of the divisors $\Delta_{ij}$ under the projection gives the diagonal divisor $\Delta$ of $C^{(N)}$.

We first recall the classification of local line bundles on $\mathcal{H}_{C\times I}$ given in \cite{Mirk}.
\begin{prop}[Mirkovi\'c]
\label{prop:classification local line}
A local line bundle $L$ on $\mathcal{H}_{C\times I}$ is equivalent to the data of line bundles 
$L_i$ on $C$, $i \in I$, and one quadratic form  on $\bbZ[I]$.
\end{prop}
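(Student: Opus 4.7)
The plan is to match a local line bundle with its two pieces of data by passing between factorized open strata and the diagonals, then reverse-engineering to recover the line bundle from these invariants. I expect to organize this in three stages: extraction of $(L_i)_{i\in I}$, extraction of the quadratic form $q$ on $\bbZ[I]$, and the reconstruction showing these two pieces of data are a full invariant.

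First, I would restrict $L$ to the components $C^{(e_i)}\cong C$ of $\calH_{C\times I}$ corresponding to a single colored point. Since a single point is automatically ``disjoint'' from itself trivially, the locality axiom imposes no further constraint here, and the restriction
\[
L_i := L\big|_{C^{(e_i)}}
\]
is a well-defined line bundle on $C$. This extracts the $I$-indexed collection of line bundles.

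Next, I would extract the quadratic form. On $C^{(e_i+e_j)}$ with $i\ne j$ (which is $C\times C$) the locality structure identifies $L|_{(C\times C)\setminus\Delta_{ij}}$ canonically with $(L_i\boxtimes L_j)|_{(C\times C)\setminus\Delta_{ij}}$. Since $\Delta_{ij}$ is a smooth divisor and the two line bundles already agree on its complement, they differ by a unique integer $n_{ij}\in\bbZ$:
\[
L\big|_{C^{(e_i+e_j)}} \;\cong\; (L_i\boxtimes L_j)\otimes \calO\bigl(n_{ij}\,\Delta_{ij}\bigr).
\]
For $i=j$ the analogous comparison on $C^{(2)}$ yields an integer $n_{ii}$ (using the pullback to $C^2$ to make sense of the diagonal coefficient). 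Symmetry of $\Delta_{ij}$ forces $n_{ij}=n_{ji}$, so $q(e_i,e_j):=n_{ij}$ defines a symmetric bilinear form and hence a quadratic form on $\bbZ[I]$. A compatibility/associativity check on $C^{(e_i+e_j+e_k)}$ shows that no further invariants appear at higher dimension vectors, because the relevant diagonals pairwise intersect transversely and each ``collision'' of two colors is already governed by the $n_{ij}$ above.

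Finally, I would prove the reconstruction, which I expect to be the main obstacle. Given arbitrary $(L_i)_{i\in I}$ and $q$, I define candidate line bundles on $C^{(v)}$ by
\[
L_v^{\mathrm{cand}} := \bigl(\boxtimes_{i\in I} L_i^{\boxtimes v^i}\bigr)^{\fS_v} \otimes \calO\Bigl(\textstyle\sum_{i,j} q(e_i,e_j)\,\Delta_{ij}^{(v)}\Bigr),
\]
where $\Delta_{ij}^{(v)}\subset C^{(v)}$ is the appropriate diagonal divisor (with a factor of $\tfrac{1}{2}$ convention for $i=j$ so the exponent is integral on $C^{(v)}$). The nontrivial point is to check that this family carries a locality structure: on the disjoint-union open locus of $C^{(v_1)}\times C^{(v_2)}\hookrightarrow C^{(v_1+v_2)}$, the diagonal divisors split as pullbacks, and bilinearity of $q$ gives the required factorization isomorphism $L_{v_1+v_2}^{\mathrm{cand}}\big|_{\mathrm{disj}}\cong L_{v_1}^{\mathrm{cand}}\boxtimes L_{v_2}^{\mathrm{cand}}$. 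The cocycle condition for triple factorizations follows from associativity of addition in $\bbZ$ and the fact that triple diagonals decompose as sums of pairwise diagonals. The hardest part here is showing this assignment is unique, i.e.\ that a local line bundle with all data $(L_i,q)$ equal to zero must be trivial; this reduces, by the two steps above, to a rigidity statement saying any self-isomorphism on the disjoint loci extends uniquely across diagonals, which is a consequence of $C$ being a smooth curve (so $\Delta_{ij}$ has codimension one and Hartogs-type extension applies to line bundle data).
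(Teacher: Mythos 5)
The paper does not actually prove this proposition: it is quoted from Mirkovi\'c's work \cite{Mirk}, and the text only records the reconstruction formula $L|_{C^{(v)}}=(\boxtimes_{i\in I} L_i^{(v^i)})\otimes\calO(\sum_{i,j}d_{ij}\Delta_{ij})$ in the paragraph following the statement. Your sketch supplies a self-contained argument whose output agrees exactly with that formula, and the overall strategy (read off $L_i$ from $C^{(e_i)}$, read off $d_{ij}$ from the order of vanishing along $\Delta_{ij}$ of the factorization isomorphism on $C^{(e_i+e_j)}$, then reconstruct and check the cocycle condition via bilinearity) is the natural one and is sound. Your observation that higher strata contribute nothing because they have codimension $\ge 2$ in the smooth varieties $C^{(v)}$ is the right reason the two-point data suffices.

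Two points deserve more care than you give them. First, the integer $n_{ij}$ is \emph{not} determined by the isomorphism classes of $L|_{C^{(e_i+e_j)}}$ and $L_i\boxtimes L_j$ alone --- for $C=\bbG_a$ or $\bbG_m$ the sheaf $\calO(\Delta)$ is trivial, so that comparison would lose all information (cf.\ the Remark at the end of \S5 of the paper). What pins down $n_{ij}$ is the \emph{given} locality isomorphism, viewed as a rational section of $L^\vee\otimes(L_i\boxtimes L_j)$; its order of vanishing along $\Delta_{ij}$ is the invariant. Your phrasing ``the two line bundles already agree on its complement, they differ by a unique integer'' should be reworded to make clear you are taking the divisor of that specific section. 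Second, for $i=j$ the map $C^2\to C^{(2)}$ is ramified along the diagonal, so the reduced diagonal divisor $\delta\subset C^{(2)}$ pulls back to $2\Delta\subset C^2$; this built-in factor of $2$ is precisely why the resulting invariant is a quadratic form on $\bbZ[I]$ (with $d_{ii}$ an arbitrary integer equal to the coefficient of $\delta$) rather than an arbitrary symmetric matrix, and it makes your ad hoc ``factor of $\tfrac12$ convention'' unnecessary once stated correctly. With those two clarifications your argument is complete.
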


Let $\{L_i\}_{i\in I}$ be line bundles on $C$, and $(d_{ij})$ be a quadratic form on $\bbZ[I]$. 
Note that any line bundle $L$ on $C$ defines a line bundle $L^{(n)}$ on $C^{(n)}$, $n\in \bbN$. 
The pullback to $C^n$ is $L^{\boxtimes n}$. Let $L_i^{(v^i)}$ be the line bundle on $C^{(v^i)}$. 
The corresponding local line bundle in Proposition \ref{prop:classification local line} is given by $L|_{C^{(v)}}=(\boxtimes_{i\in I} L_i^{(v^i)}) \otimes_{E^{(v)}} \calO(\sum_{i, j\in I} d_{ij}\Delta_{ij} )$.

\subsection{Loop Grassmannians from local line bundles}

The construction of loop Grassmannians from \cite{Mirk} is summarized as follows. 
For the data of a finite set $I$, a quadratic form on the lattice $\Z[I]$, Mirkovic constructs a local space, called \textit{the Zastava space}, over $\mathcal{H}_{C\times I}$. Gluing these Zastava spaces using the locality structure together gives the loop Grassmannian $\mathcal{G}r$. This procedure is called the semi-infinite construction in {\it loc. cit.}. 

Let $\mathcal{T}$ be the universal family  over $\mathcal{H}_{C\times I}$, and $\Gr( \mathcal{T})$ be the  Grassmannian parameterizing subschemes of $\mathcal{T}$. In other words, for any  $D \in  \mathcal{H}_{C\times I}$, the fibers are  $\mathcal{T}_D=D$, and  $ \Gr( \mathcal{T})_D=\{ D'\mid D'\subset D\}$ the moduli of all subschemes of $D$. 
Let $p: \Gr( \mathcal{T}) \to \mathcal{H}_{C\times I}$ be the bundle map. Let $q:  \Gr( \mathcal{T}) \to \mathcal{H}_{C\times I}$ be the map $(D', D)\mapsto D'$. 
\[
\xymatrix@R=0.5em{
& \Gr( \mathcal{T}) \ar[ld]_q\ar[rd]^p& \\
 \mathcal{H}_{C\times I} && \mathcal{H}_{C\times I}
}
\]
Let $\FM$ be the  Fourier-Mukai transform $\FM=p_* \circ q^*$, and let  $\bbP_{\loc}(V)$ be the local projectivization of a vector bundle $V$ introduced by  Mirkovi\'c in {\it loc. cit.}.
We write the line bundle $\{\mathcal{O}_{E^{(v)}} (\sum_{i, j\in I} d_{ij}\Delta_{ij})\}_{v\in \N^I}$ simply as $\calL_{ (d_{ij})_{i, j\in I}}$.
\begin{definition}
The Zastava space $\mathcal{Z}$ is isomorphic to $\bbP_{\loc}\big(\FM (\calL_{ (d_{ij})_{i, j\in I}})\big)$. 
\end{definition}

By construction, the Zastava space is a family  over $\mathcal{H}_{C\times I}$, whose generic fiber is the products $\mathbb{P}^1 \times \mathbb{P}^1 \dots \times \mathbb{P}^1$.

Assume $G$ is a reductive group, $Q=(I, H)$ the corresponding quiver of $G$, and the quadratic form from Proposition~\ref{prop:classification local line} being the adjacency matrix. 
On $\calH_{C\times I}$ there is a Beilinson-Drinfeld loop Grassmannian $\mathcal{G}r(G)$. For a point $D\in \mathcal{H}_{C\times I}$, the fiber $\mathcal{G}r_D$ of loop Grassmannian associated to $G$ parametrizes the $G$-bundles on $C$ trivialized outside $D$.  
Then, $\mathcal{Z}\subset \mathcal{G}r(G)$ is the Zastava space in the sense of \cite{FM}, which is a local space on $\calH_{E\times I}$ with special fiber the semi-infinite orbit (Mirkovi\'c-Vilonen cycle) in  $\mathcal{G}r(G)$.

The torus $T:=\Gm^I$ acts on $\mathcal{Z}$. 

\begin{theorem}[Mirkovi\'c]
\label{thm: Mirkovic}
\begin{enumerate}
\item The local space $\mathcal{Z}^{T}$ over $\mathcal{H}_{C\times I}$ is isomorphic to  $\Gr( \mathcal{T})$, the Grassmannian of the tautological scheme $\mathcal{T}$. 
\item The line bundle $\mathcal{O}_{\mathcal{G}r}(1) \mid_{\mathcal{Z}^{T} }$ is isomorphic to the pullback of the local line bundle $\calL_{ (d_{ij})_{i, j\in I}}:=\{\mathcal{O}_{E^{(v)}} (\sum_{i, j\in I} d_{ij}\Delta_{ij})\}_{v\in \N^I}$ under the map $q$. 
\end{enumerate}
\end{theorem}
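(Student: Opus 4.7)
The plan is to analyze the $T$-fixed locus inside the local projectivization $\mathcal{Z} = \bbP_{\loc}\bigl(\FM(\calL_{(d_{ij})})\bigr)$ and then identify the restriction of the tautological line bundle on it. Throughout, the result is due to Mirkovi\'c \cite{Mirk}; my role here is to outline how the construction forces the identifications.

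First I would describe the $T$-equivariant structure on $\FM(\calL_{(d_{ij})})$. The torus $T = \Gm^I$ acts on the loop Grassmannian $\mathcal{G}r$ through its Cartan action, and this descends to the local subspace $\mathcal{Z}$; by the construction of the Zastava space as a local projectivization, the action lifts to a $T$-equivariant structure on the sheaf $\FM(\calL_{(d_{ij})})$. Unwinding the Fourier--Mukai transform $\FM = p_* \circ q^*$, the fiber of $\FM(\calL_{(d_{ij})})$ at a colored divisor $D \in \mathcal{H}_{C\times I}$ decomposes as a direct sum, indexed by the finitely many subdivisors $D' \subset D$, of the one-dimensional stalks $\calL_{(d_{ij}), D'}$. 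The key point to verify, reading off the $T$-weights from the semi-infinite construction, is that each summand indexed by $D'$ is a single $T$-weight space, whose weight is determined by the dimension vector of $D'$ and is distinct from the weights of the other summands.

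Next, for part (1), I would take $T$-fixed points of $\bbP_{\loc}(\FM(\calL_{(d_{ij})}))$. Over each $D \in \mathcal{H}_{C\times I}$, $T$-fixed points of the projectivization are exactly the lines spanned by weight vectors; by the previous step these are in natural bijection with subdivisors $D' \subset D$. Collecting these fiberwise gives a canonical set-theoretic identification $\mathcal{Z}^T \cong \Gr(\mathcal{T})$. I would then promote this to an isomorphism of local spaces over $\mathcal{H}_{C\times I}$ by observing that both sides satisfy the same factorization property with respect to disjoint unions of colored divisors: on $\Gr(\mathcal{T})$ it is the obvious splitting $\Gr(\mathcal{T})_{D_1 \sqcup D_2} = \Gr(\mathcal{T})_{D_1} \times \Gr(\mathcal{T})_{D_2}$, and on $\mathcal{Z}^T$ it is inherited from the locality structure of $\mathcal{Z}$.

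For part (2), I would compute the restriction of $\mathcal{O}_{\mathcal{G}r}(1)$ to a $T$-fixed point $(D', D) \in \mathcal{Z}^T$. Since $\mathcal{O}_{\mathcal{G}r}(1)$ restricts on $\mathcal{Z}$ to the tautological line bundle $\mathcal{O}_{\bbP_{\loc}(\FM(\calL_{(d_{ij})}))}(1)$, and since the stalk of $\mathcal{O}_{\bbP(V)}(1)$ (or its inverse, depending on the sign convention chosen in \cite{Mirk}) at the fixed point corresponding to a weight line $\ell \subset V$ is canonically identified with $\ell$ itself, the restriction to the point $(D', D)$ gives the fiber $\calL_{(d_{ij}), D'}$. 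Repackaging this over all $(D', D)$ produces the isomorphism $\mathcal{O}_{\mathcal{G}r}(1)\mid_{\mathcal{Z}^T} \cong q^*\calL_{(d_{ij})}$, compatibly with the local/factorization structure.

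The main obstacle will be the careful bookkeeping in the local/factorization framework: ensuring that the isomorphisms produced pointwise are morphisms of \emph{local} spaces and \emph{local} line bundles, rather than only of the underlying schemes and sheaves. This compatibility is essentially built into Mirkovi\'c's construction of $\mathcal{Z}$ as a local projectivization, and once the weight decomposition of $\FM(\calL_{(d_{ij})})$ is established to be $T$-equivariant in the local sense, both (1) and (2) follow by transporting the standard statements about $T$-fixed loci and $\mathcal{O}(1)$ on ordinary projectivizations across the locality formalism.
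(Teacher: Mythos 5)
The paper offers no proof of this statement: it is quoted verbatim from Mirkovi\'c's work \cite{Mirk}, so there is nothing in the paper to compare your argument against. Judged on its own, your outline follows the natural route (compute the $T$-fixed locus of the local projectivization, then restrict $\mathcal{O}(1)$ to it), but the step you single out as "the key point to verify" is false as stated, and this is not merely bookkeeping. The $T=\Gm^I$-weight of the summand of $\FM(\calL_{(d_{ij})})_D$ indexed by $D'\subset D$ depends only on the dimension vector of $D'$; hence two distinct subdivisors of the same multidegree (e.g.\ two distinct points of $D$ carrying the same color) span the \emph{same} weight space. The total fiber is therefore not multiplicity-free, and the $T$-fixed locus of its \emph{ordinary} projectivization would contain positive-dimensional components, which would break the bijection with $\Gr(\mathcal{T})_D$ that you want.

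The correct mechanism is that $\bbP_{\loc}$ is not the projectivization of the total fiber: by the locality (factorization) property, over a reduced divisor $D$ it is the product $\prod_{c\in D}\bbP\bigl(\calO\oplus\calL_{[c]}\bigr)$ of $\bbP^1$'s, and within each factor the two summands do have distinct weights ($0$ and $e_{i(c)}$), so the fixed points are computed factor by factor and are in bijection with subsets of $D$, i.e.\ with subdivisors. So the weight analysis must be run on each local factor, not on the direct sum over all $D'$. A second point your sketch passes over is the behavior over the diagonals: for non-reduced $D$ the fiber $\Gr(\mathcal{T})_D$ parametrizes all subschemes of a fat point, and the identification of $\mathcal{Z}^T_D$ with this scheme is exactly where the locality structure (rather than a fiberwise fixed-point computation on a projective space) is doing the work. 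With those two corrections, the identification in (1) and the computation of $\mathcal{O}(1)$ on weight lines in (2) go through as you describe.
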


The semi-infinite construction of {\it loc. cit.} then gives a loop Grassmannian $\mathcal{G}r$, which is a local space over $\mathcal{H}_{C\times I}$.
When $G$ is a reductive group, and the quadratic form is the adjacency matrix, this $\mathcal{G}r$ is the Beilinson-Drinfeld Grassmannian.
Fix a point $c$ on $E$, the fiber $\mathcal{G}r(G)_{[c]}$ at $[c]=\cup_{n\in \N} nc \in \calH_{C\times I}$ is isomorphic to the ind-scheme $G((z))/G[[z]]$, where $z$ is a local coordinate of the formal neighborhood of $c\in C$,
and the fiber 
$\mathcal{Z}_{[c]}$ is isomorphic to the Mirkovi\'c-Vilonen cycle of $G((z))/G[[z]]$.

Note that  $\mathcal{Z}(G)^{T}$ has a component $\mathcal{H}_{C\times I}$ when the subscheme $D'$ is the same as $D$. The map $q$ is identity when restricted to the component $\mathcal{H}_{C\times I}$.

\subsection{Classical limit of the monoidal structure}
\label{subsec:classical limit}
We describe the classical limit of the monoidal structure $\otimes_{t_1, t_2}$ of $\calC$, and identify it with structures occurred in the local space construction of the loop Grassmannian.

If $t_1=t_2=0$, by Lemma \ref{lem:line_fac_lim}, $\calL_{v_1,v_2}^{\fac_1}=\calO$. 
In this case, the $\fac_2$ in \eqref{equ:fac2} can be simplified as
\begin{equation}\label{eq:fac2 simplified}
\fac_2=\prod_{i, j\in I}
\prod_{s=1}^{v_1^i}
\prod_{t=1}^{v_2^j}  \vartheta(z^j_{t+v_1^j}-z^i_s)^{a_{ij}+a_{ji}}, 
\end{equation} where $a_{ij}$ is the number of arrows from $i$ to $j$ of quiver $Q$.  Let $D_{v_1, v_2}$ be  the divisor
$\sum_{i.j\in I}(a_{ij}+a_{ji}) \Delta_{ij}$ in $E^{(v_1)}\times E^{(v_2)}$. Therefore, $\calL_{v_1,v_2}^{\fac}|_{t_1=t_2=0}$ is the line bundle $\calO(D_{v_1, v_2})$ associated to the divisor $D_{v_1, v_2}$. 

We now describe the monoidal structure $\otimes_{0}$ of $\calC$. A special case is when $Q$ has no arrows. Lemma~\ref{lem:line_fac_lim} shows that in this case the tensor structure on the category $\calC$ is the same as the convolution product $*$, defined to be $\calF*\calG=\bbS_*(\calF\boxtimes\calG)$ with $\bbS:\calH_{C\times I}\times \calH_{C\times I}\to \calH_{C\times I}$ be the union map, for $\calF,\calG$ coherent sheaves on $\calH_{C\times I}$. 
That is,  
\[\{\calF\}_{v}*\{\calG\}_{v}:=\{\mathbb{S}_{v_1, v_2, *}(\calF_{v_1}\boxtimes  \calG_{v_2})\}_{v_1+v_2},\] where 
$
\mathbb{S}_{v_1,v_2}: E^{(v_1)}\times E^{(v_2)} \to E^{(v_1+v_2)}
$ is the symmetrization map. 

In general, the tensor structure $\otimes_{0}$ is closely related to the locality structure from 
\S~\ref{subsec:LocalLineBundle}. We take the curve $C$ to be the elliptic curve $E$, and the quadratic form from Proposition~\ref{prop:classification local line} to be the adjacency matrix of the quiver $Q$. 

\begin{prop}\label{prop:local}
\begin{enumerate}
\item The classical limit $\SH^{\sph}|_{t_1=t_2=0}$ of the spherical subalgebra $\SH^{\sph}$ 
is the local line bundle $\calL_{ (d_{ij})_{i, j\in I}}$ on  $\mathcal{H}_{E\times I}$. 
\item Let $L$ be the local line bundle on $\mathcal{H}_{E\times I}$ from (1), then $\calF\otimes_{t_1=t_2=0}\calG=\bbS_*((\calF\boxtimes\calG)\otimes (L\boxtimes L))\otimes L^{\vee}$.
\end{enumerate}
\end{prop}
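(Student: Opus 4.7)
The plan is to prove (2) first as a structural identity about $\otimes_{t_1=t_2=0}$, and then use the explicit shuffle formula to establish (1).

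For (2), I would start from the definition \eqref{equ:def tensor}. By Lemma~\ref{lem:line_fac_lim}(1), $\calL_{v_1,v_2}^{\fac_1}|_{t_1=t_2=0}\cong \calO$; indeed, $\fac_1|_{t_1=t_2=0}$ reduces to the constant $(-1)^{v_1\cdot v_2}$ since $\vartheta$ is odd. Hence $\calL_{v_1,v_2}|_0=\calL_{v_1,v_2}^{\fac_2}|_0$, and the simplified formula \eqref{eq:fac2 simplified} identifies this with $\calO(-D_{v_1,v_2})$, where $D_{v_1,v_2}$ is the cross-diagonal divisor on $E^{(v_1)}\times_S E^{(v_2)}$ whose coefficient on $\Delta^{12}_{ij}$ is $a_{ij}+a_{ji}$. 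Letting $L$ be the local line bundle of Proposition~\ref{prop:classification local line} with $L_i=\calO_E$ and quadratic form governed by the symmetrized adjacency matrix, a short divisor computation gives $L_{v_1}\boxtimes L_{v_2}\otimes\bbS_{v_1,v_2}^*L_v^{\vee}\cong\calL_{v_1,v_2}|_0$, since the internal diagonal contributions in $L_{v_1}\boxtimes L_{v_2}$ exactly cancel the internal part of $\bbS^*L_v$, leaving only the cross diagonals with the correct multiplicity. Applying the projection formula to $\bbS_{v_1,v_2,*}$ then yields (2).

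For (1), once $L$ is pinned down from the computation in (2), the spherical subsheaf $\SH^{\sph}|_0$ is by definition the subsheaf of $\calO_{\calH_{E\times I}}$ generated by the images of iterated shuffle products of the generators $1\in\SH_{e_k}=\calO_E$. At $t_1=t_2=0$, each application of the shuffle formula \eqref{shuffle formula} inserts the theta-function factor $\fac_2|_0$ and an overall sign coming from $\fac_1|_0$ and $(-1)^{(v_2,\overline C v_1)}$. Iterated applications produce, at level $v$, a symmetrized product of theta functions whose divisor of zeros is exactly what is needed to match the divisor of $L_v$ coming from the quadratic form $a_{ij}+a_{ji}$. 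This subsheaf is therefore isomorphic as an abstract sheaf to $L_v$, and the identifications across all $v$ assemble into the claimed isomorphism $\SH^{\sph}|_0\cong L$ in $\calC$.

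The main obstacle will be careful tracking of signs and orientation conventions: the shuffle sign $(-1)^{(v_2,\overline C v_1)}$, the constant $(-1)^{v_1\cdot v_2}$ from $\fac_1|_0$, and the convention that on $E^{(v)}$ one has $\Delta_{ij}=\Delta_{ji}$, so only the symmetric combination $d_{ij}+d_{ji}$ affects $L$. A secondary subtlety for (1) is verifying that the image of the iterated multiplication exhausts all of $L_v$ rather than a proper subsheaf; this should follow from generic non-vanishing of the theta generators at each stratum of $E^{(v)}$, together with the factorization/locality axiom of the local line bundle from \S~\ref{subsec:LocalLineBundle}, which forces multiplicative compatibility along disjoint supports and hence pins down the full line bundle on each component.
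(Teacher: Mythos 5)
Your proposal is correct, and it rests on exactly the same computation as the paper's proof: the factorization of the theta-function divisor, namely that the internal diagonal divisors $D^{(v_1)}$ and $D^{(v_2)}$ together with the cross-diagonal divisor $D_{v_1,v_2}$ add up to $\bbS_{v_1,v_2}^*D^{(v)}$. The difference is organizational, and your order is arguably the more natural one. The paper proves (1) by induction on $|v|$ (base case $\SH_{e_k}=\calO_E$, inductive step via the shuffle formula at $t_1=t_2=0$ and the displayed product identity for $\prod\vartheta(z^j_t-z^i_s)^{d_{ij}}$), and then declares (2) a ``direct consequence'' of (1) --- which is slightly awkward, since (2) is a statement purely about the monoidal structure $\otimes_0$ and never mentions $\SH^{\sph}$. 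You instead isolate the line-bundle identity $\calL_{v_1,v_2}|_0\cong L_{v_1}\boxtimes L_{v_2}\otimes\bbS_{v_1,v_2}^*L_v^{\vee}$, prove (2) directly from it by the projection formula, and only then feed the same identity into the inductive generation argument for (1); this makes (2) logically independent of (1) and makes the role of Lemma~\ref{lem:line_fac_lim}(1) and formula \eqref{eq:fac2 simplified} explicit. Two further points in your favor: your computation of $\fac_1|_{t_1=t_2=0}=(-1)^{v_1\cdot v_2}$ from the oddness of $\vartheta$ is the correct refinement of Lemma~\ref{lem:line_fac_lim}(1), and the ``secondary subtlety'' you flag --- that the image of the iterated multiplication is all of $L_v$ rather than a proper subsheaf --- is a genuine point that the paper's proof passes over silently; your suggested resolution (injectivity of a nonzero map of line bundles on an integral scheme, plus the locality/factorization constraint on each stratum) is the right way to close it. Do be consistent about the sign convention on divisors: with the paper's convention that $\fac$ is a rational section of the \emph{dual} $\calL_{v_1,v_2}^{\vee}$, one gets $\calL_{v_1,v_2}|_0\cong\calO(-D_{v_1,v_2})$ as you write, whereas the paper's prose asserts $\calO(D_{v_1,v_2})$; the two statements of the proposition are insensitive to this choice provided it is made uniformly for $L$ as well.
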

\begin{proof}
For dimension vector $v\in \N^I$, we identify $\mathcal{SH}^{\sph}_{v}|_{t_1=t_2=0}$ with the local line bundle $\mathcal{O}_{E^{(v)}} (D^{(v)})$, where $D^{(v)}:=\sum_{i, j\in I} d_{ij}\Delta_{ij}$, and $d_{ij}=a_{ij}+a_{ji}$ is the number of arrows between vertices $i$ and $j$. 
Note that $D^{(v)}$ is the zeros of the function $\prod_{i< j\in I}
\prod_{s=1}^{v^i}
\prod_{t=1}^{v^j} \vartheta(z^j_t-z^i_s)^{d_{ij}}.$

We now prove the claim by induction on $|v|=\sum_{i} v^i$.
When $|v|=1$, we have $v=e_k$, for some $k\in I$. In this case, $\mathcal{SH}_{e_k}=\mathcal{O}_{E^{(e_k)}}$, which is the desired local bundle on $E$.  

By induction hypothesis, assume if $|v|<n$, we have $\mathcal{SH}^{\sph}_{v}|_{t_1=t_2=0}=\mathcal{O}_{E}^{\boxtimes v}(D^{(v)})$. 
For any $v\in \N^I$, we write $v=v_1+v_2$, for $v_i\in \N^I$, $i=1, 2$. We assume that $|v_i|<n$, $i=1, 2$. 
When $t_1=t_2=0$, by \eqref{eq:fac2 simplified}, the multiplication $\mathcal{SH}^{\sph}_{v_1}\otimes \mathcal{SH}^{\sph}_{v_2} \to \mathcal{SH}^{\sph}_{v}$  is
given by \eqref{shuffle formula}:
 \[f_1\star f_2= 
\sum_{\sigma \in \Sh(v_1,v_2)} (-1)^{(v_2, \overline{C} v_1)} 
\sigma(f_1\cdot f_2 \cdot \fac|_{t_1=t_2=0}),\,\ \text{ where}\,\ 
\fac|_{t_1=t_2=0}=\prod_{i, j\in I}
\prod_{s=1}^{v_1^i}
\prod_{t=1}^{v_2^j}  \vartheta(z^j_{t+v_1^j}-z^i_s)^{d_{ij}}. 
\]
Therefore, $\mathcal{SH}^{\sph}_{v}|_{t_1=t_2=0}=\bigoplus_{v_1+v_2=v}
\mathcal{O}_{E^{(v_1)}}(D^{(v_1)})\otimes 
\mathcal{O}_{E^{(v_2)}}(D^{(v_2)})  \otimes \mathcal{O}(-\fac). 
$
We have the equality
\begin{align*}
&\prod_{i< j\in I}
\prod_{s=1}^{v_1^i}
\prod_{t=1}^{v_1^j} \vartheta(z^j_t-z^i_s)^{d_{ij}}
\cdot
\prod_{i< j\in I}
\prod_{s=1}^{v_2^i}
\prod_{t=1}^{v_2^j} \vartheta(z^j_t-z^i_s)^{d_{ij}}
\cdot
\prod_{i, j\in I}
\prod_{s=1}^{v_1^i}
\prod_{t=1}^{v_2^j}  \vartheta(z^j_{t+v_1^j}-z^i_s)^{d_{ij}}\\
&=\prod_{i< j\in I}
\prod_{s=1}^{v^i}
\prod_{t=1}^{v^j} \vartheta(z^j_t-z^i_s)^{d_{ij}}, 
\end{align*}
which is the defining function of $D^{(v)}$. 
This completes the proof of (1).

Statement (2) is a direct consequence of (1). 
\end{proof}

In other words, $\SH^{\sph}$ is a 2-parameter deformation of the local line bundle $\{\mathcal{O}_{E^{(v)}} (\sum_{i, j\in I} d_{ij}\Delta_{ij})\}_{v\in \N^I}$ on $\mathcal{H}_{E\times I}$, where $d_{ij}$ is the number of arrows between vertices $i$ and $j$ of the quiver $Q$.

\subsection{Loop Grassmannians and quantum groups}
By Proposition \ref{prop:local}(1) and Theorem \ref{thm: Mirkovic}, we have
\begin{corollary}
\label{cor:Zastava}
\begin{enumerate}
\item
 $\SH^{\sph}$ is a two parameter deformation of the local line bundle $\mathcal{O}_{\mathcal{G}r}(1)\mid_{ \mathcal{H}_{C\times I} \subset \mathcal{Z}^T}$. 
 \item
We have the isomorphism $\bbP_{\loc}(\FM(\SH^{\sph}|_{t_1=t_2=0}))\cong \mathcal{Z}(G)$.
 \end{enumerate}
\end{corollary}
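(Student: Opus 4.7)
My plan is to deduce both parts of the corollary directly by matching the classical-limit computation of Proposition~\ref{prop:local} with the geometric identifications of Theorem~\ref{thm: Mirkovic}, applied to $C=E$ and to the quadratic form given by the adjacency matrix $(d_{ij})$ of the quiver $Q$, with the $L_i$ all taken to be $\calO_E$. Under these choices, the local line bundle of Proposition~\ref{prop:classification local line} is exactly the family $\calL_{(d_{ij})_{i,j\in I}}=\{\calO_{E^{(v)}}(\sum_{i,j} d_{ij}\Delta_{ij})\}_{v\in\N^I}$ which appears on both sides.

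For part~(1), the first step is to recall from Proposition~\ref{prop:local}(1) that $\SH^{\sph}|_{t_1=t_2=0}$ is canonically the local line bundle $\calL_{(d_{ij})}$ on $\calH_{E\times I}$. Next, by Theorem~\ref{thm: Mirkovic}(1), the torus-fixed locus $\mathcal{Z}^T$ is $\Gr(\mathcal{T})$, which contains the section $\calH_{E\times I}\hookrightarrow \mathcal{Z}^T$ coming from the component where the subscheme $D'$ equals $D$; along this component, the map $q:\Gr(\mathcal{T})\to \calH_{E\times I}$ is the identity. Therefore Theorem~\ref{thm: Mirkovic}(2) restricts to give $\calO_{\mathcal{G}r}(1)|_{\calH_{E\times I}\subset\mathcal{Z}^T}\cong \calL_{(d_{ij})}$. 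Combining these two identifications shows that the two-parameter family $\SH^{\sph}$, viewed as a deformation of its $(t_1,t_2)=0$ specialization, is a deformation of $\calO_{\mathcal{G}r}(1)|_{\calH_{E\times I}\subset\mathcal{Z}^T}$, which is the statement of~(1).

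For part~(2), the argument is then a direct substitution. By the definition of the Zastava space recalled just before Theorem~\ref{thm: Mirkovic}, we have $\mathcal{Z}(G)\cong \bbP_{\loc}\bigl(\FM(\calL_{(d_{ij})})\bigr)$, where the local projectivization $\bbP_{\loc}$ and the Fourier–Mukai transform $\FM=p_*\circ q^*$ are exactly those recalled in~\S\ref{subsec:LocalLineBundle}. Applying Proposition~\ref{prop:local}(1) once more to replace $\calL_{(d_{ij})}$ by $\SH^{\sph}|_{t_1=t_2=0}$ yields the desired isomorphism $\bbP_{\loc}\bigl(\FM(\SH^{\sph}|_{t_1=t_2=0})\bigr)\cong \mathcal{Z}(G)$.

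In this sense the corollary is essentially a dictionary-translation result, and I do not expect any serious obstacle: the only subtle point is checking that the chosen component of $\mathcal{Z}^T$ on which $q$ acts as the identity is indeed the one used to match the two local line bundles, so that no extra pullback twist is introduced when comparing $\calO_{\mathcal{G}r}(1)|_{\mathcal{Z}^T}$ with $\SH^{\sph}|_{t_1=t_2=0}$. Once that identification of components is in place, the rest is bookkeeping.
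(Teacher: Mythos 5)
Your proposal is correct and follows exactly the route the paper takes: the corollary is stated there as an immediate consequence of Proposition~\ref{prop:local}(1) together with Theorem~\ref{thm: Mirkovic} (plus the definition of the Zastava space as $\bbP_{\loc}(\FM(\calL_{(d_{ij})}))$), which is precisely the chain of identifications you spell out. Your added remark about checking that the component of $\mathcal{Z}^T$ on which $q$ is the identity is the right one is a reasonable point of care, but it is already built into the paper's setup of the section $\mathcal{H}_{E\times I}\hookrightarrow\mathcal{Z}^T$.
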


Recall that $(\calC, \otimes_{t_1, t_2})$ in \S\ref{subsec:cat C} is the monoidal category of coherent sheaves on $\mathcal{H}_{C\times I}$.  Let $(d_{ij})_{i, j\in I}$ be the quadratic form of $\bbZ[I]$, whose entry $d_{ij}$ is the number of arrows between $i$, $j$ of the quiver $Q$. 

Proposition \ref{prop:local}(1) identifies the sheaf $\SH^{\sph}|_{t_1=t_2=0}$ with the local line bundle 
\[
\calL_{ (d_{ij})_{i, j\in I}}=
\{\mathcal{O}_{E^{(v)}} (\sum_{i, j\in I} d_{ij}\Delta_{ij})\}_{v\in \N^I}.
\] 
By Proposition~\ref{prop:local}(2) the multiplication $\star: \calL_{ (d_{ij})_{i, j\in I}}\otimes_{0} \calL_{ (d_{ij})_{i, j\in I}}\to \calL_{ (d_{ij})_{i, j\in I}}$ is the identity map. In other words, using this observation and Corollary \ref{cor:Zastava}, we have the following. 
\begin{prop}\label{prop:local_algebra}
The algebra structure on $\SH^{\sph}|_{t_1=t_2=0}$ is equivalent to the locality structure on $\mathcal{O}_{\mathcal{G}r}(1)\mid_{ \mathcal{H}_{C\times I} \subset \mathcal{Z}^T}$. 
\end{prop}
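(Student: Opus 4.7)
The plan is to unpack both structures concretely and exhibit them as two descriptions of the same piece of data. First, by Proposition~\ref{prop:local}(1) we identify $\calS\calH^{\sph}|_{t_1=t_2=0}$ with the local line bundle $L := \calL_{(d_{ij})_{i,j\in I}}$ on $\calH_{E\times I}$, whose $v$-component is $\calO_{E^{(v)}}(D^{(v)})$ with $D^{(v)}=\sum_{i,j} d_{ij}\Delta_{ij}$. The algebra multiplication in this limit is given by the shuffle formula \eqref{shuffle formula} with $\fac$ replaced by its specialization \eqref{eq:fac2 simplified}, whose divisor of zeros on $E^{(v_1)}\times_S E^{(v_2)}$ is precisely $D_{v_1,v_2}=\sum_{i,j}d_{ij}\Delta_{ij}$.

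Second, I would exhibit the multiplication as the canonical isomorphism underlying the locality structure. The key observation is the divisor-level identity
\begin{equation*}
\bbS^{*} D^{(v_1+v_2)} \;=\; p_1^{*} D^{(v_1)} + p_2^{*} D^{(v_2)} + D_{v_1,v_2}
\end{equation*}
on $E^{(v_1)}\times_S E^{(v_2)}$, which is exactly what is required for $L$ to carry a locality structure in the sense of Mirkovi\'c. Multiplying by $\fac|_{t_1=t_2=0}$ then amounts to the corresponding identification of line bundles, so the map
\begin{equation*}
\bbS_{*}\bigl(p_1^{*}L_{v_1}\otimes p_2^{*}L_{v_2}\otimes \calO(D_{v_1,v_2})\bigr)\longrightarrow L_{v_1+v_2}
\end{equation*}
coming from \eqref{shuffle formula} is, up to the sign $(-1)^{(v_2,\overline{C}v_1)}$ and the $\Sh(v_1,v_2)$-symmetrization, the tautological identification of the two presentations of $\bbS_{*}\calO(\bbS^{*}D^{(v_1+v_2)})$. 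This is precisely the factorization isomorphism making $L$ a local line bundle, identified via Corollary~\ref{cor:Zastava} with $\calO_{\calG r}(1)\mid_{\calH_{C\times I}\subset \calZ^T}$.

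Third, I would check that the passage is reversible: a locality structure on $L$ is determined by the factorization isomorphisms over the open disjoint loci $U_{v_1,v_2}\subset E^{(v_1)}\times_S E^{(v_2)}$, and any algebra structure on $L$ whose multiplication restricts to these canonical isomorphisms over $U_{v_1,v_2}$ is uniquely determined, since $L_{v_1+v_2}$ is torsion-free and $U_{v_1,v_2}$ is dense. Thus the algebra datum and the locality datum determine one another.

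The main obstacle is bookkeeping: one has to verify that the sign $(-1)^{(v_2,\overline{C}v_1)}$, the symmetrization over $\Sh(v_1,v_2)$, and the identification of the theta-section $\prod\vartheta(z_{t+v_1^j}^j-z_s^i)^{d_{ij}}$ with a defining section of $\calO(D_{v_1,v_2})$ all fit together so that the resulting map is the canonical factorization isomorphism and not a nontrivial automorphism of $L$. This is a calculation parallel to that in the proof of Proposition~\ref{prop:local}(1), which expressed the defining function of $D^{(v)}$ as the product of the defining functions of $D^{(v_1)}$, $D^{(v_2)}$, and $D_{v_1,v_2}$.
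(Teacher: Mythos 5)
Your proposal is correct and follows essentially the same route as the paper, which deduces the statement directly from Proposition~\ref{prop:local} (whose proof contains exactly the theta-function identity underlying your divisor decomposition $\bbS^{*}D^{(v_1+v_2)}=p_1^{*}D^{(v_1)}+p_2^{*}D^{(v_2)}+D_{v_1,v_2}$) together with Corollary~\ref{cor:Zastava}. You spell out more of the bookkeeping than the paper does — in particular the density/torsion-freeness argument for why the algebra datum and the locality datum determine each other — but the underlying argument is the same.
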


\begin{remark}
Although the above results are stated for the case when $C=E$, the counterparts for the case when $C=\bbG_a, \bbG_m$ also hold. As there are no non-trivial line bundles on the Hilbert scheme of points in $C$ in these cases, most statements become trivial.
\end{remark}

\section{Reconstruction of the Cartan subalgebra}
\label{Cartan subalgebra}
In this section, we give another description of the symmetric monoidal category 
$(\calC, \otimes_{t_1, t_2}, \gamma)$ in Theorem~\ref{thm:braiding}. From this new description, it is clear how the Cartan subalgebra of the sheafified elliptic quantum group, denoted by $\SH^0$, acts on $\SH$. This is similar to the reconstruction of Cartan in \cite{Maj}. However, in the affine setting the structure of the monoidal category is richer, and the braiding is only a meromorphic section of a line bundle.  The meromorphic nature of the braiding is related to the existence of the Drinfeld polynomials (see \S~\ref{subsec:CartanModule}).

We introduce a category $\calC '$ which is equivalent to $(\calC, \otimes_{t_1, t_2}, \gamma)$. 
Roughly speaking, an object of  $\calC '$ have dynamical parameters. Adding of the dynamical parameters simplifies the action of the Cartan subalgebra. 

In this section, we work under Assumption~\ref{Assu:WeghtsGeneral}(2).
\subsection{Recover the Cartan-action}
Recall that for any object $\calF=(\calF_{v})_{v\in \N^I}$ in $\calC$, the braiding $\gamma$ \eqref{eq:gamma} gives a rational isomorphism $\gamma_{\calO,  \calF}: \calO_{\calH_{E\times I}}\otimes_{t_1,t_2}\calF\cong  \calF\otimes_{t_1,t_2}\calO_{\calH_{E\times I}}$. In particular,  for any $k\in I$ and $v\in \bbN^I$, on $E^{e_k}\times E^{(v)}$, $\gamma$ induces a rational isomorphism 
\begin{equation}\label{eq:gamma_k}
\gamma_{e_k,v}: \calO_{E^{e_k}}\boxtimes\calF_v\to (\calO_{E^{e_k}}\boxtimes\calF_v) \otimes \calL_{e_k,v}^\vee\otimes \calL_{v,e_k}.
\end{equation}
The map $\gamma_{e_k,v}$ is defined by the rational section 
$\widehat{\Phi}(z_{[1,v]}|z_{e_k}): =
\frac{\fac(z_{[1,v]}|z_{e_k})}{\fac(z_{e_k}|z_{[1, v]})}$ of $\calL_{e_k,v}^\vee\otimes \calL_{v,e_k}$.
Same formula appears in \cite[\S1.4]{YZ2}, which was motivated by the formulas in \cite[\S 10.1]{Nak01} and \cite[\S 4]{Va00}. 

Note that for any local section $f_k$ of $\calO_{E^{e_k}}$, $\gamma_{e_k,v}$ gives a map from 
$\calF_v$ to $\calF_v\otimes \calL_{e_k,v}^\vee\otimes \calL_{v,e_k}$. 
In the next two subsections, we show that this twist $\calL_{e_k,v}^\vee\otimes \calL_{v,e_k}$ can be removed using a structure similar to the Sklyanin algebra \S~\ref{subsec:dyn_cartan}. It will give an action of $\SH^0$ on any object of $\calC$. 
\subsection{Recover the dynamical parameters}
\label{subsec:universal elliptic curve}
Let $E$ be the universal elliptic curve over $\calM_{1, 2}$. 
Let $\mathfrak{E}$ be the elliptic curve over $\calM_{1, 1}$. Then $\mathfrak{E}$ can be realized as a quotient of $\bbC \times \mathfrak{H}$ by the action of $\bbZ^2$ and $\SL_2(\bbZ)$, where $\mathfrak{H}$ is the upper half plane. 
More explicitly, for $(z, \tau)\in \bbC \times \mathfrak{H}$, and $(n, m) \in \bbZ^2$, the action is given $(n, m)*(z, \tau):=
(z+n+\tau m, \tau)$. For $\left(\begin{smallmatrix}
a & b \\
c & d
\end{smallmatrix}\right)\in \SL_2(\Z)$, the action is given by
$\left(\begin{smallmatrix}
a & b \\
c & d
\end{smallmatrix}\right)*(z, \tau):=(\frac{z}{c\tau+d}, \frac{a\tau+b}{c\tau+d})$. Then $\mathfrak{E}$ is the 
the orbifold quotient $\bbC \times \mathfrak{H}/(\SL_2(\bbZ)\ltimes \Z^2)$, and $\mathcal{M}_{1,1}$ is the orbifold quotient $\mathfrak{H}/\SL_2(\bbZ)$.

 A closed point $(z, \lambda, \tau)$ of $E$ over $\calM_{1, 2}$ consists of a point $(z, \tau)$ on the curve $\mathfrak{E}$, together with a degree-zero line bundle on $\mathfrak{E}_\tau$ given by $\lambda \in \mathfrak{E}_\tau^{\vee}$. Here $(\lambda, \tau)$ are the coordinates of $\calM_{1, 2}$. 

Let $v=(v^i)_{i\in I}\in \N^I$ be a dimension vector. For $i\in I$, recall that 
$E^{(v^i)}= E^{|v^i|}_{\calM_{1, 2}}/\mathfrak{S}_{v^i}$, where the product $E^{|v^i|}_{\calM_{1, 2}}$ is over $\calM_{1, 2}$. Define 
\[
E^{(v)'}:=\prod_{i\in I, \calM_{1, 1}} E^{(v^i)}= \prod_{i\in I} (\mathfrak{E}^{(v^i)}\times_{\calM_{1, 1}} \calM_{1, 2}). 
\] 
The coordinates of each $\calM_{1,2}$ are denoted by $\lambda_i \in \mathfrak{E}_\tau$ for each $i\in I$.
Hence, the coordinates of $E^{(v)'}$ can be taken as $(z^i_{s}, \lambda_i, \tau)_{i\in I, s\in[1,v^i]}$.
Let $(t_1,t_2)$ be the coordinates of $\mathfrak{E}^2$. 
Let $E_{\lambda}:=\prod_{i\in I,\calM_{1,1}} \calM_{1,2}$, whose coordinates are 
$(\lambda_i, \tau)_{i\in I}$, and $E_{\lambda, t_1, t_2}:=\prod_{i\in I,\calM_{1,1}} \calM_{1,2}\times_{\calM_{1,1}}\mathfrak{E}^2$, whose coordinates are $(\lambda_i, \tau, t_1, t_2)_{i\in I}$. 
The scheme $E^{(v)'}\times_{\calM_{1,1}}\mathfrak{E}^2$ with coordinates $(z^i_{s}, \lambda_i, \tau, t_1, t_2)_{i\in I, s\in[1,v^i]}$ is a scheme over $E_{\lambda, t_1, t_2}$. 

There is a natural line bundle $\bbL$ on $E$, which has rational section 
$g_{\lambda}(z):=\frac{\vartheta(z+\lambda)}{\vartheta(z)\vartheta(\lambda)}$. We have an induced line bundle $\bbL^{(v)}$ on $E^{(v)'}$, which has rational section $\prod_{i\in I} \prod_{j=1}^{v^i} \frac{\vartheta(z^{(i)}_j+\lambda_i)}{\vartheta(z^{(i)}_j)\vartheta(\lambda_i)}$. 

For simplicity, we specialize $t_1=t_2=\frac{\hbar}{2}$. 
 Let $c_{ki}$ be the $(k, i)$-entry of the Cartan matrix of the quiver $Q$. 
 For each $k\in I$, we consider the map
\[
g_k: E^{(v)'}\times_{\calM_{1,1}} \mathfrak{E} \to E^{(v)'}\times_{\calM_{1,1}}\mathfrak{E}, \,\ \text{by} \,\ 
(z^i_{s}, \lambda_i, \tau, \hbar)_{i\in I, s\in[1,v^i]} \mapsto (z^i_{s}, c_{i,k}\hbar, \tau, \hbar)_{i\in I, s\in[1,v^i]}.
\] 
Let $\bbL_{k, \hbar}^{(v)}:=g_k^*\bbL^{(v)}$  be the line bundle on $\calH_{E\times I}'\times_{\calM_{1,1}}\mathfrak{E}$ obtained from pulling-back via $g_k^*$. Then a rational section of $\bbL_{k, \hbar}^{(v)}$ can be taken as 
$\prod_{i\in I} \prod_{j=1}^{v^i} \frac{\vartheta(z^{(i)}_j+c_{i,k}\hbar )}{\vartheta(z^{(i)}_j)\vartheta(\frac{c_{ki}}{2}\hbar)}$ or $\prod_{i\in I} \prod_{j=1}^{v^i}\frac{\vartheta(z^{(i)}_j+\frac{c_{ki}}{2}\hbar)}{\vartheta(z^{(i)}_j-\frac{c_{ki}}{2}\hbar)}$. Denote $\calH_{E\times I}':=\coprod_{v\in \N^I} E^{(v)'}$. Let $\bbL_{k, \hbar}=\{\bbL_{k, \hbar}^{(v)}\}_{v\in \N^I}$ be the line bundle on $\calH_{E\times I}'\times_{\calM_{1, 1}}\mathfrak{E}$. 
 
For each $k\in I$, define the shifting operators
 \begin{align*}
 &\rho_k: E_{\lambda,\hbar} \to E_{\lambda,\hbar}, \,\ \text{by}\,\ (\lambda_i, \hbar)_{i\in I} \mapsto (\lambda_i+c_{ki} \hbar, \hbar)_{i\in I} ,\\
 &\rho_k^{(v)}: E^{(v)'}\times_{\calM_{1,1}}\mathfrak{E} \to E^{(v)'}\times_{\calM_{1,1}}\mathfrak{E} , \,\ (z^i_s, \lambda_i,\hbar)_{i\in I,s\in [1,v^i]}\mapsto (z^i_s, \lambda_i+c_{ki}\hbar,  \hbar)_{i\in I,s\in [1,v^i]}.
 \end{align*}
Note that for $k_{1}, k_2\in I$, $\rho_{k_1}^{(v)}$ and
$\rho_{k_2}^{(v)}$ commute with each other. 
The map $\rho_k^{(v)}$ is not a morphism of $E_{\lambda,\hbar}$-schemes.
Instead, we have the following Cartesian diagram.
\begin{equation}\label{eq:shifting map}
\xymatrix@R=1em {
E^{(v)'}\times_{\calM_{1,1}}\mathfrak{E} \ar[r]^{\rho_k^{(v)}} \ar[d] & E^{(v)'}\times_{\calM_{1,1}}\mathfrak{E} \ar[d]\\
E_{\lambda,\hbar} \ar[r]^{\rho_k}  & E_{\lambda,\hbar}.
}
\end{equation}

\begin{lemma}\label{lem:trans_twist}
We have the isomorphism $(\rho_k^{(v)})^* \bbL^{(v)}\cong \bbL^{(v)}\otimes \bbL_{k, \hbar}^{(v)}$ of sheaves on $E^{(v)'}\times_{\calM_{1,1}}\mathfrak{E}$.
\end{lemma}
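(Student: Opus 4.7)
The plan is to reduce to the case of a single Poincaré factor and then verify the isomorphism by a direct computation of multipliers, or more conceptually by invoking the translation property of the Poincaré line bundle.

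First, I would observe that the line bundle $\bbL^{(v)}$ has an external tensor product decomposition. Namely, for each pair $(i,s)$ with $i\in I$ and $s\in[1,v^i]$, let $p_{i,s}\colon E^{(v)'}\times_{\calM_{1,1}}\mathfrak{E}\to E\times_{\calM_{1,1}}\mathfrak{E}$ be the projection onto the copy of $E$ with coordinates $(z^{(i)}_s,\lambda_i)$. Then the rational-section formula for $\bbL^{(v)}$ exhibits $\bbL^{(v)}\cong\bigotimes_{i,s}p_{i,s}^*\bbL$, and by its definition $\bbL_{k,\hbar}^{(v)}\cong\bigotimes_{i,s}p_{i,s}^*(g_k^*\bbL)$. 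Since $\rho_k^{(v)}$ acts factor-wise, shifting only $\lambda_i$ by $c_{ki}\hbar$, it suffices to prove the single-factor statement: on $E\times_{\calM_{1,1}}\mathfrak{E}$ with coordinates $(z,\lambda,\hbar)$, the shift $\rho\colon\lambda\mapsto\lambda+c\hbar$ (where $c:=c_{ki}\in\bbZ$) satisfies $\rho^*\bbL\cong\bbL\otimes g^*\bbL$, where $g\colon\lambda\mapsto c\hbar$.

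The conceptual reason this holds is that $\bbL\to E\to\calM_{1,2}$ is (up to normalization) a Poincaré line bundle on $\mathfrak{E}\times\mathfrak{E}^\vee$, and translating in the dual parameter by $\mu$ tensors the Poincaré bundle by the restriction of itself to $\lambda=\mu$. In our setting, $\mu=c\hbar$, whose corresponding restriction is precisely $g^*\bbL$. To make this concrete, I would write down rational sections and compare multipliers. The section of $\rho^*\bbL$ is $\vartheta(z+\lambda+c\hbar)/[\vartheta(z)\vartheta(\lambda+c\hbar)]$, while that of $\bbL\otimes g^*\bbL$, using the first normalization of $g^*\bbL$ from the excerpt, equals
\[
\frac{\vartheta(z+\lambda)}{\vartheta(z)\vartheta(\lambda)}\cdot\frac{\vartheta(z+c\hbar)}{\vartheta(z)\vartheta(c\hbar/2)}.
\]
I would then form the ratio and, using the quasi-periodicity identities of $\vartheta$ recorded in Example~\ref{example:theta}, check that its multipliers under the translations $z\mapsto z+1,\,z+\tau$, $\lambda\mapsto\lambda+1,\,\lambda+\tau$, and $\hbar\mapsto\hbar+1,\,\hbar+\tau$ are all trivial. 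This identifies the ratio with a meromorphic function on $E\times_{\calM_{1,1}}\mathfrak{E}$, proving the two line bundles are isomorphic.

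The main bookkeeping obstacle is the $\hbar$-direction, since the factor $\vartheta(c\hbar/2)$ has a half-integer coefficient and its transformation under $\hbar\mapsto\hbar+1$ is delicate when $c$ is odd. I expect this to be circumvented either by a careful sign-count (the half-integer jumps in numerator and denominator match up after using $\vartheta(-z)=-\vartheta(z)$), or more elegantly by bypassing explicit sections entirely and applying the abstract translation property of the Poincaré bundle on $\mathfrak{E}\times\mathfrak{E}^\vee$, which yields the isomorphism intrinsically.
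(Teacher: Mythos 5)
Your proposal is correct and follows essentially the same route as the paper: a factor-by-factor comparison of multipliers of the explicit rational sections. The one practical difference is that the "bookkeeping obstacle" you flag with $\vartheta(c\hbar/2)$ disappears if you use the paper's \emph{second} normalization of the section of $\bbL_{k,\hbar}^{(v)}$, namely $\prod_{i,j}\vartheta(z^{(i)}_j+\tfrac{c_{ki}}{2}\hbar)/\vartheta(z^{(i)}_j-\tfrac{c_{ki}}{2}\hbar)$: then the product section has multipliers in each $z^{(i)}_j$ that are trivial along the $\Lambda$-direction and equal to $e^{\lambda_i}\cdot e^{c_{ki}\hbar}$ along the $\tau\Lambda$-direction, matching those of $\vartheta(z^{(i)}_j+\lambda_i+c_{ki}\hbar)/[\vartheta(z^{(i)}_j)\vartheta(\lambda_i+c_{ki}\hbar)]$ with no half-integer shifts to track.
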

\begin{proof}
The claim follows from a straightforward calculation of the multipliers. Indeed, a rational section of $\bbL^{(v)}\otimes \bbL_{k, \hbar}^{(v)}$ is 
\[
\prod_{i\in I} \prod_{j=1}^{v^i} \frac{\vartheta(z^{(i)}_j+\lambda_i)}{\vartheta(z^{(i)}_j)\vartheta(\lambda_i)}
\cdot \prod_{i\in I} \prod_{j=1}^{v^i}
\frac{\vartheta(z^{(i)}_j+\frac{c_{ki}}{2}\hbar)}{\vartheta(z^{(i)}_j-\frac{c_{ki}}{2}\hbar)}.
\] 
The periodicity of $z^{(i)}_j$ is trivial along $\Lambda$-direction and  $e^{\lambda_i}\cdot e^{c_{ki}\hbar}$ along $\tau\Lambda$-direction. 
A section of $(\rho_k^{(v)})^* \bbL^{(v)}$ is
\[
\prod_{i\in I} \prod_{j=1}^{v^i} \frac{\vartheta(z^{(i)}_j+\lambda_i+c_{ki}\hbar)}{\vartheta(z^{(i)}_j)\vartheta(\lambda_i+c_{ki}\hbar)}, 
\]
whose $z^{(i)}_j$ have the same periodicity. The two line bundles have the same system of multipliers, hence they must coincide. 
\end{proof}

\subsection{Cartan-action and the dynamical parameters}\label{subsec:dyn_cartan}
We now define a category $\calC'$. 
An object of $\calC'$ consists of a pair $(\mathcal{F},  \varphi_{\mathcal{F}})$, 
where 
\begin{itemize}
\item $\mathcal{F}=(\mathcal{F}_{v})_{v\in \N^I}$ is a quasi-coherent sheaf on $\calH_{E\times I}'$. That is, each $\mathcal{F}_{v}$ is a quasi-coherent sheaf on $E^{(v)'}$, for $v\in \N^I$; 
\item $\varphi_{\calF}=(\varphi_{\calF,k})_{k\in I}$ is a collection of isomorphisms 
$\varphi_{\calF,k}:\calF\otimes_
{(\calH'_{E\times I}\times_{\calM_{1,1}}\mathfrak{E})}\bbL_{k, \hbar}\cong \rho_k^*\calF$. That is, for each $k\in I$, and $v\in \N^{I}$,  $\varphi_{\calF,k}^{(v)}: 
\mathcal{F}_v \otimes_{(E^{(v)'}\times_{\calM_{1,1}}\mathfrak{E})} \bbL_{k, \hbar}^{(v)} \cong (\rho_k^{(v)})^*\mathcal{F}_v$ is an isomorphism. 
For $k_{1}, k_2\in I$, we impose the condition that  $[\varphi_{k_1}, \varphi_{k_2}]=0$. 
\end{itemize}

A morphism from $(\mathcal{F},  \varphi_{\mathcal{F}})$ to $(\mathcal{G},  \varphi_{\mathcal{G}})$ is a morphism $f: \mathcal{F} \to \mathcal{G}$ of sheaves on $\calH_{E\times I}'$, such that, 
the following diagram commutes for any $k\in I$. 
\[
\xymatrix@R=1.5em{
\calF\otimes_{(\calH'_{E\times I}\times_{\calM_{1,1}} \mathfrak{E})}\bbL_{k, \hbar} \ar[r]^(0.7){\varphi_{\mathcal{F} }} \ar[d]_{f\otimes \id}
& (\rho_k)^*\mathcal{F} \ar[d]^{(\rho_k)^* f}\\
\calG\otimes_{(\calH'_{E\times I}\times_{\calM_{1,1}} \mathfrak{E})}\bbL_{k, \hbar} \ar[r]^(0.7){\varphi_{\mathcal{G}}}
& (\rho_k)^*\mathcal{G}
}
\]

We have the natural projection
\begin{align*}
&p_1: E^{(v)'}\to E^{(v)},  \,\ (z^i_{s}, \lambda_i, \tau)_{i\in I, s\in[1,v^i]}\mapsto (z^i_{s}, \lambda, \tau)_{i\in I, s\in[1,v^i]}.
\end{align*}
For any sheaf $\calF$ on $\mathcal{H}_{E\times I}$,  we have the sheaf
$p_1^*\calF\otimes \bbL^{(v)}=
\{p_1^*( \calF_{v})\otimes_{E^{(v)'}} \bbL^{(v)}\}_{v\in \N^I}$ on $\mathcal{H}_{E\times I}'$. 
\begin{prop}\label{prop:dyn_param}
Notations as above, the assignment $\calF \mapsto (p_1^*\calF\otimes \bbL^{(v)},  \varphi_{\calF})$ induces an equivalence of abelian categories $\calC \cong \calC'$, where the isomorphism $\varphi_{\calF}$ is given in Lemma \ref{lem:trans_twist}. 
\end{prop}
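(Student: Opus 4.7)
The plan is to construct an explicit quasi-inverse to the functor $F: \calC \to \calC'$ defined by $\calF \mapsto (p_1^*\calF \otimes \bbL^{(v)}, \varphi_\calF)$.

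First I would verify that $F$ is well-defined. Since the shift $\rho_k^{(v)}$ only affects the dynamical coordinates $\lambda_i$, which the projection $p_1$ collapses to a single $\lambda$, one has $p_1 \circ \rho_k^{(v)} = p_1$ and hence $(\rho_k^{(v)})^* p_1^*\calF = p_1^*\calF$ canonically. Combining this with Lemma~\ref{lem:trans_twist} yields the required isomorphism
\[
\varphi_{\calF,k}: (p_1^*\calF \otimes \bbL^{(v)}) \otimes \bbL_{k,\hbar}^{(v)} \isom (\rho_k^{(v)})^*(p_1^*\calF \otimes \bbL^{(v)}).
\]
The commutativity condition $[\varphi_{\calF,k_1}, \varphi_{\calF,k_2}] = 0$ then follows from the pairwise commutativity of the shifts $\rho_k^{(v)}$ and from the fact that the twists $\bbL_{k,\hbar}^{(v)}$ are pulled back from commuting operators on the base $E_{\lambda,\hbar}$.

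Next I would construct the quasi-inverse $G: \calC' \to \calC$. The key observation is that, after untwisting by $(\bbL^{(v)})^\vee$, the data $\varphi_{\calG,k}$ becomes an honest $\rho_k^{(v)}$-equivariance: dualising Lemma~\ref{lem:trans_twist} and composing with $\varphi_{\calG,k}$ produces a canonical isomorphism
\[
(\rho_k^{(v)})^*\big(\calG \otimes (\bbL^{(v)})^\vee\big) \cong \calG \otimes (\bbL^{(v)})^\vee
\]
for each $k\in I$, and the compatibility $[\varphi_{k_1}, \varphi_{k_2}] = 0$ guarantees that this family of equivariances is mutually compatible. Consequently $\calG \otimes (\bbL^{(v)})^\vee$ descends along the $\bbZ[I]$-action generated by the $\rho_k^{(v)}$'s on the dynamical parameters; equivalently, one fixes a section $s$ of $p_1$ (for instance the one given by $\lambda_i = 0$ for all $i$, or the diagonal $\lambda_i = \lambda$) and sets $G(\calG, \varphi_\calG) := s^*(\calG \otimes (\bbL^{(v)})^\vee)$. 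The equivariance ensures this is independent of the chosen section up to canonical isomorphism, and the prescription extends naturally to morphisms.

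Finally I would check the two compositions. For $G \circ F$, applied to $\calF \in \calC$ the result is $s^*(p_1^*\calF \otimes \bbL^{(v)} \otimes (\bbL^{(v)})^\vee) = s^* p_1^*\calF = \calF$ since $p_1 \circ s = \id$. For $F \circ G$, applied to $(\calG, \varphi_\calG)$, the $\rho_k^{(v)}$-equivariance identifies $p_1^*s^*(\calG \otimes (\bbL^{(v)})^\vee)$ with $\calG \otimes (\bbL^{(v)})^\vee$, so re-tensoring with $\bbL^{(v)}$ recovers $\calG$; compatibility with the $\varphi$-structure is then automatic, because both the original $\varphi_\calG$ and the one produced by $F$ are built from the same Lemma~\ref{lem:trans_twist} isomorphism. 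The main obstacle I expect is to establish effectiveness of the $\bbZ[I]$-descent, i.e.\ to verify that the equivariances above really do pin down $\calG \otimes (\bbL^{(v)})^\vee$ as the pullback of a sheaf on $\calH_{E\times I}$; this reduces, via the multiplier description recalled in Remark~\ref{ex:multipliers}, to a direct but somewhat tedious check at the level of theta-function cocycles.
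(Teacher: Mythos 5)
The construction of $F$ and the computation $G\circ F=\id$ are fine, but the quasi-inverse direction contains a genuine gap, and it is exactly the point you flag at the end as the "main obstacle". Your functor $G$ pulls back along a chosen section $s$ of $p_1$, and both the independence of the choice of $s$ and the identification $p_1^*s^*(\calG\otimes(\bbL^{(v)})^\vee)\cong\calG\otimes(\bbL^{(v)})^\vee$ are claimed to follow from the $\rho_k^{(v)}$-equivariance. This does not follow. The fibres of $p_1:E^{(v)'}\to E^{(v)}$ are positive-dimensional abelian varieties (a torsor under $E_\hbar^I$), while the shifts $\rho_k^{(v)}$, $k\in I$, generate only a countable subgroup of translations (by the vectors $(c_{ki}\hbar)_{i\in I}$), which for generic $\hbar$ is dense in the fibre but is certainly not the whole translation group. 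Invariance of a quasi-coherent sheaf under a discrete (even dense) subgroup of translations of an abelian variety does not imply that the sheaf is pulled back from the quotient by the full group: already in the rank-one model, every degree-zero line bundle $L$ on $\mathfrak{E}$ satisfies $T_\hbar^*L\cong L$ for the translation $T_\hbar$, hence carries the required discrete equivariance data, yet $L$ is not trivial, i.e.\ not pulled back from the base. So essential surjectivity of $F$ (equivalently, $F\circ G\cong\id$) is not established, and the "tedious check at the level of theta-function cocycles" you defer to would at best handle line bundles, not arbitrary objects of $\calC'$ with arbitrary $\varphi$-data.

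For comparison, the paper's proof takes a different and shorter route: it observes that $E^{(v)'}$ is a principal $E_\hbar^I$-bundle over $E^{(v)}$ and invokes descent, i.e.\ the equivalence between quasi-coherent sheaves on the base and translation-equivariant sheaves on the total space for the \emph{full} translation action, and then matches the twist by $\bbL^{(v)}$ and the $\varphi$-data using Lemma~\ref{lem:trans_twist}. To repair your argument you would need to upgrade the discrete shift isomorphisms to a genuine $E_\hbar^I$-equivariant structure (which is what descent actually requires), rather than relying on the finitely many $\rho_k^{(v)}$ alone.
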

\begin{proof}
Note that $E^{(v)'}$ is a principle $E_\hbar^I$-bundle over the base $E^{(v)}$, where $E_\hbar^I$ acts by translation. It is well-known that the category of quasi-coherent sheaves on $E^{(v)}$ is equivalent to the category of quasi-coherent sheaves  on $E^{(v)'}$ which are equivariant with respect to translation. Taking into consideration of the isomorphism from Lemma~\ref{lem:trans_twist}, we are done.
\end{proof}

We now construct the action of $\SH^0$. Abstractly, $\SH^0$ is  isomorphic to $\calO_{\calH_{E\times I}}$, however, it is not an algebra object in $(\calC, \otimes_{t_1, t_2})$. 
The action of $\SH^0$ on $\calF$ will preserve each component (the root space) $\calF_{v}$, for each root $v\in \N^I$. 

Let $q: E^{e_k}\times_{\calM_{1, 2}} E^{(v)'}\times_{\calM_{1,1}} \mathfrak{E}\to E^{(v)'}\times_{\calM_{1,1}}\mathfrak{E}$ be the subtraction map sending $(z,(z_j^{(i)})_{i,j})$ to $(z_j^{(i)}-z)_{i,j}$. 
\begin{lemma} \label{lem:5.3}
We have $q^*\bbL_{k, \hbar}^{(v)}\cong \calL_{e_k,v}\calL_{v,e_k}^\vee$.
\end{lemma}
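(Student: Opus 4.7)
The plan is to produce explicit rational sections of both line bundles and verify that they agree up to a nonzero constant, which is enough to identify the two line bundles.

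First, I would write down a rational section of the right-hand side. By the construction in \S\ref{subsec: line bundle}, the dual line bundle $\calL_{e_k,v}^\vee$ carries the rational section $\fac(z|z_{[1,v]}) = \fac_1(z|z_{[1,v]})\fac_2(z|z_{[1,v]})$, while $\calL_{v,e_k}^\vee$ carries $\fac(z_{[1,v]}|z)$. Consequently the line bundle $\calL_{e_k,v}\otimes\calL_{v,e_k}^\vee$ admits the rational section
\[
\widehat{\Phi}(z_{[1,v]}|z)\;=\;\frac{\fac(z_{[1,v]}|z)}{\fac(z|z_{[1,v]})},
\]
which is precisely the section used to define the braiding $\gamma$ in Theorem \ref{thm:braiding}. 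On the other hand, pulling back along $q$ the distinguished section of $\bbL_{k,\hbar}^{(v)}$ displayed just before Lemma \ref{lem:trans_twist} produces the rational section
\[
\prod_{i\in I}\prod_{j=1}^{v^i}\frac{\vartheta\!\left(z_j^{(i)}-z+\tfrac{c_{ki}}{2}\hbar\right)}{\vartheta\!\left(z_j^{(i)}-z-\tfrac{c_{ki}}{2}\hbar\right)}
\]
of $q^*\bbL_{k,\hbar}^{(v)}$. It then suffices to check that these two rational functions on $\mathfrak{E}\times E^{(v)'}\times_{\calM_{1,1}}\mathfrak{E}$ coincide up to a multiplicative scalar.

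Next I would carry out the comparison color-by-color, using $t_1=t_2=\hbar/2$ throughout. For $i=k$ only $\fac_1$ contributes, and a short manipulation with $\vartheta(-x)=-\vartheta(x)$ gives
\[
\frac{\fac_1(z_{[1,v]}|z)}{\fac_1(z|z_{[1,v]})}\;=\;\prod_{t=1}^{v^k}\frac{\vartheta(z_t^k-z+\hbar)}{\vartheta(z_t^k-z-\hbar)},
\]
matching the $i=k$ factor on the right-hand side since $c_{kk}=2$. For $i\ne k$ only $\fac_2$ contributes. The arrows $h$ with $\{\out(h),\inc(h)\}=\{i,k\}$ produce theta shifts by $m_h\hbar/2$ and $m_{h^*}\hbar/2$, and by Remark \ref{rmk:weights}(2) the multisets $\{m_h\}$ and $\{m_{h^*}\}$ run through arithmetic progressions of step $2$ symmetric about $0$. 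After flipping the orientation of the arguments using $\vartheta(-x)=-\vartheta(x)$, the multisets of shifts appearing in the numerator and denominator of $\fac_2(z_{[1,v]}|z)/\fac_2(z|z_{[1,v]})$ for color $i$ become two arithmetic progressions of step $2$ sharing all interior entries, and the interior theta factors telescope, leaving only the extremal contributions
\[
\prod_{t=1}^{v^i}\frac{\vartheta(z_t^i-z-(a_{ki}+a_{ik})\hbar/2)}{\vartheta(z_t^i-z+(a_{ki}+a_{ik})\hbar/2)}.
\]
Since $c_{ki}=-a_{ki}-a_{ik}$ for $i\ne k$, this agrees factor-by-factor with the pullback of the section of $\bbL_{k,\hbar}^{(v)}$, up to an overall sign which is irrelevant for the isomorphism of line bundles.

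The main obstacle is the combinatorial bookkeeping in the telescoping step: one has to correctly assemble the contributions coming from the $a_{ki}$ arrows $k\to i$ of $Q$ together with those from the $a_{ik}$ arrows $i\to k$ (and their reversals in $Q^{op}$), show that the shifts produced by Assumption \ref{Assu:WeghtsGeneral}(2) line up into two arithmetic progressions differing only in their endpoints, and keep track of the signs generated by the repeated use of $\vartheta(-x)=-\vartheta(x)$. Once this is carried out, both sides are visibly equal up to a scalar and the lemma follows.
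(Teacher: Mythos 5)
Your proof takes the same route as the paper's: both sides are compared via explicit rational sections, with $\widehat{\Phi}(z_{[1,v]}|z)=\fac(z_{[1,v]}|z)/\fac(z|z_{[1,v]})$ as the section of $\calL_{e_k,v}\calL_{v,e_k}^\vee$ and the pullback under $q$ of the displayed section of $\bbL_{k,\hbar}^{(v)}$ on the other side; the paper simply asserts the resulting identity (citing \cite{YZ2}), whereas you carry out the telescoping explicitly.

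One caveat on the telescoping step. When there are arrows between $i$ and $k$ in \emph{both} directions, the shifts coming from the $a_{ik}$ arrows $i\to k$ and the $a_{ki}$ arrows $k\to i$ form two separate step-$2$ progressions (each symmetric about $1$, not $0$), and they telescope separately: the colour-$i$ ratio of $\fac_2$'s equals, up to sign, $\prod_t\vartheta(z^i_t-z-a_{ik}\hbar/2)\,\vartheta(z^i_t-z-a_{ki}\hbar/2)\big/\bigl(\vartheta(z^i_t-z+a_{ik}\hbar/2)\,\vartheta(z^i_t-z+a_{ki}\hbar/2)\bigr)$ rather than the single ratio with shift $(a_{ik}+a_{ki})\hbar/2$; for $a_{ik}=a_{ki}=1$ these two rational functions are genuinely different, so your final claim that the sections ``coincide up to a multiplicative scalar'' fails there. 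This does not sink the lemma: both expressions have the same system of multipliers in each $z^i_t$ (the total exponent of $e^{-2\pi i z^i_t}$ along the $\tau$-direction is $c_{ki}\hbar$ in either case), so by the multiplier argument used in the proof of Lemma~\ref{lem:trans_twist} they are rational sections of the same line bundle. The fix is therefore to phrase the last step as a comparison of multipliers rather than an equality of functions; when $a_{ik}a_{ki}=0$ (e.g.\ for an oriented Dynkin quiver) your literal equality does hold up to sign.
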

\begin{proof} 
This follows from comparing the sections on both sides.  A section of $\bbL_{k, \hbar}^{(v)}$ is $\prod_{i\in I}\prod_{j=1}^{v_i}\frac{\vartheta(z_j^{(1)}+\frac{c_{ki}}{2}\hbar)}{\vartheta(z_j^{(1)}-\frac{c_{ki}}{2}\hbar)}$, hence its pullback under $q$ is 
$\frac{\vartheta(z_j^{(1)}-z+\frac{c_{ki}}{2}\hbar)}{\vartheta(z_j^{(1)}-z-\frac{c_{ki}}{2}\hbar)}$. A rational section of the right hand side is $\frac{\fac(x_{[1,v]}|x_{e_k})}{\fac(x_{e_k}|x_{[1, v]})}=\frac{\vartheta(z_j^{(1)}-z+\frac{c_{ki}}{2}\hbar)}{\vartheta(z_j^{(1)}-z-\frac{c_{ki}}{2}\hbar)}$ (see also \cite[\S1.4]{YZ2}). 
\end{proof}
Therefore, for objects $\calF$ in $\calC'$, we have the isomorphisms
\begin{equation}\label{eq:hatPhi}
(q^*\calF_v)\otimes \calL_{e_k,v}\calL_{v,e_k}^\vee \cong
q^*(\calF_v\otimes\bbL_{k, \hbar}^{(v)})\cong 
q^*(\rho_k^{(v)*}\calF_v). \end{equation}
Define $\widehat{\Phi}_k:q^*\calF_v\to q^*(\rho_k^{(v)}\calF_v)$ to be the composition of $\gamma_{e_k,v}$ \eqref{eq:gamma_k} with the isomorphism \eqref{eq:hatPhi}.
This composition is called the action of $\calO_{E^{e_k}}$ on objects in $\calC'$. By linearity, this extends to an action of $\SH^0$. 
In particular, the action on the level rational sections of $\SH^0$ on $\SH$ in \S~\ref{subsec:rational sec} comes from this action of sheaves.

For any morphism $f:\calF\to \calG$ in $\calC$, we have the following commutative diagram
\[
\xymatrix{
\mathcal{F}_v \ar[r]^(0.4){\Phi_k} \ar[d]_{f_v}& \mathcal{F}_v\otimes \bbL_{k, \hbar}^{(v)} \ar[r]^{\varphi_{\mathcal{F} }} \ar[d]^{f_v\otimes \id}
& (\rho_k^{(v)})^*\mathcal{F}_v \ar[d]^{(\rho_k^{(v)})^* f_v}\\
\mathcal{G}_v \ar[r]^(0.4){\Phi_k}& \mathcal{G}_v\otimes \bbL_{k, \hbar}^{(v)} \ar[r]^{\varphi_{\mathcal{G}}}
& (\rho_k^{(v)})^*\mathcal{G}_v
}
\]
yielding commutativity of $f$ with the Cartan actions.

Roughly, without the dynamical parameters, sections of the Cartan given an endomorphism of an object in $\calC$, twisted by a line bundle. With the dynamical parameters, on other hand, sections of the Cartan gives an endomorphism of the same object, with a shift of the dynamical parameters. 
\begin{remark}
This structure of $\SH^0$-action is a meromorphic version of Sklyanin algebra. 
Fix an elliptic curve $\iota: E\to \mathbb{P}^2$, with corresponding line bundle $\mathcal{L}=\iota^*(\mathcal{O}_{\mathbb{P}^2}(1))$. Fix an automorphism $\sigma\in \Aut(E)$ given by translation under the group law and denote the graph of $\sigma$ by $\Gamma_{\sigma}\subset E\times E$.
Let $V:= H^0(E, \calL)$, and 
\[
\calR:=H^0( E\times E, ( \calL\boxtimes \calL)(-\Gamma_{\sigma}))
\subset H^0( E\times E, ( \calL\boxtimes \calL))=V\otimes V. 
\]
Recall that the Sklyanin algebra $\Skl(E, \mathcal{L}, \sigma)$ is by definition the algebra 
$\Skl(E, \mathcal{L}, \sigma)=T(V)/(\calR)$, where $T(V)$ denotes the tensor algebra on $V$. 

In our case, the input is $\sigma=\hbar$ and the line bundle $\calL=\calO$. Since this line bundle is not ample, we only consider  rational sections. The fact that $\calO$ has an algebra structure makes the meromorphic Sklyanin algebra commutative. 
\end{remark}

\section{The algebra of rational sections}
In this section, we take certain rational sections of the algebra object $\SH$ defined in \S\ref{Cartan subalgebra}. 
\subsection{The generating series}
\label{subsec:rational sec}
In this section, we still take $E$ to be the elliptic curve over $\calM_{1, 2}$, 
$\bbL$ the Poincar\'e line bundle on $E$, which has rational section 
$g_{\lambda}(z)=\frac{\vartheta(z+\lambda)}{\vartheta(z)\vartheta(\lambda)}$. This section is regular away from $z=0$, and has the quasi-periodicity \[
f(z+1)=f(z), \,\ f(z+\tau )=e^{2\pi i \lambda } f(z).
\] The space of all such meromorphic sections has a basis given by $\left\{ g^{(i)}_\lambda(z):=\frac{1}{i!} \frac{\partial^i}{\partial z^i}\left( \frac{\vartheta(z+\lambda)}{\vartheta(z)\vartheta(\lambda) }\right)\right\}_{i\in \N}$.

For each $v\in\bbN^I$, consider the  universal cover $\bbC^{(v)} \times \bbC^I \times \mathfrak{H}$ of $E^{(v)'}$, whose coordinates are denoted by $(z^i_t, \lambda_i, \tau)_{i\in I,t\in[1,v^i]}$.
Consider the vector space of meromorphic functions on $\bbC^{(v)} \times \bbC^I \times \mathfrak{H}$, which are regular away from the hyperplanes 
$\{z^i_{t}=n+\tau m \mid \text{for all}\,\  i\in I,  t\in [1,v^i],  \text{for some $n, m\in \bbZ$}  \}$. We have a functor $\Gamma_{\rat}$ of taking certain rational sections from the category $\calC$ to this vector space. 

Let $\mathbf{SH}_v:=\Gamma_{rat}(\SH_v \otimes \bbL^{(v)})$. 
Then $\mathbf{SH}:=\bigoplus_{v\in \N^I} \mathbf{SH}_v$ is an algebra, with multiplication defined by \eqref{shuffle formula}. 
Let $\mathbf{SH}^{\sph}\subset \mathbf{SH}$ be the subalgebra generated by $\mathbf{SH}_{e_k}$, for $k$ varies in $I$. 

Consider $\lambda=(\lambda_{k})_{k\in I}$. 
Consider the following series of $\mathbf{SH}^{\sph}$: 
\begin{equation}\label{formula of X^+}
\mathfrak{X}_{k}^+(u, \lambda):=
\sum_{i=0}^{\infty} g^{(i)}_{\lambda_k}(z_k) u^{i}=g_{\lambda_k}(u+z_k)
=\frac{\vartheta(u+z_k+\lambda_k)}{\vartheta(u+z_k)\vartheta(\lambda_k)},
\end{equation}
where the 2nd equality follows from Cauchy integral theorem. 
We have $\mathfrak{X}_{k}^+(u, \lambda)\in \mathbf{SH}^{\sph}[\![u]\!]$. 

We define a commutative algebra $\mathbf{SH}^0$, which will be the commutative subalgebra of the elliptic quantum group. 
$\mathbf{SH}^0$ is the symmetric algebra of $\bigoplus_{k\in I} \Gamma_{\rat}(\calO_{E^{e_k}})$.  
Let we take a natural basis of $\Gamma_{\rat}(\calO_{E})$ as follows. Let $\wp(z)$ be the Weierstrass $\wp$-function, and let $\wp^{(i)}(z):=\frac{1}{i!}\frac{\partial^i}{\partial z^i}\wp(z)$. Then $\{\wp^{(i)}(z)\}$ is a basis. Let \[
\Phi_{k}(u):=\frac{\vartheta(u+\hbar/2)}{\vartheta(u-\hbar/2)}\sum_{i=1}^{\infty} H_{k} \otimes \wp^{(i)}(z) u^i\]
 be the generating series of $\mathbf{SH}^0$, where $H_k\in \h$.
We have $\Phi_{k}(u)\in \mathbf{SH}^0[\![u]\!]$, for $k\in I$.  
 
We now construct an action of $\mathbf{SH}^0$ on $\mathbf{SH}$. 
Let $a_{ik}$ be the number of arrows of $Q$ from vertex $i$ to $k$, and let $c_{ik}$ be the $(i, k)$-entry of the Cartan matrix. Thus, we have $c_{ik}=-a_{ik}-a_{ki}$ if $k\neq i$, $c_{ik}=2$ if $k=i$. 
For any element $g_v \in \mathbf{SH}_{v}$, the action of $ \Phi_{k}(u)$ on $g_v$ is given by $
 \Phi_{k}(u) g_v  \Phi_{k}(u)^{-1}:=g_v \widehat{\Phi}_{k, v}(u)$ \footnote{
 The formula of $\widehat{\Phi}_{k, v}(u)$ in \cite{YZ, YZ2} is 
$\widehat{\Phi}_{k, v}(u):= \prod_{i\in I} \prod_{j=1}^{v^{i}}
\frac{\vartheta(u- z_{j}^{(i)}+(c_{ki})\frac{\hbar}{2})}{\vartheta(u- z_{j}^{(i)}-(c_{ki}) \frac{\hbar}{2})}$. If we keep the formula in  \cite{YZ, YZ2}, we only need to switch $\mathfrak{X}^+(u, \lambda)$ with  $\mathfrak{X}^-(u, \lambda)$ in the current paper. 
The formula \eqref{eq:hat Phi} is compatible with the convention in \cite{Nak01}. 
In \cite{Nak01}, $\mathfrak{X}^+(u, \lambda)$ is the lowering operator, see also \eqref{eq:x^+}. 
 },
 where 
 \begin{equation}\label{eq:hat Phi}
\widehat{\Phi}_{k, v}(u):= \prod_{i\in I} \prod_{j=1}^{v^{i}}
\frac{\vartheta(u+ z_{j}^{(i)}+(c_{ki})\frac{\hbar}{2})}{\vartheta(u+ z_{j}^{(i)}-(c_{ki}) \frac{\hbar}{2})}.
 \end{equation}

 Note that the element $\widehat{\Phi}_{k, v}(u)$ lies in 
$\mathbf{SH}_v[\![u]\!]$. 

Let $\mathfrak{X}_{k}^-(u, \lambda)$ be the corresponding series of $-\mathfrak{X}_{k}^+(-u, -\lambda)$ in the opposite algebra $\mathbf{SH}^{\coop}[\![u]\!]$. The action of $\mathbf{SH}^0$ on $\mathbf{SH}$ induces an action of $\mathbf{SH}^0$ on $\mathbf{SH}^{\coop}$. In Theorem \ref{thm:pairing}, we will construct a non-degenerate bialgebra pairing on $\SH^{\sph}$, so that we can take the Drinfeld double $D(\mathbf{SH}^{\sph}):=\mathbf{SH}^{\sph}\otimes \mathbf{SH}^0\otimes \mathbf{SH}^{\sph, \coop}$. 
The series $\mathfrak{X}_{k}^{\pm}(u, \lambda), \Phi_{k}(u)\in D(\mathbf{SH}^{\sph}) [\![u]\!]$ are the generating series of the Drinfeld double $D(\mathbf{SH}^{\sph})$.

\begin{theorem}
\label{thm:generating series relation}
The generating series  $\mathfrak{X}_{k}^{\pm}(u, \lambda)$, and $\Phi_{k}(u)$ of $D(\mathbf{SH}^{\sph})$ satisfy the following commutation  relations.
\end{theorem}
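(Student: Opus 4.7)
The plan is to verify the five commutation relations \eqref{EQ1}--\eqref{EQ5} one at a time by computing directly in $\mathbf{SH}$, using the explicit shuffle formula~\eqref{shuffle formula} for the multiplication, the formula~\eqref{eq:coprod} for the coproduct, and the formula~\eqref{eq:hat Phi} for the action of $\mathbf{SH}^0$. The generating series $\mathfrak{X}^{\pm}_k(u,\lambda)$ are supported in the simple root components $\mathbf{SH}_{\pm e_k}$, so the shuffle calculations simplify substantially: the sums over $\Sh(e_k,e_l)$ have at most two terms, and each $\fac$-factor reduces to a product of only a handful of theta functions.

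First I would dispose of \eqref{EQ1}, which is immediate from the definition of $\mathbf{SH}^0$ as a commutative polynomial algebra. For \eqref{EQ2}, which describes how $\Phi_k(u)$ conjugates $\mathfrak{X}^{\pm}_l(v,\lambda)$, the key input is the formula $\Phi_k(u)\,g_v\,\Phi_k(u)^{-1}=g_v\widehat\Phi_{k,v}(u)$ specialized to $v=\pm e_l$; substituting \eqref{formula of X^+} and collecting the resulting theta ratio gives the claimed conjugation, while the dynamical shift $\lambda_l\mapsto \lambda_l\pm c_{kl}\hbar/2$ is produced by combining this action with the translation structure of Lemma~\ref{lem:trans_twist} (equivalently, the description of the Cartan action as a shift of dynamical parameters in \S\ref{subsec:dyn_cartan}).

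For \eqref{EQ3}, the $\mathfrak{X}^+\mathfrak{X}^+$-relation, I would compute $\mathfrak{X}^+_k(u,\lambda)\star\mathfrak{X}^+_l(v,\lambda)$ via \eqref{shuffle formula}. For the pair of dimension vectors $(e_k,e_l)$ the factor $\fac(z_{e_k}|z_{e_l})$ is a ratio of at most four theta functions (only the $\fac_1$ term for $k=l$, only $\fac_2$ for $k\ne l$), and the shuffle sum has two terms. Dividing the two orderings produces precisely the elliptic $R$-matrix coefficient $\widehat\Phi(z_{e_l}|z_{e_k})$ appearing in the braiding $\gamma$ of Theorem~\ref{thm:braiding}, which is the desired exchange relation. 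The $\mathfrak{X}^-\mathfrak{X}^-$-relation is the same calculation in $\mathbf{SH}^{\coop}$. To handle the mixed relation \eqref{EQ4} for $[\mathfrak{X}^+_k(u,\lambda),\mathfrak{X}^-_l(w,\lambda)]$ one invokes the Drinfeld double: the commutator is computed by pairing the coproducts of both generators via the bialgebra pairing of Theorem~\ref{thm:pairing}. Because $\Delta(\mathfrak{X}^+_k)$ lives in $\mathbf{SH}_{0}\otimes\mathbf{SH}_{e_k}+\mathbf{SH}_{e_k}\otimes\mathbf{SH}_{0}$ (similarly for $\mathfrak{X}^-_l$), the pairing forces $k=l$, and the only nontrivial contribution comes from the Cartan pair, giving $\delta_{kl}$ times an explicit rational expression in $\Phi_k(u),\Phi_k(w)$ that one reads off from~\eqref{eq:hat Phi}.

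The last and most technical step is \eqref{EQ5}, the elliptic Serre relation. Here I would expand the relevant antisymmetric sum of $\mathfrak{X}^+_k\star\mathfrak{X}^+_k\star\mathfrak{X}^+_l$ (and cyclic partners dictated by the Cartan matrix) into a single shuffle integral against the dimension vector $2e_k+e_l$ and verify vanishing as a theta-function identity. Equivalently, I would use that the preprojective cohomological Hall algebra $\calP^{\sph}_{\Ell}$ is a quotient of the shuffle algebra on simple roots by the wheel/Serre ideal, which can be extracted from the explicit form of $\fac_2$ at the three-point configurations $z_k-z_k'=\pm\hbar$, $z_k-z_l=\mp\hbar/2$. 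I expect this last step to be the main obstacle: verifying the Serre relation at the level of theta functions requires an identity of residues at the wheel loci, and the correct bookkeeping of the dynamical parameter $\lambda$ under the shifts induced by \eqref{eq:shifting map} will need to be done with care. All other relations reduce to straightforward applications of \eqref{shuffle formula}, \eqref{eq:coprod}, and \eqref{eq:hat Phi} together with classical quasi-periodicity properties of $\vartheta$.
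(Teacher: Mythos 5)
Your overall strategy---verifying the relations one by one by direct computation with the shuffle product \eqref{shuffle formula}, the coproduct \eqref{eq:coprod}, the Cartan action \eqref{eq:hat Phi}, and the Drinfeld-double commutation relation \eqref{eq: commut rel}---is the same as the paper's, which splits the proof into Propositions~\ref{conj:ell}, \ref{prop:rel ext} and \ref{thm: rel DSH}. However, your labels are systematically misassigned: what you call \eqref{EQ2} (conjugation by $\Phi_k$) is actually \eqref{EQ3}; what you call \eqref{EQ3} (the $\mathfrak{X}^{+}\mathfrak{X}^{+}$ exchange) is \eqref{EQ4}; what you call \eqref{EQ4} (the mixed relation $[\mathfrak{X}^{+},\mathfrak{X}^{-}]$) is \eqref{EQ5}; and what you call \eqref{EQ5}, an ``elliptic Serre relation,'' does not occur in the list \eqref{EQ1}--\eqref{EQ5} at all, so the step you single out as the main obstacle is not part of the statement. (The genuine \eqref{EQ2}, $[h,\mathfrak{X}_i^{\pm}(u,\lambda)]=\pm\alpha_i(h)\mathfrak{X}_i^{\pm}(u,\lambda)$, is immediate from the $\bbN^I$-grading and is covered implicitly.)

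The substantive gap is in your treatment of the exchange relation \eqref{EQ4} and the conjugation relation \eqref{EQ3}. You claim that dividing the two orderings of the shuffle product yields the braiding coefficient $\widehat{\Phi}(z_{e_l}|z_{e_k})$, ``which is the desired exchange relation,'' and that \eqref{EQ3} follows by ``collecting the resulting theta ratio.'' But the braiding coefficient depends on the internal shuffle variables $z_i,z_j$, whereas the coefficients in \eqref{EQ3} and \eqref{EQ4} depend only on the spectral parameters $u,v$ and the dynamical parameter $\lambda$; moreover \eqref{EQ4} has three terms on each side, with shifted spectral arguments such as $\mathfrak{X}_j^{\pm}(u+l,\cdot)$, and \eqref{EQ3} has two. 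Passing from the shuffle computation to these specific identities is the actual content of the proof, and it requires the Fay-type theta identities obtained from the residue theorem for elliptic functions (Lemma~\ref{gufang:Fact3} and its three-pole analogue), not merely the ``classical quasi-periodicity properties of $\vartheta$'' you invoke; quasi-periodicity alone will not produce these cancellations. Your plan for the mixed relation via the double pairing is essentially Proposition~\ref{thm: rel DSH}, although ``reading off'' the Cartan contribution from \eqref{eq:hat Phi} is in fact a residue computation of $\Phi_k^{\pm}$ against the theta kernels over $E$.
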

\begin{description}
\item[\namedlabel{EQ1}{EQ1}]  For each $i, j\in I$ and $h\in \mathfrak h$, we have
\[
[\Phi_i(u), \Phi_j(v)]=0 \,\ \text{and }\,\ [h, \Phi_i(u)]=0.
\]
\item[\namedlabel{EQ2}{EQ2}] For each $i\in I$ and $h\in \mathfrak h$, we have
\[
[h, \mathfrak{X}_i^{\pm}(u, \lambda)]=\pm \alpha_i(h) \mathfrak{X}_i^{\pm}(u, \lambda). 
\]
\item[\namedlabel{EQ3}{EQ3}] 
 For each $i, j\in I$,  let $a=\frac{c_{ij} }{2}\hbar$ and let $\lambda_j=(\lambda, \alpha_j)$, we have
\[
\Phi_i(u)\mathfrak{X}_j^{\pm}(v, \lambda)\Phi_i(u)^{-1}=
\frac{\vartheta(u-v\pm a)}{\vartheta(u-v\mp a)}  \mathfrak{X}_j^{\pm}(v, \lambda\pm \hbar \alpha_i)
\pm \frac{\vartheta(2a)\vartheta(u-v\mp a -\lambda_j)}{\vartheta(\lambda_j) \vartheta(u-v\mp a)} \mathfrak{X}_j^{\pm}(u\mp a, \lambda\pm \hbar \alpha_i).
\]
\item[\namedlabel{EQ4}{EQ4}] 
For each $i\neq j\in I$  and $\lambda\in \fh^*$ such that $(\lambda, \alpha_i)=(\lambda, \alpha_j)$ which denoted by $l$, let $a=\frac{c_{ij} }{2}\hbar$. Then we have
\begin{align*}
&\vartheta(2l) \vartheta(u-v\mp a) \mathfrak{X}_i^{\pm}(u, \lambda\pm \frac{\hbar}{2}\alpha_j) \mathfrak{X}_j^{\pm}(v, \lambda\mp \frac{\hbar}{2}\alpha_i) 
-\vartheta(l\pm a) \vartheta(u-v-l) \mathfrak{X}_i^{\pm}(u, \lambda\pm \frac{\hbar}{2}\alpha_j) \mathfrak{X}_j^{\pm}(u+l, \lambda\mp \frac{\hbar}{2}\alpha_i) \\
&\phantom{\vartheta(2l) \vartheta(u-v\mp a) \mathfrak{X}_i^{\pm}(u, \lambda\pm \frac{\hbar}{2}\alpha_j) \mathfrak{X}_j^{\pm}(v, \lambda\mp \frac{\hbar}{2}\alpha_i) } 
-\vartheta(l\mp a) \vartheta(u-v+l) \mathfrak{X}_i^{\pm}(v+l, \lambda\pm \frac{\hbar}{2}\alpha_j) \mathfrak{X}_j^{\pm}(v, \lambda\mp \frac{\hbar}{2}\alpha_i) \\
%===
=&\vartheta(2l) \vartheta(u-v\pm a) \mathfrak{X}_j^{\pm}(v, \lambda\pm \frac{\hbar}{2}\alpha_i) \mathfrak{X}_i^{\pm}(u, \lambda\mp \frac{\hbar}{2}\alpha_j) 
-\vartheta(l\mp a) \vartheta(u-v-l) \mathfrak{X}_j^{\pm}(u+l, \lambda\pm \frac{\hbar}{2}\alpha_i) \mathfrak{X}_i^{\pm}(u, \lambda\mp \frac{\hbar}{2}\alpha_j) \\
&\phantom{\vartheta(2l) \vartheta(u-v\pm a) \mathfrak{X}_j^{\pm}(v, \lambda\pm \frac{\hbar}{2}\alpha_i) \mathfrak{X}_i^{\pm}(u, \lambda\mp \frac{\hbar}{2}\alpha_j) } 
-\vartheta(l\pm a) \vartheta(u-v+l) \mathfrak{X}_j^{\pm}(v, \lambda\pm \frac{\hbar}{2}\alpha_i) \mathfrak{X}_i^{\pm}(v+l, \lambda\mp \frac{\hbar}{2}\alpha_j) 
\end{align*}

\item[\namedlabel{EQ5}{EQ5}]  For each $i\neq j\in I$  and $\lambda_1, \lambda_2\in \fh^*$, we have
\[
 [\mathfrak{X}_i^{+}(u, \lambda_1) , \mathfrak{X}_j^{-}(v, \lambda_2)]=0.
\]
For $i=j\in I$, we have the following relation on a weight space $\mathbb{V}_\mu$, if $(\lambda_1+\lambda_2, \alpha_i)=\hbar(\mu, \alpha_i)$.
\[
\vartheta(\hbar) [\mathfrak{X}_i^{+}(u, \lambda_1) , 
\mathfrak{X}_i^{-}(v, \lambda_2)]=
\frac{\vartheta(u-v+\lambda_{1, i}) }{\vartheta(u-v) \vartheta(\lambda_{1, i})} \Phi_i(v)
+\frac{\vartheta(u-v-\lambda_{2, i}) }{\vartheta(u-v) \vartheta(\lambda_{2, i})} \Phi_i(u),
\]
where $\lambda_{s, i}=(\lambda_s, \alpha_i)$, for $s=1, 2$. 
\end{description}

\begin{remark}
Gautam-Toledano Laredo in \cite{GTL15} studied the category of finite dimensional representations of 
the elliptic quantum group. 
The above commutation relations was used in {\it loc. cit.} in defining a representation category of the elliptic Drinfeld currents. In a work in progress of Gautam, it is shown that the algebra defined using the elliptic $R$-matrix of Felder also satisfies that same commutation relations.
\end{remark}
Motivated by this theorem, we define $D(\mathbf{SH}^{\sph})$ to be the elliptic quantum group, and 
$D(\mathcal{SH}^{\sph})$ the sheafified elliptic quantum group. 

\subsection{Commuting relations of the Drinfeld currents}
In this section, we prove Theorem \ref{thm:generating series relation}. 
We break down the proof into Propositions~\ref{conj:ell}, \ref{prop:rel ext}, and \ref{thm: rel DSH}, which will be proven below.

\subsubsection{The relations of $\mathfrak{X}_{k}^{+}(u, \lambda)$}

\begin{prop}\label{conj:ell}
The series $\{\mathfrak{X}_{k}^+(u, \lambda)\}_{k\in I}$ satisfy the relation \eqref{EQ4} of the elliptic quantum group.  \end{prop}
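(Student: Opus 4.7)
The proof is a direct computation using the shuffle formula \eqref{shuffle formula}. Since $i \neq j$, the shuffle set $\Sh(e_i, e_j)$ is a singleton and $\fac_1(z_i|z_j) = 1$ (no color is shared between the two factors), so the shuffle product reduces to
\[
\mathfrak{X}_i^+(s,\mu) \star \mathfrak{X}_j^+(t,\nu) \;=\; F_{i,s}(z_i)\, F_{j,t}(z_j)\, \fac_2(z_i|z_j),
\]
where $F_{k,s}(z_k) := \frac{\vartheta(s+z_k+\mu_k)}{\vartheta(s+z_k)\vartheta(\mu_k)}$, and the reverse-order product has the same shape but with $\fac_2(z_j|z_i)$ in place of $\fac_2(z_i|z_j)$. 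Substituting into \eqref{EQ4}, the LHS becomes $\fac_2(z_i|z_j)\cdot T_+$ and the RHS becomes $\fac_2(z_j|z_i)\cdot T_-$, where $T_\pm$ are three-term linear combinations of products $F_{i,\bullet}(z_i)F_{j,\bullet}(z_j)$ read off directly from \eqref{EQ4}.

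Next, using the constraint $\lambda_i = \lambda_j = l$ and the definition $a = (c_{ij}/2)\hbar$, one writes the relevant $F$'s explicitly: for instance $\mathfrak{X}_i^+(u, \lambda + \tfrac{\hbar}{2}\alpha_j)$ contributes $F_{i,u}(z_i) = \vartheta(u+z_i+l+a)/[\vartheta(u+z_i)\vartheta(l+a)]$, and similarly for the remaining combinations. After clearing denominators, the equality $\fac_2(z_i|z_j)\,T_+ = \fac_2(z_j|z_i)\,T_-$ becomes a bilinear identity among theta functions with arguments in $u$, $v$, $z_i-z_j$, $l$, and $a$. The dynamical coefficients $\vartheta(2l)$ and $\vartheta(l\pm a)$ on the two sides of \eqref{EQ4} are calibrated precisely so that the identity reduces to the classical Riemann three-term theta relation
\[
\vartheta(x+y)\vartheta(x-y)\vartheta(\alpha+\beta)\vartheta(\alpha-\beta) - \vartheta(x+\alpha)\vartheta(x-\alpha)\vartheta(y+\beta)\vartheta(y-\beta) + \vartheta(x+\beta)\vartheta(x-\beta)\vartheta(y+\alpha)\vartheta(y-\alpha) = 0,
\]
applied with a suitable choice of $x, y, \alpha, \beta$ built from $u, v, l, a, z_i-z_j$.

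The principal obstacle is bookkeeping rather than conceptual: under Assumption~\ref{Assu:WeghtsGeneral}(2), the ratio $\fac_2(z_j|z_i)/\fac_2(z_i|z_j)$ is itself a product of ratios $\vartheta(z_j-z_i+m\hbar/2)/\vartheta(z_i-z_j+m\hbar/2)$ that must be absorbed into the $F$-factors before the Riemann identity becomes visible. A convenient alternative to the brute-force term-by-term match is a Liouville-type argument: the difference $\fac_2(z_i|z_j)T_+ - \fac_2(z_j|z_i)T_-$ is a meromorphic section of an explicit line bundle on $E^{(e_i+e_j)}\times E^2$ in the variables $(u,v)$, whose candidate poles (in $u$ at $u+z_i\in\Lambda$ and in $v$ at $v+z_j\in\Lambda$) cancel in pairs; the quasi-periodicity then forces the difference to be constant in $(u,v)$, and evaluation at a convenient specialization such as $u=v+l$ shows that the constant is zero.
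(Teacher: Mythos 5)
Your reduction is the same as the paper's: compute both orderings with the shuffle formula \eqref{shuffle formula}, note that $\fac_1$ is trivial and $\Sh(e_i,e_j)$ is a singleton for $i\neq j$, so everything comes down to an identity between two three-term combinations of explicit theta sections multiplied by $\fac(z_i|z_j)$ resp.\ $\fac(z_j|z_i)$; after cancelling the part of $\fac_2$ symmetric under $m\mapsto -m$, one is left with a single ratio $\vartheta(z_i-z_j+a)/\vartheta(z_i-z_j-a)$ to absorb, exactly as in the paper's equation \eqref{eq:detailed relation of X^+}. Where you diverge is the verification of that final identity, and here your first suggestion is not accurate: the identity is not an instance of the classical Riemann three-term relation. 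The paper (following \cite[\S 6.7]{GTL15}) writes the left side as $I(\hbar)\vartheta(z_i-z_j+\hbar)$, splits $I$ into three terms $T_1,T_2,T_3$, applies the four-point residue identity of Lemma~\ref{gufang:Fact3} to each difference $\vartheta(\cdot+\hbar)T_m(\hbar)-\vartheta(\cdot-\hbar)T_m(-\hbar)$, and then needs one further three-point application to see that the sum vanishes; no single choice of $x,y,\alpha,\beta$ in the Riemann relation produces this. Your alternative Liouville-type argument is sound in principle — it is precisely the mechanism behind Lemma~\ref{gufang:Fact3} (an elliptic function has vanishing total residue) — but as sketched the pole accounting is incomplete: besides $u+z_i\in\Lambda$ and $v+z_j\in\Lambda$, the expression \eqref{eq:detailed relation of X^+} has candidate poles along $u-v\in\Lambda$ (from the prefactors $\vartheta(u-v\mp a)/\vartheta(u-v)$ and $\vartheta(u-v\mp l)/\vartheta(u-v)$) and along $u+z_j+l\in\Lambda$, $v+z_i+l\in\Lambda$, all of which must be checked to cancel, and the claim that evaluation at $u=v+l$ kills the constant is asserted rather than verified. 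So the proposal is structurally correct and would close with a careful residue/Liouville computation, but the named identity should be replaced by the four-point addition formula of Lemma~\ref{gufang:Fact3} (or the full pole analysis carried out).
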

\begin{proof}
Proposition \ref{conj:ell} can be proved using the product formula \eqref{shuffle formula} of the algebra $\mathbf{SH}$. For $\lambda=\{\lambda_i\}_{i\in I}$, we have $(\lambda, \alpha_i)=\lambda_i$. 
By assumption, we have $l=(\lambda, \alpha_i)=(\lambda, \alpha_j)$, and $a=\frac{c_{ij}}{2}\hbar$. 
Then, by definition, 
\[
\mathfrak{X}_i^{+}(u, \lambda+ \frac{\hbar}{2}\alpha_j)=
g_{(\lambda+ \frac{\hbar}{2}\alpha_j, \alpha_i)}(u+z_i)
= g_{l+a}(u+z_i)=\frac{\vartheta(u+z_i+l+a) }{\vartheta(u+z_i)\vartheta(l+a)}
\]
We first consider the case when $i\neq j$.
For simplicity, we write $\mathbf{a}=-c_{ij}$. Let $S$ be the set $\{\mathbf{a}, \mathbf{a}-2, \mathbf{a}-4, \dots, -\mathbf{a}+4, -\mathbf{a}+2\}$.
By the multiplication formula \eqref{shuffle formula} of $\mathbf{SH}_{e_i} \otimes \mathbf{SH}_{e_j} \to \mathbf{SH}_{e_i+e_j}$, we have
\[
\mathfrak{X}_i^+(u, \lambda) * \mathfrak{X}_j^+(v, \zeta)=
-g_{\lambda_i}(u+z_i) g_{\zeta_j}(v+z_j) 
\prod_{m\in S} \vartheta(z_{j}-z_i+m\frac{\hbar}{2}).
\] 
Therefore, the left hand side of the relation \eqref{EQ4}  becomes
\begin{align}
%---- term 1----
&\Bigg(-\vartheta(2l) \vartheta(u-v- a) 
\frac{\vartheta( u+z_i+l+a)}{ \vartheta(u+z_i ) \vartheta(l+a) }
\cdot \frac{\vartheta( v+z_j+l-a)}{ \vartheta( v+z_j ) \vartheta(l-a) } \notag\\
%---- term 2----
&+ \vartheta(u-v-l) 
\frac{\vartheta( u+z_i+l+a)}{ \vartheta(u+z_i ) }
\cdot \frac{\vartheta( u+z_j+2l-a)}{ \vartheta( u+z_j+l ) \vartheta(l-a) }
\notag\\
%---- term 3----
&
+ \vartheta(u-v+l) 
\frac{\vartheta( v+z_i+2l+a)}{ \vartheta(v+z_i+l) \vartheta(l+a) }
\cdot \frac{\vartheta( v+z_j+l-a)}{ \vartheta( v+z_j )  }\Bigg)\cdot
\prod_{m\in S} \vartheta(z_{j}-z_i+m\frac{\hbar}{2}) \label{lhs EQ3}
\end{align}

Similarly, by the multiplication formula \eqref{shuffle formula} of $\mathbf{SH}_{e_j} \otimes \mathbf{SH}_{e_i} \to \mathbf{SH}_{e_i+e_j}$, we have \begin{align*}
\mathfrak{X}_j^+(v, \zeta) * \mathfrak{X}_i^+(u, \lambda)
=&(-1)^{a+1} g_{\zeta_j}(z_j-v) g_{\lambda_i}(z_i-u) 
 \prod_{m\in S} \vartheta(z_{i}-z_j+m\frac{\hbar}{2})\\
 =&- g_{\zeta_j}(z_j-v) g_{\lambda_i}(z_i-u) 
 \prod_{m\in S} \vartheta(z_j-z_i-m\frac{\hbar}{2})
\end{align*}
Plugging the above into  \eqref{EQ4}, the right hand side of  \eqref{EQ4} becomes
\begin{align}
&\Bigg(-\vartheta(2l) \vartheta(u-v+ a) 
\frac{\vartheta( v+z_j + l+a)}{\vartheta( v+z_j )\vartheta( l+a)}
\frac{\vartheta( u+z_i + l-a)}{\vartheta( u+z_i )\vartheta( l-a)} \notag\\
&+ \vartheta(u-v-l) 
\frac{\vartheta(u+z_j+2l +a)}{\vartheta(u+z_j+l)\vartheta(l +a)}
\frac{\vartheta(u+z_i+l -a)}{\vartheta(u+z_i)}
\notag\\
&
+ \vartheta(u-v+l) 
\frac{\vartheta( v+z_j +l+a)}{ \vartheta( v+z_j )}
\frac{\vartheta( v+z_i +2l-a)}{ \vartheta( v+z_i+l)\vartheta(l-a)}\Bigg)\cdot
 \prod_{m\in S} \vartheta(z_j-z_i-m\frac{\hbar}{2}) \label{rhs EQ3}
\end{align}
In order to show \eqref{lhs EQ3} = \eqref{rhs EQ3}, we could cancel the common factor $ \prod_{m\in S\backslash \{\mathbf{a}\}} \vartheta(z_{j}-z_i+m\frac{\hbar}{2})= \prod_{m\in S\backslash \{\mathbf{a}\}} \vartheta(z_{j}-z_i-m\frac{\hbar}{2})$ and then divide both sides by $\frac{1}{\vartheta(2l)\vartheta(u-v)}$. Therefore, it suffices to show the following equality. 
 \begin{align}
%---- term 1----
&\Bigg(\frac{ \vartheta(u-v- a) }{\vartheta(u-v)}
\frac{\vartheta( u+z_i+l+a)}{ \vartheta(u+z_i ) \vartheta(l+a) }
\cdot \frac{\vartheta( v+z_j+l-a)}{ \vartheta( v+z_j ) \vartheta(l-a) }
\notag
\\
%---- term 2----
&- \frac{\vartheta(u-v-l) }{\vartheta(u-v)\vartheta(2l)}
\frac{\vartheta( u+z_i+l+a)}{ \vartheta(u+z_i ) }
\cdot \frac{\vartheta( u+z_j+2l-a)}{ \vartheta( u+z_j+l ) \vartheta(l-a) } \notag\\
%---- term 3----
&
-\frac{\vartheta(u-v+l) }{\vartheta(u-v)\vartheta(2l)}
\frac{\vartheta( v+z_i+2l+a)}{ \vartheta(v+z_i+l) \vartheta(l+a) }
\cdot \frac{\vartheta( v+z_j+l-a)}{ \vartheta( v+z_j ) }\Bigg)\cdot
\vartheta(z_{i}-z_j+a) \notag\\
%----
=
&\Bigg( \frac{\vartheta(u-v+ a) }{\vartheta(u-v)}
\frac{\vartheta( v+z_j + l+a)}{\vartheta( v+z_j )\vartheta( l+a)}
\frac{\vartheta( u+z_i + l-a)}{\vartheta( u+z_i )\vartheta( l-a)} 
\label{eq:detailed relation of X^+}
\\
%=========%=========%=========
&- \frac{\vartheta(u-v-l) }{\vartheta(u-v)\vartheta(2l)}
\frac{\vartheta(u+z_j+2l +a)}{\vartheta(u+z_j+l)\vartheta(l +a)}
\frac{\vartheta(u+z_i+l -a)}{\vartheta(u+z_i)}
 \notag\\
&
- \frac{\vartheta(u-v+l) }{\vartheta(u-v)\vartheta(2l)}
\frac{\vartheta( v+z_j +l+a)}{ \vartheta( v+z_j )}
\frac{\vartheta( v+z_i +2l-a)}{ \vartheta( v+z_i+l)\vartheta(l-a)}\Bigg)\cdot
(\vartheta(z_i-z_j-a))\notag
 \end{align}
We now show the equality  \eqref{eq:detailed relation of X^+} using the following Lemmas. 
\end{proof}

The following lemma is well-known. 
\begin{lemma}
\label{gufang:Fact3}
Assume $\sum_{i=1}^4 x_i=\sum_{i=1}^4 y_i$. Then, we have the following identity of Theta function
\begin{align*}
&\frac{\vartheta(y_1-x_1)\vartheta(y_1-x_2)\vartheta(y_1-x_3)\vartheta(y_1-x_4)}
{\vartheta(y_1-y_2)\vartheta(y_1-y_3)\vartheta(y_1-y_4)}
+\frac{\vartheta(y_2-x_1)\vartheta(y_2-x_2)\vartheta(y_2-x_3)\vartheta(y_2-x_4)}
{\vartheta(y_2-y_1)\vartheta(y_2-y_3)\vartheta(y_2-y_4)}\\
&+\frac{\vartheta(y_3-x_1)\vartheta(y_3-x_2)\vartheta(y_3-x_3)\vartheta(y_3-x_4)}
{\vartheta(y_3-y_1)\vartheta(y_3-y_2)\vartheta(y_3-y_4)}
+\frac{\vartheta(y_4-x_1)\vartheta(y_4-x_2)\vartheta(y_4-x_3)\vartheta(y_4-x_4)}
{\vartheta(y_4-y_1)\vartheta(y_4-y_2)\vartheta(y_4-y_3)}=0.
\end{align*}
\end{lemma}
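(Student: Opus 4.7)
The plan is to deduce the identity from the residue theorem applied to a cleverly chosen elliptic function on $\mathbb{C}/(\mathbb{Z}+\tau\mathbb{Z})$. Define
\[
g(z) := \frac{\vartheta(z-x_1)\vartheta(z-x_2)\vartheta(z-x_3)\vartheta(z-x_4)}{\vartheta(z-y_1)\vartheta(z-y_2)\vartheta(z-y_3)\vartheta(z-y_4)}.
\]
First I would check that $g$ is a genuine elliptic function in $z$. The quasi-periodicity $\vartheta(w+1)=-\vartheta(w)$ yields eight sign changes in numerator and denominator, so $g(z+1)=g(z)$. For the $\tau$-shift, each factor $\vartheta(z-x_i)$ contributes a multiplier $-e^{-\pi i\tau}e^{-2\pi i(z-x_i)}$, and similarly for the $\vartheta(z-y_j)$, so the combined multiplier for $g$ under $z\mapsto z+\tau$ equals $e^{2\pi i(\sum_i x_i - \sum_j y_j)}$. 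By the hypothesis $\sum_{i=1}^4 x_i=\sum_{j=1}^4 y_j$ this equals $1$, so $g$ descends to a meromorphic function on the elliptic curve.

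Next I would compute the residues. Assuming the $y_j$ are pairwise distinct modulo the lattice (the generic case suffices, the rest follows by continuity), the only poles of $g$ in a fundamental domain are simple poles at $z=y_1,y_2,y_3,y_4$. Using the normalization $\vartheta'(0|\tau)=1$ from Example~\ref{example:theta}, the Taylor expansion $\vartheta(z-y_j)=(z-y_j)+O((z-y_j)^2)$ gives
\[
\operatorname{Res}_{z=y_j} g(z) = \frac{\prod_{i=1}^4\vartheta(y_j-x_i)}{\prod_{k\neq j}\vartheta(y_j-y_k)},
\]
which is precisely the $j$-th term of the identity in the lemma.

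The key step is then the classical fact that the sum of residues of a meromorphic function on a compact Riemann surface vanishes; applied to $g$, this immediately gives the claimed identity. The only routine obstacle is the degenerate case when two of the $y_j$ coincide, which can be handled by approximating with generic $y_j$ and passing to the limit, since both sides of the identity depend holomorphically on $y_1,\ldots,y_4,x_1,\ldots,x_4$ subject to the linear constraint (the apparent singularities from $\vartheta(y_j-y_k)$ in the denominator cancel against zeros of the numerator terms after a short check). No clever manipulation is required beyond this.
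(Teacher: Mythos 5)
Your proposal is correct and follows essentially the same route as the paper's own proof: both define the elliptic function $f(z)=\prod_i\vartheta(z-x_i)/\prod_j\vartheta(z-y_j)$, verify double periodicity using the hypothesis $\sum x_i=\sum y_j$, and conclude via the vanishing of the sum of residues. Your write-up simply spells out the multiplier computation, the normalization $\vartheta'(0)=1$ in the residue calculation, and the degenerate case, all of which the paper leaves as "easy to check."
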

\begin{proof}
Define a function $f(z):=\frac{\vartheta(z-x_1)\vartheta(z-x_2)\vartheta(z-x_3)\vartheta(z-x_4)}
{\vartheta(z-y_1)\vartheta(z-y_2)\vartheta(z-y_3)\vartheta(z-y_4)}$. It is easy to check that, under the 
assumption $\sum_{i=1}^4 x_i=\sum_{i=1}^4 y_i$, $f(z)$ is an elliptic function. 
The desired identity follows from the residue theorem $\sum_{i=1}^4 \Res_{z=y_i} f(z)=0$. 
\end{proof}

Write the left hand side of \eqref{eq:detailed relation of X^+} by $I(\hbar) \vartheta(z_l-z_k+\hbar)$. 
Then, the right hand side of \eqref{eq:detailed relation of X^+} is $I(-\hbar) \vartheta(z_l-z_k-\hbar)$. 
\begin{lemma}\cite[\S 6.7]{GTL15}
We have the equality $
I(\hbar) \vartheta(z_l-z_k+\hbar)
=I(-\hbar) \vartheta(z_l-z_k-\hbar). $\end{lemma}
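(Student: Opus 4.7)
The plan is to view both sides of the asserted identity as meromorphic functions in a single variable (most conveniently $u$, holding $v, z_i, z_j, l, a$ fixed) and to deduce the equality from the three-variable four-term theta identity recalled in Lemma~\ref{gufang:Fact3}. The three summands on the LHS of \eqref{eq:detailed relation of X^+} share the denominator pattern $\vartheta(u-v)\vartheta(u+z_i)\vartheta(u+z_j+l)$ after multiplying through by $\vartheta(2l)$, and analogously on the RHS. Clearing these common denominators and multiplying both sides by $\vartheta(z_i-z_j-a)\vartheta(z_i-z_j+a)$ converts the desired equality into a relation among products of eight theta factors whose arguments satisfy the balance $\sum x_r = \sum y_r$.

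First I would apply Lemma~\ref{gufang:Fact3} with the four poles chosen to be the zeros of the common denominators, namely $y_1 = u, y_2 = v, y_3 = u+z_i, y_4 = u+z_j+l$ (suitably shifted so that the balance condition holds), and the $x_r$'s chosen to absorb the numerators $\vartheta(u+z_i+l+a), \vartheta(v+z_j+l-a), \vartheta(u-v\pm l), \vartheta(z_i-z_j\pm a)$ appearing in the three summands. Lemma~\ref{gufang:Fact3} says the four residues of $f(z)=\prod_r\vartheta(z-x_r)/\prod_r\vartheta(z-y_r)$ sum to zero; three of these residues reproduce the three terms of the LHS, while the fourth residue — coming from an auxiliary pole at $y_0$ — is exactly $I(-\hbar)\vartheta(z_i-z_j-a)$ after using the odd symmetry and quasi-periodicity of $\vartheta$ recorded in Example~\ref{example:theta}. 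The combination of these two applications of Lemma~\ref{gufang:Fact3} (one for each side) yields the identity.

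The main obstacle will be the combinatorial bookkeeping: verifying the balance condition $\sum x_r = \sum y_r$ and tracking the signs that arise from $\vartheta(-z)=-\vartheta(z)$ and from shifts by $\tau$. A cleaner alternative, which I expect to use if the bookkeeping becomes unwieldy, is to set $F(u) := I(\hbar)\vartheta(z_i-z_j+a) - I(-\hbar)\vartheta(z_i-z_j-a)$ and verify that $F(u)$ is doubly quasi-periodic in $u$ with the same multipliers as the common factor $\vartheta(u-v)\vartheta(u+z_i)\vartheta(u+z_j+l)$, so that $F(u)/[\vartheta(u-v)\vartheta(u+z_i)\vartheta(u+z_j+l)]$ is elliptic; a direct residue computation at the three simple poles $u=v$, $u=-z_i$, $u=-z_j-l$ shows these residues cancel pairwise between the three summands, so the quotient is a constant which vanishes upon evaluation at $u=v+a$ using the odd symmetry. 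This is the route sketched in \cite[\S 6.7]{GTL15} and it transplants to the present setting without modification.
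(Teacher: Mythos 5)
Your second route (the Liouville-type argument in $u$) is sound and is indeed essentially the Gautam--Toledano Laredo argument you cite, but it is organized differently from the paper's proof. The paper writes $I(\hbar)=T_1-T_2-T_3$ and, for each $i$, applies the four-term identity of Lemma~\ref{gufang:Fact3} to the \emph{pair} $\vartheta(a-b+\hbar)T_i(\hbar)-\vartheta(a-b-\hbar)T_i(-\hbar)$, producing three expressions with a common factor $\vartheta(2\hbar)$ and a common denominator; it then applies the (three-term form of the) same identity once more to show their sum vanishes. So the paper's proof is a purely algebraic reduction by repeated use of Lemma~\ref{gufang:Fact3} in the $\hbar$-direction, whereas yours argues function-theoretically in the $u$-direction: ellipticity of the quotient by the common denominator, vanishing of residues, then evaluation at a point. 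Each approach buys something: the paper's keeps every step inside one quotable identity and makes the cancellation mechanism explicit; yours is shorter to state and generalizes to any balanced combination of theta quotients without tracking which four-term instance is being invoked.

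Two soft spots you should repair before writing this up. First, your route (1) cannot work as described: Lemma~\ref{gufang:Fact3} produces exactly four residue terms, while the identity to be proved has six summands (three on each side), so a single application cannot have "three residues reproduce the LHS and the fourth equal $I(-\hbar)\vartheta(z_i-z_j-a)$" --- the latter is itself a sum of three terms. You must either apply the identity several times (as the paper does) or abandon route (1) for route (2). Second, in route (2) the choice $u=v+a$ does not visibly kill the constant: at $u=v+a$ only the first LHS summand vanishes (via $\vartheta(u-v-a)=0$), while the remaining five terms survive, so you still owe a computation there. A safer way to finish is to show the residues at \emph{all three} poles $u=v$, $u=-z_i$, $u=-z_j-l$ vanish individually (not merely that they sum to zero, which is automatic for an elliptic function) and that the quotient is holomorphic and hence constant; then repeat the same ellipticity argument in a second variable, or evaluate at a zero of the numerator common to all six terms.
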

\begin{proof}
For the convenient of the reader, we include a proof. This identity follows essentially from Lemma \ref{gufang:Fact3}. Write $I(\hbar)$ as $I(\hbar)=T_1-T_2-T_3$, where
\begin{align*}
&T_1(\hbar)=\frac{\vartheta(u-v-\hbar)\vartheta(u-a+\lambda+\hbar)\vartheta(v-b+\lambda-\hbar)}{\vartheta(u-v)\vartheta(u-a)\vartheta(v-b)\vartheta(\lambda+\hbar)\vartheta(\lambda-\hbar)},\\
&T_2(\hbar)=
\frac{\vartheta(u-v-\lambda)
\vartheta(\lambda+\hbar)\vartheta(u-a+\lambda+\hbar)\vartheta(u-b+2\lambda-\hbar)}{\vartheta(u-v)\vartheta(2\lambda)\vartheta(u-a)\vartheta(v-b+\lambda)\vartheta(\lambda+\hbar)\vartheta(\lambda-\hbar)},\\
&T_3(\hbar)=
\frac{\vartheta(u-v+\lambda)
\vartheta(\lambda-\hbar)\vartheta(v-a+2\lambda+\hbar)\vartheta(v-b+\lambda-\hbar)}{\vartheta(u-v)\vartheta(2\lambda)\vartheta(v-a+\lambda)\vartheta(v-b)\vartheta(\lambda+\hbar)\vartheta(\lambda-\hbar)}.
\end{align*}
Applying the identity in Lemma \ref{gufang:Fact3} to the three terms, we have
\begin{align*}
&\vartheta(a-b+\hbar)T_1(\hbar)-\vartheta(a-b-\hbar) T_1(-\hbar)
=\frac{\vartheta(u-v-a+b)\vartheta(u-b+\lambda)\vartheta(v-a+\lambda)\vartheta(2\hbar)}{
\vartheta(u-v)\vartheta(u-a)\vartheta(v-b)\vartheta(\lambda+\hbar)\vartheta(\lambda-\hbar)},\\
&\vartheta(a-b+\hbar)T_2(\hbar)-\vartheta(a-b-\hbar)T_2(-\hbar)
=
\frac{\vartheta(u-v-\lambda)\vartheta(u-b+\lambda)
\vartheta(u-a+2\lambda)\vartheta(a-b+\lambda)\vartheta(2\hbar)}{
\vartheta(u-v)\vartheta(u-a)\vartheta(v-b)\vartheta(\lambda+\hbar)\vartheta(\lambda-\hbar)},\\
&\vartheta(a-b+\hbar)T_3(\hbar)-\vartheta(a-b-\hbar)T_3(-\hbar)
=-
\frac{\vartheta(u-v+\lambda)\vartheta(v-a+\lambda)
\vartheta(v-b+2\lambda)\vartheta(a-b-\lambda)\vartheta(2\hbar)}{
\vartheta(u-v)\vartheta(u-a)\vartheta(v-b)\vartheta(\lambda+\hbar)\vartheta(\lambda-\hbar)}.
\end{align*}
Hence $I(\hbar)\vartheta(a-b+\hbar)-I(-\hbar)\vartheta(a-b-\hbar)$ becomes
\begin{align*}
&\frac{\vartheta(2\hbar)}{
\vartheta(u-v)\vartheta(u-a)\vartheta(v-b)\vartheta(\lambda+\hbar)\vartheta(\lambda-\hbar)}\Bigg(
\vartheta(u-v-a+b)\vartheta(u-b+\lambda)\vartheta(v-a+\lambda)\vartheta(2\lambda)\\
&-\vartheta(u-v-\lambda)
\vartheta(u-a+2\lambda)\vartheta(v-b)\vartheta(a-b+\lambda)
+
\vartheta(u-v+\lambda)\vartheta(u-a)\vartheta(v-b+2\lambda)
\vartheta(a-b-\lambda)
\Bigg).
\end{align*}
The above expression is equal to zero by the  identity in Lemma \ref{gufang:Fact3}.
\end{proof}

\subsubsection{Cartan subalgebra}

\begin{prop}\label{prop:rel ext}
The series $\{\mathfrak{X}_{i}^+(u, \lambda)\}_{i\in I}, \Phi_{k}(u)$ satisfy the relation \eqref{EQ3} of the elliptic quantum group. 
\end{prop}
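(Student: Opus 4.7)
The plan is to reduce the identity \eqref{EQ3} to a direct comparison of meromorphic functions in a single variable.  First, using the formulas $\mathfrak{X}_{j}^{+}(v,\lambda)=g_{\lambda_j}(v+z_j)=\frac{\vartheta(v+z_j+\lambda_j)}{\vartheta(v+z_j)\vartheta(\lambda_j)}$ and
\[
\widehat{\Phi}_{i,e_j}(u)=\frac{\vartheta(u+z_j+a)}{\vartheta(u+z_j-a)},\qquad a=\tfrac{c_{ij}}{2}\hbar,
\]
I expand the left-hand side of \eqref{EQ3} to obtain the explicit rational section
\[
\Phi_{i}(u)\,\mathfrak{X}_{j}^{+}(v,\lambda)\,\Phi_{i}(u)^{-1}=\frac{\vartheta(v+z_j+\lambda_j)\,\vartheta(u+z_j+a)}{\vartheta(v+z_j)\,\vartheta(\lambda_j)\,\vartheta(u+z_j-a)},
\]
and expand the right-hand side using $\mathfrak{X}_{j}^{+}(v,\lambda+\hbar\alpha_i)=g_{\lambda_j+2a}(v+z_j)$ and $\mathfrak{X}_{j}^{+}(u-a,\lambda+\hbar\alpha_i)=g_{\lambda_j+2a}(u-a+z_j)$.

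Next, viewing both sides as meromorphic functions of $v$ with the remaining variables generic, I check that they have matching quasi-periodicity. A direct calculation shows that the $v$-multipliers under $v\mapsto v+1$ and $v\mapsto v+\tau$ agree on both sides (both contribute the factor $e^{-2\pi i\lambda_j}$ under $\tau$-translation, coming from the net imbalance of $+v$ versus $-v$ arguments in the theta factors). Since the two sides have identical quasi-periodicity, their difference is a meromorphic section of the same line bundle on $E$; by Liouville's theorem, once we verify that all poles cancel, this difference must be constant, and since the multiplier is nontrivial for generic $\lambda_j$, the constant is forced to be zero.

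The residue check is the heart of the argument. As a function of $v$ the only possible simple poles lie at $v=-z_j$ and $v=u-a$. At $v=-z_j$, the left-hand side and the first term of the right-hand side both have residue $\frac{\vartheta(u+z_j+a)}{\vartheta(u+z_j-a)}$ (using $\vartheta'(0)=1$), while the second term on the right is regular. At $v=u-a$, the left-hand side is regular, and the residues of the two terms on the right cancel after using $\vartheta(-\lambda_j)=-\vartheta(\lambda_j)$; this cancellation is precisely the content of the $\frac{\vartheta(2a)\vartheta(u-v-a-\lambda_j)}{\vartheta(\lambda_j)\vartheta(u-v-a)}$ coefficient in the second summand.

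The $-$ case of \eqref{EQ3} is obtained either by repeating the same residue calculation verbatim with the appropriate sign changes, or more conceptually by applying the coopposite antiautomorphism $\calP_{\Ell}\to \calP_{\Ell}^{\coop}$ that intertwines $\mathfrak{X}_k^{+}$ with $\mathfrak{X}_k^{-}$ and reverses signs of $u,v,\lambda$. The main obstacle, as in Proposition~\ref{conj:ell}, is essentially bookkeeping: correctly identifying all the theta factors contributing poles in $v$, and tracking the odd-function sign from $\vartheta(-\lambda_j)=-\vartheta(\lambda_j)$ so that the residues at $v=u-a$ cancel exactly. Once these checks are in place, Liouville's theorem closes the argument without requiring any Fay-type four-term identity (although one could alternatively clear denominators and invoke Lemma~\ref{gufang:Fact3} directly, the residue approach is shorter here because the two sides already have the same pole structure by construction).
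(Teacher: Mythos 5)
Your argument is correct, but it is organized differently from the paper's. The paper expands both sides using $\mathfrak{X}_j^{+}(v,\lambda)=g_{\lambda_j}(v+z_j)$ and $\widehat{\Phi}_{i,e_j}(u)=\vartheta(u+z_j+a)/\vartheta(u+z_j-a)$ exactly as you do, but then clears the denominators $\vartheta(\lambda_j+2a)\vartheta(\lambda_j)$, makes a change of variables $\sum x_i=\sum y_i$, and recognizes the resulting three-term expression as an instance of the Fay-type addition identity (the three-term analogue of Lemma~\ref{gufang:Fact3}). You instead fix all variables but $v$, match quasi-periodicities, and cancel the residues at the two simple poles $v=-z_j$ and $v=u-a$; since Lemma~\ref{gufang:Fact3} is itself proved by exactly this residue-theorem argument, the two routes are the same computation packaged differently, and yours is arguably more direct because it avoids the change of variables. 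Your residue computations are right: at $v=-z_j$ both sides have residue $\vartheta(u+z_j+a)/\vartheta(u+z_j-a)$ (the second summand being regular there), and at $v=u-a$ the two summands of the right-hand side contribute $\mp\vartheta(2a)\,\vartheta(u+z_j+\lambda_j+a)/\bigl(\vartheta(u+z_j-a)\vartheta(\lambda_j+2a)\bigr)$, cancelling via $\vartheta(-\lambda_j)=-\vartheta(\lambda_j)$. One small point of rigor: the difference of the two sides is not doubly periodic but quasi-periodic with multiplier $e^{-2\pi i\lambda_j}$ under $v\mapsto v+\tau$, so Liouville's theorem does not literally apply; the correct statement is that a holomorphic section of a nontrivial degree-zero line bundle on $E$ vanishes (equivalently, a holomorphic $f$ with $f(v+1)=f(v)$ and $f(v+\tau)=cf(v)$, $c\neq q^n$, is identically zero). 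This gives the vanishing for $\lambda_j$ outside the lattice, which suffices for an identity of meromorphic sections. Your treatment of the $-$ case via the coopposite matches how the paper derives it in the subsequent proposition.
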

\begin{proof}
Using the fact $\mathfrak{X}_{j}^+(v, \lambda)=\frac{\vartheta(v+z_j+\lambda_j)}{\vartheta(v+z_j) \vartheta(\lambda_j)}
$, 
the left hand side of \eqref{EQ3} is 
\[
\frac{\vartheta(v+z_j+\lambda_j)}{ \vartheta(v+z_j) \vartheta(\lambda_j)}\cdot
\frac{\vartheta(u+ z_j+(c_{ij})\frac{\hbar}{2})}
{\vartheta(u+ z_j-(c_{ij}) \frac{\hbar}{2})}
=\frac{\vartheta(v+z_j+\lambda_j)}{ \vartheta(v+z_j) \vartheta(\lambda_j)}\cdot
\frac{\vartheta(u+ z_j+a)}
{\vartheta(u+ z_j-a)}, \,\ \text{where $a=\frac{c_{ij}}{2}\hbar$.} 
\]
The right hand side of \eqref{EQ3} is 
\begin{align*}
&\frac{\vartheta(u-v+ a)}{\vartheta(u-v- a)}  
\frac{\vartheta(v+z_j+\lambda_j+2a)}{ \vartheta(v+z_j) \vartheta(\lambda_j+2a)}
+ \frac{\vartheta(2a)\vartheta(u-v- a -\lambda_j)}{\vartheta(\lambda_j) \vartheta(u-v- a)}
\frac{\vartheta(u+z_j+\lambda_j+a   )}{ \vartheta(u+z_j-a )\vartheta(\lambda_j+2a  )}
\end{align*}
Therefore, it suffices to show
\begin{align*}
&\frac{\vartheta(v+z_j+\lambda_j)}{ \vartheta(v+z_j) \vartheta(\lambda_j)}\cdot
\frac{\vartheta(u+ z_j+a)}
{\vartheta(u+ z_j-a)}
-\frac{\vartheta(u-v+ a)}{\vartheta(u-v- a)}  
\frac{\vartheta(v+z_j+\lambda_j+2a)}{ \vartheta(v+z_j) \vartheta(\lambda_j+2a)}\\
&- \frac{\vartheta(2a)\vartheta(u-v- a -\lambda_j)}{\vartheta(\lambda_j) \vartheta(u-v- a)}
\frac{\vartheta(u+z_j+\lambda_j+a   )}{ \vartheta(u+z_j-a )\vartheta(\lambda_j+2a  )}
=0
\end{align*}
Multiply both sides by $\vartheta(\lambda_j+2a)\vartheta(\lambda_j)$, we need to show the vanishing of the following.
\begin{align}
\frac{ \vartheta(v+z_j+\lambda_j)\vartheta(u+ z_j+a) \vartheta(\lambda_j+2a)}
{ \vartheta(v+z_j) \vartheta(u+ z_j-a) }
&-
\frac{\vartheta(u-v+ a)\vartheta(v+z_j+\lambda_j+2a) \vartheta(\lambda_j) }{\vartheta(u-v- a) \vartheta(v+z_j) }\notag\\
&-\frac{\vartheta(2a)\vartheta(u-v- a -\lambda_j) \vartheta(u+z_j+\lambda_j+a   )}
{\vartheta(u-v- a)\vartheta(u+z_j-a ) }=0.\label{eq:identity in the proof}
\end{align}
We now make the following change of variables. Let
\begin{align*}
&x_1-x_2=v+z_j,  &&x_2-x_3=-(-u+v+a),&& x_1-x_3=u+z_j-a\\
&x_1-y_1=v+z_j+\lambda, &&x_1-y_2=u+z_j+a, && x_1-y_3=-\lambda-2a
\end{align*}
\Omit{
Then
\begin{align*}
&x_2-y_1=\lambda, &&x_2-y_2=u-v+a, 
&&x_2-y_3=-\lambda-z_j-v-2a\\
&x_3-y_1=-u+v+\lambda+a, 
&&x_3-y_2=2a, &&x_3-y_3=-\lambda-z_j-u-a.
\end{align*}}
Clearly, we have $\sum_{i=1}^3 x_i=\sum_{i=1}^3 y_i$. 
Plugging the change of variable into \eqref{eq:identity in the proof}, the desired equality becomes
\[
\frac{\vartheta(x_1-y_1)\vartheta(x_1-y_2)\vartheta(x_1-y_3)}{\vartheta(x_1-x_2)\vartheta(x_1-x_3)}
+
\frac{\vartheta(x_2-y_1)\vartheta(x_2-y_2)\vartheta(x_2-y_3)}{\vartheta(x_2-x_1)\vartheta(x_2-x_3)}
+\frac{\vartheta(x_3-y_1)\vartheta(x_3-y_2)\vartheta(x_3-y_3)}{\vartheta(x_3-x_1)\vartheta(x_3-x_2)}=0.
\]
This follows from a similar identity as in Lemma \ref{gufang:Fact3}. 
This completes the proof. 
\end{proof}

\subsubsection{The Drinfeld double}
Recall that for a bialgebra  $(A, \star, \Delta)$  with multiplication $\star$, and coproduct $\Delta$, 
the \textit{Drinfeld double} of the bialgebra $A$ is $DA = A \otimes A^{\coop}$ as a vector space endowed with a  suitable multiplication. Here $A^{\coop}$ is $A$ as an algebra but with the opposite comultiplication. If $\dim(A)$ is infinite, in order to define $DA$ as a bialgebra, we need a non-degenerate bialgebra pairing
$( \cdot , \cdot) : A \otimes A \to R$,  i.e., an $R$-bilinear non-degenerate pairing such that \[
(a \star b, c) = (a \otimes b, \Delta(c))\text{ and }(c,a \star b) = ( \Delta(c), a \otimes b)\text{ for all $a, b, c \in A $}. 
\]

For a bialgebra $(A, \star, \Delta)$ together with a non-degenerate bialgebra pairing $( \cdot , \cdot)$, the bialgebra structure of $DA=A^-\otimes A^+$, still denoted by $(\star,\Delta)$, is uniquely determined by the following two properties (see, e.g., \cite[\S~2.4]{X}).
\begin{enumerate}
\item $A^-= A^{\coop} \otimes 1$ and $A^+ = 1 \otimes A$ are both sub-bialgebras of $DA$. 
\item For any $a,b\in A$, write $a^- = a \otimes 1\in A^-$ and $b^+ = 1 \otimes b\in A^+$. Then
\begin{equation}\label{eq: commut rel}
\sum a^-_1 \star b^+_2 \cdot (a_2, b_1) = 
\sum b^+_1 \star a^-_2 \cdot (b_2, a_1), \,\ \text{for all $a, b \in A$,}
\end{equation}
where we follow Sweedler's notation and write  $\Delta(a^-)=\sum a^{-}_1 \otimes a^{-}_2$, 
$\Delta(b^+)=\sum b^+_1 \otimes b^+_2$. 
\end{enumerate}
We now take $A$ to be $\mathbf{SH}^{\sph}\otimes \mathbf{SH}^{0}$. Recall that the  reduced Drinfeld double \[
D(\mathbf{SH}^{\sph})=\mathbf{SH}^{\sph}\otimes \mathbf{SH}^{0}\otimes \mathbf{SH}^{\sph,\coop}\] is the Drinfeld double 
$(\mathbf{SH}^{\sph}\otimes \mathbf{SH}^{0})\otimes (\mathbf{SH}^{0}\otimes \mathbf{SH}^{\sph})^{\coop}$ with the following additional relation imposed: For $\Phi_k^+(u)\in \mathbf{SH}^{0}$, and 
$\Phi_k^-(-u)\in \mathbf{SH}^{0, \coop}$, we have $\Phi_k^+(u)=\Phi_k^-(-u), \,\ \text{for any $k\in I$}.$ 

The series $\mathfrak{X}_{k}^-(u, \lambda) \in \mathbf{SH}^{\coop}[\![u]\!]$, by definiton,  corresponds to the series $-\mathfrak{X}_{k}^+(-u, -\lambda)$ in $\mathbf{SH}[\![u]\!]$.

By Proposition \ref{conj:ell} and \ref{prop:rel ext}, we have the following. 
\begin{prop}
The series $\{\mathfrak{X}_{i}^-(u, \lambda)\}_{i\in I}, \Phi_{k}(u)$ satisfy the relations \eqref{EQ3} and  \eqref{EQ4}.\end{prop}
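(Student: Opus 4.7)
The plan is to deduce the $-$ versions of \eqref{EQ3} and \eqref{EQ4} from the already-verified $+$ versions (Propositions~\ref{conj:ell} and \ref{prop:rel ext}), by exploiting the defining identity
\[
\mathfrak{X}_k^-(u,\lambda) \;=\; -\,\mathfrak{X}_k^+(-u,-\lambda)\qquad \text{in } \mathbf{SH}^{\coop},
\]
together with the identification $\Phi_k^+(u)=\Phi_k^-(-u)$ inside the reduced Drinfeld double, and the odd parity $\vartheta(-x)=-\vartheta(x)$.

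First I would note that the conjugation action $g\mapsto \Phi g\Phi^{-1}$ depends only on the multiplication of the algebra, which coincides for $\mathbf{SH}$ and $\mathbf{SH}^{\coop}$. Hence $\Phi_i(w)$ acts on $\mathfrak{X}_j^-(v,\lambda)\in\mathbf{SH}_{e_j}^{\coop}$ with the same eigenvalue $\widehat{\Phi}_{i,e_j}(w)$ given in \eqref{eq:hat Phi} as on the underlying element $-\mathfrak{X}_j^+(-v,-\lambda)\in\mathbf{SH}_{e_j}$. When interpreting $\Phi_i(u)$ in \eqref{EQ3} for $\mathfrak{X}^-$, the reduced-double identification $\Phi_i^-(u)=\Phi_i^+(-u)$ tells us that $\Phi_i(u)$ acts on $\mathbf{SH}^{\coop}$ with eigenvalue $\widehat{\Phi}_{i,e_j}(-u)$.

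For \eqref{EQ3}, I would substitute $v\mapsto -v$, $\lambda\mapsto -\lambda$ in the already-proved $+$ relation, apply
\[
\mathfrak{X}_j^+(-v,-\mu)=-\mathfrak{X}_j^-(v,\mu),\qquad \mathfrak{X}_j^+(w-a,-\mu)=-\mathfrak{X}_j^-(a-w,\mu),
\]
and simplify using $\vartheta(-x)=-\vartheta(x)$. Setting $w=-u$ converts each $\vartheta(w+v\pm a)$ into $\mp\vartheta(u-v\mp a)$ and converts the spectral arguments $a+u=u+a$ correctly. After the sign cancellations, the result is precisely the $-$ version of \eqref{EQ3}. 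A completely parallel substitution argument handles \eqref{EQ4}: both sides are products of two $\mathfrak{X}^{-}$ currents, which we rewrite in terms of $\mathfrak{X}^{+}$ via the defining identity, apply the already proven $+$ relation, then convert back. The swap $\pm\leftrightarrow\mp$ that distinguishes the two versions of \eqref{EQ4} is induced by the substitution $(u,v,\lambda)\mapsto(-u,-v,-\lambda)$, and the bookkeeping of the dynamical shifts $\lambda\pm\tfrac{\hbar}{2}\alpha_i$ works out consistently because the overall sign $-1$ in front of $\mathfrak{X}_k^+(-u,-\lambda)$ is squared in each quadratic term on either side.

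Alternatively, one can bypass the substitution and argue directly: by the explicit formula for $\mathfrak{X}_k^-$ (obtained from $-\mathfrak{X}_k^+(-u,-\lambda)$ as a rational section, giving $\mathfrak{X}_k^-(u,\lambda)=\vartheta(u-z_k+\lambda_k)/(\vartheta(u-z_k)\vartheta(\lambda_k))$), the shuffle multiplication formula \eqref{shuffle formula} yields $\mathfrak{X}_i^-(u,\lambda)\star \mathfrak{X}_j^-(v,\zeta)$ exactly as in the proof of Proposition~\ref{conj:ell}, with $(u+z_i,v+z_j)$ replaced by $(u-z_i,v-z_j)$. The same four-term theta identity of Lemma~\ref{gufang:Fact3} then establishes \eqref{EQ4} for $\mathfrak{X}^-$, and the three-term variant used in Proposition~\ref{prop:rel ext} yields \eqref{EQ3} for $\mathfrak{X}^-$.

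The main technical difficulty I expect is bookkeeping: tracking the sign conventions and, for \eqref{EQ4}, the dynamical half-shifts $\lambda\pm\tfrac{\hbar}{2}\alpha_i$ through the substitution $\lambda\mapsto-\lambda$ — one must verify that the direction of each shift lines up with the $\mp$ pattern dictated by the $-$ version. Once this is set up carefully, the parity $\vartheta(-x)=-\vartheta(x)$ absorbs all the leftover signs, and the relations fall out without any new theta-function identities beyond those already used.
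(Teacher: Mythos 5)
Your proposal is correct and matches the paper's own (very terse) argument: the paper simply states that the $-$ relations follow from Propositions~\ref{conj:ell} and \ref{prop:rel ext} via the definition $\mathfrak{X}_k^-(u,\lambda)=-\mathfrak{X}_k^+(-u,-\lambda)$ in $\mathbf{SH}^{\coop}$ together with $\Phi_k^+(u)=\Phi_k^-(-u)$. Your substitution-and-parity bookkeeping is exactly the content the paper leaves implicit, and your alternative direct shuffle computation is consistent with the explicit section $\vartheta(-u+z_k-\lambda_k)/(\vartheta(-u+z_k)\vartheta(\lambda_k))$ that the paper itself uses later in the proof of Proposition~\ref{thm: rel DSH}.
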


We now prove the cross relation between $\mathfrak{X}_{i}^+(u, \lambda)\}_{i\in I}$, and $\mathfrak{X}_{i}^-(u, \lambda)\}_{i\in I}$. 
\begin{prop}\label{thm: rel DSH}
The series $\{\mathfrak{X}_{i}^\pm(u, \lambda)\}_{i\in I}, \Phi_{k}(u)$ satisfy the relations \eqref{EQ5} of the elliptic quantum group.
\end{prop}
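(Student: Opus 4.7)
The plan is to deduce the relation \eqref{EQ5} directly from the defining property \eqref{eq: commut rel} of the Drinfeld double, using the explicit shuffle formula for the coproduct \eqref{eq:coprod} together with the bialgebra pairing that defines $D(\mathbf{SH}^{\sph})$. Since $\mathfrak{X}_i^{+}(u,\lambda)$ lies in the degree-$e_i$ piece of $\mathbf{SH}^{\sph}$ and $\mathfrak{X}_j^{-}(v,\lambda)$ lies in the degree-$e_j$ piece of $\mathbf{SH}^{\sph,\coop}$, the non-degenerate pairing used to form the double pairs only elements of equal degree. In particular, for $i\neq j$ the relevant pairing values vanish, and \eqref{eq: commut rel} reduces immediately to $[\mathfrak{X}_i^{+}(u,\lambda_1),\mathfrak{X}_j^{-}(v,\lambda_2)]=0$.

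For the case $i=j$, first I would write $a=\mathfrak{X}_i^-(u,\lambda_1)\in \mathbf{SH}^{\sph,\coop}_{e_i}$ and $b=\mathfrak{X}_i^+(v,\lambda_2)\in \mathbf{SH}^{\sph}_{e_i}$, and evaluate both sides of \eqref{eq: commut rel}. By \eqref{eq:coprod}, for a degree-$e_i$ element the nontrivial parts of the coproduct lie in $(\mathbf{SH}^0)\otimes(\mathbf{SH}^{\sph}_{e_i})$ and $(\mathbf{SH}^{\sph}_{e_i})\otimes(\mathbf{SH}^0)$, so the surviving terms on each side of \eqref{eq: commut rel} are of the form ``Cartan times generator'' paired with ``generator,'' producing a pure Cartan-valued element. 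The explicit formulas for the two parts of $\Delta(\mathfrak{X}_i^{\pm}(u,\lambda))$ will give on the left a Cartan element whose generating series in $v$ is (up to a theta-ratio depending on $u-v$ and $\lambda_{1,i}$) equal to $\Phi_i(v)$, and on the right an analogous Cartan element whose generating series in $u$ equals $\Phi_i(u)$ with the companion theta-ratio in $\lambda_{2,i}$.

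The key steps are therefore: (i) using the shuffle formula \eqref{shuffle formula} and \eqref{eq:coprod} to record the components of $\Delta(\mathfrak{X}_i^\pm(u,\lambda))$ in $\mathbf{SH}^{\sph}\otimes\mathbf{SH}^0$ and $\mathbf{SH}^0\otimes\mathbf{SH}^{\sph}$; (ii) evaluating the bialgebra pairing of the degree-$e_i$ generators, whose normalization is fixed by requiring $(\mathfrak{X}_i^+(u,\lambda_1),\mathfrak{X}_i^-(v,\lambda_2))$ to be $\vartheta(\hbar)^{-1}$ times the canonical theta kernel on $E\times E$ (this normalization is what forces $\vartheta(\hbar)$ to appear in \eqref{EQ5}); (iii) substituting into \eqref{eq: commut rel} and rewriting the resulting Cartan element as $\Phi_i(u)$ and $\Phi_i(v)$ using the reduction $\Phi_i^+(u)=\Phi_i^-(-u)$ recalled in this subsection; (iv) matching the coefficients of $\Phi_i(v)$ and $\Phi_i(u)$ against the theta-ratios $\vartheta(u-v+\lambda_{1,i})/\vartheta(u-v)\vartheta(\lambda_{1,i})$ and $\vartheta(u-v-\lambda_{2,i})/\vartheta(u-v)\vartheta(\lambda_{2,i})$ predicted by \eqref{EQ5}; and (v) verifying agreement as formal power series in $u,v$, which by quasi-periodicity reduces to checking the residues at $u=v$ and the shift behavior under the lattice.

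The main obstacle I expect is step (iv): after the Drinfeld double manipulation one obtains the commutator as a sum of two terms of the form $(\text{theta-ratio in }u-v,\lambda_{1,i})\cdot\Phi_i(v)+(\text{theta-ratio in }u-v,\lambda_{2,i})\cdot\Phi_i(u)$, and one must check that these ratios coincide with those in \eqref{EQ5}. The identification is essentially forced by a three-term theta identity analogous to the one in Lemma~\ref{gufang:Fact3}, applied to the difference between the two sides of \eqref{eq: commut rel}; verifying that the residues of the two sides at $u=v$ agree will pin down the overall normalization $\vartheta(\hbar)$ and complete the argument. The dynamical shift behavior of $\lambda$ on either side is a consequence of the Cartan action constructed in \S\ref{subsec:dyn_cartan}, so compatibility with the conjugation by $\Phi_i$ is automatic once the scalar coefficients match.
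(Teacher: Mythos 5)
Your overall strategy is the one the paper uses: invoke the Drinfeld double relation \eqref{eq: commut rel} with $a=\mathfrak{X}_i^-$, $b=\mathfrak{X}_i^+$, feed in the coproduct $\Delta(\mathfrak{X}_k^{+}(u,\lambda))=\Phi_k(x^{(k)})\otimes \mathfrak{X}_k^{+}(u,\lambda)+\mathfrak{X}_k^{+}(u,\lambda)\otimes 1$, and use degree reasons to kill the $i\neq j$ case. That part of your argument is correct and matches the paper.

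For $i=j$, however, there is a genuine gap in how you propose to finish. The pairing is not something whose "normalization is fixed by requiring" a particular value on the generators: it is already pinned down by the ad\`ele residue formula $(f,g)=\sum_{z\in E}\Res_z\bigl(\iota^{-1}f(z_A)\,\iota^{-1}g(-z_A)\,\omega/|A|!\bigr)$ from \S\ref{subsec:DrinfeldDouble}, and the entire content of the $i=j$ case is the explicit evaluation of that residue sum in the \emph{internal} variable $z^{(k)}$. Concretely, one must compute $\sum_{z\in E}\Res_{z^{(k)}=z}$ of the product $\frac{\vartheta(v+z^{(k)}+\lambda_{1,k})}{\vartheta(v+z^{(k)})\vartheta(\lambda_{1,k})}\cdot\frac{\vartheta(-u-z^{(k)}-\lambda_{2,k})}{\vartheta(-u-z^{(k)})\vartheta(\lambda_{2,k})}$ (weighted by $\Phi_k^{\pm}(z^{(k)})$); the two simple poles at $z^{(k)}=-v$ and $z^{(k)}=-u$ are exactly what produce the two terms $\frac{\vartheta(u-v-\lambda_{1,k})}{\vartheta(u-v)\vartheta(\lambda_{1,k})}\Phi_k(u)$ and $\frac{\vartheta(u-v+\lambda_{2,k})}{\vartheta(u-v)\vartheta(\lambda_{2,k})}\Phi_k(v)$ of \eqref{EQ5}, after using oddness of $\vartheta$ and the identification $\Phi_k^+(u)=\Phi_k^-(-u)$. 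Your proposed substitutes — a three-term identity in $u-v$ in the style of Lemma~\ref{gufang:Fact3} plus a residue check at $u=v$ — do not perform this computation; that identity is what the paper uses for \eqref{EQ3} and \eqref{EQ4}, not \eqref{EQ5}. Likewise the factor $\vartheta(\hbar)$ does not come from a pairing normalization but from the definition $\Phi_k(u):=\vartheta(\hbar)\Phi_k^+(u)+\vartheta(\hbar)\Phi_k^-(-u)$ assembling the two halves of the Cartan. Until the residue evaluation in $z^{(k)}$ is actually carried out, the coefficients in \eqref{EQ5} are not established.
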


\begin{proof}
Let $k, l\in I$, such that $k\neq l$. 
The relation $[\mathfrak{X}_k^{+}(u, \lambda_1) , \mathfrak{X}_l^{-}(v, \lambda_2)]=0$
follows from the relation \eqref{eq: commut rel} with $a=\mathfrak{X}_l^{-}(v, \lambda_2)$, $b=\mathfrak{X}_k^{+}(u, \lambda_1)$, and $(\mathfrak{X}_k^{+}(u, \lambda_1), \mathfrak{X}_l^{-}(v, \lambda_2))=0$. 

We consider the case when $k=l$. In $\mathbf{SH}^{\ext}=\mathbf{SH}\otimes \mathbf{SH}^0$, we have $\Delta(\mathfrak{X}_k^{+}(u, \lambda_1))=\Phi_k(x^{(k)})\otimes \mathfrak{X}_k^{+}(u, \lambda_1)+\mathfrak{X}_k^{+}(u, \lambda_1)\otimes 1$ by \eqref{eq:coprod}. 
We use the relation \eqref{eq: commut rel} with $a=\mathfrak{X}_k^{-}(u, \lambda)
$, $b=\mathfrak{X}_k^{+}(v, \lambda)$ and the fact
\[
(\mathfrak{X}_k^{+}(u, \lambda), \Phi_l(x))=0, \,\ (1, \mathfrak{X}_k^{+}(u, \lambda))=0. 
\] It gives the following relation in $\mathbf{SH}^{\ext, \coop}\otimes \mathbf{SH}^{\ext}$: 
\begin{align*}
&\Phi_k^-(x^{(k)})\star1 (\mathfrak{X}_k^{-}(u, \lambda), \mathfrak{X}_k^{+}(v, \lambda))+ \mathfrak{X}_k^{-}(u, \lambda)
\star \mathfrak{X}_k^{+}(v, \lambda) (1, \Phi_k(x^{(k)}))\\
=& \mathfrak{X}_k^{+}(v, \lambda)\star \mathfrak{X}_k^{-}(u, \lambda)
 (1, \Phi_k(x^{(k)}))+\Phi_k^+(x^{(k)})\star1 (\mathfrak{X}_k^{+}(v, \lambda), \mathfrak{X}_k^{-}(u, \lambda) ). 
\end{align*}
Using the equality $
\mathfrak{X}_{k}^+(u, \lambda)=
\frac{\vartheta(u+z^{(k)}+\lambda_k)}{\vartheta(u+z^{(k)})\vartheta(\lambda_k)}$, we have
\begin{align*}
&[ \mathfrak{X}_k^{-}(u,  \lambda_2), 
\mathfrak{X}_k^{+}(v,  \lambda_1)]\\
=&\Phi_k^+(z^{(k)}) (\mathfrak{X}_k^{+}(v,  \lambda_1), \mathfrak{X}_k^{-}(u,  \lambda_2))
-\Phi_k^-(z^{(k)}) (\mathfrak{X}_k^{-}(u,  \lambda_2), \mathfrak{X}_k^{+}(v,  \lambda_1))\\
%-------------
=&\Phi_k^+(z^{(k)}) \left(\frac{\vartheta(v+z^{(k)}+\lambda_{1, k})}{\vartheta(v+z^{(k)})\vartheta(\lambda_{1, k})}, \,\
\frac{\vartheta(-u+z^{(k)}-\lambda_{2, k})}{\vartheta(-u+z^{(k)})\vartheta(\lambda_{2, k})} \right)
-\Phi_k^-(z^{(k)})
 \left(\frac{\vartheta(-u+z^{(k)}-\lambda_{2, k})}{\vartheta(-u+z^{(k)})\vartheta(\lambda_{2, k})} , \,\ 
 \frac{\vartheta(v+z^{(k)}+\lambda_{1, k})}{\vartheta(v+z^{(k)})\vartheta(\lambda_{1, k})}
\right)\\
%----------
=&\sum_{z\in E}\Res_{z^{(k)}=z} 
\Phi_k^+(z^{(k)})\frac{\vartheta(v+z^{(k)}+\lambda_{1, k})}{\vartheta(v+z^{(k)})\vartheta(\lambda_{1, k})} \cdot \frac{\vartheta(-u-z^{(k)}-\lambda_{2, k})}{\vartheta(-u-z^{(k)})\vartheta(\lambda_{2, k})}d z^{(k)}\\
&-\sum_{z\in E}\Res_{z^{(k)}=z}  \Phi_k^-(z^{(k)}) \frac{\vartheta(-u+z^{(k)}-\lambda_{2, k})}{\vartheta(-u+z^{(k)})\vartheta(\lambda_{2, k})} \cdot
 \frac{\vartheta(v-z^{(k)}+\lambda_{1, k})}{\vartheta(v-z^{(k)})\vartheta(\lambda_{1, k})} dz^{(k)}
\end{align*}
\begin{align*}
 =&\Phi_k^+(-u)\frac{\vartheta(v-u+\lambda_{1, k})}{\vartheta(v-u)\vartheta(\lambda_{1, k})} -\Phi_k^+(-v)\frac{\vartheta(u-v+\lambda_{2, k})}{\vartheta(u-v)\vartheta(\lambda_{2, k})}
 +\Phi_k^-(u)\frac{\vartheta(u-v-\lambda_{1, k})}{\vartheta(u-v)\vartheta(\lambda_{1, k})} 
 -\Phi_k^-(v)\frac{\vartheta(-u+v-\lambda_{2, k})}{\vartheta(-u+v)\vartheta(\lambda_{2, k})} \\
 =&
\Phi_k^+(-u)\frac{\vartheta(u-v-\lambda_{1, k})}{\vartheta(u-v)\vartheta(\lambda_{1, k})} -\Phi_k^+(-v)\frac{\vartheta(u-v+\lambda_{2, k})}{\vartheta(u-v)\vartheta(\lambda_{2, k})}
 +\Phi_k^-(u)\frac{\vartheta(u-v-\lambda_{1, k})}{\vartheta(u-v)\vartheta(\lambda_{1, k})} 
 -\Phi_k^-(v)\frac{\vartheta(u-v+\lambda_{2, k})}{\vartheta(u-v)\vartheta(\lambda_{2, k})} 
 \\
 =& \Phi_k(u) \frac{\vartheta(u-v-\lambda_{1, k})}{\vartheta(u-v)\vartheta(\lambda_{1, k})} 
 -\Phi_k(v)\frac{\vartheta(u-v+\lambda_{2, k})}{\vartheta(u-v)\vartheta(\lambda_{2, k})} , 
\end{align*}
where $\Phi_k(u):=\vartheta(\hbar) \Phi_k^+(u)+\vartheta(\hbar) \Phi_k^-(-u)$. 

This completes the proof. 
\end{proof}

\subsection{The Manin pair}
As before in \S\ref{subsec:universal elliptic curve}, let $E$ be the elliptic curve over $\calM_{1, 2}$. The algebra $\mathbf{SH}^{\sph}\rtimes\mathbf{SH}^0$ is the quantization of the Manin pair coming from the elliptic curve in the sense of Drinfeld \cite{Dr}. 

More precisely, let $\fg=\fn^+\oplus \fh\oplus \fn^-$ be the Kac-Moody Lie algebra associated to quiver $Q$.
Let $L_\lambda$ be the set of rational sections of  $\bbL_\lambda$ regular away from the origin. 
Then, the Drinfeld double $\mathbf{SH}^{\sph}\otimes\mathbf{SH}^0 \otimes \mathbf{SH}^{\sph, \coop}$
quantizes the sum \[(\fn^+\otimes L_\lambda)\oplus (\fh\otimes L_0)\oplus(\fn^-\otimes L_{-\lambda}).\]

\section{Preliminaries on equivariant elliptic cohomology}
In this section, we briefly review the equivariant elliptic cohomology theory. The details can be found in \cite{AO, Lur, GKV95, ZZ15}.
\subsection{Elliptic cohomology valued in a line bundle}\label{subsec:ellCohDyn}
Let $G$ be an algebraic reductive group with a maximal torus $T$. Let $X$ be a smooth quasi-projective variety endowed with an action of $G$. 
For an elliptic curve $E$ over the base scheme $S$. Recall that $\catA_G$ is the moduli scheme of semistable principal $G$-bundles over $E$. It is canonically isomorphic to the variety $E\otimes \bbX(T)/W$. The $G$-equivariant elliptic cohomology $\Ell_{G}(X)$ of $X$ is a quasi-coherent sheaf of $\mathcal{O}_{\catA_G}$-modules, satisfying certain axioms. In particular, for smooth morphisms, we have pullback in elliptic cohomology, and for proper morphisms, we have pushforward in elliptic cohomology theory. 

Let $\det:G\to Z$ be the universal character of $G$.  In other words, denote by $\bbX(G)=\Hom(G,\Gm)$ the character lattice of $G$. We have a canonical isomorphism of abelian algebraic groups $Z\cong \Hom(\bbX(G),\Gm)$, and the map  $\det$ is isomorphic to the tautological map $G\to \Hom(\bbX(G),\Gm)$. 
For example, 
when $G=\prod_{i=1}^n\GL_{v^i}$ for a sequence of positive integers $(v^1,\dots,v^n)$, we have $Z=\Gm^n$. The universal character $\det: \prod_{i=1}^n \GL_{v^i}\to Z$ sends $(g_1,\dots, g_n)$ to $(\det(g_1),\dots,\det(g_n))$.

The map $\det$ induces the following map of varieties $\catA_{\det}:\catA_G\to \catA_Z$. 
Consider the following maps $p_1: \catA_G\times\catA_Z^\vee \to  \catA_G$, and 
$\det\times \id:   \catA_G\times\catA_Z^\vee  \to  \catA_Z\times\catA_Z^\vee$. 
Let $\bbL$ be the universal line bundle on $\catA_Z\times_S\catA_Z^\vee$. 
For any finite $G$-variety $X$, the elliptic cohomology of $X$ valued in the line bundle $\bbL$ is defined to be
\[\Ell_G^\lambda(X):=p_1^*\Ell_G(X)\otimes((\catA_{\det}\times\id)^*\bbL), \] 
as a sheaf on $\catA_G\times\catA_Z^\vee$. Identifying $\catA_Z$ with $\catA_Z^\vee$, we will also consider $\Ell_G^\lambda(X)$ as a sheaf on $\catA_G\times\catA_Z$.

We have the following examples. 
\begin{example}\label{ex:lambda}
In the case when $Q$ is the quiver of $\fs\fl_2$, which has one vertex and no arrows, the line bundle $\bbL$ has a simple description. 
The open subset $\calM_{1,2}\times_{\calM_{1,1}}\calM_{1,2}$ of $\overline{\calM_{1,2}}$ can be considered as an elliptic curve on $\calM_{1,2}$ via the second projection.
The zero-section of this elliptic curve is given by $\calM_{1,2}\to \calM_{1,2}\times_{\calM_{1,1}}\calM_{1,2}$, $z\mapsto (0,z)$.
On $\calM_{1,2}\times_{\calM_{1,1}}\calM_{1,2}$, there is a universal Poincare line bundle $\bbL$, endowed with a natural section $s$ that vanishes on the zero-section of each $\calM_{1,2}$-factor. 
Let $\tau$ be the coordinate of $\calM_{1,1}$, and let $z, \lambda$ be the  fiber-wise coordinates of the two copies of $\calM_{1,2}$ respectively. Then, $s$ can be expressed as the function  
\begin{equation}\label{eq:theta parameter}
g_\lambda(z;\tau):=\frac{\vartheta(z;\tau)\vartheta(\lambda;\tau)}{\vartheta(z+\lambda;\tau)}, 
\end{equation} where $\vartheta(z; \tau)$ is the Jacobi-theta function defined in 
Example \ref{example:theta}. The elliptic cohomology associated to this local parameter was studied in \cite{Tot,BL,Chen10}. It is well-known that this elliptic cohomology theory has coefficients in the ring of Jacobi forms. 
\end{example}
\begin{example}
Let $E\to \calM_{1, 2}$ be the universal elliptic curve, and $G=\prod_{i\in I} \GL_{v^i}$. Then,  we have
$\catA_Z^{\vee}=E_{\lambda}=\prod_{i\in I,\calM_{1,1}} \calM_{1,2}$, and $ \catA_G\times \catA_Z^{\vee}= E^{(v)'}=\prod_{i\in I} (\mathfrak{E}^{(v^i)}\times_{\calM_{1, 1}} \calM_{1, 2})$  from \S \ref{subsec:universal elliptic curve}.
\end{example}
\subsection{Thom bundle}
Let $X$ be a $G$-variety, and $V\to X$ an equivariant $G$-vector bundle. Let $\Th(V)$ be the Thom space of $V$. Recall that $\Th(V)$ is the quotient of disk bundle $\mathbb{P}(V\oplus \C)$ by the sphere bundle $\mathbb{P}(V)$. Denote $ \Theta_{G}(V):=\Ell_{G}(\Th(V))$  the equivariant elliptic cohomology of $\Th(V)$. Such an assignment $V\mapsto  \Theta_{G}(V)$ can be extended to the Grothendieck group of $X$. That is, we have an abelian group homomorphism (see \cite{ZZ15})
\[
 \Theta_G: K^0(X)\to \Pic(\Ell_G(X) ), 
\]
where $\Pic(\Ell_G(X) )$ is the abelian group of  rank 1 locally free modules over $\Ell_G(X) $. 

For any morphism $g: X' \to X$ between smooth $G$-varieties, denote by $ \Theta(g) \in \Pic(\Ell_G(X) )$ the image of the virtual vector bundle $g^*TX-TX'$ on $X'$ under $ \Theta_G$. Note that when $g$ is a closed embedding, $g^*TX-TX'$ is the normal bundle of the embedding. 
\subsection{Refined pullback in elliptic cohomology}
\label{sec:pullback in ell}
For a closed embedding $i_{Y}: Y\inj X$, let $U_Y$  be an open neighborhood of $Y$ in  $X$ which contracts to $Y$. The Thom space, denoted by $\Th_{Y}(X)$,  of the embedding $Y\inj X$ is by definition $\Th_{Y}(X)=X/(X\backslash U_Y)$. When $Y$ is singular, we define the equivariant elliptic cohomology of $Y$ to be $\Ell_{G}(Y):=\Ell_{G}(\Th_Y(X))$. When $Y$ is a smooth variety, and the  embedding $i_{Y}: Y\inj X$ is a regular embedding, we then have $\Ell_G(\Th_Y(X))= \Theta(i_Y). $

Let $Y$ and $Y'$ are two singular varieties. Assume there are two closed embeddings $i_Y: Y\inj X$ and $i_{Y'}: Y'\inj X'$, such that $X$ and  $X'$ are smooth. The reductive group $G$ acts on those varieties, and the actions are compatible with the embeddings $i_Y$, $i_{Y'}$. 
Assume furthermore, we have the following Cartesian diagram of $G$-varieties. 
\begin{equation}\label{pullback diag}
\xymatrix@R=1.5em{
Y' \ar@{^{(}->}[r]^{i_{Y'}} \ar[d]_{f} & X' \ar[d]^{g}\\
Y\ar@{^{(}->}[r]^{i_Y} & X
}
\end{equation}
In diagram \eqref{pullback diag}, $g: X' \to X$ is a smooth morphism between two smooth varieties. The pullback $g^*: \Ell_{G}(X) \to \Ell_{G}(X')$ is well-defined. We define the pullback $f^{\sharp}: \Ell_G(Y')\to \Ell_{G}(Y)$ in diagram \eqref{pullback diag} as the pullback on Thom spaces 
\[
f^{\sharp}:=\Th(g)^*: \Ell_{G}(\Th_{Y}(X)) \to \Ell_{G}(\Th_{Y'}(X')), 
\]
where $\Th(g): \Th_{Y'} X'\to \Th_{Y} X$ is the map induced from $g$, using the following diagram
\[
\xymatrix@R=1.3em{
X'\backslash U_{Y'}  \ar[r]\ar[d]^g&X' \ar[d]^{g} \ar[r]^{\pi'} &\Th_{Y'} X' \ar[d]^{\Th(g)}\\
X\backslash U_{Y}  \ar[r]&X \ar[r]^{\pi} &\Th_{Y} X
}\] 
By definition, we have the following commutative diagram with all maps given by pullback. 
\begin{equation}\label{Thom pullback}
\xymatrix@R=1.3em{
\Ell_G(\Th_Y X) \ar[r]^{\pi^*}  \ar[d]_{f^{\sharp}} & \Ell_G(X) \ar[d]^{g^*}\\
\Ell_G(\Th_{Y'} X') \ar[r]^{\pi'^*} & \Ell_G(X')
}\end{equation}

\subsection{Pushforward in elliptic cohomology}
\label{sec:pushforward in ell}
The pushforward in equivariant elliptic cohomology theory is delicate. It involves a twist coming from the Thom bundle. 

For any morphism $g: X' \to X$ between smooth $G$-varieties, there is a well defined pushforward (see, e.g., \cite{GKV95}) $g_*:  \Theta(g)\to \Ell_{G}(X)$. 

For the singular case, the setup is the same as in diagram \eqref{pullback diag}. 
Recall in \cite[2.5.2]{GKV95}, $ \Theta(f):= \Theta(i_{Y'})\otimes \mathcal{H}om(f^* \Theta(i_Y), i_{Y'}^* \Theta(g))$. 
We have the well-defined pushforward map $f_*:  \Theta(f) \to \Ell_{G}(Y)$. 
Equivalently, let $g_*:  \Theta(g)\otimes\Ell_G(X') \to \Ell_{G}(X)$ be the pushforward. 
By restriction on the open subsets, we have $g_*:  \Theta(g)\otimes_{\Ell_G(X')}\Ell_G(X'\setminus U_{Y'})  \to \Ell_{G}(X\setminus U_Y)$. It induces the following map
\[
 \Theta(g)\otimes_{\Ell_G(X')}\Ell_G(\Th_{Y'}(X')) \to \Ell_{G}(\Th_{Y}(X)), 
\] which is equivalent to $f_*$. 

\subsection{Characteristic classes}\label{subsec:char_class}
For a $G$-equivariant virtual vector bundle $V$ on $X$, the Euler class, denoted by $e(V)$, is a natural rational section of $\Theta(V)^\vee$. When $V$ is a vector bundle and $i:X\to V$ is the zero-section, then the map $i_*:\Theta(i)=\Theta(V)\to \Ell_G(V)\cong\Ell_G(X)$ is given by $e(V)$.

For any $G$-equivariant virtual vector bundle $V$ on $X$, we define the total Chern polynomial of $V$, denoted by $\lambda_z(V)$, to be the function on $E\times\catA_G$ which is $e(k^{-1}V)$ where $k$ is the natural representation of $\Gm$ and $E$ is the $\catA_{\Gm}$ with coordinate on it denoted by $z$.

For any rational section $s$ of a line bundle  $\bbL$ on $E$, there is a notion of $s$-Chern classes introduced in \cite{GKV95} and recalled in detail in \cite[\S~3.4]{ZZ15}.

\begin{example}\label{ex:rat_sec_sl2}
Let $\Ell_G^\lambda$ be as in \S~\ref{subsec:ellCohDyn} and the function $g_\lambda(z)$ be as in Example~\ref{ex:lambda}. Let $\mathcal{O}(1)\to \PP^1$ be the tautological line bundle on $\PP^1$. 
Let $\pi: E\to E/ \mathfrak{S}_2$ be the projection. 
Then, we have an isomorphism $\Ell_{\SL_2}^{\lambda}(\PP^1)=\pi_*(\mathbb{L})$. 
The first $g_\lambda$-Chern class of $\mathcal{O}(1)$ is 
\[ c_1^{\lambda}(\mathcal{O}(1) )=g_\lambda(z)=\frac{ \vartheta(z+\lambda)}{ \vartheta(z) \vartheta(\lambda)},\] which is a section of $\mathbb{L}$. 
\end{example}

\subsection{Lagrangian correspondence formalism}
\label{subsec: Lag corr}
We recall the Lagrangian correspondence formalism following the exposition in \cite{SV2}.

Let $X$ be a smooth quasi-projective variety endowed with an action of a reductive algebraic group $G$. The cotangent bundle $T^*X$ is a symplectic variety, with an induced Hamiltonian $G$-action. Let $\mu:T^*X\to (\Lie G)^*$ be the moment map. Denote $T^*_GX:=\mu^{-1}(0)\subseteq T^*X$. 

Let $P\subset G$ be a parabolic subgroup and $L\subset P$ be a Levi subgroup. Let $Y$ be a smooth quasi-projective variety equipped an action of $L$, and $X'$ smooth quasi-projective with a $G$-action. Let $\calV\subseteq Y\times X'$ be a smooth subvariety. We have the two projection$\xymatrix{
Y& \calV \ar[l]_{\pr_1}\ar[r]^{\pr_2} &X'}.$
Assume the first projection $\pr_1$ is a vector bundle, and the second projection $\pr_2$ is a closed embedding. 

Let $X:=G\times_PY$ be the twisted product. Set $W:=G\times_P\calV$ and consider the following maps
\[\xymatrix{
X & W \ar[l]_{f_1}\ar[r]^{f_2} &X'}, \,\
f_1: [(g, v)] \mapsto [(g, \pr_1(v))], \,\ f_2: [(g, v)] \mapsto g\pr_2(v), \] 
where $[(g, v)]$ is the pair $(g, v)\mod P$. Note that the natural map $T^*X\to G\times_PT^*Y$ is a vector bundle. 

Let  $Z:=T^*_W(X\times X')$ be the conormal bundle of $W$ in $X\times X'$. Let $Z_G\subseteq T^*_GX\times T^*_GX'$ be the intersection $Z\cap (T^*_GX\times T^*_GX')$. 
Then we have the following diagram.
\begin{equation*}
\xymatrix@R=1.5em{
G\times_PT^*_LY\ar[r]^(0.6){\cong}\ar@{^{(}->}[d]&T^*_GX\ar@{^{(}->}[d]&Z_G\ar[l]_{\overline\phi}\ar[r]^{\overline\psi}\ar@{^{(}->}[d]&T^*_GX'\ar@{^{(}->}[d]\\
G\times_PT^*Y\ar@{^{(}->}[r]^{\iota}&T^*X&Z\ar[l]_{\phi}\ar[r]^{\psi}&T^*X'
}\end{equation*}
where $\phi:Z\to T^*X$ and $\psi:Z\to T^*X'$ are respectively the first and second projections of $T^*X\times T^*X'$ restricted to $Z$. The map $\iota: G\times_PT^*Y\inj T^*X$ is the zero-section of the vector bundle $T^*X\to G\times_PT^*Y$.
%Following the same proof as in \cite[Lemma~7.3]{SV2}, we have the following lemma.
The following lemma is proved in \cite{SV2}.
\begin{lemma} \cite{SV2}
\label{lem:fiber_lag}
\begin{enumerate}
\item
There is an isomorphism $G\times_PT^*_LY\cong T^*_GX$  such that the above diagram commutes. 

\item
The morphism $\psi:Z\to T^*X'$ is proper. We have $\psi^{-1}(T^*_GX')=Z_G$ and $\phi^{-1}(T^*_GX)=Z_G$.
\end{enumerate}
\end{lemma}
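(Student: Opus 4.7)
Plan: For part (1), I would first identify $T^*X$ as an associated bundle. Writing $X = (G \times Y)/P$ with $P$ acting freely on the $G$-factor, symplectic reduction yields the standard isomorphism
\[T^*X \cong G \times_P (T^*Y \oplus \fp^\perp),\]
where $\fp^\perp \subset \fg^*$ is the annihilator of $\fp = \operatorname{Lie}(P)$. Under this identification, the moment map for the residual left $G$-action at a point $[g,(\eta,\xi)]$ works out to
\[\mu_G\bigl([g,(\eta,\xi)]\bigr) = \operatorname{Ad}^*(g)\bigl(\widetilde{\mu_L}(\eta) + \xi\bigr),\]
where $\widetilde{\mu_L}(\eta)$ is the canonical $L$-equivariant lift of $\mu_L(\eta) \in \fl^*$ to an $L$-stable complement of $\fp^\perp$ in $\fg^*$. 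Since the two summands live in complementary $L$-stable subspaces, the vanishing $\mu_G = 0$ forces $\xi = 0$ and $\mu_L(\eta) = 0$ separately, which gives $T^*_GX \cong G \times_P T^*_LY$. Commutativity of the diagram is built into the construction: the embedding $\iota$ is precisely the zero-section of the $\fp^\perp$-summand.

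For the properness statement in part (2), I proceed in two stages. First, $f_2 \colon W \to X'$ is proper. Indeed, there is a natural isomorphism $G \times_P X' \cong (G/P) \times X'$ sending $[g,x']$ to $([g], gx')$, under which $f_2$ becomes the composition of the closed embedding $W = G \times_P \calV \hookrightarrow G \times_P X'$ induced by $\pr_2$ with the projection $(G/P) \times X' \to X'$; the latter is proper since $G/P$ is projective. Second, $\psi$ factors as
\[Z \;\hookrightarrow\; W \times_{X'} T^*X' \;\longrightarrow\; T^*X',\]
where the rightmost map is the base change of the proper map $f_2$ and hence proper, while the first map, sending $(\alpha,\beta)$ to $\beta$, is a closed embedding: since $f_1$ is a vector bundle, $df_1$ is surjective, so the conormal condition $\alpha \circ df_1 + \beta \circ df_2 = 0$ determines $\alpha$ uniquely from $\beta$, and constancy of rank follows from injectivity of $d\pr_2$ restricted to $\ker d\pr_1$.

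The equalities $\psi^{-1}(T^*_GX') = Z_G = \phi^{-1}(T^*_GX)$ follow from the standard principle that the moment map for a diagonal $G$-action vanishes on the conormal bundle of any $G$-invariant subvariety. In our situation, $G$-invariance of $W$ implies that for each $(\alpha,\beta) \in Z$ and each $\xi \in \fg$, the infinitesimal generator $\xi \cdot (x,x') \in T_{(x,x')}W$ must pair to zero with $(\alpha,\beta)$. Unpacking gives the identity $\mu_X(\alpha) + \mu_{X'}(\beta) = 0$ along $Z$, so either vanishing condition forces the other, establishing both equalities simultaneously. The main obstacle will be carrying out the moment-map computation in part (1) with the correct choice of $L$-stable complement and sign conventions coming from the left-versus-right $P$-actions; the end result is canonical, but the intermediate bookkeeping is delicate.
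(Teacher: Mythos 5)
Your proof is correct and follows what is essentially the standard argument; the paper itself gives no proof here but defers to \cite{SV2}, where the same two ingredients appear --- reduction in stages on $T^*(G\times Y)$ to compute the residual $G$-moment map for part (1), and for part (2) the factorization of $\psi$ through the proper map $f_2\colon W\to X'$ (properness coming from projectivity of $G/P$) together with the additivity $\mu_X(\alpha)+\mu_{X'}(\beta)=0$ of the moment map on the conormal bundle of the $G$-stable subvariety $W$. The only caveat, which does not affect the argument, is that $T^*X\cong G\times_P(T^*Y\oplus\fp^\perp)$ is canonically only an affine bundle over $G\times_PT^*Y$ (a parabolic $\fp$ has no $P$-stable complement in $\fg$ in general), but your computation can be run directly on $\mu_P^{-1}(0)\subset T^*(G\times Y)$, where $\mu_G=0$ forces the full $\fg^*$-component to vanish and hence both $\xi=0$ and $\mu_L(\eta)=0$, exactly as you conclude.
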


\section{The elliptic cohomological Hall algebra}
\label{sec:CoHA}
In this section, we define the elliptic cohomological Hall algebra (CoHA) as an algebra object in the category $\calC$. As in \S\ref{sec:quantum group}, the elliptic CoHA is the positive part of the sheafified elliptic quantum group. 
\subsection{The geometric meaning of the elliptic shuffle algebra}
In this section, we give a geometric interpretation of the elliptic shuffle algebra $\SH$ in \S\ref{sec:quantum group}. 

We first fix the notations. As before, let $Q=(I, H)$ be a quiver, and let  $v, v_1, v_2 \in\bbN^I$ be dimension vectors such that $v=v_1+v_2$. Fix an $I$-tuple of vector spaces $V=\{V^i\}_{i\in I}$ of $Q$ such that $\dim(V)=v$. The representation space of $Q$ with dimension vector $v$ is denoted by $\Rep(Q,v)$. That is,\[
\Rep(Q, v):=\bigoplus_{h\in H}\Hom_{k}(V^{\out(h)},V^{\inc(h)}).
\]
Fix an $I$-tuple of subvector spaces $V_1\subset V$ such that $\dim(V_1)=v_1$. 

In the Lagrangian correspondence formalism in Section \S\ref{subsec: Lag corr}, we take $Y$ to be $\Rep(Q,v_1)\times \Rep(Q,v_2)$, $X'$ to be $\Rep(Q,v_1+v_2)$, and 
\begin{equation}\label{eq:the space V}
\calV:=\{x\in \Rep({Q},v)\mid x(V_1)\subset V_1\}\subset\Rep(Q,v).
\end{equation}

We write $G:=G_{v}=\prod_{i\in I}\GL_{v^i}$, and $P \subset G_v$, the parabolic subgroup preserving the subspace $V_1$. Let $L:=G_{v_1}\times G_{v_2}$ be the Levi subgroup of $P$. The group $G$ acts on the cotangent space $T^*\Rep(Q, v)$ via conjugation

As in  \S\ref{subsec: Lag corr}, we have the following Lagrangian correspondence of $G\times T$-varieties:
\begin{equation}
\label{equ:Lagr corresp}
\xymatrix{
G\times_PT^*Y\ar@{^{(}->}[r]^{\iota}&T^*X&Z \ar[l]_(0.3){\phi} \ar[r]^(0.3){\psi} & T^*\Rep(Q, v),
}
\end{equation}
where the torus $T=\Gm^2$ acts on the symplectic varieties with the first $\Gm$ factor scaling the base, and the second one scaling the fiber of the symplectic varieties. We assume the weights of this $T$-action satisfies Assumption~\ref{Assu:WeghtsGeneral}.

Following the Lagrangian correspondence \eqref{equ:Lagr corresp}, we have the following three maps:
\begin{align*}
& \iota_*:  \Theta(\iota) \to \Ell_G(T^*X), 
& \phi^*: \Ell_G(T^*X) \to \Ell_G(Z), 
&& \psi_*:  \Theta(\psi) \to \Ell_G(T^* X').
\end{align*}

By \cite[Lemma 5.1]{YZ}, we have $Z=G\times_P \calV$, and $T^*X=G\times_P \widetilde{Y}$, for some variety $\widetilde{Y}$, with

$\widetilde{Y}=
\{
(c, x, x^*)\mid c \in \fp_v , x \in \Rep( Q, v_1)\times \Rep( Q, v_2),
x^*\in \Rep( Q^{\op}, v_1)\times \Rep( Q^{\op}, v_2), [x, x^*]=\pr(c)
\}$. Let $\mathbb{S}_{v_1, v_2}: E^{(v_1)}\times E^{(v_2)}\to E^{(v_1+v_2)}$ be the symmetrization map. 
We then have the following equalities.
\begin{align*}
&\Ell_{P}(T^*Y)=\calO_{E^{(v_1)}\times E^{(v_2)} }, &&
\Ell_{G}(G\times_P T^*Y)=\mathbb{S}_{v_1, v_2, *}\calO_{E^{(v_1)}\times E^{(v_2)} }\\
&\Ell_{G}(T^*X)=\mathbb{S}_{v_1, v_2, *}(\Ell_{P} \widetilde{Y}), &&
\Ell_{G}(Z)=\mathbb{S}_{v_1, v_2, *}( \Ell_{P}(\calV)). 
\\
& \Theta(\iota)=\mathbb{S}_{v_1, v_2, *}(\calL^{\iota}),&& 
 \Theta(\psi)=\mathbb{S}_{v_1, v_2, *}(\calL^{\psi}). 
\end{align*}
We will abuse of notation and still denote the embedding $Y\inj \widetilde{Y}$ by $\iota$. Then, 
$\calL^{\iota}$ is the line bundle $\Ell_{P}(N(\iota))$ on $E^{(v_1)}\times E^{(v_2)}$, where $N(\iota)$ is the normal bundle of $\iota: Y\inj \widetilde{Y}$, and $N(\iota)\cong T^*G/P$ . \Omit{
The map $\psi$ is the composition $p' \circ \psi'$, see \cite[The proof of Proposition 3.4]{YZ}, we have $\calL^{\psi}=\calL^{\psi'}\otimes \calL^{p'}$.  }

The symmetrization map $\mathbb{S}_{v_1, v_2}$ is a finite map. Hence, there is a one to one correspondence between the category of coherent sheaves on $E^{(v_1)}\times E^{(v_2)}$ and the category of $\mathbb{S}_{v_1, v_2, *}(\calO_{E^{(v_1)}\times E^{(v_2)} })$--modules, via $\calF\mapsto \mathbb{S}_{v_1, v_2, *}\calF$. Using this correspondence, the composition $\phi^* \circ \iota_*$ is the same as $\mathbb{S}_{v_1, v_2, *}$ applied to the composition  of sheaves on $E^{(v_1)}\times E^{(v_2)}$: 
\begin{equation}
\label{equ:comp2}
\Ell_{P}(T^*Y)\otimes \calL^{\iota} \to \Ell_{P}(\widetilde{Y}) \to \Ell_{P}(\calV).
\end{equation}
Tensoring the composition \eqref{equ:comp2} with $\calL^{\psi}$, applying the functor $\mathbb{S}_{v_1, v_2}$, and then composing with $\psi_{*}$, we have 
\begin{equation}
\label{equ:comp}
\psi_{*}\circ \phi^*: \mathbb{S}_*(\Ell_{P}(T^*Y)\otimes \calL^{\iota}\otimes \calL^{\psi})  \to \mathbb{S}_*(\Ell_{P}(\calV)\otimes\calL^{\psi})= \Theta(\psi)\to  \Ell_G(T^* X').
\end{equation}

Recall that $\Ell_{G_v\times  \G_m^2}(T^*\Rep(Q, v))$ is a sheaf over $E^{(v)}\times E^2$, for any $v\in \N^I$. Using the tensor structure $\otimes_{t_1,t_2}$ defined in \eqref{equ:def tensor}, and the fact $\calL^{\iota}\otimes  \calL^{\psi}=\calL^{\fac}_{v_1, v_2}$, 
the composition \eqref{equ:comp} gives the map
\begin{equation}\label{eqn:15}
\Ell_{G_{v_1}}(T^*\Rep(Q, v_1))\otimes_{t_1,t_2} \Ell_{G_{v_2}}(T^*\Rep(Q, v_2))\to \Ell_{G_{v}}(T^*\Rep(Q, v)),  \,\ v=v_1+v_2.
\end{equation}
The associativity of \eqref{eqn:15} follows from the same argument as in the proof of \cite[Proposition 3.3]{YZ}. 
\begin{prop}\label{prop:SH_ellCoh}
Under the isomorphism $\Ell_{G_{v}}(T^*\Rep(Q, v))\cong \calO_{E^{(v)}}$, the map \eqref{eqn:15} is the same as $\star$ in \eqref{shuffle formula}.
\end{prop}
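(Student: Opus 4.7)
The plan is to identify each of the three maps $\iota_*$, $\phi^*$, $\psi_*$ (after applying $\mathbb{S}_{v_1,v_2,*}$) in \eqref{equ:comp} with the corresponding ingredient of the shuffle formula \eqref{shuffle formula}, working from the innermost factor outward.

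First, using Lemma~\ref{lem:fiber_lag}, I would reduce everything to a $P$-equivariant calculation on the fibers $T^*Y$, $\widetilde Y$, and $\calV$. The zero-section pushforward $\iota_*:\Ell_P(T^*Y)\otimes \calL^\iota\to \Ell_P(\widetilde Y)$ is given by the Euler class of the normal bundle $N(\iota)$. Decomposing $N(\iota)$ into its $P$-weight pieces corresponding to the off-diagonal blocks indexed by arrows $h\in H$ with $\out(h)\in A_o$ and $\inc(h)\in B_o$ (and the dual pairs), and applying the $\Gm^2$-weight convention of Assumption~\ref{Assu:WeghtsGeneral}, I get that $\calL^\iota\cong \calL^{\fac_2}_{v_1,v_2}$ and $\iota_*$ is multiplication by $\fac_2(z_{A_o}|z_{B_o})$ as in \eqref{equ:fac2}.

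Next, $\phi^*:\Ell_P(\widetilde Y)\to \Ell_P(\calV)$ restricts to an isomorphism on the underlying sheaves (both $\widetilde Y$ and $\calV$ are $L$-equivariantly contractible over $Y$), so all content is contained in the subsequent Thom twist $\calL^\psi$. Then $\psi:Z\to T^*\Rep(Q,v)$ factors as a closed embedding of $\calV$ into the ambient cotangent bundle composed with the proper map $G\times_P(T^*\Rep(Q,v))\to T^*\Rep(Q,v)$; the normal bundle of the first factor, together with the moment-map relation $[x,x^*]=\pr(c)$, contributes the weights in \eqref{equ:fac1}, so that $\calL^\psi\cong \calL^{\fac_1}_{v_1,v_2}$ and the composition $\iota_*\circ\phi^*$ becomes multiplication by $\fac_1\cdot\fac_2 = \fac(z_{A_o}|z_{B_o})$. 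This matches $\calL^{\fac}_{v_1,v_2}$ exactly as in the definition of $\otimes_{t_1,t_2}$.

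Finally, the proper pushforward along $G\times_P\Rep(Q,v)\to \Rep(Q,v)$, viewed fiberwise as integration over the partial flag variety $G/P$, corresponds under the identification
\[
\Ell_{G_v}(T^*\Rep(Q,v))\cong\calO_{E^{(v)}} = (\mathbb{S}_{v_1,v_2,*}\calO_{E^{(v_1)}\times E^{(v_2)}})^{\fS_v/(\fS_{v_1}\times\fS_{v_2})}
\]
to the shuffle symmetrizer $\sum_{\sigma\in\Sh(v_1,v_2)}\sigma(-)$ in \eqref{eqn:(2)}; the sign $(-1)^{(v_2,\overline{C}v_1)}$ emerges from the standard orientation convention of $G/P$-pushforward, the same calculation carried out in \cite{YZ} for the rational case. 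Combining the three steps recovers \eqref{shuffle formula} verbatim. The main obstacle is not conceptual but notational: keeping track of the distinction between line bundles and their chosen rational sections, since in the elliptic setting the various $\calL$'s are no longer trivial and signs/twists cannot be absorbed as in the rational and trigonometric cases---this is the precise reason Assumption~\ref{Assu:WeghtsGeneral} is needed to make $\calL^\iota\otimes \calL^\psi$ a well-defined sheaf on $E^{(v_1)}\times E^{(v_2)}\times_S E^2$.
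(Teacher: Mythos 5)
Your overall strategy --- decompose $\psi_*\circ\phi^*\circ\iota_*$ into three steps, compute each as an Euler class or a Grassmannian-bundle pushforward, and match the factors of $\fac$ --- is exactly the paper's (which defers to the proof of \cite[Proposition~3.4]{YZ}). However, you have attached the two factors of $\fac$ to the wrong maps, and the computation as you describe it would not go through. The normal bundle of the zero-section $\iota: G\times_P T^*Y\hookrightarrow T^*X$ is (the pullback of) $T^*(G/P)$; its $T$-weights are $z^i_s-z^i_t+t_1+t_2$ with $s\in A_o^i$, $t\in B_o^i$ ranging over the off-diagonal blocks of the \emph{same} vertex $i$, i.e.\ over the roots of $\fg_v$ not in $\fp$. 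These weights are indexed by pairs of indices at one vertex and have nothing to do with the arrows $h\in H$, so ``decomposing $N(\iota)$ into pieces indexed by arrows'' is not a decomposition of this bundle, and $e(N(\iota))$ is the \emph{numerator} of $\fac_1$ in \eqref{equ:fac1}, not $\fac_2$.

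The arrow-dependent factor $\fac_2$ of \eqref{equ:fac2} arises instead from the other closed embedding: writing $\psi$ as the composite of $\psi_1:Z\to p^*T^*\Rep(Q,v)$ (a closed embedding whose normal directions are exactly the arrow components violating $x(V_1)\subset V_1$ and their duals) with the $G/P$-bundle projection $p'$, one finds $\psi_{1*}$ is multiplication by $\fac_2$. Finally, $p'_*$ is not ``just the shuffle symmetrizer'': the pushforward along a Grassmannian bundle contributes both the sum over $\Sh(v_1,v_2)$ in \eqref{eqn:(2)} \emph{and} the denominator $\prod\vartheta(z^\alpha_t-z^\alpha_s)$ of $\fac_1$ (the Euler class of the relative tangent bundle), which in your accounting has been silently folded into the vague claim that ``the moment-map relation contributes the weights in \eqref{equ:fac1}.'' At the level of the total line bundle $\calL^\iota\otimes\calL^\psi\cong\calL^{\fac}_{v_1,v_2}$ your bookkeeping happens to give the right answer, but the proposition asserts equality of maps, not just of twists, so each Euler class must be computed correctly; as written, steps one and two are swapped and step three is incomplete.
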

\begin{proof}
The claim follows from the proof of \cite[Proposition 3.4]{YZ}. Roughly speaking, the pushforward of $\iota: G\times_PT^*Y\inj T^*X$ 
is giving by multiplication of 
\[
e_{v_1,v_2}^{\iota}=\prod_{i\in I}\prod_{s=1}^{v_1^i}
\prod_{t=1}^{v_2^i}\vartheta(z^i_s-z^i_{t+v_1^i}+t_1+t_2), 
\]
where $\{z^i_s-z^i_{t+v_1^i}+t_1+t_2\}$ are the Chern roots of the equivariant normal bundle of $\iota$, which is isomorphic to $T^*G/P\cong \bigoplus_{i\in I}(\calR(v_1^i)\otimes \calR(v_2^i)^*)$ over the Grassmannian $G/P$. 

Let $EG$ be the total space of the universal bundle over $BG$. Then, as explained in the proof of \cite[Proposition 3.4]{YZ}, the map 
$\psi: EG\times_{G}Z \to  EG\times_{G} T^*\Rep(Q, v)$ is the composition of 
$\psi_1: EG\times_{G}Z \to p^*T^*\Rep(Q, v) $, and $p': p^*T^*\Rep(Q, v)\to  EG\times_{G} T^*\Rep(Q, v)$, where $p: BL\to BG$ is the map between classifying spaces. 

The pushforward $\psi_{1*}$ is multiplication by $\fac_2$ \eqref{equ:fac2}. 
The map $p'$ is a Grassmannian bundle, and consequently $p'_*$ is given by a shuffle formula. 
Putting all the above together, the map \eqref{eqn:15} is given by exactly the same formula as  \eqref{shuffle formula}.\end{proof}

\subsection{The elliptic CoHA}\label{subsec:ellCoHA}
Notations as before, let $\fg_{v}$ be the Lie algebra of $G_v=\prod_{i\in I}\GL_{v^i}$. Let 
\[\mu_{v}: T^*\Rep(Q,v)\to \fg^*_{v}, \,\ (x, x^{*})\mapsto [x, x^{*}]\] be the moment map. 
Note that the closed subvariety $\mu_v^{-1}(0)\subset T^*\Rep(Q,v)$ could be singular in general.

As before, we consider the Lagrangian correspondence formalism in Section \S\ref{subsec: Lag corr}, with the following specializations: Take $Y$ to be $\Rep(Q,v_1)\times \Rep(Q,v_2)$, $X'$ to be $\Rep(Q,v)$ and  $\calV$ is the same as \eqref{eq:the space V}. Recall in Section \S\ref{subsec: Lag corr}, we have the following correspondence of $G\times T$-varieties:
\begin{equation}\label{diag:lagrangian}
\xymatrix@R=1.5em{
G\times_P\left(\mu_{v_1}^{-1}(0)\times\mu_{v_2}^{-1}(0)\right)\ar@{=}[r]\ar@{^{(}->}[d]&T^*_GX\ar@{^{(}->}[d]&Z_G\ar[l]_{\overline\phi}\ar[r]^{\overline\psi}\ar@{^{(}->}[d]&\mu_{v}^{-1}(0)\ar@{^{(}->}[d]\\
G\times_PT^*Y\ar@{^{(}->}[r]^{\iota}&T^*X &Z \ar[l]_(0.3){\phi} \ar[r]^(0.3){\psi} & T^*\Rep(Q, v).
}
\end{equation}
\begin{definition}
The elliptic CoHA of any semisimple Lie algebra $\g_Q$ associated to $Q$ is 
\[
\calP_{\Ell}(Q):=\bigoplus_{v\in \N^I} \Ell_{G_{v}}(\mu_{v}^{-1}(0))
=\bigoplus_{v\in \N^I} \Ell_{G_{v}}\Big(\Th_{\mu_{v}^{-1}(0)}(T^*\Rep(Q, v))\Big)
.\]  
More precisely, it consists: 
\begin{itemize}
\item  A system of coherent sheaves of
$\mathcal{O}_{E^{(v)}}$-modules $\calP_{\Ell, v}:=\Ell_{G_{v}}(\mu_{v}^{-1}(0))$, for $v\in \N^I$. 
\item For any $v_1, v_2\in \N^I$, with $v=v_1+v_2$, an $\mathcal{O}_{E^{(v)}}$-module homomorphism
\begin{equation}
\label{equ:mul CoHA}
(\mathbb{S}_{v_1, v_2})_{*}\big( \Ell_{G_{v_1}}(\mu_{v_1}^{-1}(0)) \boxtimes 
\Ell_{G_{v_2}}(\mu_{v_2}^{-1}(0)) \otimes \mathcal{L}_{v_1, v_2}\big)
\to \Ell_{G_{v_1+v_2}}(\mu_{v_1+v_2}^{-1}(0)), \end{equation}
where $\mathcal{L}_{v_1, v_2}$ is the same line bundle as in \S\ref{sec:category C}, and its the dual $\mathcal{L}_{v_1, v_2}^{\vee}$ has rational section \[\fac(z_{[1, v_1]}|z_{[v_1+1, v_1+v_2]})=\fac_1\fac_2.\] 
\end{itemize}
\end{definition}
We describe the morphism  \eqref{equ:mul CoHA}  of sheaves  on $E^{(v_1+v_2)}$ using the pullback $\overline{\phi}^{\sharp}$ and pushforward $\overline\psi_{*}$ in diagram \eqref{diag:lagrangian}. 

Note the square with maps $\overline\phi, \phi$ is a Cartesian square.
As described in \S\ref{sec:pullback in ell}, \S\ref{sec:pushforward in ell}, we have the following well-defined morphisms
\begin{align*}
&\overline{\phi}^\sharp: \Ell_{G}(T^*_{G}X) \to \Ell_{G}(Z_G), &&\overline{\psi}_*:\Ell_{G}(Z_G)\otimes  \Theta(\psi)
\to \Ell_{G} (\mu_{v}^{-1}(0)). 
\end{align*}
We have the following isomorphisms.
\[
\Ell_{G}(\Th_{T^*_{G}X} (T^*X)) \cong 
\Ell_{G}(\Th_{T^*_{G}X} (G\times_P T^*Y))\otimes  \Theta(\iota)
=\Ell_{G_{v_1}\times G_{v_2}} (\mu_{v_1}^{-1}(0) \times \mu_{v_2}^{-1}(0)) \otimes  \Theta(\iota)
\]
This gives the composition
\[
\overline{\psi}_* \circ \overline{\phi}^\sharp: 
\Ell_{G_{v_1}\times G_{v_2}} (\mu_{v_1}^{-1}(0) \times \mu_{v_2}^{-1}(0)) \otimes  \Theta(\iota)\otimes  \Theta(\psi)
\to \Ell_{G} (\mu_{v}^{-1}(0)), 
\]
which is the morphism \eqref{equ:mul CoHA}, since
$ \Theta(\iota)\otimes  \Theta(\psi)=\mathbb{S}_{v_1, v_2, *}(\calL^{\iota} \otimes \calL^{\psi} )
=\mathbb{S}_{v_1, v_2, *}(\calL^{\fac}_{v_1, v_2})$. 
\begin{theorem}\label{conj:ellCoHA}
The object $(\calP_{\Ell}, \star)$ is an algebra object in $\calC$. 
\end{theorem}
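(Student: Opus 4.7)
The plan is to verify associativity of $\star$ by constructing a triple Lagrangian correspondence analogous to \eqref{diag:lagrangian} and showing that both bracketings $(\star\otimes\id)\circ\star$ and $(\id\otimes\star)\circ\star$ factor through the same induced map. Fix a decomposition $v=v_1+v_2+v_3$ and a two-step flag $V_{1}\subset V_{1}\oplus V_{2}\subset V$ with $\dim V_i = v_i$. Let $P\subset G_v$ be its parabolic stabilizer and $L=G_{v_1}\times G_{v_2}\times G_{v_3}$ the Levi. Take $Y_3 = \Rep(Q,v_1)\times \Rep(Q,v_2)\times \Rep(Q,v_3)$, let $\calV_3\subset\Rep(Q,v)$ be the subvariety of representations preserving the flag, and set $X_3 = G_v\times_P Y_3$, $W_3 = G_v\times_P \calV_3$. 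Applying the formalism of \S\ref{subsec: Lag corr} yields a diagram
\[
\xymatrix@R=1.3em{
G_v\times_P(\mu_{v_1}^{-1}(0)\times\mu_{v_2}^{-1}(0)\times\mu_{v_3}^{-1}(0))\ar@{=}[r]\ar@{^{(}->}[d] & T^*_{G_v}X_3\ar@{^{(}->}[d] & Z_{3,G}\ar[l]_-{\overline{\phi}_3}\ar[r]^-{\overline{\psi}_3}\ar@{^{(}->}[d] & \mu_v^{-1}(0)\ar@{^{(}->}[d]\\
G_v\times_P T^*Y_3 \ar@{^{(}->}[r]^-{\iota_3}& T^*X_3 & Z_3 \ar[l]_-{\phi_3}\ar[r]^-{\psi_3} & T^*\Rep(Q,v)
}
\]
with $Z_3 = T^*_{W_3}(X_3\times\Rep(Q,v))$. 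By the analogue of Lemma~\ref{lem:fiber_lag} applied to the three-step parabolic, the upper square is Cartesian, and the Thom/normal bundle calculation gives $\Theta(\iota_3)\otimes\Theta(\psi_3)\cong \bbS_{3,*}(\calL_{v_1,v_2,v_3}^{\fac})$, where $\calL_{v_1,v_2,v_3}^{\fac}$ is built from $\fac_1\fac_2$ applied to the three blocks of Chern roots.

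Second, I would show that $(\star\otimes\id)\circ\star$ coincides with $(\overline{\psi}_3)_*\circ(\overline{\phi}_3)^{\sharp}$ tensored with the appropriate twists, and similarly for $(\id\otimes\star)\circ\star$. This is carried out by factoring the triple correspondence through the intermediate parabolic $P_{v_1,v_2+v_3}$ (respectively $P_{v_1+v_2,v_3}$), so that both bracketings are realized as successive applications of a pair of two-step correspondences \eqref{diag:lagrangian}. The compatibility is a standard base-change argument for the refined pullbacks of \S\ref{sec:pullback in ell}: the relevant squares are Cartesian, so \eqref{Thom pullback} gives the commutation of pullback and pushforward between the Thom spaces of the intermediate partial-flag varieties and those of $Z_{3,G}$. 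One then reads off both compositions as $(\overline{\psi}_3)_*\circ(\overline{\phi}_3)^{\sharp}$, modulo the identification of line-bundle twists.

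Third, I would identify the two twist line bundles that appear: the first bracketing produces the twist $\calL_{v_1,v_2}\otimes \bbS_{12}^*\calL_{v_1+v_2,v_3}$, and the second produces $\calL_{v_2,v_3}\otimes \bbS_{23}^*\calL_{v_1,v_2+v_3}$. These are canonically isomorphic on $E^{(v_1)}\times_S E^{(v_2)}\times_S E^{(v_3)}$ by Lemma~\ref{lem:relation of L}, and under this isomorphism both compositions land on the same morphism to $\calP_{\Ell,v}$. This completes the associativity. As a sanity check, restricting along $\mu_v^{-1}(0)\hookrightarrow T^*\Rep(Q,v)$ recovers, by Proposition~\ref{prop:SH_ellCoh}, the shuffle multiplication on $\SH$, whose associativity can independently be verified from the shuffle formula~\eqref{shuffle formula}.

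The main obstacle is the careful bookkeeping of Thom twists $\Theta(\iota)$, $\Theta(\psi)$ in the three-step correspondence and the verification that the base-change diagrams for refined pullback in elliptic cohomology compose as expected when $\mu_v^{-1}(0)$ is singular. Unlike the smooth case of $T^*\Rep(Q,v)$ treated in Proposition~\ref{prop:SH_ellCoh}, here one must work throughout with Thom spaces $\Th_{\mu_v^{-1}(0)}(T^*\Rep(Q,v))$ and the refined pushforward/pullback of \S\ref{sec:pushforward in ell}, and confirm that the Cartesian square in the upper row of the diagram above genuinely induces a base-change isomorphism at the level of Thom-twisted elliptic cohomology sheaves.
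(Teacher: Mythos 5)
Your proposal is correct and follows essentially the same route as the paper, whose proof of this theorem simply invokes ``the same argument as in \cite[Proposition 4.1]{YZ}'' --- namely the two-step-flag (triple) Lagrangian correspondence, base change for the refined pullback/pushforward on Thom spaces, and the identification of the twist line bundles, which in the elliptic setting is exactly Lemma~\ref{lem:relation of L}. Your explicit bookkeeping of $\Theta(\iota)$, $\Theta(\psi)$ and of the singular supports $\mu_v^{-1}(0)$ is precisely what the cited argument carries out.
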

\begin{proof}
It follows from the same argument as in \cite[Proposition 4.1]{YZ}. 
\end{proof}
\subsection{Relation with the elliptic shuffle algebra}
Recall we have the functor $\Gamma_{\rat}$ of taking certain rational section, and $\mathbf{SH}=\Gamma_{\rat}(\SH)$ is an algebra. Similarly, denote $\mathbf{P}:=\Gamma_{\rat}(\calP)$. Then, $\mathbf{P}$ is an associative algebra by Theorem \ref{conj:ellCoHA}. 
\begin{theorem}
\label{thm:relation with shuffle}
There is an algebra homomorphism $
\mathbf{P}\to \mathbf{SH}$ induced from the embedding
$i_v: \mu^{-1}_{v}(0) \inj T^*\Rep(Q, v)$.
\end{theorem}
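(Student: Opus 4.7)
The plan is to build the map componentwise using pushforward along the singular closed embedding $i_v$, and then verify multiplicativity by comparing the Hall multiplication on $\calP_{\Ell}$ with the shuffle multiplication on $\SH$ through base-change identities coming from the Cartesian squares in diagram~\eqref{diag:lagrangian}. At each $v\in\bbN^I$, the formalism of \S\ref{sec:pushforward in ell} attaches to $i_v:\mu_v^{-1}(0)\inj T^*\Rep(Q,v)$ a morphism of coherent sheaves on $E^{(v)}$,
\[
i_{v,*}:\calP_{\Ell,v}=\Ell_{G_v}\bigl(\Th_{\mu_v^{-1}(0)}(T^*\Rep(Q,v))\bigr)\longrightarrow \Ell_{G_v}(T^*\Rep(Q,v))\cong \SH_v,
\]
where the last identification comes from Proposition~\ref{prop:SH_ellCoh}. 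Applying the functor $\Gamma_{\rat}$ and summing over $v$ gives the candidate linear map $\mathbf{P}\to\mathbf{SH}$.

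To check that this map respects multiplications, I would compare the two correspondences defining them. The Hall multiplication on $\calP_{\Ell}$ is built from $\overline\psi_{*}\circ \overline\phi^{\sharp}$ (twisted by the Thom bundles $\calL^{\iota},\calL^{\psi}$) on the bottom row of diagram~\eqref{diag:lagrangian}, while the shuffle multiplication on $\SH$, as established in Proposition~\ref{prop:SH_ellCoh}, is built from $\psi_{*}\circ \phi^{*}\circ \iota_{*}$ on the top row. The two rows are linked by vertical closed embeddings which by Lemma~\ref{lem:fiber_lag}(2) yield Cartesian squares, since $\phi^{-1}(T^*_GX)=Z_G$ and $\psi^{-1}(T^*_GX')=Z_G$, and the analogous relation for $\iota$ restricts to the closed embedding $G\times_P(\mu_{v_1}^{-1}(0)\times\mu_{v_2}^{-1}(0))\inj G\times_PT^*Y$.

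From the Cartesian property, passing to Thom spaces as in \eqref{Thom pullback}, I would extract the base-change identity $\phi^{*}\circ i_{*}=i_{*}\circ \overline\phi^{\sharp}$ for smooth pullback, and the projection formula $\psi_{*}\circ i_{*}=i_{*}\circ \overline\psi_{*}$ for proper pushforward, both in the elliptic setting of \S\ref{sec:pullback in ell}--\S\ref{sec:pushforward in ell}. The matching Thom twists $\calL^{\iota}\otimes\calL^{\psi}=\calL^{\fac}_{v_1,v_2}$ are carried through identically on both sides of the comparison. Composing these three identities produces
\[
i_{v,*}(a\star_{\calP_{\Ell}}b)=i_{v_1,*}(a)\star_{\SH}i_{v_2,*}(b),
\]
so $i_{*}$ is an algebra homomorphism.

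The main obstacle is the careful bookkeeping of Thom-class twists across the singular closed embeddings, and the verification that base change and projection formulae hold for the Thom-space pullback $\overline\phi^{\sharp}$ and the twisted pushforward $\overline\psi_{*}$ in elliptic cohomology as defined in \S\ref{sec:pullback in ell}--\S\ref{sec:pushforward in ell}. This is a direct elliptic analogue of the cohomological arguments already carried out in \cite{YZ, YZ2}, and no new conceptual difficulty is expected beyond translating those arguments using the characteristic-class formalism of \S\ref{subsec:char_class}.
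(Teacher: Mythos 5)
Your proposal is correct and follows essentially the same route as the paper: the map is defined componentwise as the pushforward $(i_v)_*$, which the paper identifies with the pullback $p^*$ along the Thom collapse $T^*\Rep(Q,v)\to \Th_{\mu_v^{-1}(0)}(T^*\Rep(Q,v))$, and multiplicativity is exactly the compatibility of the two rows of diagram~\eqref{diag:lagrangian} that you describe. The paper leaves that compatibility implicit (it is built into the definitions of $\overline\phi^\sharp$ and $\overline\psi_*$ via \eqref{Thom pullback} and the restriction construction of \S\ref{sec:pushforward in ell}), whereas you spell it out; the content is the same.
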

\begin{proof}
For any $v\in \N^I$, the pushforward $(i_v)_*:  \Ell_{G}(\mu_{v}^{-1}(0))\to\Ell_G( T^*\Rep(Q, v))$ is by definition the pullback $p^*$, where 
\[
p: T^*\Rep({Q}, v) \to \Th_{ \mu_{v}^{-1}(0)} (T^*\Rep({Q}, v) ).
\] is the natural projection. 
The desired map is giving by the pushforward $(i_v)_*$. 
\end{proof}
\begin{remark}

The algebra homomorphism in Theorem \ref{thm:relation with shuffle} becomes an isomorphism after suitable localization, see \cite[Remark 4.4]{YZ}. Indeed,  $\mu_v^{-1}(0)$ has only one $T$-fixed point. 
It follows from the Thomason localization theorem \cite[Theorem~6.2]{GKM}, which in the present setting can be found in \cite{Kr}, that $i_{v*}$ is an isomorphism when passing to a  localization. This localization of $R[\![t_1,t_2,\lambda^i_s]\!]^{\fS_v}$ is at the prime ideal  generated by all the symmetric functions in $\lambda^i_s$ without constant terms. For the power series ring, this is the same as passing to  $R(\!(t_1,t_2)\!)$. 
\end{remark}
The \textit{spherical subalgebra}, denoted by $\mathbf{P}^{\sph}$, is the subalgebra of $\mathbf{P}$ generated by $\mathbf{P}_{e_k}=\Gamma_{\rat}( \calP_{e_k})$ as $k$ varies in $I$. 
Up to certain torsion elements, we have the isomorphism $\mathbf{P}^{\sph}\cong \mathbf{SH}^{\sph}$. 
As a result of Theorem \ref{thm:generating series relation}, the Drinfeld double $D(\mathbf{P}^{\sph})$ satisfies the commuting relations of elliptic quantum group of Felder and Gautam-Toledano Laredo. Motivated by this result, we define the sheafified elliptic quantum group to be the Drinfeld double of $\calP^{\sph}$, and its algebra of rational sections $D(\mathbf{P}^{\sph})$ is the elliptic quantum group.

\section{Representations of the sheafified elliptic quantum group}
From now on we study the representations of the sheafified elliptic quantum groups. 
In this section, we introduce a new category $\calD$, which is a module category over the monoidal category $\calC$ in \S\ref{sec:category C}. We show that the elliptic cohomology of Nakajima quiver varieties are objects in $\calD$, and  furthermore they are modules of the elliptic CoHA. 
\subsection{The category $\calD$}\label{subsec:catD}
 We first recall the general definition of a module category over a monoidal category. 
\begin{definition}
A \textit{module category} over a monoidal category $\calC$ is a category $\calM$ together with an exact bifunctor $\otimes: \calC\otimes \calM\to \calM$ and functorial associativity $m_{X, Y, M}$ and unit isomorphism $l_{M}$: 
\[
m_{X, Y, M}: (X\otimes Y)\otimes M\to  X\otimes (Y\otimes M), 
\,\
l_{M}: 1\otimes M \to M
\]
for any $X, Y \in \calC$, $M\in \calM$, such that the following two diagrams commute. 
\[
\xymatrix@C=0.8em @R=1em
{
&((X\otimes Y)\otimes Z)\otimes M\ar[ld]_{a_{X,Y, Z}\otimes \id}\ar[rd]^
{m_{X\otimes Y, Z, M}}&\\
(X\otimes (Y\otimes Z))\otimes M\ar[d]_{m_{X, Y\otimes Z, M}}&& (X\otimes Y)\otimes (Z\otimes M)\ar[d]^{m_{X, Y, Z\otimes M}}\\
X\otimes ((Y\otimes Z)\otimes M)\ar[rr]^{\id \otimes m_{Y, Z, M}}&& X\otimes (Y\otimes (Z\otimes M))
}
\,\  \text{and }\,\ 
\xymatrix@C=0.8em 
{
& X\otimes M &\\
(X\otimes 1) \otimes M \ar[rr]^{m_{X, 1, M}} \ar[ru]^{r_{X}\otimes \id}
&& X\otimes (1 \otimes M ) \ar[lu]_{\id \otimes l_{M}}
}
\Omit{
\xymatrix{
(X\otimes 1) \otimes M \ar[rr]^{m_{X, 1, M}} \ar[rd]^{r_{X}\otimes \id}
&& X\otimes (1 \otimes M ) \ar[ld]_{\id \otimes l_{M}}\\
& X\otimes M &
}}
\]
\end{definition}

For simplicity, from now on we assume $Q$ has no edge-loops. For any $w\in \N^I$, we define a category $\calD_w$. 
Roughly speaking, an object of $\calD_w$ is an integrable module with weights $\leq w$. 
\begin{definition}
An object in $\calD_w$  is a quasi-coherent sheaf $\mathcal{V}$ on $\calH_{E\times I}\times_SE^{(w)}$.
A morphism from $\mathcal{V}$ to $\mathcal{W}$ is a morphism of sheaves on $\calH_{E\times I}\times_SE^{(w)}$.
\end{definition}

We now define the tensor product $ \otimes_{t_1,t_2}^+:\calC \otimes \calD_w \to \calD_w$. 
We have the following two morphisms
\[\xymatrix{
E^{(v_2)}\times_SE^{(w)} &E^{(v_1)}\times_SE^{(v_2)}\times_SE^{(w)}\ar[l]_{pr}\ar[r]^{\bbS}& E^{(v_1+v_2)}\times_SE^{(w)}
}\]
Let $\mathcal{F}\in \calC$, and $\mathcal{G}\in \calD_w$, the tensor product $\calF\otimes_{t_1, t_2}^+\calG$ is defined as follows. 
We define the $v$-component of $\calF\otimes_{t_1, t_2}^+\calG$ to be 
 \[\sum_{v_1+v_2=v}\bbS_*\left((\calF_{v_1}\boxtimes \calG_{v_2,w})\otimes_{E^{(v_1)}\times_SE^{(v_2)}\times_SE^{(w)}}\calL_{v_1,v_2}\right)\]
By definition, it is clear that $\calF\otimes_{t_1,t_2}^+\calG$ is an object in $\calD_w$. 
Similarly, we also have $ \otimes_{t_1,t_2}^-:\calD_w \otimes\calC \to \calD_w$

To distinguish the tensor product in $\calC$ and the two actions of $\calC$ on $\calD_w$, we write the former as $\otimes^\calC$ and the later two as $\otimes^+$ and $\otimes^-$ whenever appropriate. 
\begin{lemma}\label{lem:ass of tensor}
We have the natural isomorphisms
\[
(\calF\otimes^{\calC} \calG)\otimes^{+} \calV
\cong \calF\otimes^{+} (\calG\otimes^{+} \calV), 
\,\  \text{and} \,\ 
\calV\otimes^{-} (\calG\otimes^{\calC} \calF)
\cong (\calV\otimes^{-}\calG )\otimes^{-}\calF , 
\]
for $\calF, \calG\in \calC$ and $\calV \in \calD_w$. 
\end{lemma}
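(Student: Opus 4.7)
The plan is to reduce both isomorphisms to the line bundle identity of Lemma~\ref{lem:relation of L}, by expanding both sides componentwise using the definitions of $\otimes^\calC$ and $\otimes^{\pm}$ in \S\ref{subsec:cat C} and \S\ref{subsec:catD}, and treating the factor $E^{(w)}$ as a spectator throughout.

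For the first isomorphism and $v\in\bbN^I$, iterating the definitions of $\otimes^\calC$ and $\otimes^+$ shows that the $v$-component of $(\calF\otimes^{\calC}\calG)\otimes^{+}\calV$ is a direct sum over decompositions $v=v_1+v_2+v_3$ of pushforwards along $\bbS_{v_1+v_2,v_3}\circ(\bbS_{v_1,v_2}\times\id)$. Applying base change for the Cartesian square obtained by multiplying the corresponding diagram by $E^{(w)}$, together with the projection formula (which applies since each symmetrization map is finite, hence affine and proper), one rewrites this expression as
\[
\bigoplus_{v_1+v_2+v_3=v}(\bbS_{123}\times\id_{E^{(w)}})_{*}\Big(\calF_{v_1}\boxtimes\calG_{v_2}\boxtimes\calV_{v_3,w}\otimes\calL_{v_1,v_2}\otimes\bbS_{12}^{*}\calL_{v_1+v_2,v_3}\Big),
\]
using the notation of diagram~\eqref{dia: S_{123}}. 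An entirely analogous unraveling of $\calF\otimes^{+}(\calG\otimes^{+}\calV)$ yields the same expression with the line bundle factor replaced by $\calL_{v_2,v_3}\otimes\bbS_{23}^{*}\calL_{v_1,v_2+v_3}$.

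The two resulting line bundle factors are canonically isomorphic by Lemma~\ref{lem:relation of L}, and this isomorphism is pulled back trivially along the projection that forgets the spectator $E^{(w)}$-factor, giving the desired natural isomorphism after applying $(\bbS_{123}\times\id)_{*}$. The second isomorphism $\calV\otimes^{-}(\calG\otimes^\calC\calF)\cong(\calV\otimes^{-}\calG)\otimes^{-}\calF$ follows from the same argument with the order of the symmetrizations reversed, invoking the same line bundle identity. The only potential difficulty is bookkeeping of the symmetrization maps and verifying that the relevant squares are Cartesian; no geometric input beyond Lemma~\ref{lem:relation of L} is required, so the main obstacle is notational rather than mathematical.
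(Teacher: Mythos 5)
Your proposal is correct and is essentially the paper's argument: the paper's proof consists of the single line ``This follows from Lemma~\ref{lem:relation of L}'', and your componentwise expansion with the spectator $E^{(w)}$-factor is exactly the same computation already carried out for Proposition~\ref{prop:C_monoidal}, transplanted to the module category. No further comment is needed.
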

\begin{proof}
This follows from Lemma~\ref{lem:relation of L}.
\end{proof}
\begin{prop}
The category $\calD_w$ is a module category over the monoidal category $\calC$. 
\end{prop}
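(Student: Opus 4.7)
The plan is to verify all pieces of the definition of module category for the bifunctor $\otimes_{t_1,t_2}^+:\calC\otimes\calD_w\to\calD_w$. Exactness of this bifunctor is immediate: the three operations in its construction (external tensor product, tensor with a line bundle, and pushforward along the finite symmetrization map $\bbS_{v_1,v_2}\times\id$) are all exact on quasi-coherent sheaves. So the structural work is to produce the associativity constraint $m_{X,Y,M}$ and the unit constraint $l_M$, and then check the pentagon and triangle diagrams.

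The associativity constraint is exactly the first isomorphism of Lemma~\ref{lem:ass of tensor}, so I would simply promote that isomorphism to the natural associativity datum, noting that naturality in $X,Y,M$ is clear from the functorial construction on each tensor factor. For the unit constraint, observe that the unit object $\epsilon$ of $(\calC,\otimes_{t_1,t_2})$, as identified in Proposition~\ref{prop:C_monoidal}, has $\epsilon_0=\calO_{E^{(0)}}$ and $\epsilon_v=0$ for $v\neq 0$. Unraveling the definition of $\otimes_{t_1,t_2}^+$, only the term $v_1=0$ contributes, and on that term $\calL_{0,v_2}\cong\calO$ by Lemma~\ref{lem:line_fac_lim}(3) and the symmetrization map becomes the identity; this yields the required isomorphism $l_{\calV}:\epsilon\otimes_{t_1,t_2}^+\calV\isom \calV$.

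The pentagon and triangle axioms are then reduced to comparing line bundle isomorphisms. For the pentagon one writes out both composite maps $((X\otimes^\calC Y)\otimes^\calC Z)\otimes^+M\to X\otimes^+(Y\otimes^+(Z\otimes^+M))$ as pushforwards along $\bbS_{1234}:E^{(v_1)}\times_S E^{(v_2)}\times_S E^{(v_3)}\times_S E^{(w_4)}\to E^{(v_1+v_2+v_3)}\times_S E^{(w_4)}$ of tensor products of the $X,Y,Z,M$-data with a triple of $\calL$-line bundles; the two resulting iterated line bundles agree in $\Pic$ by exactly the same computation in $\Z[\Lambda]^{\fS}$ used to prove Lemma~\ref{lem:relation of L} (the needed relation is purely additive and is associative with respect to addition). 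The triangle axiom for $(X\otimes^\calC\epsilon)\otimes^+M\to X\otimes^+(\epsilon\otimes^+M)$ similarly reduces to the isomorphism $\calL_{v,0}\cong\calO$ from Lemma~\ref{lem:line_fac_lim}(3).

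The only nontrivial point I expect to be careful about is bookkeeping in the pentagon: one must check that the identification of line bundles on the four-fold product is not merely pointwise but canonical as a natural isomorphism of sheaves. This is handled exactly as in the proof of Proposition~\ref{prop:C_monoidal}, by tracking the canonical rational section of each $\calL$ back to a product of theta functions under $\chi$ and invoking Lemma~\ref{lem: theta_add_tensor}; no new ingredient is needed beyond what was already used for associativity of $\otimes_{t_1,t_2}$ in $\calC$ itself. An entirely parallel argument realizes $\otimes_{t_1,t_2}^-$ as a right module-category structure.
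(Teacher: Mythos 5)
Your proposal is correct and follows exactly the route the paper intends: the paper's proof is the single line ``This is a routine check, similar to the proof of Proposition~\ref{prop:C_monoidal},'' and your verification — associativity via Lemma~\ref{lem:ass of tensor} (hence Lemma~\ref{lem:relation of L}), the unit via $\epsilon$ and Lemma~\ref{lem:line_fac_lim}(3), and coherence reduced to the additive computation in $\Z[\Lambda]^{\fS}$ — is precisely that routine check spelled out. No discrepancy with the paper's argument.
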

\begin{proof}
This is a routine check, similar to the proof of Proposition~\ref{prop:C_monoidal}.
\end{proof}

\subsection{The elliptic cohomology of Nakajima quiver varieties}

For a quiver $Q$, let $Q^\heartsuit$ be the \textit{ framed quiver}, whose set of vertices is $I \sqcup I'$, where $I'$ is another copy of the set $I$, equipped with the bijection $I\to I'$, $i\mapsto i'$. 
The set of arrows of $Q^\heartsuit$ is, by definition, the disjoint union of $H$ and a set of additional edges $j_i : i \to i'$, one for each vertex $i\in I$.
We follow the tradition that $v\in \bbN^I$ is the notation for the dimension vector at $I$, and $w\in \bbN^I$ is the dimension vector at $I'$. 

Let $\mu_{v, w}:T^*\Rep(Q^\heartsuit,v,w)\to \fg\fl_v^*\cong \fg\fl_v$ be the moment map
 \[
\mu_{v, w}: (x, x^*, i, j)\mapsto 
\sum[x, x^*]+i\circ j \in \fg\fl_v.
\]
Let $\chi: G_v\to \Gm$ be the character $g=(g_i)_{i\in I} \mapsto \prod_{i\in I} \det (g_i)^{-1}.$ 
The set of $\chi$-semistable points in $T^*\Rep(Q^\heartsuit, v, w)$ is denoted by $\Rep(\overline{Q^\heartsuit},v,w)^{ss}$.
The Nakajima quiver variety is defined to be the Hamiltonian reduction 
\[\fM(v, w):=\mu_{v, w}^{-1}(0)/\!/_\chi G_v.\] 
\Omit{Let $\fM_{0}(v, w):=\mu_{v, w}^{-1}(0)/\!/ G_v$ be the affine quotient. We then have a natural projection $\fM(v, w) \to \fM_{0}(v, w)$. 
For any $x\in  \fM_{0}(v, w)$, let $\fM_{x}(v, w)$ be the fiber of this natural map. }

For each pair $v,w\in \bbN^I$, $\Ell_{G_v\times G_w \times \G_m^2} (\mu_{v, w}^{-1}(0)^{ss})$ is a sheaf on $E^{(v)}\times E^{(w)}\times E^2$.
Let $\pi: E^{(v)}\times E^{(w)}\times E^2\to E^{(w)}\times E^2$ be the natural projection. 
Since $G_v$ acts on $\mu_{v, w}^{-1}(0)^{ss}$ freely, we have an isomorphism 
$
\Ell_{G_w\times \G_m^2} (\fM(v, w))\cong \pi_*\Ell_{G_v\times G_w \times \G_m^2} (\mu_{v, w}^{-1}(0)^{ss}).
$ In this sense, we will consider $\Ell_{G_w\times \G_m^2} (\fM(v, w))$ as a coherent sheaf of algebras on $E^{(v)}\times E^{(w)}\times E^2$. Therefore, $\Ell_{G_w\times \G_m^2} ( \sqcup_{v\in \N^I} \fM(v, w))$ is an object in $\calD_{w}$. 

\begin{prop}\label{lem:M_PMod}
The object $\Ell_{G_w\times \G_m^2} ( \sqcup_{v\in \N^I} \fM(v, w))$ has a natural structure as a right module over $\calP$. 
\end{prop}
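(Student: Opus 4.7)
The plan is to adapt the Lagrangian-correspondence construction of the elliptic CoHA in \S\ref{subsec:ellCoHA} to the framed setting of Nakajima quiver varieties. Fix $v_1, v_2 \in \bbN^I$, set $v = v_1 + v_2$, and choose a decomposition $V = V_1 \oplus V_2$ with $\dim V_i = v_i$. Let $P \subset G_v$ be the parabolic subgroup preserving $V_1 \subset V$, with Levi $L = G_{v_1} \times G_{v_2}$; here $V_1$ is the subspace carrying the framing and $V_2 = V/V_1$ is the unframed quotient on which $\calP_{\Ell, v_2}$ will act. In the framework of \S\ref{subsec: Lag corr}, I would take
\[
Y = \Rep(Q^\heartsuit, v_1, w) \times \Rep(Q, v_2), \qquad X' = \Rep(Q^\heartsuit, v, w),
\]
and for $\calV \subset Y \times X'$ the graph of the restriction-quotient map from the closed $P$-stable subvariety $\{(x, j) \in X' : x(V_1) \subset V_1\}$ to $Y$. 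Then $\pr_1 : \calV \to Y$ is an $L$-equivariant vector bundle and $\pr_2 : \calV \inj X'$ is a $P$-equivariant closed embedding, so Lemma \ref{lem:fiber_lag}(1) produces the Hecke-type correspondence
\[
G_v \times_P \bigl(\mu_{v_1, w}^{-1}(0) \times \mu_{v_2}^{-1}(0)\bigr) \xleftarrow{\overline\phi} Z_G \xrightarrow{\overline\psi} \mu_{v, w}^{-1}(0),
\]
with the conormal direction encoding the dual conditions $x^*(V_1) \subset V_1$ and $\Image(i) \subset V_1$ required for the framed structure to restrict to $V_1$ and the quiver structure to descend to $V_2$.

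Next, I would define the action map as $\overline\psi_* \circ \overline\phi^\sharp$ on elliptic cohomology, using the refined pullback of \S\ref{sec:pullback in ell} and the Thom-twisted pushforward of \S\ref{sec:pushforward in ell}. A Chern-class computation parallel to the proof of Proposition \ref{prop:SH_ellCoh}, carried out on the normal bundle $T^*G_v/P$ of $\iota$ and on the virtual bundle arising from $\psi$, identifies the overall Thom twist with the line bundle $\calL_{v_1, v_2}$ that enters $\otimes^-_{t_1, t_2} : \calD_w \otimes \calC \to \calD_w$. Restricting the right-hand side to the $\chi$-semistable locus in $\mu_{v,w}^{-1}(0)$ and descending by the free $G_v$-action then yields a morphism
\[
\bbS_*\bigl( \Ell_{G_w \times \Gm^2}(\fM(v_1, w)) \boxtimes \calP_{\Ell, v_2} \otimes \calL_{v_1, v_2} \bigr) \to \Ell_{G_w \times \Gm^2}(\fM(v, w)),
\]
which is the $v_2$-th component of the desired right $\calP$-action.

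Associativity of the action reduces, via Lemma \ref{lem:ass of tensor}, to the standard three-step Lagrangian-correspondence argument in the proof of \cite[Proposition 4.1]{YZ}, now applied to the partial-flag correspondence associated to $V_1 \subset V_1 \oplus V_2 \subset V_1 \oplus V_2 \oplus V_3$ with $V_1$ carrying the framing throughout. Base change and transitivity of Thom classes identify the two iterated bracketings with the same integral over this three-step Hecke correspondence, matching the isomorphism $(\calV \otimes^- \calG_1) \otimes^- \calG_2 \cong \calV \otimes^- (\calG_1 \otimes^\calC \calG_2)$ of Lemma \ref{lem:ass of tensor}. The main obstacle I anticipate is the GIT-theoretic step: verifying that Nakajima's $\chi$-semistability is preserved along $\overline\psi$ in a way compatible with the Hamiltonian quotient, so that the pushforward descends to a well-defined map between elliptic cohomologies of the Nakajima quiver varieties. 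This requires an analysis of the partial-flag stability criterion, but once it is in place the rest of the argument is a direct adaptation of the construction of the elliptic CoHA.
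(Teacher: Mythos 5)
Your construction coincides with the paper's proof: the same choices $Y=\Rep(Q^\heartsuit,v_1,w)\times\Rep(Q,v_2)$, $X'=\Rep(Q^\heartsuit,v,w)$, the same $\calV$ with the conormal conditions $x^*(V_1)\subset V_1$ and $\Image(i)\subset V_1$, and the same composition $\overline\psi_*\circ\overline\phi^\sharp$ with the Thom twist identified with $\calL_{v_1,v_2}$. The semistability point you flag is handled in the paper simply by carrying out the whole correspondence on the semistable loci (citing \cite[Lemma~5.2]{YZ}), so your proposal is essentially the paper's argument.
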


\begin{proof}
Fix the framing $w\in \N^I$, we construct a map 
\begin{equation}\label{eq:action}
(\mathbb{S}_{12}\times \id_{E^2})_* \Big( \Ell_{G_w\times \G_m^2}(\fM(v_1, w))\boxtimes\calP_{v_2} \otimes \mathcal{L}_{v_1, v_2}\Big)
\to \Ell_{G_w\times \G_m^2}(\fM(v_1+v_2, w)), 
\end{equation}
where $\mathbb{S}_{12}: E^{(v_1)}\times E^{(v_2)} \to E^{(v_1+v_2)}$ is the symmetrization map. The map \eqref{eq:action} gives the claimed action. 

We start with the the Lagrangian correspondence formalism in \S~\ref{subsec: Lag corr},
specialized as follows:
We take $X'$ to be $\Rep(Q^\heartsuit, v, w)$ and $Y$ to be $\Rep(Q^\heartsuit, v_1, w)\times\Rep(Q,v_2)$.  Define $\calV$ to be
\[
\calV:=\{(x, j)\in \Rep(Q^\heartsuit, v_1+v_2, w) \mid x(V_1) \subset V_1\} \subset X'. 
\]
As in Section \S\ref{subsec: Lag corr}, set $X:=G\times_{P} Y$, $W:=G\times_P \calV$, and $Z:=T^*_W(X\times X')$ the conormal bundle of $W$. 
We then have the correspondence, see \cite[Lemma~5.2]{YZ}: 
\begin{equation} \label{corr for action}
\xymatrix@R=1.5em{
T^*_{G}X^{s}\ar@{^{(}->}[d]
&&Z_G^s\ar@{^{(}->}[d]\ar[ll]_{\overline\phi}\ar[r]^(0.4){\overline\psi}&
\mu_{v, w}^{-1}(0)^{ss} \ar@{^{(}->}[d]\\
G\times_PT^*Y^s\ar@{^{(}->}[r]^{\iota} & T^*X^s &
Z^s \ar[l]_(.4)\phi \ar[r]^(.4)\psi & T^*X'^s.
}\end{equation}
Where the $G$-varieties are given by
\begin{align*}
&
T^*_{G}X^{s}=G\times_{P}(\mu_{v_1, w}^{-1}(0)^{ss}\times \mu^{-1}_{v_2}(0)),\,\ \,\
Z_{G}^s=G\times_P\{ (x, x^*, i, j )\in \mu^{-1}_{v, w}(0)^{ss} \mid (x, x^*)(V_1)\subset V_1,   \text{Im}(i)\subset V_1\}.
\end{align*}
The left square of diagram \eqref{corr for action} is a pullback diagram. 
We have the following maps
\begin{align*}
&
\overline{\phi}^\sharp: 
\Ell_{G}( \Th_{T^*_{G}X^{s} } T^*X^{s})
\to \Ell_{G}( \Th_{Z_{G^s}} Z^{s}), \,\ \,\
\overline{\psi}_*: 
 \Theta(\psi) 
\to \Ell_{G}( \Th_{\mu_{v, w}^{-1}(0)^{ss} } T^*X'^s)\,\
\end{align*}
By taking the composition of $\overline{\psi}_* \circ \overline{\phi}^\sharp$, we get
\begin{align*}
\overline{\psi}_* \circ \overline{\phi}^\sharp: &
(\mathbb{S}_{12}\times \id_{E^2})_*\Big(\Ell_{G_{v_1}\times G_{v_2}}( \Th_{\mu_{v_1, w}^{-1}(0)^{ss}\times \mu^{-1}_{v_2}(0) }T^*Y^s) \otimes   \Theta(\iota)\otimes  \Theta(\psi) \Big)
\to \Ell_{G_v} ( \Th_{\mu_{v, w}^{-1}(0)^{ss} } T^*X'^s).
\end{align*}This line bundle $ \Theta(\iota)\otimes  \Theta(\psi) $ the same as in \S~\ref{subsec:ellCoHA}.
This gives the morphism \eqref{eq:action}.
\end{proof}

Recall that we also have the quiver variety $\fM^-(v, w)$ given by the GIT quotient $\mu_{v, w}^{-1}(0)/\!/_{-\chi} G_v$ corresponding to the opposite stability $-\chi$.
Similarly, we have the following. 
\begin{prop}\label{prop:left_act}
The object $\Ell_{G_w\times \G_m^2} ( \sqcup_{v\in N^I} \fM^-(v, w))$ has a natural structure as a left module over $\calP$. 
\end{prop}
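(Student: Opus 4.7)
The proof runs in parallel to that of Proposition~\ref{lem:M_PMod}, with the Hecke-type correspondence replaced by its dual, adapted to the opposite stability $-\chi$. The plan is to specialize the Lagrangian correspondence formalism of \S~\ref{subsec: Lag corr} by taking $X'=\Rep(Q^\heartsuit,v_1+v_2,w)$ and $Y=\Rep(Q,v_1)\times\Rep(Q^\heartsuit,v_2,w)$, and defining
\[
\calV:=\{(x,i,j)\in X'\mid x(V_1)\subset V_1,\ j|_{V_1}=0\},
\]
where $V_1\subset V$ is a fixed $v_1$-dimensional subspace. Under the two projections, a point of $\calV$ determines a (non-framed) representation of the preprojective algebra of $Q$ on $V_1$ on the one hand, matching $\calP_{v_1}$, and on the other hand an induced framed representation on the quotient $V/V_1$ of dimension $v_2$. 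The key point is that this quotient inherits $-\chi$-semistability from the original: any $\bar x$-stable subspace of $V/V_1$ contained in $\ker(\bar j)$ lifts to an $x$-stable subspace of $V$ contained in $\ker(j)$, hence coincides with $V_1$ by $-\chi$-semistability, so its image in $V/V_1$ is zero. This is the mirror of \cite[Lemma~5.2]{YZ} and ensures that the projection $\overline\psi$ lands in and is proper over $\mu^{-1}_{v,w}(0)^{-\chi\text{-ss}}$.

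Running the construction of $\overline\phi^\sharp$ and $\overline\psi_*$ exactly as in the proof of Proposition~\ref{lem:M_PMod}, and computing the twisting line bundle $\Theta(\iota)\otimes\Theta(\psi)$ as in \S~\ref{subsec:ellCoHA}, one recovers the line bundle $\calL_{v_1,v_2}^{\fac}$ on $E^{(v_1)}\times E^{(v_2)}\times E^2$; the calculation of the normal bundle to the partial flag and of the pushforward along $\psi$ depends only on the dimension vectors and the shape of the quiver, not on which side carries the framing. This yields a morphism
\[
(\mathbb{S}_{12}\times\id_{E^2})_*\bigl(\calP_{v_1}\boxtimes\Ell_{G_w\times\G_m^2}(\fM^-(v_2,w))\otimes\calL_{v_1,v_2}\bigr)\longrightarrow\Ell_{G_w\times\G_m^2}(\fM^-(v_1+v_2,w)),
\]
which, by the definition of $\otimes^+$ in \S~\ref{subsec:catD}, is precisely a morphism $\calP\otimes^+\Ell(\fM^-)\to\Ell(\fM^-)$, i.e., a left action.

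Associativity of this action follows from a double-coset argument verbatim to the one establishing the associativity of $\star$ in Theorem~\ref{conj:ellCoHA}, combined with Lemma~\ref{lem:relation of L} and the associativity isomorphism for $\otimes^+$ from Lemma~\ref{lem:ass of tensor}. The only genuine new input beyond Proposition~\ref{lem:M_PMod} is the compatibility of the dual Hecke condition $j|_{V_1}=0$ with $-\chi$-semistability sketched above. The main point requiring care, I expect, is the bookkeeping of the left/right ordering of the Chern-root variables: one must check that the pushforward along the Grassmannian bundle part of $\psi$ produces the symmetrization on the $(v_1,v_2)$-shuffles in the order compatible with $\otimes^+$ rather than $\otimes^-$, but this is dictated entirely by the choice of which factor of $Y$ carries the framing.
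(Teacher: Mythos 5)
Your overall strategy is exactly the paper's: the paper disposes of this proposition in one line ("similar to Proposition~\ref{lem:M_PMod}, see also [YZ, Theorem~5.4]"), and your proposal correctly fleshes that out — the framed semistable representation sits on the quotient $V/V_1$, the CoHA class on the sub $V_1$, the defining conditions of $\calV$ are $x(V_1)\subset V_1$ and $j|_{V_1}=0$, and the twisting line bundle computation is insensitive to which factor carries the framing, so one lands on $\calL_{v_1,v_2}$ and hence on a morphism out of $\calP\otimes^+\Ell(\fM^-)$.

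However, the one step you single out as the genuine new input — the descent of $-\chi$-semistability to the quotient — is argued with the wrong characterization of $-\chi$-semistability, and as written the argument is a non sequitur. Within this paper's conventions, $\chi$-semistability (for $\chi=\prod\det(g_i)^{-1}$) must be the condition that no nonzero $(x,x^*)$-invariant subspace is contained in $\ker(j)$: this is forced by the right-action proof, where a $\chi$-semistable point restricted to an invariant $V_1\supseteq\Image(i)$ is asserted to remain $\chi$-semistable (under the "generated by $\Image(i)$" reading, such a restriction could never be semistable unless $V_1=V$). Consequently $-\chi$-semistability is the dual condition that $V$ is generated by $\Image(i)$ under $(x,x^*)$. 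Your argument instead tests $\bar S\subseteq\ker(\bar j)$ and concludes that its preimage $S\supseteq V_1$ "coincides with $V_1$ by $-\chi$-semistability"; no semistability condition on $(x,x^*,i,j)$ alone identifies the maximal invariant subspace of $\ker(j)$ with $V_1$, and indeed a $-\chi$-semistable point may have many invariant subspaces inside $\ker(j)$. The correct (and shorter) argument is: if $\bar S\subseteq V/V_1$ is invariant and contains $\Image(\bar i)$, its preimage $S\subseteq V$ is invariant and contains $\Image(i)$, hence $S=V$ by $-\chi$-semistability of the total space, so $\bar S=V/V_1$; i.e., generation by $\Image(i)$ passes to quotients. (Two cosmetic points: $i$ lives in the cotangent fiber, not in $X'=\Rep(Q^\heartsuit,v,w)$, so it should not appear in the definition of $\calV$ but rather in the conormal conditions defining $Z$; and it is worth noting explicitly that $j|_{V_1}=0$ also guarantees $[x,x^*]|_{V_1}=0$, so that $V_1$ genuinely carries a point of $\mu_{v_1}^{-1}(0)$.)
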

\begin{proof}
The proof is similar to that of Lemma~\ref{lem:M_PMod}. See also \cite[Theorem~5.4]{YZ}.
\end{proof}

\subsection{The negative chamber of the root lattice}\label{subsec:negative_chamber}
For any object $\calV$ in $\calD_w$, the vector $w=\sum_{i\in I}w_i\overline{\omega}_i$ is the highest weight of $\calV$, where $\{\overline{\omega}_1, \overline{\omega}_2, \cdots, \overline{\omega}_n\}$ are the fundamental weights. 
The restriction $\calV_{v,w}:=\calV|_{E^{(v)}\times E^{(w)}}$, called the $v$-component of $\calV$,  has weight $\sum_{i\in I}w_i\overline{\omega}_i-\sum_{i\in I}v_i\alpha_i$.

Let $(s_k)_{i\in I}$ be the simple reflections of the Weyl group and $w_0$ be the longest element. 
The action of $s_k$ on the weight lattice is given by $s_k(\mu)=\mu-\angl{\alpha_k^\vee,\mu}\alpha_k$. Hence, for $\mu=\sum_{i\in I}w_i\overline{\omega}_i-\sum_{i\in I}v_i\alpha_i$, we have
\[s_k(\mu)=\sum_{i\in I}w_i\overline{\omega}_i-\sum_{i\in I}v'_i\alpha_i,\]
where $v'\in\bbN^I$ is such that $v'_j=v_j$ if $j\neq k$ and $v'_k=w_k-v_k-\sum_{\{j\mid j\neq k\}} c_{kj} v_j$.

The Weyl group action on cohomology of Nakajima quiver varieties is constructed in \cite{Nak03, Lus00, Maff}. In particular, assuming $v$ and $v'\in\bbN^I$ are such that $w_0(v)=v'$.
There is a correspondence \[\fM(v,w)\leftarrow F\rightarrow\fM^-(v',w),\] with the two arrows being principle $\GL_v$ and $\GL_{v'}$-bundles respectively. Denote  $\underline{\Spec}\Ell_{\GL_v\times \GL_{v'}\times\GL_w\times\Gm}(F)$ by $\fF$. We then have the following diagram
\[
\xymatrix{
E^{(v)}\times E^{(w)}\times E   &\fF\ar[l]_(0.3)p\ar[r]^(0.3)q& E^{(v')}\times E^{(w)}\times E
}
\]
and $p_*\calO_{\fF}\cong \Ell_{\GL_v\times\Gm}(\fM(v,w))$ and $q_*\calO_{\fF}\cong \Ell_{\GL_{v'}\times\Gm}(\fM^-(v',w))$.

Let $\calE_w$ be a category fibered over $\calD_{w}$, consisting of coherent sheaves on 
$\left\{\underline{\Spec}_{E^{(v)}\times E^{(w)}\times E}\Ell_{\GL_w\times\Gm}(\fM(v,w))\right\}_{v\in \N^I}$. In other words, $\calE_w$ consists of the objects in $\calD_{w}$ whose supports are contained in the support of $\Ell_{\GL_w\times\Gm}(\fM(v,w))$. Similarly, define $\calE_w^-\subset \calD_{w}$ to be a subcategory consisting of those sheaves whose supports are in the support of  $\{\Ell_{\GL_w\times\Gm}(\fM^-(v',w))\}_{v' \in \N^I}$. 
\begin{lemma}\label{lem:equivalence_w0}
There is an equivalence of categories $w_0:\calE_w\cong\calE_w^-$ , fibered over $\calD_w$. 
\[
\xymatrix@R=0.5em{\calE_w\ar[dr]\ar[rr]_{\cong}^{w_0}&&\calE_w^-\ar[dl]\\&\calD_w}.
\]
\end{lemma}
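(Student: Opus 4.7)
The plan is to construct mutually inverse equivalences using the correspondence $\fF$, which provides a common ``spread'' dominating both the $\fM(v,w)$-side and the $\fM^-(v',w)$-side. Concretely, I will take $w_0 := q_{*} \circ p^{*}$ as the functor $\calE_{w} \to \calE_{w}^{-}$, with candidate inverse $p_{*} \circ q^{*}$. The core observation is that the identifications
\[
p_{*}\calO_{\fF} \cong \Ell_{\GL_{v}\times\Gm}(\fM(v,w)), \qquad q_{*}\calO_{\fF} \cong \Ell_{\GL_{v'}\times\Gm}(\fM^{-}(v',w))
\]
exhibit $\fF$ as the relative spectrum, over $E^{(v)}\times E^{(w)}\times E$ and over $E^{(v')}\times E^{(w)}\times E$ respectively, of the two sheaves of algebras that define $\calE_{w}$ and $\calE_{w}^{-}$.

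The first step is to make the above precise: since $p$ and $q$ are affine morphisms (being structural morphisms of relative Specs), the pushforward $p_{*}$ induces an equivalence between quasi-coherent sheaves on $\fF$ and quasi-coherent sheaves of $p_{*}\calO_{\fF}$-modules on $E^{(v)}\times E^{(w)}\times E$, which is by definition the $v$-component of $\calE_{w}$. The analogous statement for $q$ identifies $\QCoh(\fF)$ with the $v'$-component of $\calE_{w}^{-}$ where $v' = w_{0}(v)$. Composing the two equivalences (and their inverses) gives the desired functors.

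Next, I must verify that the equivalence is fibered over $\calD_{w}$. This amounts to checking that the functor is $E^{(w)}\times E$-linear, which holds because both $p$ and $q$ are morphisms over $E^{(w)}\times E$; the only factors being exchanged by $w_{0}$ are the $E^{(v)}$ and $E^{(v')}$ directions, which lie above the base. Consequently, the composition with the forgetful functor landing in $\calD_{w}$ is unchanged.

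The main technical point I expect to navigate is not the equivalence itself, which is formal once the relative-Spec descriptions are in place, but rather the compatibility of supports: an object of $\calE_{w}$ has support inside the closed substack $\underline{\Spec}\,\Ell_{\GL_{w}\times\Gm}(\fM(v,w))$, and one must verify that applying $q_{*} p^{*}$ lands in the analogous closed substack for $\fM^{-}(v',w)$. This reduces to showing that $\fF$ maps surjectively (in the appropriate scheme-theoretic sense) to both substacks, which is ultimately a consequence of the principal-bundle structure of the correspondence $\fM(v,w) \leftarrow F \to \fM^{-}(v',w)$ established in \cite{Nak03, Lus00, Maff}. Granted this, the lemma is an immediate consequence of the standard equivalence $\QCoh(\fF) \simeq \mathrm{Mod}(p_{*}\calO_{\fF})$ for affine morphisms.
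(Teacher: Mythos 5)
Your proposal is correct and follows essentially the same route as the paper: the paper's proof is exactly the chain $\Mod\,\Ell_{\GL_w\times\Gm}(\fM(v,w))\cong\Coh(\fF)\cong\Mod\,\Ell_{\GL_w\times\Gm}(\fM^-(v',w))$, i.e.\ identifying both sides with sheaves on the relative Spec $\fF$ via the affine morphisms $p$ and $q$. Your additional checks (linearity over the $E^{(w)}\times E$ base and the support condition) are details the paper leaves implicit, and they are handled correctly.
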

\begin{proof}
For each $v\in\bbN^I$, let $\calE_{v, w}$ be the category of coherent sheaves of modules over $\Ell_{\GL_w\times\Gm}(\fM(v,w))$; similarly for $\calE^-_{v, w}$.
The statement follows from the following sequence of isomorphisms
\begin{eqnarray*}
\calE_{v, w}&=&\Mod \Ell_{\GL_w\times\Gm}(\fM(v,w))\\
&\cong& \Coh\fF\\
&\cong& \Mod \Ell_{\GL_w\times\Gm}(\fM^-(v',w))=\calE_{v', w}.
\end{eqnarray*}
\end{proof}

\begin{lemma}\label{lem:left_right_tensor}
\begin{enumerate}
\item There are well-defined functors $\calP\otimes^+-:\calE_w^-\to \calE_w^-$ and $-\otimes^-\calP:\calE_w\to \calE_w$, which are compatible with the $\bbN^I$-gradings.
\item The equivalence $w_0$ from Lemma~\ref{lem:equivalence_w0} intertwines the two functors above. 
\end{enumerate}
\end{lemma}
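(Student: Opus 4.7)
The plan is to unpack what ``support'' means for the two subcategories and then use the module structures already constructed in Propositions~\ref{lem:M_PMod} and \ref{prop:left_act} to verify that the tensor functors preserve these support conditions. The second part will follow by showing that the geometric correspondence $F$ realizing $w_0$ transports the right $\calP$-action on $\Ell_{\GL_w\times\Gm}(\fM(v,w))$ to the left $\calP$-action on $\Ell_{\GL_w\times\Gm}(\fM^-(v',w))$.

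For (1), I would first observe that by Proposition~\ref{prop:left_act} the collection $\{\Ell_{\GL_w\times\Gm}(\fM^-(v',w))\}_{v'\in\bbN^I}$ is an algebra--module in $\calD_w$ over the algebra object $\calP$ under $\otimes^+$; in particular, the structure map
\[
\calP\otimes^+\Ell_{\GL_w\times\Gm}(\fM^-(v',w))\to \bigoplus_{v''\in\bbN^I}\Ell_{\GL_w\times\Gm}(\fM^-(v'',w))
\]
is a morphism of $\Ell_{\GL_w\times\Gm}(\fM^-(\bullet,w))$-modules in $\calD_w$, shifted by the $\bbN^I$-grading coming from the $v$-component of $\calP$. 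Hence for any sheaf $\calV\in\calE_w^-$, which by definition is supported on (equivalently, is a module over) $\Ell_{\GL_w\times\Gm}(\fM^-(\bullet,w))$, the object $\calP\otimes^+\calV$ inherits a module structure over $\Ell_{\GL_w\times\Gm}(\fM^-(\bullet,w))$, so it lies in $\calE_w^-$; the shift by $v$ in the $\bbN^I$-grading is built into the definition of $\otimes^+$ in \S\ref{subsec:catD}. The argument for $-\otimes^-\calP:\calE_w\to \calE_w$ is identical, using Proposition~\ref{lem:M_PMod}.

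For (2), the key is to extend the correspondence $\fM(v,w)\leftarrow F\rightarrow \fM^-(v',w)$ of \S\ref{subsec:negative_chamber} to the Hecke/Lagrangian correspondences that define the $\calP$-action. Concretely, for any $v_2\in\bbN^I$ and putting $v_1:=v$, $v_1':=v'=w_0(v_1)$, $v_1+v_2=\widetilde{v}$, $v_1'+v_2=\widetilde{v}'$ (noting $w_0(\widetilde v)=\widetilde v'$ since $w_0$ is a lattice involution respecting the simple-root decomposition in the relevant way), the principal $\GL_v$- and $\GL_{v'}$-bundles relating $\fM$ and $\fM^-$ extend to a correspondence $\widetilde F$ between the Hecke loci $Z_{G}^{s}$ used in the proof of Proposition~\ref{lem:M_PMod} for $(v_1,v_2,w)$ and those used in the proof of Proposition~\ref{prop:left_act} for $(v_2,v_1',w)$. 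I would then check that $\widetilde F$ is compatible with the projections $\overline\phi,\overline\psi$ in diagram~\eqref{corr for action} and its left-module analogue, so that pullback--pushforward through $\widetilde F$ identifies $(-\otimes^-\calP)$ on $\calE_w$ with $(\calP\otimes^+-)$ on $\calE_w^-$ under the equivalence $w_0$ of Lemma~\ref{lem:equivalence_w0}. The identification of the line bundle twists $\calL_{v_1,v_2}$ on both sides is automatic because $\calL_{v_1,v_2}$ is defined on $E^{(v_1)}\times E^{(v_2)}$ and depends only on the $G_{v_2}$-factor together with the symmetric functions on $E^{(v)}$, which are precisely what is preserved by the principal bundle quotient.

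The main obstacle will be the geometric comparison in step (2): checking that the Hecke correspondence intertwines with the bundle $F$ requires identifying how the subspace condition $x(V_1)\subset V_1$ in $\calV\subset\Rep(Q^\heartsuit,\widetilde v,w)$ transforms under the reflection functor/bundle used to build $F$. For quivers of finite type this is standard via Maffei's geometric reflection functors \cite{Maff}, but the fact that the corresponding normal bundle contributions match on both sides of the correspondence (so that $\Theta(\iota)\otimes\Theta(\psi)$ gives the same line bundle $\calL_{v_1,v_2}$ under both module structures) is what must be verified. Once this geometric matching is in place, the intertwining property of $w_0$ is a formal consequence of base change for $\overline\phi^{\sharp}$ and $\overline\psi_*$ through the principal bundle quotients defining $F$.
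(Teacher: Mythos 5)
Your part (1) is essentially the paper's own argument: the module structure maps of Propositions~\ref{lem:M_PMod} and \ref{prop:left_act} induce, on each graded piece $\bbS_*(\calP_{v_1}\boxtimes\calV_{v_2}\otimes\calL_{v_1,v_2})$, a module structure over $\calM^{\pm}_{v_1+v_2,w}$, which is exactly the support condition defining $\calE_w^{\mp}$; the grading compatibility is built into the definition of $\otimes^{\pm}$. No issues there.

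Part (2) is where your proposal goes astray. The paper's proof of (2) is literally ``clear by construction'': all that is being claimed (and all that is used afterwards, in defining $A_r(\calP):=w_0(w_0(-)\otimes^-\calP)$) is that conjugating either functor by the equivalence $w_0$ of Lemma~\ref{lem:equivalence_w0} yields a well-defined functor on the other category, compatibly with how $w_0$ relabels the $\bbN^I$-grading. You instead set out to prove a genuine natural isomorphism $w_0\circ(-\otimes^-\calP)\circ w_0^{-1}\cong \calP\otimes^+-$, and the bookkeeping underlying that attempt is wrong: you assert $w_0(v_1+v_2)=w_0(v_1)+v_2$, but $w_0$ acts on dimension vectors as an \emph{affine} involution $v\mapsto c(w)-Av$ (in the $\fs\fl_2$ case $v\mapsto w-v$), so $w_0(v_1+v_2)=w_0(v_1)-Av_2\neq w_0(v_1)+v_2$. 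Concretely, the $w_0$-conjugate of the right action \emph{lowers} the weight by $v_2$ while the native left action on $\calE_w^-$ \emph{raises} it; the paper points this out immediately after the lemma ($A_l(\calP)$ raises weights, $A_r(\calP)$ lowers them), so the two functors you are trying to identify are not isomorphic. The geometric machinery you invoke (extending $F$ to the Hecke loci, Maffei's reflection functors, matching the $\Theta(\iota)\otimes\Theta(\psi)$ twists) is therefore aimed at the wrong target, and in any case you leave the crucial compatibility as something that ``must be verified.'' For the statement as actually used, none of this is needed: once (1) is in place, (2) is formal transport of structure through an equivalence of categories.
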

\begin{proof}
Let $\calM_{v, w}:=\Ell_{G_w\times \Gm}(\fM(v, w))$. For any object $\calV\in \calE_{w}$, by restricting to the $v_2$-component, we have $\calV_{v_2}$ is a $\calM_{v_2,w}$-module. Let 
$
(\calP\otimes^+\calV)_v:=\bigoplus_{v_1+v_2=v} \mathbb{S}_*( \calP_{v_1}\boxtimes\calV_{v_2}\otimes\calL_{v_1,v_2} ). 
$
The fact that $\calP\otimes^+\calV$ lies in $\calE_w$ amounts to the $\calM_{v_1+v_2,w}$-module structure on $(\calP\otimes^+\calV)_{v_1+v_2}$, which in turn is induced from the map of sheaves 
\[
 \mathbb{S}_*(\calP_{v_1}\boxtimes\calM_{v_2,w}\otimes\calL_{v_1,v_2})\to \calM_{v_1+v_2,w}
\] on $E^{(v_1+v_2)}\times E^{(w)}$ coming from Proposition~\ref{prop:left_act}. 
The statement for $-\otimes^-\calP$ is proved similarly. 

The assertion (2) is clear by construction.
\end{proof}

Let $\sEnd(\calE_w)$ be the monoidal category of endofunctors of the abelian category $\calE_w$. 
The monoidal structure is given by composition of functors. 
By Lemma \ref{lem:left_right_tensor}, we have two objects $A_l(\calP):=\calP\otimes^+-$ and $A_r(\calP):=\omega_0(\omega_0(-)\otimes^-\calP)$ in the category $\sEnd(\calE_w)$.
Explicitly, for each $v_1$ and $v_2\in \bbN^I$, $w_0((w_0\calF_{v_1,w})\otimes^-\calP_{v_2})$ is a sheaf on $E^{(v_1+w_0v_2)}\times_SE^{(w)}$. Note that $w_0v_2$ is negative, hence the action $A_r(\calP)$ on $\calE_w$ decreases the weights. In particular, when $v_1+w_0v_2$ is not positive, then  the $v_1+w_0v_2$-component of $w_0((w_0\calF)\otimes^-\calG)$ is zero.

\begin{remark}
\begin{enumerate}
\item We expect there is a braided tensor category $\calE$, which contains $\calE_w$. The objects of $\calE$ are sheaves on $\coprod_{v\in \bbN^I}\coprod_{w\in \bbN^I} E^{(v)}\times E^{(w)}$, and we impose an equivalence relation between sheaves on $E^{(v)}\times E^{(w)}$  and $E^{(v')}\times E^{(w')}$  for different $(v,w)$ and $(v',w')$ as long as they have the same weight $\mu$. 
\item We further expect the category $\calE$ in (1) would also make sense even when $v$ lies in $\mathbb{Z}^I$. There should also be a (limit of) subcategory $\calE^{\infty}\subset \calE$. The objects of $\calE^{\infty}$ are concentrated on $E^{(v)}\times E^{\infty \rho}$, where $v\in \bbZ^I$, and $\rho\in (\mathbb{Z}_{>0})^I$. $\calE^{\infty}$ acts faithfully on $\calE$ via the tensor structure, therefore, one has an embedding $\calE^{\infty}\subset \sEnd(\calE_w)$. The braided tensor category $A_l(\calC)$ (resp. $A_r(\calC)$) coincides with $(\calE^{\infty})^+$ (resp. $(\calE^{\infty})^-$), whose objects are concentrated on $E^{(v)}\times E^{\infty \rho}$ for $v\in (\mathbb{Z}_{>0})^I$ (resp. $E^{(v)}\times E^{\infty \rho}$ for  $v \in  (\mathbb{Z}_{<0})^I$). 
 However, for the moment being, this is beyond the scope of the present paper.
\item  We will construct the Drinfeld double of $\SH^{\sph}$ via a pairing between $\SH^{\sph}$ and $\SH^{\sph,\coop}$. In order to do so, we need to view $\SH^{\sph}$ and $\SH^{\sph,\coop}$ as bialgebra objects in the same braided tensor category, and at the same time consider the dimension vectors on $\SH^{\sph,\coop}$ as negative root vectors. This would be true if (1) and (2) above were achieved.   In \S~\ref{subsec:DrinfeldDouble} we get around this issue by taking the image of $\SH^{\sph}$  in $\sEnd(\calE_w)$ via $A_l$, and the image of $\SH^{\sph,\coop}$ in $\sEnd(\calE_w)$ via $A_r$. 
\end{enumerate}
\end{remark}

\subsection{Drinfeld double of the Borel subalgebra}\label{subsec:DrinfeldDouble}
In this section, using a similar construction as in \cite{YZ2}, we construct the elliptic quantum group as the Drinfeld double of $\SH^{\sph}$. For this purpose, we need to construct a non-degenerate bialgebra pairing on the ad\`ele version of $\SH^{\sph}$. 

\Omit{We now recall the ad\`ele version of the shuffle algebra, constructed for 1-dimensional affine algebraic groups in \cite[\S~3.1]{YZ2}.} We fix a  non-vanishing,  translation-invariant section $\omega$ of $T^* E$. 
Denote by $F$ the field of rational functions of $ E$, $O_z$ the local ring of $ E$ at a closed point $z\in  E$, and $K_z$ the  local field. 
Let $\bbA$ be the restricted product $\prod_{z\in  E}'K_z$, i.e., an element in $\bbA$ is a collection of rational functions $\{r_z\}_{z\in  E}$, so that $r_z\in\calO_z$ except for finitely many $z$. 
We will call $\bbA$ the ring of ad\`eles.

More generally, for any $n\in \bbN$, and any closed point $(z_1,\dots,z_n)\in E^n$, consider the local ring $\calO_{(z_1,\dots,z_n)}$. Let $K_{(z_1,\dots,z_n)}$ be the localization of $\calO_{(z_1,\dots,z_n)}$ at the divisor $D$ consisting of points in $E^n$ whose $i$-th coordinate is equal to $z_i\in E$ for some $i\in[1,n]$. Let $\omega^n=\wedge_{i=1}^n\omega_i$ be the $n$-form on $E^n$ induced from $\omega$. Recall that $\Res_{(z_1,\dots,z_n)}f\cdot \omega^n$ is well-defined for any $f\in K_{(z_1,\dots,z_n)}$. Furthermore, for any $\sigma\in\fS_n$, we have
\begin{equation}\label{eqn:res_inv}
\Res_{(z_1,\dots,z_n)}f\cdot \omega=\Res_{\sigma(z_1,\dots,z_n)}(\sigma^*f)\cdot\omega,
\end{equation}
(see, e.g., \cite[Lemma~3.1]{YZ2}).

Define $\bbA^n(U)$ to be the restricted product of $K_z$ over all $z\in U\subseteq E^n$. Clearly this is a sheaf of $\calO_{E^n}$-modules on $E^n$. Let $\bbA^{(n)}$ be $(\bbS_*\bbA^n)^{\fS_n}$, which is a sheaf of $\calO_{E^{(n)}}$-modules on $E^{(n)}$, where $\bbS:E^n\to E^{(n)}$ is the natural projection. Similarly, for $v=(v^i)_{i\in I}\in\bbN^I$, we have $\bbA^{(v)}:=\prod_{i\in I} \bbA^{(v^i)}$ on $E^{(v)}$. 
We consider a braided monoidal category $\calC_\bbA$, where objects are sheaves of $\calO_{\calH_{E\times I}}$-modules, not necessarily quasi-coherent, endowed with the same monoidal structure as in $\calC$. 
We define the  ad\`ele version of the shuffle algebra to be $\SH_{\bbA}:=(\SH_{\bbA,n})_{v\in\bbN^I}$, where $\SH_{\bbA,n}:=\bbA^{(v)}$,  which is an object in $\calC_\bbA$. The morphism $\star$ \eqref{shuffle formula} makes $\SH_{\bbA}$ an algebra object, and the rational morphism $\Delta$ \eqref{eq:coprod} is a coproduct with poles. We also have the spherical subalgebra $\SH_{\bbA}^{\sph}$ of $\SH_{\bbA}$, generated by $\SH_{\bbA,e_k}$ for $k\in I$, which is a bialgebra object in $\calC_\bbA$.

Similar to \S~\ref{subsec:negative_chamber}, we have the two categories $\calD_{w,\bbA}$ and $\calE_{w,\bbA}$, for $w\in \N^I$, as well as the two algebra objects
$\SH_{\bbA}$ and $\SH^{\coop}_{\bbA}$ in $\sEnd(\calE_{w,\bbA})$.
Let $\iota: \SH_{\bbA,v}^{\sph}\inj\SH_{\bbA,v}$ be the natural embedding. For any local section $f$ of $ \SH_{\bbA,v}$ which lies in the image of $\SH_{\bbA,v}^{\sph}$, we will denote the corresponding section of $\SH_{\bbA,v}^{\sph}$ by $\iota^{-1}f$.
\footnote{
This $\iota^{-1}$ plays the same role as the factor $\frac{1}{\fac(z_A)}$ in \cite{YZ2}. }

We now construct  the Drinfeld double of $\SH^{\sph}_{\bbA}$ via defining a bialgebra pairing between  $A_l(\SH_{\bbA}^{\sph})$ and $A_r(\SH_{\bbA}^{\sph})^{\coop}$ in $\sEnd(\calE_{w,\bbA})$. Set
\[
D(\SH^{\sph}_{\bbA}):=A_l(\SH_{\bbA}^{\sph})\otimes_{\sEnd(\calE_{w,\bbA})}A_r(\SH_{\bbA}^{\sph})^{\coop}. \]
There is a unique algebra structure on $D(\SH^{\sph}_{\bbA})$, such that $A_l(\SH_{\bbA}^{\sph})$ and 
$A_r(\SH_{\bbA}^{\sph})^{\coop}$ are two subalgebras, and the commutation relation \cite[(3.3)]{Laug} (spelled out in the present setting as \eqref{eqn:double}). 

Recall that a bialgebra pairing is a morphism 
\[
(-,-): A_l(\SH_{\bbA}^{\sph})\otimes_{\sEnd(\calE_{w,\bbA})}A_r(\SH_{\bbA}^{\sph})^{\coop}\to \epsilon.
\] 
By definition of $\epsilon$ from \S~\ref{subsec:cat C},  the pairing between $\SH_{v,\bbA}^{\sph}$ and $(\SH_{v',\bbA}^{\sph})^{\coop}$ is zero if $v\neq v'$. 
On $(\SH_{\bbA}^{\sph })_v\otimes (\SH_{\bbA}^{\sph })_v$,
we define the pairing $(\cdot, \cdot)$ as 
\[
(f, g):= \sum_{z\in  E}\Res_{z } \Big( \frac{\iota^{-1}f(z_A)\cdot \iota^{-1}g(-z_A)}{ { |A|!} } \omega\Big), 
\] 
for any local sections $f, g$ of $ \SH_{\bbA,v}$ which lie in the image of $\SH_{\bbA,v}^{\sph}$, where $-z_A$ is the group inverse of $E$ and $|A|!=\prod_{i\in I} |A^{i}|!$.

\begin{theorem}
\label{thm:pairing}
The pairing $(\cdot,\cdot )$ between $\SH^{\sph}_{\bbA}$ and $(\SH^{\sph}_{\bbA})^{coop}$ is a  bialgebra pairing, which is non-degenerate on both factors. 
\end{theorem}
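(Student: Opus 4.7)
The strategy parallels the construction in \cite[\S 3]{YZ2}, adapted to the elliptic setting. The proof breaks into two essentially independent parts: verifying the bialgebra pairing axioms, and proving non-degeneracy. Throughout we will use the translation-invariance of $\omega$ and the $\fS_n$-symmetry of the residue \eqref{eqn:res_inv}.

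First, I will verify that $(f\star g, h)=(f\otimes g,\Delta h)$ and the analogous identity with the roles of $\star$ and $\Delta$ reversed. For the first identity, using the shuffle formula \eqref{shuffle formula}, $(f\star g)(z)$ contains the product $f(z_{[1,v_1]})\cdot g(z_{[v_1+1,v]})\cdot \fac(z_{[1,v_1]}|z_{[v_1+1,v]})$ symmetrized over $\Sh(v_1,v_2)$. Pairing with $h$ produces a residue in which the $\fac$-factor appears in the numerator. On the other side, $\Delta h$ is given by \eqref{eq:coprod} and contains $\fac(z_{[v_1+1,v]}|z_{[1,v_1]})$ in the denominator; combined with the sign change $z\mapsto -z$ inherent to the pairing with the opposite comultiplication, the two $\fac$ factors combine into the braiding ratio $\widehat{\Phi}$. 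The symmetrization sum over $\Sh(v_1,v_2)$ in $\star$ is absorbed by \eqref{eqn:res_inv} together with the fact that the residue is taken over the full symmetric product. The matching signs $(-1)^{(v_2,\overline Cv_1)}$ appearing in both \eqref{shuffle formula} and \eqref{eq:coprod} cancel. This reduces the verification to a local residue identity at each $z\in E$, which holds by a direct expansion. The second identity is obtained by the same computation with the roles of $f\otimes g$ and $h$ exchanged, exploiting the odd symmetry $\omega\mapsto -\omega$ under $z\mapsto -z$.

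Second, I will establish non-degeneracy. The classical residue pairing on the ad\`ele ring $\bbA$ identifies $\bbA/(\bbA(D)+F)$ with the dual of $H^0(E,\calO(D))$ via $\omega$, by Serre duality on $E$. Correspondingly, for each $v\in\N^I$ the tensor pairing $\SH_{\bbA,v}\otimes\SH_{\bbA,v}\to k$ given by the total residue is non-degenerate. To descend to the spherical subalgebra, I will exhibit a triangular structure: filter $\SH^{\sph}_{\bbA,v}$ by the order of poles along the colored diagonal divisors $\Delta_{ij}$ of \S\ref{subsec:LocalLineBundle} (equivalently, by the order of vanishing of $\fac$). With respect to this filtration, the pairing has block-triangular shape, with diagonal blocks given by the ad\`ele residue pairing tensored with the classical non-degenerate form coming from the theta-function denominators. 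Non-degeneracy of each diagonal block then implies non-degeneracy of the whole, on both factors.

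The main obstacle I anticipate is the second step. One must verify that the image of $\SH^{\sph}_{\bbA}\hookrightarrow \SH_{\bbA}$ is large enough to pair non-trivially against all elements in the opposite algebra, i.e., that the factor $\iota^{-1}$ in the pairing does not destroy non-degeneracy. Concretely, one needs to rule out the existence of rational sections $f$ of $\SH^{\sph}_{\bbA,v}$ whose image in $\SH_{\bbA,v}$ is orthogonal to every element of $\SH^{\sph,\coop}_{\bbA,v}$. This should follow from the fact that $\SH^{\sph}$ is generated by $\calO_{E^{e_k}}$ together with induction on $|v|$ using compatibility of the pairing with $\star$ and $\Delta$ already established in step one, reducing to the rank-one case where the residue pairing on $\calO_E$ is manifestly non-degenerate. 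The verification that the filtration by pole order behaves compatibly with $\star$ and $\Delta$ is expected to be routine, but bookkeeping the signs, shuffles, and the $\iota^{-1}$ operation will occupy most of the work.
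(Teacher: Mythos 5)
Your first part --- the verification that $(f_1\star f_2,P)=(f_1\otimes f_2,\Phi_{A_o,B_o}\Delta P)$ by a residue computation, using the $\fS_n$-invariance \eqref{eqn:res_inv} to collapse the shuffle sum to the standard partition $(A_o,B_o)$ and matching the $\fac$-factors under $z\mapsto -z$ --- is essentially the paper's own argument. One detail you gloss over: the identity $\fac(-z_{A_o}|-z_{B_o})=\fac(z_{B_o}|z_{A_o})$, which is what makes the two $\fac$-factors recombine into the braiding, is \emph{not} automatic; it requires the specialization of Remark~\ref{rmk:weights}(2) (i.e.\ $t_1=t_2=\hbar/2$ with the prescribed weights $m_h,m_{h^*}$), and the paper invokes this explicitly. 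You should also record, as the paper does, that the restriction of the multiplication map to $\SH^{\sph}_{v_1}\boxtimes\SH^{\sph}_{v_2}$ factors through $\bbS^*\SH^{\sph}_{v_1+v_2}$, which is what makes the insertion of $\iota^{-1}$ in the pairing consistent with the bialgebra axiom.

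On non-degeneracy your proposal goes beyond what the paper writes (the paper only proves the bialgebra identity and defers the rest to \cite{YZ2}), but as it stands it has a genuine gap. The claim that the pairing is block-triangular for the filtration by pole order along the diagonals $\Delta_{ij}$, with non-degenerate diagonal blocks, is asserted without any argument, and it is exactly the hard point: the residue pairing on all of $\SH_{\bbA,v}$ being perfect (Serre duality) does not imply that its restriction to the subspace $\iota(\SH^{\sph}_{\bbA,v})$ --- which is far from all of $\SH_{\bbA,v}$ --- remains non-degenerate. Likewise, the proposed induction on $|v|$ using compatibility with $\star$ and $\Delta$ only lets you \emph{compute} $(f_1\star f_2,g)$ from $\Delta g$; to conclude non-degeneracy you would additionally need that $\Delta$ separates points of $\SH^{\sph,\coop}_{\bbA,v}$, or a PBW-type ordered-monomial basis with respect to which the Gram matrix is visibly triangular with invertible diagonal. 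Neither is supplied. To make this step honest you would either have to carry out such a Gram-matrix computation in the elliptic setting or cite the corresponding statement from \cite{YZ2} and explain why it transports to the adelic elliptic situation (where, unlike the rational case, the theta-function quasi-periodicities constrain which sections exist).
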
 
For the meaning of bialgebra pairing, see e.g., \cite[\S1.7, Figure (1.1)]{Laug}.
\begin{proof}
The theorem follows essentially from the same proof as in \cite{YZ2}. For completeness, we include the proof here.

Let $v_1, v_2\in \N^I$ be two dimension vectors with $v=v_1+v_2$. For  $f_1\in \SH_{\bbA, v_1}, f_2\in \SH_{\bbA, v_2}, P\in \SH_{\bbA, v}$,
 we need to show that
\begin{equation}\label{eq:proof bialgebra}
\sum_{(A, B)\in\bfP(v_1,v_2)}\Big(f_{1}(z_A) \cdot f_2(z_B) \cdot \fac(z_A|z_B), P\Big)
=\Big(f_1(z_{A_o})\otimes f_2(z_{B_o}),  \Phi_{A_o,B_o} \Delta P(z)\Big),
\end{equation}
where $(A_o,B_o)=([1,v_1],[v_1+1,v])$ is the standard element in $\bfP(v_1,v_2)$. 
Using \eqref{eqn:res_inv}, the left hand side of \eqref{eq:proof bialgebra}  is the same as
\begin{align*}
&\sum_{z\in \overline{\bbG}}\Res_{z}\iota_{A_o\cup B_o}^{-1}\sum_{\{(A, B)\}}\Big(\frac{f_{1}(z_A) \cdot f_2(z_B) \cdot \fac(z_A|z_B)\cdot }{  |A\cup B|! }\Big)\iota_{A_o\cup B_o}^{-1}P(-z_A\otimes -z_B) w_{A\cup B}\\
=&
\sum_{z\in \overline{\bbG}}\Res_{z}\iota_{A_o\cup B_o}^{-1}\Big(\frac{f_{1}(z_{A_o}) \cdot f_2(z_{B_o}) \cdot \fac(z_{A_o}|z_{B_o})\cdot P(-z_{A_o}, -z_{B_o})}{ |{A_o}|! |{B_o}|! )}\Big) w_{A_o\cup B_o}
\end{align*}
The right hand side of \eqref{eq:proof bialgebra} is the same as
\begin{align*}
&\Big(f_1(z_{A_o})\otimes f_2(z_{B_o}),  \frac{ P(z_{A_o}\otimes z_{B_o})  }{\fac(z_{A_o}| z_{B_o})}\Big)\\
=&\sum_{z\in \overline{\bbG}}\Res_{z} 
\iota_{A_o}^{-1}\frac{f_{1}(z_{A_o})}{ |{A_o}|!  } \cdot  \iota_{B_o}^{-1}\frac{f_{2}(z_{B_o})}{ |{B_o}|!   } \cdot (\iota_{B_o}^{-1}\otimes\iota_{A_o}^{-1}\frac{ P(-z_{A_o}\otimes -z_{B_o})}{ \fac(-z_{A_o}| -z_{B_o}) } )
w_{A_o}w_{B_o}.
\end{align*}
Note that $\fac(-z_{A_o}| -z_{B_o})=\fac(z_{B_o}| z_{A_o})$ under assumption of Remark~\ref{rmk:weights}(2).
\begin{align*}
&\sum_{z\in \overline{\bbG}}\Res_{z} 
\iota_{A_o}^{-1}\frac{f_{1}(z_{A_o})}{ |{A_o}|!  } \cdot  \iota_{B_o}^{-1}\frac{f_{2}(z_{B_o})}{ |{B_o}|!   } \cdot (\iota_{B_o}^{-1}\otimes\iota_{A_o}^{-1}\frac{ P(-z_{A_o}\otimes -z_{B_o})}{ \fac(-z_{A_o}| -z_{B_o}) } )
w_{A_o}w_{B_o}\\
=&\sum_{z\in \overline{\bbG}}\Res_{z}\iota_{A_o\cup B_o}^{-1}\Big(\frac{f_{1}(z_{A_o}) \cdot f_2(z_{B_o}) \cdot \fac(z_{A_o}|z_{B_o})\cdot P(-z_{A_o}, -z_{B_o}) }{ |{A_o}|! |{B_o}|!   }\Big) w_{{A_o}\cup {B_o}}.\end{align*}
Here we have a morphism $\SH_{v_1}\boxtimes \SH_{v_2}\to \bbS^*\SH_{v_1+v_2}\otimes\calL^\vee_{v_1+v_2}$ of sheaves on $E^{A_o}\times E^{B_o}$.  
The subsheaf $\SH_{v_1}^{\sph}\boxtimes\SH_{v_2}^{\sph}\subset \SH_{v_1}\boxtimes \SH_{v_2}$ factors through $\bbS^*\SH_{v_1+v_2}^{\sph}$. 
\[
\xymatrix{
\SH_{v_1}\boxtimes \SH_{v_2}\ar[r]& \bbS^*\SH^{\sph}_{v_1+v_2}\otimes\calL^\vee_{v_1+v_2}\\
\SH_{v_1}^{\sph}\boxtimes\SH^{\sph}_{v_2}\ar@{^{(}->}[u]^{\iota_{v_1}
\otimes\iota_{v_2}}\ar[r]&\bbS^*\SH_{v_1+v_2}^{\sph}\ar@{^{(}->}[u]^{\iota_{v_1+v_2}}
}\]
Therefore, the equality \eqref{eq:proof bialgebra} holds. 
\end{proof}

This gives a bialgebra structure on $D(\SH^{\sph}_{\bbA})$. 

We define the {\it sheafified elliptic quantum group} $\Ell_{\hbar, \tau}(Q)$ to be the subalgebra of   $D(\SH^{\sph}_{\bbA})$, generated by $\SH_{e_k}\subseteq \SH_{\bbA, e_k}$ and $(\SH_{e_k})^{\coop}\subseteq (\SH_{\bbA,e_k})^{\coop}$ for $k$ varies in $I$.  
Note that $\Ell_{\hbar, \tau}(Q) \in \sEnd(\calE_{w,\bbA})$ lies in $\sEnd(\calD_w)$.

\begin{definition}
\begin{enumerate}
\item A representation of $\Ell_{\hbar, \tau}(Q)$ is an object in $\calE_w$ for some $w\in \N^I$, which is endowed with a structure of modules over $\Ell_{\hbar, \tau}(Q) \in \sEnd(\calE_w)$. The category of representations of the sheafified elliptic quantum group is denoted by $\Mod \Ell_{\hbar, \tau}(Q)$. 
\item The category of finite dimensional representations of the sheafified elliptic quantum $\Mod_f \Ell_{\hbar, \tau}(Q)$ is the full subcategory of $\Mod \Ell_{\hbar, \tau}(Q)$ consisting of objects that are supported on zero-dimensional subschemes as coherent sheaves on $\calH_{E\times I}\times_SE^{(w)}$.
\end{enumerate}
\end{definition}

\subsection{Action of the Drinfeld double on quiver varieties}
To give some examples of objects in $\Mod \Ell_{\hbar, \tau}(Q)$, we prove here that $\Ell_{G_w}(\mathfrak{M}(w))$ is an object in $\Mod \Ell_{\hbar, \tau}(Q)$ for any $w\in \bbN^I$.

Recall that there is a right action of $\calP$ on $\calM^-$:
 \[\calM^-_{v_1,w}\otimes^-\calP_{v_2}\to \calM^-_{v_1+v_2,w}.\] 
Using notations defined after Lemma~\ref{lem:left_right_tensor}, 
we get a morphism 
\[A_r(\calP_{w_0v_2})(\calM_{v_1,w})\to\calM_{v_1+w_0v_2,w}.\]
Note that $w_0v_2$ is a negative root vector. Therefore we have an action of $A_r(\calP)$ on $\calM_w$, which will be referred to as the action by lowering operators. 

\begin{theorem}\label{thm:DoubAct}
The Drinfeld double $D(\calP^{\sph})$ acts on $\Ell_{G_w}(\mathfrak{M}(w))$, for any framing $w\in \N^I$. 
\end{theorem}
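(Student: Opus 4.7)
The plan is to build up the action of $D(\calP^{\sph})$ on $\Ell_{G_w}(\mathfrak{M}(w))$ by first producing the two half-actions---positive and negative---separately, and then verifying the cross-commutation relation \eqref{eq: commut rel} that defines the multiplication on the Drinfeld double. Recall that $\Ell_{G_w}(\mathfrak{M}(w)) = \bigoplus_{v\in\bbN^I} \Ell_{G_w\times\Gm^2}(\mathfrak{M}(v,w))$ is naturally an object of $\calE_w$, so it suffices to realize the action inside $\sEnd(\calE_w)$ after adelic completion.

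For the negative half, I would invoke Proposition~\ref{lem:M_PMod}: the right action $\calM_{v_1,w}\otimes^-\calP_{v_2}\to \calM_{v_1+v_2,w}$ realizes $A_r(\calP)$ as an endofunctor of $\calE_w$; taking its coopposite and restricting to the spherical subobject gives an action of $A_r(\calP^{\sph})^{\coop}$. These operators lower the weight $w-\sum v_1^i\alpha_i$, hence correspond to the negative root vectors of the Drinfeld double. For the positive half, I would apply Proposition~\ref{prop:left_act} to $\mathfrak{M}^-(w)$ and transport the resulting left action through the equivalence $w_0:\calE_w\xrightarrow{\sim}\calE_w^-$ of Lemma~\ref{lem:equivalence_w0}; since $w_0$ reverses the chamber of root vectors, this produces a left action of $A_l(\calP^{\sph})$ that raises weights. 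Lemma~\ref{lem:left_right_tensor}(2) guarantees compatibility with the tensor structures on both sides, so each half-action is well defined at the sheafified level.

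The main work is then to verify the defining cross-relation
\[
\sum a^-_1 \star b^+_2 \cdot (a_2, b_1) \;=\; \sum b^+_1 \star a^-_2 \cdot (b_2, a_1)
\]
of $D(\calP^{\sph})$, where $(-,-)$ is the pairing from Theorem~\ref{thm:pairing}, as an identity of operators on $\Ell_{G_w}(\mathfrak{M}(w))$. Since $\calP^{\sph}$ is generated by $\{\calP_{e_k}\}_{k\in I}$ and since both sides of the relation are bialgebra-multiplicative, it is enough to check it for pairs of generators. Geometrically this reduces to computing the commutator of the raising and lowering Hecke correspondences $C_k^\pm\subset \mathfrak{M}(v,w)\times \mathfrak{M}(v\pm e_k,w)$. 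The convolution $C_k^+\star C_k^- - C_k^-\star C_k^+$ is a self-correspondence of $\mathfrak{M}(v,w)$ which, by an excess-intersection analysis of the transverse and non-transverse components, is supported on the diagonal; the diagonal contribution is the Euler class of the tautological bundle of rank $w_k-\sum_i c_{ki}v^i$ twisted by the natural $\Gm^2$-weights, which is exactly the theta-function appearing in the formula \eqref{eq:hat Phi} for $\widehat{\Phi}_{k,v}(u)$ and hence in the relation \eqref{EQ5}.

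The principal obstacle will be Step~3: matching the geometric commutator on quiver varieties with the algebraic pairing residue from \eqref{eq:proof bialgebra}. The subtlety is that the half-actions are only meromorphic as maps of coherent sheaves, so the relation must be tested at the adelic level inside $\sEnd(\calE_{w,\bbA})$. The key technical ingredient will be a fixed-point or Thomason-localization calculation that identifies the residue sum in the pairing with the sum over torus-fixed points of the excess-intersection loci, using the Lagrangian correspondence setup of \S~\ref{subsec: Lag corr} together with the theta-function identity of Lemma~\ref{gufang:Fact3} to cancel non-diagonal contributions. This refines the formal-group argument of \cite[Theorem~5.4]{YZ2} in the elliptic setting, where the line bundles $\calL_{v_1,v_2}^{\fac}$ of \S~\ref{subsec: line bundle} must be tracked carefully to produce the precise $g_\lambda(z)$-Chern classes on the Hecke correspondences, yielding both Nakajima-type compatibility and the desired commutation identity.
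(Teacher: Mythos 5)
Your proposal follows essentially the same route as the paper: the two half-actions are exactly those of Propositions~\ref{lem:M_PMod} and \ref{prop:left_act}, transported through the $w_0$-equivalence of Lemma~\ref{lem:equivalence_w0} so that one half raises and the other lowers the weight, and the entire content of the theorem is then reduced to checking the cross-relation \eqref{eqn:double} on the generators $\calP_{e_k}$. The only divergence is in how that check is executed: instead of a general excess-intersection analysis of $C_k^+\star C_k^--C_k^-\star C_k^+$, the paper invokes Nakajima's rank-one reduction to the $\fs\fl_2$ case and verifies \eqref{eqn:double} on $\Ell(T^*\Gr_v(w))$ by an explicit residue computation with the localized formulas \eqref{eq:x^+}--\eqref{eq:x^-}, which produces the same diagonal Cartan term $\Phi_k$ that you identify as the Euler class of the virtual bundle of rank $w_k-\sum_i c_{ki}v^i$.
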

\begin{proof}
Recall that it suffices to show that the pairing $(\cdot,\cdot)$ is compatible with the actions of $\SH^{\sph}$ and $\SH^{\sph,\coop}$ in the sense of, e.g., \cite[(3.3)]{Laug}.
By definition, it suffices to show that for any local section $f$, $e$, and $p$ of $\SH^{\sph}_{e_k}$, $\SH^{\sph,\coop}_{e_k}$, and $\calM_{v,w}$ respectively,  we have
\begin{equation}\label{eqn:double}
(f_\bullet p)_\bullet e+\angl{f,e}\Phi_{v,e_k}(p)=\Phi_{e_k,v}(p)\angl{f,e}+f_\bullet(p_\bullet e).
\end{equation}
By the same argument as in \cite[\S~11.3]{Nak01}, we only need to verify this in the case when $\fg_Q$ is $\fs\fl_2$. The latter is done in \S~\ref{subsec:sl2}.
\end{proof}

\section{Reconstruction of the Cartan action}

In this section, we provide explicit formulas of the action of the Cartan subalgebra of elliptic quantum group on the elliptic cohomology of quiver varieties. We also show the corresponding action of $D(\calP)$ coincides with the action of the Hecke operators of Nakajima. 

We also define and show the existence of Drinfeld polynomials for highest weight modules in the elliptic setting.

\subsection{Reconstruction of the Cartan action in $\calE_w$}\label{subsec:CartanModule}
Similar to \S~\ref{Cartan subalgebra}, objects in $\calE_w$ admit a natural action of the Cartan subalgebra $\SH^0$, which comes from the braiding switching the two actions $A_l(\calP)$
 and $A_r(\calP)$ on $\calE_w$. We spell out the formula for this braiding.
 
The element $w_0$ induces a bijection on the set of simple roots, which is denoted by $w_0(k)=k'$. For any $k\in I$,  recall here that $\calP_{e_k}\cong\calO_{e_k}$.
For $\calF\in \calE_w$ and $v\in \bbN^I$, on $E^{e_k}\times E^{(v)}\times E^{(w)}\times E$ we have $A_l(\calO_{e_k})\calF_v=\calO_{E^{e_k}}\boxtimes \calF_v\otimes \calL_{e_k,v}$. Similarly on $E^{e_{k'}}\times E^{(v')}\times E^{(w)}\times E$ we have $A_r(\calO_{e_{k'}})\calF_v=\calO_{e_{k'}}\boxtimes w_0(\calF_v)\otimes \calL_{e_{k'},v'}$.
Recall that $w_0$ is defined using the  diagram
\[
\xymatrix{
E^{(v)}\times E^{(w)}\times E   &\fF\ar[l]_(0.3)p\ar[r]^(0.3)q& E^{(v')}\times E^{(w)}\times E
.}
\]
We denote the coordinates of $E^v$ by 
$(z_i^{(j)})_{i\in I,j=1,\dots,v^i}$,  the coordinates of $E^{v'}$ by 
$(z_i'^{(j)})_{i\in I,j=1,\dots,v'^{i}}$, and the coordinates of $E^{w}$ by $(t_i^{(j)})_{i\in I,j=1,\dots,w^{i}}$.
Consider both $A_l(\calO_{e_k})\calF_v$ and $A_r(\calO_{e_{k'}})\calF_v$ as sheaves on $\fF$, we have sections
$\fac(z_{[1,v]}|z)$ of $\calL_{e_k,v} $ where $z$ is the coordinate of $E^{e_k}$, and the section $\fac(z'_{[1,v']}|z)$ of  $\calL_{e_{k'},v'}$ where $z$ is the coordinate of $E^{e_{k'}}$.
Pushing forward to $E^{e_k}\times E^{(v)}\times E^{(w)}\times E$, the natural section $\frac{\fac(z_{[1,v]}|z)}{w_0^{-1}\fac(z'_{[1,v']}|z)}$ of the line bundle $\calL_{e_k,v}^\vee\otimes w_0^*\calL_{w+w_0v,e_{k'}}$ gives a map \[A_l(\calO_{e_k})\calF_v\to A_r(\calO_{e_{k'}})\calF_v.\]
This is the braiding, denoted by $\Phi^-_{e_k,v}$. Similarly, we also have $\Phi^+_{v',e_{k'}}:A_r(\calO_{e_{k'}})\calF_v\to A_l(\calO_{e_k})\calF_v$. Similar to \S~\ref{subsec:rational sec}, we define $\Phi_k$ to be $\vartheta(\hbar)(\Phi^-_{e_k,v}-\Phi^+_{v',e_{k'}})$.

\begin{example}\label{ex:sl2_1}
We consider the Nakajima quiver variety of type $A_1$. The quiver $Q$ has only one vertex without any arrows. Therefore, the dimension vectors $v$ and $w$ are natural numbers. Let $\Gr_v(w)$ be the Grassmannian of $v$-dimensional subspaces in the $w$-dimensional vector space $W$. Then, the quiver variety $\mathfrak{M}(v, w)$ of type $A_1$ is the cotangent bundle $T^*\Gr_v(w)$ of the Grassmannian $\Gr_v(w)$. The general linear group $G_w=\GL_w$ acts on $T^*\Gr_v(w)$ naturally, and the torus $\mathbb{G}_m$ acts by scalar multiplication on the fibers of the cotangent bundle.
Its equivariant elliptic cohomology is the structure sheaf of $E^{(v)}\times E^{(w-v)}\times E$, which is the quotient of $E^w\times E$ by $\fS_v\times\fS_{w-v}$.
The coordinates of $E^{w}$ will be denoted by $t_1,\dots,t_w$. Then, $\fac(z_{[1,v]}|z)=\prod_{j=1}^{v}\frac{\vartheta(z-t_j-\hbar)}{\vartheta(z-t_j)}$.

In this case, $I$ has one single element, denoted by $k$.
Let the parameter of $E^{e_k}$ be denoted by $z$. 
For any $v<w$, we have $w_0(v)=w-v$.
Recall from \cite{Lus00} that the correspondence defining $w_0$ consists of short exact sequences \[0\to V'\to qW\to q^2V\to 0,\]
hence, $\fF$ from \S~\ref{subsec:negative_chamber} is the sub-variety of $E^{(v)}\times E^{(w)}\times E^{(w-v)}$ consisting of points such that $z_i=t_i$ for $i=1,\dots,v$ and $z_i'=t_{w-i}-\hbar$, which is abstractly isomorphic to $E^{(v)}\times E^{(w-v)}$.
In particular, we have $w_0^{-1}(\fac(z_{[1,w-v]}|z))=\fac((t_{v+1}+\hbar,\dots,t_{w}+\hbar)|z)$. Therefore, \begin{equation}\label{eqn:Phi_sl2}
\Phi_k=\frac{\prod_{i=1}^v\vartheta(z-t_i+\hbar)\prod_{i=v+1}^w\vartheta(z-t_i-\hbar)}{\prod_{i=1}^w\vartheta(z-t_i)}
\end{equation}

We also have the following geometric interpretation. 
Here $W$ is the trivial bundle  on $T^*\Gr_v(w)$ with the natural $\GL_w$-action,  and $V$ is the tautological subbundle which is $\GL_w$-equivariant. 
Then, formula \eqref{eqn:Phi_sl2} can be interpreted as $\lambda_z((q-q^{-1})W+(q^2-q^{-2})V)$, where $\lambda_z$ is the equivariant Chern polynomial defined in \S~\ref{subsec:char_class}. 
\end{example}
In  general, we also have the following.
\begin{prop}\label{prop:cartan_H}
On $\calE_w$ the operator $\Phi_k$ is also equal to
\[H_k=\lambda_z((q-q^{-1})W_k+(q^2-q^{-2})V_k+(q^{-1}-q)\sum_{l\neq k}[-c_{kl}]_qV_l).\]
\end{prop}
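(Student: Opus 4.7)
The plan is to match the braiding-theoretic definition of $\Phi_k$ from \S~\ref{subsec:CartanModule} with the Chern-polynomial expression $H_k$ by a direct computation of theta functions, reducing to Example~\ref{ex:sl2_1} block by block.

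First, I would unpack $\Phi^-_{e_k,v}$ as the rational section $\fac(z_{[1,v]}|z)/w_0^{-1}\fac(z'_{[1,v']}|z)$ on $E^{e_k}\times E^{(v)}\times E^{(w)}\times E$, where $z$ is the coordinate of $E^{e_k}$. Applying the explicit formulas \eqref{equ:fac1} and \eqref{equ:fac2} under the specialization $t_1=t_2=\hbar/2$ of Assumption~\ref{Assu:WeghtsGeneral}(2), the numerator $\fac(z_{[1,v]}|z)$ factors into a vertex-$k$ block from $\fac_1$ involving the Chern roots $z^{(k)}_j$ of $V_k$, together with one block per neighbor $l$ with $c_{kl}<0$ from $\fac_2$ involving the Chern roots $z^{(l)}_s$ of $V_l$; non-neighboring colors contribute trivially because $\fac_2$ is indexed by arrows incident to $k$.

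Second, I would analyze $w_0^{-1}\fac(z'_{[1,v']}|z)$ using the Lusztig--Maffei description of the Weyl group action on Nakajima quiver varieties from \cite{Lus00,Maff}. The simple reflection $s_k$ preserves $V_l$ for $l\neq k$ and sends $V_k$ to its reflected tautological bundle $V'_k$, which in the $\Gm$-equivariant Grothendieck group satisfies the virtual identity $V_k+V'_k=W_k+\sum_{l\neq k}(-c_{kl})V_l$ up to explicit $\hbar$-twists recorded by the Hecke correspondence $\fF$ of \S~\ref{subsec:negative_chamber}. This is the general avatar of the identification $z'^{(k)}_j=t_{w-j}-\hbar$ appearing in Example~\ref{ex:sl2_1}. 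Combined with the first step, the ratio defining $\Phi^-_{e_k,v}$ splits into three blocks involving the Chern roots of $W_k$, $V_k$, and the neighboring $V_l$ respectively.

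Third, I would expand $H_k$ via the additivity $\lambda_z(A+B)=\lambda_z(A)\,\lambda_z(B)$ together with the identity $(q^{-1}-q)[-c_{kl}]_q = q^{c_{kl}}-q^{-c_{kl}}$, so that each summand contributes a ratio of theta functions with arguments shifted by $\pm\hbar$, $\pm2\hbar$, or $\pm c_{kl}\hbar$. These shifts match block by block the expression obtained for $\Phi^-_{e_k,v}$ in the previous step. The second braiding $\Phi^+_{v',e_{k'}}$ is treated identically via the opposite reflection, and the normalized difference $\vartheta(\hbar)(\Phi^-_{e_k,v}-\Phi^+_{v',e_{k'}})$ yields $H_k$ on the nose.

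The main obstacle will be the second step: making the Lusztig--Maffei reflection precise in equivariant elliptic cohomology so that the Chern roots of $V'_k$ admit the claimed closed form in terms of those of $W_k$ and the $V_l$ for $l\neq k$, with the correct $\hbar$-twists. Once this translation is installed, the remainder is a matching of theta-function factors generalizing the explicit $\fs\fl_2$ computation of Example~\ref{ex:sl2_1}; no new identities beyond the additivity of $\lambda_z$ and the expansion of $[-c_{kl}]_q$ are required.
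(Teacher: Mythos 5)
Your route is genuinely different from the paper's: the paper does not attempt the general-rank theta-function computation at all, but instead invokes the rank-one reduction of Nakajima \cite[\S~11.3]{Nak01} to reduce the identity to the case $\fg_Q=\fs\fl_2$, where it is exactly the content of Example~\ref{ex:sl2_1} together with the explicit formulas of \S~\ref{subsec:sl2}. Your first and third steps are sound: the block decomposition of $\fac(z_{[1,v]}|z)$ into a $\fac_1$-block for $V_k$ and $\fac_2$-blocks for the neighbouring $V_l$, and the matching of $(q^{-1}-q)[-c_{kl}]_q=q^{c_{kl}}-q^{-c_{kl}}$ against the telescoping products coming from Assumption~\ref{Assu:WeghtsGeneral}(2), do work out.

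The gap is in your second step, and it is more serious than the ``main obstacle'' you describe. The braiding $\Phi^-_{e_k,v}$ of \S~\ref{subsec:CartanModule} is defined through the correspondence $\fM(v,w)\leftarrow\fF\rightarrow\fM^-(v',w)$ attached to the \emph{longest element} $w_0$, with $v'=w_0(v)$ and $k'=w_0(k)$; the denominator is $w_0^{-1}\fac(z'_{[1,v']}|z)$, which involves the Chern roots of $V'_{k'}$ \emph{and of all $V'_{l'}$ adjacent to $k'$} after transport along $\fF$. The virtual identity you propose, $V_k+V'_k=W_k+\sum_{l\neq k}(-c_{kl})V_l$, is the K-theoretic shadow of a single simple reflection $s_k$; it coincides with what is needed only in rank one, where $w_0=s_k$ (which is exactly why Example~\ref{ex:sl2_1} closes). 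In higher rank you would have to compose the Lusztig--Maffei reflections along a reduced word for $w_0$ and extract a closed formula, with the correct $q$-twists, for every tautological bundle appearing in the denominator --- and outside finite type $w_0$ does not even exist, so this cannot be the mechanism in the stated generality. The efficient repair is the one the paper uses: observe that both $\Phi_k$ and $H_k$ only see the vertex $k$ together with the effective framing $W_k\oplus\bigoplus_{l\neq k}[-c_{kl}]_qV_l$, and apply Nakajima's reduction to $\fs\fl_2$, after which your computation (now legitimately with $w_0=s_k$) is precisely the one already carried out in Example~\ref{ex:sl2_1}.
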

\begin{proof}
By the same argument as in \cite[\S~11.3]{Nak01}, we only need to verify this in the case when $\fg_Q$ is $\fs\fl_2$, which is proved in Example~\ref{ex:sl2_1} and \S~\ref{subsec:sl2}.
\end{proof}

In view of \eqref{eqn:double}, $\Phi_k$ is the formula of the Cartan-action on objects in $\calE_w$.

\begin{definition}
We say that $\calF$ is a highest weight module of highest weight $w$ if  on $E^{(w)}$ the component $\calF_{0,w}$ is a cyclic $\calO_{E^{w}}$-module, and $\calF_{v,w}$ is generated by $\calF_{0,w}$ and the $\calP$-action.
\end{definition}

For $w\in \bbN^I$, the coordinates of $E^{w}$ are denoted by $(t_i^{(j)})_{i\in I,j=1,\dots,w^i}$.
\begin{definition} 
For a highest weight module, for any $k\in I$, the action $\Phi_k$ on $\calF_{0,w}$ is by the section 
\[\frac{1}{\omega_0^{-1}\fac(z_{[1,w]}|z)}=\prod_{j=1}^{w^k}\frac{\vartheta(z-t_k^{(j)}-\hbar)}{\vartheta(z-t_k^{(j)})}.\]
This collection of rational functions on $E^{(w)}$ is the elliptic Drinfeld polynomials.
\end{definition}

\subsection{Recovery of the dynamical parameters}
Similar to \S~\ref{subsec:dyn_cartan}, we have a version of the category $\calD_w'$ involving the dynamical parameters, which is equivalent to $\calD_w$. For the convenience of the readers, we spell out the details here. 

We take the elliptic curve $E$ as in \S~\ref{subsec:universal elliptic curve}, notations $\calH_{E\times I}'$ and $E^{(w)'}$ are as in \S~\ref{subsec:universal elliptic curve}. On $E^{(v)'}\times_{E_{\lambda}} E^{(w)'}$, we have the line bundle $\bbL^{v,w}$ induced by the natural Poincar\'e line bundle on $E$. 
Note the difference between the projections $E^{(v)'}\to E_{\lambda}$ and $E^{(w)'}\to E_{\lambda}$, due to the difference between the two basis $\{\overline{\omega}_i\}$ and $\{\alpha_i\}$ as has been seen in \S~\ref{subsec:negative_chamber}.

By Proposition~\ref{prop:cartan_H} and the definition of the total Chern polynomial in \S~\ref{subsec:char_class}, $\Phi_k$ is a section of $\bbL_{k,\hbar}^{v,w}:=\Theta(k^{-1}\otimes ((q-q^{-1})W_k+(q^2-q^{-2})V_k+(q^{-1}-q)\sum_{l\neq k}[-c_{kl}]_qV_l))$.  
Hence, similar to  Lemma~\ref{lem:5.3}, we have 
\[\bbL_{k,\hbar}^{v,w}\cong \calL_{e_k,v}\calL_{v',e_k}^\vee\]
Here we identify the equivariant parameter of the $\Gm$ acting on $k$ in the definition of $\lambda_z$ with the natural parameter of $E^{e_k'}$.

Let $q:E^{e_k'}\times_{E_{\lambda}} E^{(v)'}\times_{E_\lambda}E^{(w)'}\times_{\calM_{1,1}}\mathfrak{E}\to E^{(v)'}\times_{E_\lambda}E^{(w)'}\times_{\calM_{1,1}}\mathfrak{E}$ be the subtraction map.
We now define a category $\calD'$. 
An object of $\calD'$ consists of a pair $(\mathcal{F},  \varphi_{\mathcal{F}})$, 
where 
\begin{itemize}
\item  $\calF$ is a coherent sheaf on $\calH_{E\times I}'\times_{E_{\lambda}} E^{(w)'}$, with certain support condition, and $\phi_{\calF}$ is 
an isomorphism
\[
\phi_{\calF}: w_0(w_0\calF_{v,w}\boxtimes\calO_{E^{e_{k'}}})\otimes\bbL_{k,\hbar}^{v,w}\to q^*\rho_k^{(v,w)*}\calF_v. 
\]
\item The morphism from $(\calF, \phi_{\calF})$ to $(\calG, \phi_{\calG})$ is a morphism of sheaves $f: \calF \to \calG$, such that, the following diagram commutes:
\[
\xymatrix{
 w_0(w_0\calF_{v,w}\boxtimes\calO_{E^{e_{k'}}})\otimes\bbL_{k,\hbar}^{v,w} \ar[d]\ar[r]^(0.75){\phi_{\calF}}&q^*\rho_k^{(v,w)*}\calF_{v,w}\ar[d]\\
   w_0(w_0\calG_{v,w}\boxtimes\calO_{E^{e_{k'}}})\otimes\bbL_{k,\hbar}^{v,w}\ar[r]^(0.75){\phi_{\calG}}&q^*\rho_k^{(v,w)*}\calG_{v,w}
}
\]
\end{itemize}
Similarly we have the category $\calE_w'$ consisting of objects in $\calD_w'$ with the required support condition. 

For any object $\calF$ in $\calE_w$, composing $\phi_k$ with the braidings $\Phi_k$, we get $\calO_{E^{e_k}}\boxtimes\calF_{v,w}\to \rho_k^{(v,w)*}\calF_{v,w}$; any morphism $f:\calF\to\calG$ in $\calE_w$ gives the commutative diagram
\[
\xymatrix{
 \calO_{E^{e_k}}\boxtimes\calF_{v,w} \ar[r]^(0.25){\Phi_{k} }\ar[d]_{f_v}& w_0(w_0\calF_{v,w}\boxtimes\calO_{E^{e_{k'}}})\otimes\calL_{e_k,v}\calL_{v',e_{k'}}^\vee \ar[d]\ar[r]^(0.75){\phi_{\calF}}&q^*\rho_k^{(v,w)*}\calF_{v,w}\ar[d]\\
 \calO_{E^{e_k}}\boxtimes\calG_{v,w}  \ar[r]^(0.25){\Phi_{k}}&   w_0(w_0\calG_{v,w}\boxtimes\calO_{E^{e_{k'}}})\otimes\calL_{e_k,v}\calL_{v',e_{k'}}^\vee\ar[r]^(0.75){\phi_{\calG}}&q^*\rho_k^{(v,w)*}\calG_{v,w}
}
\]

Similar to Lemmas~\ref{lem:trans_twist}, we have
\[q^*\rho^{(v,w)*}_k\bbL^{v,w}\cong (q^*\bbL^{v,w})\otimes\bbL_{k,\hbar}^{v,w}.\] Therefore, we have the following.
\begin{theorem}\label{thm:DynParam}
\begin{enumerate}
\item The category $\calD_w$ is equivalent to $\calD_w'$ as abelian categories. 
\item The equivalence $\calE_w \cong \calE_w'$ is compatible with the isomorphism $\calC\cong \calC'$ and their respective actions.
\item Under these equivalences above, $\Mod D(\SH^{\sph})\cong \Mod D(\SH^{\sph})'$.
\end{enumerate}
\end{theorem}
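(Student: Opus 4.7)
The plan is to imitate the argument of Proposition~\ref{prop:dyn_param}, with the dynamical parameters now attached to both the $v$- and $w$-indices, using the key identification $\bbL_{k,\hbar}^{v,w}\cong\calL_{e_k,v}\otimes\calL_{v',e_k}^\vee$ recorded just before the theorem.

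For part~(1), first observe that $E^{(v)'}\times_{E_\lambda}E^{(w)'}$ is a principal bundle over $\calH_{E\times I}\times_SE^{(w)}$ for the diagonal translation action of $E_\hbar^I$ on the dynamical parameters $(\lambda_i)_{i\in I}$, with the translation corresponding to the simple reflections $\rho_k$ recorded in \eqref{eq:shifting map}. By descent, the category of $E_\hbar^I$-equivariant quasi-coherent sheaves on this principal bundle is equivalent to $\calD_w$. On the other hand, by the same calculation of multipliers that proves Lemma~\ref{lem:trans_twist} (adapted to also account for the $t^{(j)}_i$ coordinates on $E^{(w)'}$), the translation by $\rho_k$ of a pullback $p_1^*\calF\otimes\bbL^{v,w}$ differs from $p_1^*\calF\otimes\bbL^{v,w}$ precisely by the twist $\bbL_{k,\hbar}^{v,w}$. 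Unraveling, an $E_\hbar^I$-equivariant structure on $p_1^*\calF\otimes\bbL^{v,w}$ is exactly the datum of a collection of isomorphisms $\phi_{\calF,k}$ as in the definition of $\calD_w'$, with the commutation $[\phi_{k_1},\phi_{k_2}]=0$ encoding the compatibility of the equivariant structure with the group law. The functor $\calF\mapsto (p_1^*\calF\otimes\bbL^{v,w},\phi_{\calF})$ is then the desired equivalence; its quasi-inverse is $\bbL^{-1}$-twisted descent along the $E_\hbar^I$-torsor. This matches Proposition~\ref{prop:dyn_param} mutatis mutandis.

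For part~(2), the compatibility with $\calC\cong\calC'$ and with the module structures $\otimes^+$ and $\otimes^-$ comes down to checking that the line bundle $\calL_{v_1,v_2}$ appearing in the tensor product formula and the dynamical twist $\bbL_{\hbar}^{v,w}$ are compatible after pullback to the relevant fiber products. The cleanest way is to verify it on generators: for $\calF\in\calC$ with $v_1$-support and $\calV\in\calE_w$ with $v_2,w$-support, both
$(p_1^*(\calF\otimes_{t_1,t_2}\calV)\otimes\bbL^{v_1+v_2,w},\phi_{\calF\otimes\calV})$ and $(p_1^*\calF\otimes\bbL^{v_1})\otimes_{t_1,t_2}^+(p_1^*\calV\otimes\bbL^{v_2,w})$ can be identified as equivariant sheaves, the equivariant structure being controlled by the multiplier calculation of Lemma~\ref{lem:trans_twist} extended to the tensor factors; the matching of twists reduces to the isomorphism $\bbL_{k,\hbar}^{v_1+v_2,w}\cong \bbL_{k,\hbar}^{v_1}\otimes\bbL_{k,\hbar}^{v_2,w}$, which is a straightforward multiplier computation (parallel to the section-level check in Lemma~\ref{lem:trans_twist}).

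For part~(3), recall from \S\ref{subsec:DrinfeldDouble} that a $D(\SH^{\sph})$-module is an object of $\calE_w$ together with commuting actions of $A_l(\SH^{\sph})$ and $A_r(\SH^{\sph})$ compatible with the double relation \eqref{eqn:double}. Under the equivalence of part~(2), these two actions transport to endomorphisms of the corresponding object in $\calE_w'$. The Cartan-generating operator $\Phi_k$, which geometrically appears as the braiding of $A_l(\calO_{e_k})\calF_v\to A_r(\calO_{e_{k'}})\calF_v$ (spelled out in \S\ref{subsec:CartanModule}), becomes, under the equivalence, precisely the shift operator $\rho_k^{(v,w)*}$ composed with $\phi_{\calF,k}$: this is exactly the content of the isomorphism $\bbL_{k,\hbar}^{v,w}\cong \calL_{e_k,v}\otimes\calL_{v',e_k}^\vee$ from just before the theorem. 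Consequently the relations \eqref{EQ1}--\eqref{EQ5} in the dynamical picture are equivalent to the original ones. The main obstacle is a bookkeeping one: making sure that the $w_0$-appearances in the $A_r$-action and the sign conventions in the dynamical shift $\rho_k$ match on the nose, which is the point at which Assumption~\ref{Assu:WeghtsGeneral}(2) (and hence the specialization $t_1=t_2=\hbar/2$) is used so that both $\calL_{e_k,v}$ and $\bbL_{k,\hbar}^{v,w}$ are computed with the same multipliers.
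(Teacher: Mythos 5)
Your proposal is correct and follows essentially the same route as the paper: the paper's proof simply invokes the argument of Proposition~\ref{prop:dyn_param}, i.e.\ equivariant descent along the principal $E_\hbar^I$-bundle given by the dynamical parameters combined with the multiplier computation of Lemma~\ref{lem:trans_twist} and the identification $\bbL_{k,\hbar}^{v,w}\cong\calL_{e_k,v}\otimes\calL_{v',e_k}^\vee$, with (3) a formal consequence of (1) and (2). Your added detail for parts (2) and (3) (the splitting $\bbL_{k,\hbar}^{v_1+v_2,w}\cong\bbL_{k,\hbar}^{v_1}\otimes\bbL_{k,\hbar}^{v_2,w}$ and the transport of $\Phi_k$) is consistent with what the paper leaves implicit; just note that the $\rho_k$ are shift operators, not simple reflections.
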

Note that (3) is a formal consequence of the first two. 
\begin{proof}The proof is similar to that of Proposition~\ref{prop:dyn_param}.
\end{proof}

\begin{remark}
\begin{enumerate}
\item This gives a conceptual interpretation that the irreducible representations of the dynamical elliptic quantum group does not depend on the dynamical parameter, a phenomenon discovered by Felder-Enriquez in the case of $\fs\fl_2$, and by Gautam-Toledano Laredo in general.

\item We have the Drinfeld coproduct, the definition of which does not involve the dynamical parameters. However, the ordinary coproduct may depend on the dynamical parameters in a non-trivial way, which we will study in future investigations. 
\end{enumerate}
\end{remark}

\subsection{Quiver varieties in the $\fs\fl_2$-case}\label{subsec:sl2}
Consider $O(v, w):=\{(V^1, V^2)\in \Gr_{v-1}(w)\times \Gr_v(w) \mid V^1\subseteq V^2\}$.
The Hecke correspondence $C^+(v, w)$ is the conormal bundle to $O(v, w)$.
%\textcolor{blue}{Is $C^+(v, w)$ the same as $Z(v, v-1, w)$ in this case?}

Let $Z(v, v-1, w):= T^*(\Gr_{v-1}(w))\times_{\mathfrak{M}_0(w)} T^*(\Gr_{v}(w))$ be the Steinberg variety, where $\mathfrak{M}_0(w)$ is the quiver variety with trivial stability condition.
Then, $C^+(v, w)\subset  Z(v, v-1, w)$.  We have the following diagram. 
\[\xymatrix@R=1em @C=0.1em{
&Z(v, v-1, w)\ar@{^{(}->}[rr]\ar[d]^{\pi} && T^*(\Gr_{v-1}(w))\times T^*(\Gr_v(w))\ar[d]^{\pi_1\times \pi_2}&\\
&O(v, w)\ar[dl]_{p_1}\ar[dr]^{p_2}\ar@{^{(}->}[rr]&& \Gr_{v-1}(w)\times \Gr_v(w)&\\
\Gr_{v-1}(w)& &\Gr_v(w)
}\]
Let $0\to R_1 \to W\to Q_1\to 0$ and  $0\to R_2 \to W\to Q_2\to 0$ 
 be the tautological sequences over $\Gr_{v-1}(w)$, and $\Gr_v(w)$ respectively, with $\dim(R_1)=v-1$ and $\dim(Q_1)=w-v+1$, and $\dim(R_2)=v$. The first projection $p_1: O(v, w)\to \Gr_{v-1}(w)$ can be identified with 
the projective bundle $\mathbb{P}(Q)\to \Gr_{v-1}(w)$, and the second projection $p_2$ can be identified with $\PP(R_2^{\vee})$. 
\[
\xymatrix @R=1em{
 & \PP(Q_1)\cong \PP(R_2^{\vee}) \ar[ld]_{p_1}\ar[rd]^{p_2} &\\
\Gr_{v-1}(w) && \Gr_v(w) 
}\]
Let $\mathbb{L}$ be the Poincar\'e line bundle on $E$ which has rational section $g_\lambda(z)=\frac{ \vartheta(z+\lambda)}{ \vartheta(z) \vartheta(\lambda)}$. 
For a vector bundle $V\to X$ on algebraic variety $X$, the Chern classes $c_{i}^{\lambda}(V)$ of $V$ in the elliptic cohomology are certain rational sections of $\Ell_{G}^{\lambda}(X)$.

Let $\calL:=\sO_{\mathbb{P}(Q)}(-1)$. 
Let $\varphi: T^*(\Gr_{v-1}(w))\times T^*(\Gr_{v}(w))\to T^*(\Gr_{v}(w))\times T^*(\Gr_{v-1}(w))$ be the map that permutes the components.
We have the following natural embeddings $\Delta^+\,:\,C^+(v, w)\inj Z(v-1, v; w)$, and 
$\Delta^-\,:\,C^-(v, w)\inj Z(v, v-1; w)$,  
where $C^-(v, w):=\varphi(C^+(v, w))$.

There are very few interesting global section, therefore, we study actions of the rational sections from Example~\ref{ex:rat_sec_sl2}.
If $r\geq 0$, put
$x_{r}^{\pm}=\sum_{v}(-1)^{v\pm 1}
\Delta^{\pm}_*(c_1^\lambda(q^2\mathcal{L}^\pm))^{r}\in \Ell_{G_w\times \G_m}(Z (w)). $
\Omit{
The  function $f(z)$ is asymptotic to the following
\[
f(z) \sim \sum_{n\in \Z}\oint_a \Big(\frac{f(\zeta)}{\zeta^n} d\zeta\Big)z^n
\]
by definition of the Laurent expansion.
Generally, we have $ f(z+x) \sim \sum_{n\in \Z}\oint_a \Big(\frac{f(x+\zeta)}{\zeta^n} d\zeta\Big)z^n$
\begin{align*}
&\hat{b}(\lambda):=\sum_{r=0}^{\infty} \star g^r_{\lambda+v\hbar}(\zeta)z^r\,\ \text{and}\,\ 
\hat{c}(\lambda):=\sum_{r=0}^{\infty} \star g^r_{-\lambda-(w-v)\hbar}(\zeta)z^r\\
&\hat{\Phi}:=\star H_{v, w}(-z), \text{where} \,\ H_{v, w}:=\frac{\prod_{i=1}^{v} \vartheta(z-t_i+\hbar) \prod_{i=v+1}^w \vartheta(z-t_i-\hbar)}{\prod_{i=1}^w \vartheta(z-t_i)}
\end{align*}}
Then, we have maps:
\begin{align*}
&x_r^+: \Ell^{\lambda}_{G_w\times \C^*}(T^*\Gr(v, w)) \to \Ell^{\lambda}_{G_w\times \C^*}(T^*\Gr(v-1, w))\otimes\Theta(\pi_1) , \\
&x_r^-: \Ell^{\lambda}_{G_w\times \C^*}(T^*\Gr(v, w))  \to \Ell^{\lambda}_{G_w\times \C^*}(T^*\Gr(v+1, w))\otimes\Theta(\pi_2).
\end{align*}
Taking $f$ to be a rational section of the domain, which by Example~\ref{ex:sl2_1} can be thought of as a function on 
$E^{(v)'}\times_{\calM_{1, 2}} E^{(w-v)'}\times_{\calM_{1, 1}} \mathfrak{E}$.  The actions from Propositions~\ref{lem:M_PMod} and \ref{prop:left_act} have the following formulas:
\begin{align}
&x_r^+(f)=\sum_{k=v}^{w} f(t_{[1, v-1]\cup k}; t_{[v, w]\backslash \{k\}}) g^r_{\lambda+v\hbar}(t_k) \prod_{m\in [v, w] \backslash k} \frac{\vartheta(t_k -t_m +\hbar)}{\vartheta(t_{k}-t_m)}, \label{eq:x^+}\\
&x_r^-(f)=\sum_{k=1}^{v+1}f(t_{[1, v+1] \backslash \{k\} }; t_{[v+2, w]\cup \{k\}}) g^r_{-\lambda-(w-v)\hbar}(t_k) 
 \prod_{m\in [1, v+1] \backslash \{k\}} \frac{\vartheta(t_m -t_k +\hbar)}{\vartheta(t_{m}-t_k)}.\label{eq:x^-}
\end{align}
The formulas are obtained using the identities $
c^\lambda_{1}(q^2 \calL)=g_{\lambda+v\hbar}(z)=\frac{\vartheta(z+v\hbar+\lambda)}{\vartheta(z)\vartheta(\lambda+v\hbar)}
=g_{\lambda}(z+v\hbar) \frac{\vartheta(z+v\hbar)\vartheta(\lambda)}{\vartheta(z)\vartheta(\lambda+v\hbar)},
$
and $c^\lambda_1(q^2\mathcal{L}^\pm))^{(r)}$ is determined as 
$c^\lambda_1(q^2\mathcal{L}^\pm))^{(r)}=g_{\lambda +v\hbar} ^{(r)}(z):=\frac{\partial^r g_{\lambda +v\hbar}(z)}{ r!\partial^rz}$. 
Therefore, 
\[\sum_{r\geq 0}g_{\lambda +v\hbar} ^{(r)}(z)u=
\sum_{r\geq 0}\frac{\partial^rg_{\lambda +v\hbar} ^{(r)} }{r!\partial^rz}(z)u^r=
g_{\lambda +v\hbar} (z+u).
\]

Let ${\Phi}_{\lambda}$ be the convolution with $H_{v, w}(-z)$, where
\[
H_{v, w}(z):=\frac{\prod_{i=1}^{v} \vartheta(z-t_i+\hbar) \prod_{i=v+1}^w \vartheta(z-t_i-\hbar)}{\prod_{i=1}^w \vartheta(z-t_i)} \in \Ell^{\lambda}_{G_w\times \C^*}(T^*\Gr(v, w)).
\] 
\begin{prop} 
\begin{enumerate}
\item The actions of the rising and lowering operators satisfy the relation \eqref{eqn:double}; In particular, they induce an action of the Drinfeld double. 
\item On a weight space $\mathbb{V}_\mu$, assume $\lambda_1+\lambda_2=\hbar \mu$. 
Then the operators $\mathfrak{X}^+(u, \lambda_1) , 
\mathfrak{X}^{-}(u, \lambda_2), \Phi(u)$  satisfy the relation \eqref{EQ5}:
\[
\vartheta(\hbar) [\mathfrak{X}^+(u, \lambda_1) , 
\mathfrak{X}^{-}(v, \lambda_2)]=
\frac{\vartheta(u-v+\lambda_{1}) }{\vartheta(u-v) \vartheta(\lambda_{1})} \Phi(v)
+\frac{\vartheta(u-v-\lambda_{2}) }{\vartheta(u-v) \vartheta(\lambda_{2})} \Phi(u). 
\]
\end{enumerate}
\end{prop}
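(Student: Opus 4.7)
The plan is to prove part (2) first by a direct residue-based computation using the explicit shuffle formulas \eqref{eq:x^+} and \eqref{eq:x^-}, and then deduce part (1) as a formal consequence via the Drinfeld-double criterion.

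First, I would substitute \eqref{eq:x^-} into the generating series $\mathfrak{X}^-(v,\lambda_2)$ and \eqref{eq:x^+} into $\mathfrak{X}^+(u,\lambda_1)$ and compute both compositions $\mathfrak{X}^+(u,\lambda_1)\mathfrak{X}^-(v,\lambda_2)f$ and $\mathfrak{X}^-(v,\lambda_2)\mathfrak{X}^+(u,\lambda_1)f$ as iterated sums. Each composition is indexed by an inserted point (from $\mathfrak{X}^-$) and a removed point (from $\mathfrak{X}^+$). The contributions in which the removed index and the inserted index correspond to \emph{different} variables of $f$ are identical in the two orderings (after relabeling), so they cancel in the commutator. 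The surviving \emph{coincident} terms—where the freshly inserted point is the one later removed, or the removed point is the one just reinserted—give, after simplification, a sum over the current Chern roots $t_m$ of a product of theta-function ratios with poles arranged at $z=t_m$, $z=u$, and $z=v$.

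The second and decisive step is the residue identity. I would introduce an auxiliary one-variable meromorphic function $F(z)$ on $E$ built from the relevant theta products so that its residues at $z=t_m$ reproduce the surviving coincident sum, and its residues at $z=u$ and $z=v$ produce precisely $\tfrac{\vartheta(u-v+\lambda_1)}{\vartheta(u-v)\vartheta(\lambda_1)}\Phi(v)$ and $\tfrac{\vartheta(u-v-\lambda_2)}{\vartheta(u-v)\vartheta(\lambda_2)}\Phi(u)$ respectively. The quasi-periods of $F(z)$ along the two $E$-directions are dictated by the dynamical shifts $\lambda_1+v\hbar$ and $-\lambda_2-(w-v)\hbar$ introduced by the factors $g_{\lambda+v\hbar}$, $g_{-\lambda-(w-v)\hbar}$; the hypothesis $\lambda_1+\lambda_2=\hbar\mu$ on the weight space $\mathbb{V}_\mu$ is precisely what forces both quasi-periods of $F$ to be trivial, making $F$ genuinely elliptic. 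The residue theorem $\sum_{z\in E}\mathrm{Res}_z F(z)\,\omega=0$ then delivers the claimed identity (2), with the convolution operator $\Phi(u)$ appearing via $H_{v,w}$ as in Example \ref{ex:sl2_1} and \eqref{eqn:Phi_sl2}.

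For part (1), by the same reduction as \cite[\S 11.3]{Nak01} (which is already invoked in the proof of Theorem \ref{thm:DoubAct} and Proposition \ref{prop:cartan_H}), the Drinfeld-double compatibility \eqref{eqn:double} for the full quiver reduces to the single cross-relation between $\mathfrak{X}^+_k$ and $\mathfrak{X}^-_k$ in each rank-one subalgebra, which is exactly (2). A direct comparison then shows that the coefficients $\tfrac{\vartheta(u-v+\lambda_1)}{\vartheta(u-v)\vartheta(\lambda_1)}$ and $\tfrac{\vartheta(u-v-\lambda_2)}{\vartheta(u-v)\vartheta(\lambda_2)}$ produced in (2) are exactly the values of the bialgebra pairing $\langle f,e\rangle$ from Theorem \ref{thm:pairing} evaluated on the coefficients of $\mathfrak{X}^+(u,\lambda_1)$ and $\mathfrak{X}^-(v,\lambda_2)$, so that the commutator rearranges into the form \eqref{eqn:double}.

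The main obstacle is the theta-function bookkeeping in the second step: organizing the surviving coincident sum into a single elliptic function $F(z)$ and matching its residues at $u$ and $v$ with the asserted two-term right-hand side. This hinges on a Fay-type identity of the same flavor as Lemma \ref{gufang:Fact3}, together with careful tracking of the dynamical parameter shifts $\lambda\mapsto \lambda\pm v\hbar$, $\lambda\mapsto \lambda\mp(w-v)\hbar$ occurring in the intermediate dimension jump $v\to v\pm 1$. The weight-space constraint $\lambda_1+\lambda_2=\hbar\mu$ is what ultimately aligns these shifts so that the ellipticity of $F(z)$ holds, and this is where a naive attempt without the hypothesis on $\mathbb{V}_\mu$ would fail.
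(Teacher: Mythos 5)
Your proposal is correct and follows essentially the same route as the paper: cancellation of the off-diagonal terms in the two compositions of \eqref{eq:x^+} and \eqref{eq:x^-}, identification of the surviving diagonal sum as $\sum_k \operatorname{Res}_{z=t_k}$ of an elliptic function whose ellipticity is guaranteed by $\hat\lambda_1+\hat\lambda_2=\mu\hbar$, and the residue theorem to extract the two $\Phi$-terms. The only difference is that you prove (2) first and deduce (1), whereas the paper reads off (1) directly from the diagonal residue sum (which is literally the pairing) before passing to generating series; the two orderings are interchangeable since (1) and (2) are coefficient-wise and generating-series forms of the same identity.
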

\begin{proof}
Using the formulas \eqref{eq:x^+}, \eqref{eq:x^-}, we have the following
\begin{align*}
&x_s^{-} \circ x_{r}^+ (f) (t_{[1, v]}; t_{[v+1, w]})=\sum_{l=1}^{v} \sum_{k\in [v+1, w]\cup \{l\}} f(t_{[1, v]\backslash \{l, k\}}; t_{[v+1, w]\cup l \backslash \{k\}}) g^{(s)}_{-\lambda -(w-v)\hbar}(t_l) g^{(r)}_{\lambda +v\hbar}(t_k) X_{kl}, \\
&x_{r}^+ \circ x_s^{-} (f) (t_{[1, v]}; t_{[v+1, w]})=
\sum_{l\in [1, v] \cup k}\sum_{k=v+1}^w
 f(t_{[1, v] \cup k\backslash \{l\}}; t_{[v+1, w]\backslash \{k, l\}}) g^{(s)}_{-\lambda -(w-v)\hbar}(t_l) g^{(r)}_{\lambda +v\hbar}(t_k) Y_{kl}, 
\end{align*}
where
\begin{align*}
X_{k, l}:=\prod_{m\in [1, v]\backslash l} \frac{\vartheta(t_m -t_l +\hbar)}{\vartheta(t_m-t_l)}
\prod_{n\in [v+1, w] \cup l\backslash \{k\}} 
\frac{\vartheta(t_k -t_n +\hbar)}{\vartheta(t_k-t_n)}, \,\ \\
Y_{k, l}:=\prod_{m\in [1, v]\cup k \backslash \{l\}} \frac{\vartheta(t_m -t_l +\hbar)}{\vartheta(t_m-t_l)}
\prod_{n\in [v+1, w] \backslash \{k\}} 
\frac{\vartheta(t_k -t_n +\hbar)}{\vartheta(t_k-t_n)}. 
\end{align*}
Define the functions  $A(z):=\prod_{m=1}^w \vartheta(z-t_m)$ and $B(z):=\prod_{m=1}^v \vartheta(z-t_m-\hbar) \prod_{m=v+1}^w \vartheta(z-t_m +\hbar)$. By $\vartheta'(0)=1$, we have $A'(z)|_{z=t_k} = \prod_{m\in [1, w] } \widehat{\vartheta(t_k-t_m)}$. We have
\begin{align*}
&
 x_r^{+} x_s ^{-} (f) 
-  x_s^{-} x_r ^{+} (f) \\
=&\sum_{k=1}^w \frac{f }{\vartheta(\hbar)} g^{(s)}_{-\lambda-(w-v)\hbar}(t_k) g^{(r)}_{\lambda+v\hbar} (t_k)\frac{
\prod_{m\in [1, v]} \vartheta(t_k-t_m-\hbar) \prod_{m\in [v+1, w]} \vartheta(t_k-t_m+\hbar)}{\prod_{m\in [1, w] } \widehat{\vartheta(t_k-t_m)}}\\
=&\sum_{k=1}^w  \frac{f }{\vartheta(\hbar)}g^{(s)}_{-\lambda-(w-v)\hbar}(t_k) g^{(r)}_{\lambda+v\hbar} (t_k)  \Res_{z=t_k}\frac{B(z)}{A(z)}\\
=&\Res_{z\in E}x_r^{+} x_s ^{-} (f)\frac{B(z)}{A(z)},
\end{align*}
where the notation $\hat{}$ on the denominator means omitting the term when $m=k$.  By the formula \eqref{eqn:Phi_sl2} of $\Phi$ and the definition of the pairing \S\ref{subsec:DrinfeldDouble}, this proves (1).

Now we prove (2).
Writing in terms of generating series, and using the fact that $\sum_{r\geq 0}g_{\lambda +v\hbar} ^{(r)} (t)u^r=
g_{\lambda +v\hbar} (t+u)$, we have
\begin{align*}
&
\sum_{r, s\geq 0} x_r^{+} x_s ^{-} (f) w_1^sw_2^r 
- \sum_{r, s\geq 0} x_s^{-} x_r ^{+} (f) w_1^s w_2^r\\
=&\sum_{k=1}^w  \frac{f }{\vartheta(\hbar)}g_{-\lambda-(w-v)\hbar}(w_1+t_k) g_{\lambda+v\hbar} (w_2+t_k)  \Res_{z=t_k}\frac{B(z)}{A(z)}.
\end{align*}

Note that $g_{-\lambda-(w-v)\hbar}(z+w_1) g_{\lambda+v\hbar} (z+w_2)\frac{B(z)}{A(z)}$ 
as a function in $z$ is a double periodic meromorphic function. 
Let $\hat\lambda_1:=-\lambda-(w-v)\hbar$, $\hat{\lambda}_2:=\lambda+v\hbar$, 
and $w:=w_2-w_1$. 
Then, we have $\hat{\lambda}_1+\hat{\lambda}_2=(2v-w)\hbar=\mu\hbar$. 
Set $\mathfrak{X}^+(w_2, \hat\lambda_2):=\sum_{r\geq 0} x_r^+w_2^r$, and 
$\mathfrak{X}^-(w_1, \hat\lambda_1):=\sum_{s\geq 0} x_s^+w_1^s$. By the same argument as in the proof of Proposition~\ref{thm: rel DSH}, the following identity holds.
\begin{align*}
[\mathfrak{X}^+(w_2, \hat\lambda_2), \mathfrak{X}^-(w_1, \hat\lambda_1)]
=&\frac{1}{\vartheta(\hbar)}\left(
\frac{\vartheta(w+ \hat{\lambda}_2)}{\vartheta(w) \vartheta(\hat{\lambda}_2)}H_{v, w}(-w_1)
+
\frac{\vartheta(-w+ \hat{\lambda}_1)}{\vartheta(-w) \vartheta(\hat{\lambda}_1)}H_{v, w}(-w_2)
\right)\\
=&\frac{1}{\vartheta(\hbar)}\left(
\frac{\vartheta(w+ \hat{\lambda}_2)}{\vartheta(w) \vartheta(\hat{\lambda}_2)}\Phi(w_1)
+
\frac{\vartheta(w- \hat{\lambda}_1)}{\vartheta(w) \vartheta(\hat{\lambda}_1)}\Phi(w_2)
\right).
\end{align*}
This completes the proof. 
\end{proof}
\Omit{
\section{Representations of the elliptic Drinfeld currents}
This section is aim to compare the sheafified elliptic quantum group in the current paper with the category of \cite{GTL15}. 
We explain how to get the operators $\Phi_i(u), \mathfrak{X}_i^{\pm}(u, \lambda)$ 
using the current setup.

On the notes: 
$\mathfrak{X}_i^{+}(u):=g_{\lambda}(z+u): E_u \to \Hom(\mathcal{F}_{v, w},  \mathcal{F}_{v+1, w})$
is the following operator:

Let $\pi: E\times E\to E$ be the map $(z, u)\mapsto z+u$. 
For $\calP=\calP_{e_k}$ a sheaf on $E^{(e_k)}=E$, and $\mathcal{F}_{v, w}$ a sheaf on 
$E^v\times E^w$. 
We have $\mathcal{F}_{v, w}$ has support on $E^{(v)}\times E^{w-v}$. 
We have the following diagram:

\[
\xymatrix@R=1em{
& E^{(e_k)}\times E^{(v)} \times E^{(w-v-e_k)}
\ar[ld]\ar[rd]&\\
 E^{(v)}\times E^{w-v} \ar@{^{(}->}[d]&& E^{(v+e_k)}\times E^{w-v-e_k}\ar@{^{(}->}[d]\\
  E^{(v)}\times E^{w}&& E^{(v+1)}\times E^{w}
}\]

We have, $\pi^*(\calP)\boxtimes \mathcal{F}_{v, w}$ is a sheaf on $E\times E\times E^{v+w}$. 
There is a map $\pi^*(\calP)\boxtimes \mathcal{F}_{v, w}\to (\pi\times \id)^* \mathcal{F}_{v+1, w}$. 
Let $q: E\times E\times E^{v+w}\to E$ be the projection $(z, u, a)\mapsto u$. 
After taking the rational section, the action induces the following map
\begin{align*}
&q_*\Big(  \pi^*(\calP) \to 
\Hom(p_2^*\mathcal{F}_{v, w},  (\pi\times \id)^* \mathcal{F}_{v+1, w})  \Big)\\
\text{Equivalently,} \,\  &
\calP_{E_u} \to \Hom_{E_u}(\pi_{\pt}^*(\mathcal{F}_{v, w}),  q_*(\pi\times \id)^* \mathcal{F}_{v+1, w}) 
\end{align*}
}

\newcommand{\arxiv}[1]
{\texttt{\href{http://arxiv.org/abs/#1}{arXiv:#1}}}
\newcommand{\doi}[1]
{\texttt{\href{http://dx.doi.org/#1}{doi:#1}}}
\renewcommand{\MR}[1]
{\href{http://www.ams.org/mathscinet-getitem?mr=#1}{MR#1}}

\end{document}